\newif\ifdraft
\newcommand{\admclassconv}{\mathcal W}
\newcommand{\arc}{\wideparen}
\newcommand{\weakstar}{\stackrel{*}{\rightharpoonup}}
\def\comp{{\mathcal W_{\rm conv}}}
\def\compacts{\mathcal K}
\def\convexcurves{\curves_{\rm conv}}
\def\curves{\Sigma}
\newcommand{\di}{\, {\rm d}}
\newcommand{\Div}{\textrm{div}}
\newcommand{\dist}{\textrm{dist}}
\newcommand{\dOm}{\partial \Om}
\def\eps{\varepsilon}
\newcommand\grad{\nabla}
\def\H{\mathcal H}
\newcommand{\jump}[1]{\text{{\rm \textlbrackdbl}}{#1}\text{{\rm \textrbrackdbl}}}
\newcommand{\largercomp}{\mathcal W}
\newcommand{\len}{\ell}
\newcommand{\Lip}{{\rm Lip}}
\newcommand{\nada}[1]{}
\newcommand{\nullb}{\partial^0}
\newcommand{\Om}{\Omega}
\newcommand{\openannulus}{\Sigma_{\rm ann}}
\newcommand{\openhalfplane}{H^+}
\newcommand{\immclosedann}{\Phi({\overline\Sigma}_{\rm ann})}
\newcommand{\pointinOm}{P'}
\newcommand{\pointinspace}{P}
\newcommand{\sigmamin}{\sigma}
\newcommand{\psimin}{\psi}
\newcommand{\R}{\mathbb{R}}
\newcommand{\res}{\mathop{\hbox{\vrule height 7pt width 0.5pt depth 0pt
\vrule height 0.5pt width 6pt depth 0pt}}\nolimits}
\numberwithin{equation}{section}
\mathchardef\emptyset="001F
\newtheorem{theorem}{Theorem}[section]
\newtheorem{definition}[theorem]{Definition}
\newtheorem{cor}[theorem]{Corollary}
\newtheorem{lemma}[theorem]{Lemma}
\theoremstyle{definition}
\newtheorem{remark}[theorem]{Remark}
\newtheorem{Remark}[theorem]{Remark}
\title{A non-parametric Plateau problem with partial free boundary}
\author{Giovanni Bellettini
	\footnote{
		Dipartimento di Ingegneria dell'Informazione e Scienze Matematiche, Universit\`a di Siena, 53100 Siena, Italy,
		and International Centre for Theoretical Physics ICTP,
		Mathematics Section, 34151 Trieste, Italy.
		E-mail: bellettini@diism.unisi.it
	}
\and
	Roberta Marziani\footnote{
Angewandte Mathematik, WWU M\"unster, Germany.
		E-mail: roberta.marziani@uni-muenster.de
	}
	\and
	Riccardo Scala\footnote{ 
		Dipartimento di Ingegneria dell'Informazione e Scienze Matematiche, Universit\`a di Siena, 53100 Siena, Italy.
		E-mail: riccardo.scala@unisi.it}
}
\begin{document}

\maketitle

\begin{abstract}
We consider a Plateau problem in codimension $1$ in the non-parametric setting. A Dirichlet boundary datum is given only on part of the boundary $\partial \Omega$ of a bounded convex domain $\Omega\subset\R^2$. Where the Dirichlet datum is not prescribed, 
we allow
a free contact with the horizontal plane. 
We show existence of a solution, 
and prove regularity for the corresponding minimal surface.
Finally we compare these solutions with the classical 
minimal surfaces of Meeks and Yau, and show that they 
are equivalent when  the Dirichlet boundary datum is assigned in at most $2$ disjoint arcs of $\partial \Omega$.
\end{abstract}

\noindent {\bf Key words:}~~Plateau problem, relaxation, Cartesian currents, area functional, minimal surfaces.

\vspace{2mm}

\noindent {\bf AMS (MOS) 2020 Subject Clas\-si\-fi\-ca\-tion:}  49J45, 49Q05, 49Q15, 28A75.

%%%%%%%%%%%%%%%%%%%%%%%%%%%%%%%%%%%%%%%%%%%%%%%%%%%%%%%%%%%%%%%%%%%%%%%%%
\section{Introduction}\label{sec:introduction}
%%%%%%%%%%%%%%%%%%%%%%%%%%%%%%%%%%%%%%%%%%%%%%%%%%%%%%%%%%%%%%%%%%%%%%%%%
Let $\Omega\subset \R^2$ 
be a bounded open convex set; 
in this paper we look for an area-minimizing surface 
which can be written as a graph over a subset of $\Omega$,
and spanning
a Jordan curve $\Gamma_\sigma=\gamma\cup \sigma \subset \R^2 \times [0,+\infty)$. 
Here $\gamma$ is fixed (Dirichlet condition) and
is given by a family 
$\{\gamma_i\}_{i=1}^n\subset\partial \Om\times [0,+\infty)$ 
 of $n\in\mathbb{N}$ 
curves 
each joining pairs of points $\{(p_i,q_i)\}_{i=1}^n$ 
of $\partial \Om$. 
Whereas $\sigma$, which represents 
the {\it free boundary}, consists of (the image of)
 $n$ curves $(\sigma_1,\dots,\sigma_n)$
sitting in the  plane containing $\Omega$ (also called free boundary plane), and
joining the  endpoints of $\gamma$ 
in order that $\gamma\cup\sigma$  forms a Jordan curve $\Gamma_\sigma$ 
in $\R^3$. We assume that each $\gamma_i$ 
is Cartesian, i.e., it can be expressed as the graph of a given nonnegative function $\varphi$ 
defined on a corresponding portion of 
$\partial \Omega$. This allows to restrict ourselves 
to the Cartesian setting, and to assume that the competitors 
for the Plateau problem are expressed by graphs of functions $\psi$ 
defined on a suitable subdomain of $\Omega$ depending on $\sigma$.

Our prototypical example is given by the catenoid. Consider a cylinder in $\R^3$ with a circle of radius $r$ as basis, and height $l$. 
Choose a system of Cartesian coordinates in which the $x_1x_2$-plane 
contains the cylinder axis, and restrict attention to the half-space 
$\{x_3\geq0\}$ as
 in Figure \ref{figura1},
where $\Omega = (0,l)\times(-r,r)$ and $n=2$.
Write
$$\partial \Omega=\partial^0_1\Omega\cup\overline{\partial^D_1\Om}\cup \partial^0_2\Omega\cup \overline{\partial^D_2\Om},$$
where  $\partial^0_1\Omega=(0,l)\times\{r\}$, $\partial^0_2\Omega=(0,l)\times\{-r\}$, $\partial^D_1\Om=\{0\}\times(-r,r)$, and $\partial^D_2\Om=\{l\}\times(-r,r)$. 
On the Dirichlet boundary $\partial^D\Om=\partial^D_1\Om\cup\partial^D_2\Om$ we prescribe a continuous function $\varphi$ whose graph 
consists of the two half-circles $\gamma_1$ and $\gamma_2$. The endpoints of $\gamma_1$ and $\gamma_2$ live on the 
free boundary plane (the horizontal plane) and are $p_1=(0,-r)$, $q_1=(0,r)$, and $p_2=(l,r)$, $q_2=(l,-r)$ respectively. The free boundary 
$\sigma$ consists of two curves $\sigma_1$ and $\sigma_2$ 
with endpoints $q_1,p_2$, and $q_2,p_1$, respectively,
constrained to stay in $\overline\Omega$. The concatenation of $\gamma=\gamma_1\cup\gamma_2$ and $\sigma$ 
forms a Jordan curve in $\R^3$
$$\Gamma_\sigma=\gamma_1\cup\sigma_1\cup\gamma_2\cup\sigma_2.$$
Therefore we proceed to look for an area-minimizer 
among all Cartesian surfaces $S$ with boundary $\Gamma_\sigma$ 
{\it keeping $\sigma$ free}, i.e. we minimize the area among all 
pairs $(\sigma,S)$. In this particular case
of the catenoid, a minimizing sequence $((\sigma_k,S_k))$ tends (in a suitable way specified in the sequel) to a minimizer $(\sigma,S)$ which allows for two different possibilities. If $l$ is small, $\sigma_1$ and $\sigma_2$ remain disjoint and the classical  catenoid (half of it, namely the intersection between the catenoid and the half-space $\{x_3\geq0\}$) is the surface $S$, in turn coinciding with the graph of a function $\psi$ defined on the region of $\Omega$ ``enclosed'' by $\sigma$. If instead $l$ is large, the two curves $\sigma_1$ and $\sigma_2$ merge and the region of $\Omega$ enclosed by $\sigma$ tends to become empty (it  reduces to the two segments $\partial_1^D\Omega\cup\partial_2^D\Omega$). This describes the solution given by two (half) discs.

\begin{figure}
	\begin{center}
		\includegraphics[width=0.5\textwidth]{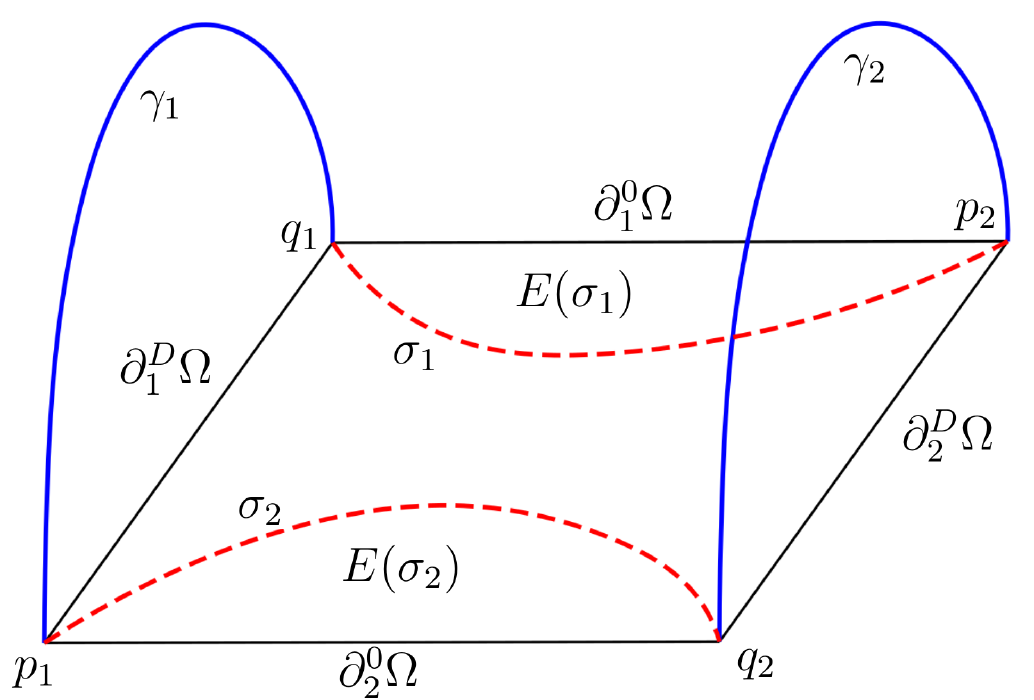}
		\caption{The catenoid: when $l$ is large 
enough the two dotted curves $\sigma_1$ and $\sigma_2$ merge and the (generalized) 
graph of $\psi$ reduces to two vertical half-circles on $\partial^D \Om = 
\partial_1^D\Om\cup \partial_2^D\Omega$.
In this case $\partial^D \Om \subset \partial E(\sigma_1)
\cup \partial E(\sigma_2)$.
}\label{figura1}
	\end{center}
\end{figure}  

	A peculiarity of our problem is the presence of a free boundary. The problem of Plateau with partial free boundary has been exhaustively studied (see for instance \cite{DHS}) but never investigated,
to our best knowledge, with the non-parametric approach.
	
	Referring to Section \ref{sec:preliminaries} 
	for the precise description of the mathematical framework, 
	here we just describe it with few details. 
	 We 
fix some distinct points 
$p_1,q_1,p_2,q_2,\dots,p_n,q_n \in \partial
\Om$ taken in clockwise order. The part of $\partial \Omega$ between the points $p_i$ and $q_i$ is noted by $\partial_i^D\Om$, and the part between $q_i$ and $p_{i+1}$ by $\partial_i^0\Om$. We fix a nonnegative 
continuous function $\varphi\colon\partial\Om\to[0,+\infty)$ which is positive on $\partial^D\Om=\cup_{i=1}^n\partial^D_i\Om$ and vanishes on $\{p_i,q_i\}_{i=1}^n\cup\partial^0\Om$ with $\partial^0\Om=\cup_{i=1}^n\partial^0_i\Om$ , and we consider Lipschitz injective  and 
mutually disjoint curves $\sigma_i$ in $\overline \Om$, $i=1,\dots,n$, 
joining $p_i$ to $q_{i+1}$. 
We suppose the graph of $\varphi$ on $\partial^D\Om$ to be a Lipschitz curve in $\R^3$. We define $E(\sigma):=\cup_{i=1}^nE(\sigma_i)$, with $E(\sigma_i)$ the planar closed region enclosed between $\partial^0_i\Om$ and $\sigma_i$.

We define the two classes
\begin{align}
&\widehat\Sigma:=\{\sigma=(\sigma_1,\sigma_2,\dots,\sigma_n):[0,1]\rightarrow \overline{\Om}^{n}\text{ be curves as above}\},\\
&\mathcal X_\varphi:=\{(\sigma,\psi)\in\widehat\Sigma\times  W^{1,1}(\Omega):\psi=0\text{ a.e.  in }E(\sigma)\text{ and }\psi=\varphi\text{ on }\partial^D\Omega\}.
\end{align}
We want to find a solution to the following minimum problem:
\begin{align}\label{problem1}
\inf_{(\sigma,\psi)\in \mathcal X_\varphi}\mathbb{A}(\psi;\Om\setminus E(\sigma)),
\end{align}
where $\mathbb A$ denotes the classical area integral, i.e., 
\begin{equation}\label{def:area_integral}
	\mathbb{A}(\psi;\Om\setminus E(\sigma)):=
\int_{\Omega\setminus E(\sigma)}\sqrt{1+|\nabla \psi|^2}dx.
\end{equation}

Since, in general, existence of minimizers is not guaranteed 
in the class $\mathcal X_\varphi$,  we need to formulate 
this problem to a more suited space of admissible pairs. Specifically, a standard relaxation procedure leads one to analyse the problem above for pairs $(\sigma,\psi)$ belonging to $\Sigma\times BV(\Omega)$, where $\Sigma$ is a suitable class containing $\widehat\Sigma$ but which also allows for partial overlapping of the curves $\sigma_i$ (a precise definition is given in Section \ref{subsec:setting_of_the_problem}). 
	Therefore we shall be concerned with 
the study of the functional $ \mathcal F_\varphi$ defined as 
\begin{align}\label{def:relaxed_functional_intro}
\mathcal F_\varphi(\sigma,\psi):=\mathcal A(\psi;\Omega)-|E(\sigma)|+\int_{\partial\Omega}|\psi-\varphi|d\mathcal H^1,
\end{align}
where $(\sigma,\psi)\in \mathcal W\subset\Sigma\times BV(\Omega)$,  $\mathcal W$ is the space of pairs $(\sigma,\psi)\in \Sigma\times BV(\Omega)$ such that $\psi=0$ a.e. on $E(\sigma)$, and  $\mathcal A(\psi;\Omega)$ is the relaxed area functional defined as in \eqref{def:relaxed_area}, which accounts for the area of the generalized graph of the map $\psi$ on $\Omega$.  The functional $\mathcal F_\varphi$ extends the area integral $\mathbb A$ to the larger class $\mathcal W$.

\begin{figure}
	\begin{center}
		\includegraphics[width=0.7\textwidth]{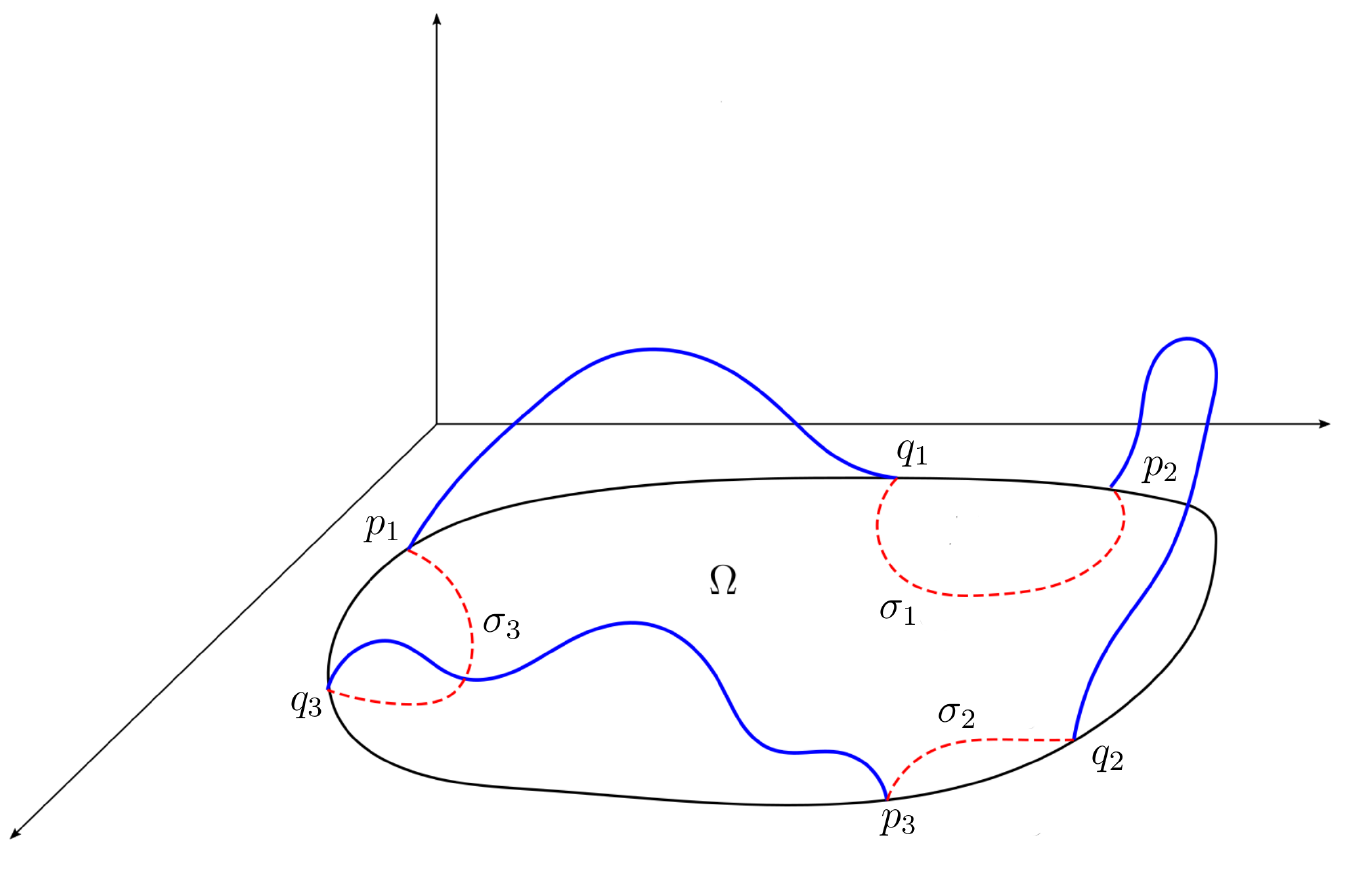}
		\caption{An example of the setting (in 3D), 
when $n=3$. On the boundary of the convex set $\Omega$ we have fixed the 
points $p_i$, $q_i$; the arc of $\partial \Omega$ joining $p_i$ to $q_i$ is $\partial_i^D\Om$, while the arc joining $q_i$ to $p_{i+1}$ 
is $\partial^0_i\Om$ ($p_4:= p_1$). On $\partial^D\Omega$ the Dirichlet boundary datum $\varphi$ is imposed, whose graph has been depicted. The dotted arcs are the free planar curves $\sigma_i$ joining the pairs $(q_i,p_{i+1})$. }\label{figura3}
	\end{center}
\end{figure}

 We then prove the following result, accounting for existence and 
regularity of minimizers of $\mathcal F_\varphi$.
 
 \begin{theorem}\label{teo_main_intro}
There exists a minimizer of $\mathcal F_\varphi$ on $\mathcal W$. Moreover, any minimizer $(\sigma,\psi)\in\mathcal W$ of $\mathcal F_\varphi$ satisfies the following regularity properties:
 \begin{itemize}
 	\item[$(1)$] The region $E(\sigma)$ consists of a family of closed convex sets. The boundary $\partial E(\sigma)$ is given by the union of the arcs $\partial^0\Omega$ 
and a family of disjoint Lipschitz curves in $\overline \Om$ (joining 
the points $p_i$ and $q_j$, in some order). 
Moreover, if $\partial^D_i\Omega$ is not a straight segment, then $\partial^D_i\Omega\cap \partial E(\sigma)=\emptyset$. If instead $\partial^D_i\Omega$ is  a straight segment, then either $\partial^D_i\Omega\cap \partial E(\sigma)=\emptyset$ or $\partial^D_i\Omega\cap \partial E(\sigma)=\partial^D_i\Omega$.
 	\item[$(2)$] The function $\psi$ is real analytic in $\Omega\setminus E(\sigma)$, and is continuous on $\partial^D\Omega\setminus \partial E(\sigma)$ where it attains  the boundary value $\psi=\varphi$.
 	\item[$(3)$] If  $\Om\cap\partial E(\sigma)\ne\emptyset$,
 	there is at least a  minimizer $(\sigma,\psi)$ such that $\psi$ is continuous and null on $\Om\cap\partial E(\sigma)$, and moreover $\Om\cap\partial E(\sigma)$ consists of a family of mutually disjoint smooth curves (joining $p_i$ and $q_j$ in some order).

 \end{itemize}
 \end{theorem}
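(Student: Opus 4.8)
The plan is to establish existence by the direct method and then derive the regularity statements by a combination of comparison arguments, elliptic regularity, and free boundary analysis. First, for existence, I would reduce to nonnegative competitors: since $\varphi\ge0$, replacing $\psi$ by $\psi^+=\max(\psi,0)$ lowers both $\mathcal A(\psi;\Om)$ and $\int_{\partial\Om}|\psi-\varphi|\,d\H^1$ while preserving the constraint $\psi=0$ on $E(\sigma)$, so it suffices to minimize over $\psi\ge0$. Given a minimizing sequence $(\sigma_k,\psi_k)$, the area bound gives $|D\psi_k|(\Om)\le\mathcal A(\psi_k;\Om)\le C$, and since $\int_{\partial\Om}|\psi_k|\,d\H^1\le\int_{\partial\Om}|\psi_k-\varphi|\,d\H^1+\int_{\partial\Om}\varphi\,d\H^1\le C$, the $BV$ trace–Poincaré inequality yields $\|\psi_k\|_{BV(\Om)}\le C$; hence $\psi_k\to\psi$ in $L^1(\Om)$ and weakly-$*$ in $BV$ along a subsequence. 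The regions $E(\sigma_k)$ are compact in $L^1$ within the relaxed class $\curves$, so $\mathbf 1_{E(\sigma_k)}\to\mathbf 1_{E(\sigma)}$ for some admissible $\sigma$ and the constraint passes to the limit. Since $\mathcal A(\cdot;\Om)+\int_{\partial\Om}|\cdot-\varphi|\,d\H^1$ is $L^1$-lower semicontinuous (being itself a relaxed functional) and $|E(\sigma_k)|\to|E(\sigma)|$, I obtain $\mathcal F_\varphi(\sigma,\psi)\le\liminf_k\mathcal F_\varphi(\sigma_k,\psi_k)$, so $(\sigma,\psi)$ is a minimizer.

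For the structure of $E(\sigma)$ in item $(1)$, the starting point is the decomposition, valid because $\psi=0$ on $E(\sigma)$,
\[
\mathcal F_\varphi(\sigma,\psi)=\int_{\Om\setminus E(\sigma)}\sqrt{1+|\grad\psi|^2}\,dx+\int_{\Om\cap\partial E(\sigma)}\psi^+\,d\H^1+\int_{\partial\Om}|\psi-\varphi|\,d\H^1,
\]
where $\psi^+$ is the inner trace on $\Om\cap\partial E(\sigma)$; indeed the flat area $|E(\sigma)|$ produced by $\mathcal A$ cancels the term $-|E(\sigma)|$. Convexity of each component follows from minimality: replacing a non-convex $E(\sigma_i)$ by its convex hull (which stays in $\overline\Om$ since $\Om$ is convex and contains the fixed arc $\partial^0_i\Om$) shrinks $\Om\setminus E(\sigma)$ and tautens the free curve, strictly decreasing the right-hand side unless $E(\sigma_i)$ was already convex. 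Once each piece is convex, $\partial E(\sigma)$ is automatically Lipschitz, equals the union of $\partial^0\Om$ and the curves $\sigma_i$, and the $\sigma_i$ are mutually disjoint because the convex pieces are. The dichotomy on $\partial^D_i\Om$ is local: if $E(\sigma)$ met $\partial^D_i\Om$ on a set of positive length one would have $\psi=0$ there against $\varphi>0$, and pushing $\sigma$ slightly inward saves boundary penalty of order $\int\varphi\,d\H^1$; when $\partial^D_i\Om$ is strictly convex this strictly lowers $\mathcal F_\varphi$, forcing $\partial^D_i\Om\cap\partial E(\sigma)=\emptyset$, while when $\partial^D_i\Om$ is a segment a reflection across its line shows the contact set is relatively open and closed, hence empty or all of $\partial^D_i\Om$.

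For item $(2)$, on $\Om\setminus E(\sigma)$ the minimizer $\psi$ minimizes the plain area integral among competitors sharing its boundary values, so it solves the minimal surface equation $\Div\big(\grad\psi/\sqrt{1+|\grad\psi|^2}\big)=0$ weakly; by elliptic regularity and analytic hypoellipticity $\psi$ is real analytic there. On $\partial^D_i\Om\setminus\partial E(\sigma)$, where $\varphi$ is the Lipschitz graph datum, the convexity (hence mean-convexity) of $\partial\Om$ permits the construction of upper and lower barriers forcing $\psi$ to attain $\varphi$ continuously and excluding boundary jumps; this is the classical attainment result for non-parametric minimal surfaces over convex domains, applied locally away from the contact set.

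Finally, for item $(3)$, when $\Om\cap\partial E(\sigma)\ne\emptyset$ I would select a minimizer carrying no interior wall: whenever $\psi^+>0$ on part of $\Om\cap\partial E(\sigma)$ the term $\int\psi^+\,d\H^1$ can be removed at no extra cost by either enlarging $E(\sigma)$ to absorb the wall or extending $\psi$ as a minimal graph slightly beyond $\sigma$, and a selection/limiting argument then yields a minimizer with $\psi$ continuous and null on $\Om\cap\partial E(\sigma)$. For such a minimizer $\sigma$ is the free boundary where a nonnegative minimal graph meets the plane at zero height, and free boundary regularity for the minimal surface equation (an obstacle/contact-set analysis bootstrapping the Lipschitz bound to $C^{1,\alpha}$ and then to $C^\infty$) shows that $\Om\cap\partial E(\sigma)$ consists of disjoint smooth curves. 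I expect the two genuinely hard points to be precisely the convexity comparison in $(1)$ and this free boundary regularity: both require controlling the coupling between the unknown contact set $E(\sigma)$ and the graph $\psi$, and making the perturbation arguments rigorous in the relaxed $BV$/Cartesian-current setting where vertical jumps are a priori admissible.
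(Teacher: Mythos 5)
Your existence argument collapses at the compactness step, and this is not a technicality but the central difficulty of the problem. Nothing in $\mathcal F_\varphi$ bounds the lengths of the curves $(\sigma_i)_k$ along a minimizing sequence: the functional only sees the graph area and $|E(\sigma_k)|$, so the curves may oscillate arbitrarily at zero cost, and there is no perimeter bound yielding $L^1$-compactness of $\mathbf 1_{E(\sigma_k)}$. Worse, even when the indicators converge, the limit set need not be of the form $E(\sigma)$ for an admissible $\sigma$, because uniform limits of injective Lipschitz curves need not be injective --- exactly the obstruction the paper flags at the start of Section \ref{sec:minimizers_in_W}. The paper's remedy is structural and has no counterpart in your proposal: existence is first proved in the restricted class $\mathcal W_{\rm conv}$, where convexity of each $E(\sigma_i)$ gives compactness through the monotone-angle parametrization (condition (P), Lemma \ref{lm:charact_convex_sets}) and Hausdorff limits of convex sets stay convex; then Theorem \ref{teo:convexification} shows $\inf_{\mathcal W}\mathcal F=\inf_{\mathcal W_{\rm conv}}\mathcal F$ by an explicit energy-reduction procedure.

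The two points you yourself flag as ``genuinely hard'' are precisely where your sketches would fail, and flagging them is not proving them. (i) Replacing $E(\sigma_i)$ by its convex hull does not merely ``tauten the free curve'': setting $\psi=0$ on the filled region creates a jump wall of $\psi$ along each chord $l$ of $\partial\,{\rm conv}(F_i)\setminus\partial F_i$, and the comparison only closes thanks to the trace inequality $\int_l|\psi^+|\,d\mathcal H^1\le\mathcal A(\psi;\overline U)-|V|$ (Lemma \ref{lem_prelimiary}, proved by slicing the subgraph current), which your decomposition never accounts for. (ii) For the dichotomy on $\partial_i^D\Omega$, ``pushing $\sigma$ inward saves boundary penalty of order $\int\varphi$'' is not decisive, since the replacement graph costs the same order; the true mechanism in the paper is that the vertical wall $\{(x,x_3):x\in\arc{ab},\ |x_3|\le\varphi(x)\}$ is a disc-type competitor for the doubled curve $\Gamma$ which is area-minimizing if and only if $\arc{ab}$ is straight, so the strict gain comes from comparison with a classical Plateau solution (Lemma \ref{lem:plateau_solution} and the strict inequality \eqref{eq:contradiction}), and your reflection argument in the segment case is likewise unsubstantiated. (iii) For item $(3)$ the paper performs no obstacle-problem bootstrap at all: smoothness of $\Omega\cap\partial E(\sigma)$ is obtained by replacing the graph near each free arc with a symmetric parametric Plateau solution whose intersection with $\{x_3=0\}$ is a smooth embedded curve by Rado's lemma applied to the harmonic coordinate $\Phi_3$ (Lemma \ref{lem:continuita-bordo-E}); this also explains why the statement only asserts existence of \emph{some} regular minimizer. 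Your proposed Alt--Caffarelli-type contact-set analysis, coupled to the reward term $-|E(\sigma)|$ in the relaxed $BV$ setting, is a different and unproven theorem, not a proof.
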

A comparison with classical solutions of the Plateau problem in parametric form is in order.
Denoting by $\gamma_i$ the graph of the map $\varphi$ on $\overline{\partial_i^D\Om}$, we consider also ${\rm sym}(\gamma_i)$, namely the graph of $-\varphi$ on $\overline{\partial_i^D\Om}$, which is symmetric to $\gamma_i$ with respect to the plane containing $\Omega$. Setting $\Gamma_i:=\gamma_i\cup {\rm sym}(\gamma_i)$, this turns out to be a simple Jordan curve in $\R^3$, for all $i=1,\dots,n$.
Hence we can consider the classical Plateau problem for the curves $\Gamma_i$. In the case $n=1$ it is intuitive that a disc-type minimal surface $S$ spanning $\Gamma=\Gamma_1$ will be symmetric with respect to the plane containing $\Omega$, and that $S^+:=S\cap \{x_3\geq0\}$ will be a minimal disc with partial free boundary on $\Omega$. It is interesting to compare such a minimal disc with the graph of $\psi$, where $(\sigma,\psi)\in \mathcal W$ is a minimizer as in Theorem \ref{teo_main_intro}. Actually, in this simple case $n=1$, it is not difficult to see that $S^+$ is Cartesian, and it is the graph of a function $\psi$ which is positive outside the convex region $E(\sigma)$ enclosed by $\sigma$ and $\partial^0\Om$, and further $(\sigma,\psi)$ is a minimizer as provided by Theorem \ref{teo_main_intro}. Also the converse is true: Any minimizer $(\sigma,\psi)$ that satisfies (1)-(3) of Theorem \ref{teo_main_intro} has as graph of $\psi$ a disc-type surface $S^+$ whose double $S=S^+\cup S^-$ is a classical solution to the Plateau problem for the curve $\Gamma$.

This result is rigorously stated in Theorem \ref{plateau:n1} of Section 
\ref{sec:the_case_n=1}. In Section \ref{sec:6.2} we instead analyse the case
$n=2$. In this case one might look for minimal surfaces obtained as union of two discs spanning $\Gamma_1$ and $\Gamma_2$, or else for a catenoid-type surface spanning $\Gamma=\Gamma_1\cup\Gamma_2$ together. 
Appealing to an existence result due to Meeks and Yau \cite{MY}, we are able to show the counterpart of Theorem \ref{plateau:n1}: Theorem \ref{thm:comparison-with-classical-plateau}, that essentially states that any minimizer  $(\sigma,\psi)\in\mathcal W$ of $\mathcal F_\varphi$ satisfying  properties (1)--(3) of Theorem \ref{teo_main_intro} is (the nonnegative half of) a Meeks-Yau solution, and vice-versa.
In order to prove Theorem  \ref{thm:comparison-with-classical-plateau} we will strongly use the convexity of the domain $\Omega$, which implies that the cylinder $\Omega\times \R$, which contains $\Gamma$ on its boundary, is convex, and so the results of Meeks and Yau are applicable.

Due to the highly nontrivial arguments used to prove this result, we restrict our analysis to the case $n=2$, since a generalization to the case $n>2$ probably requires heavy modifications. 
Indeed, some of the lemmas needed to prove Theorem \ref{thm:comparison-with-classical-plateau} employ crucially the fact that $\partial^0\Om$ consists of only two connected components.
For this reason we leave the case $n>2$ for future investigations.
\medskip

\begin{figure}
	\begin{center}
		\includegraphics[width=0.5\textwidth]{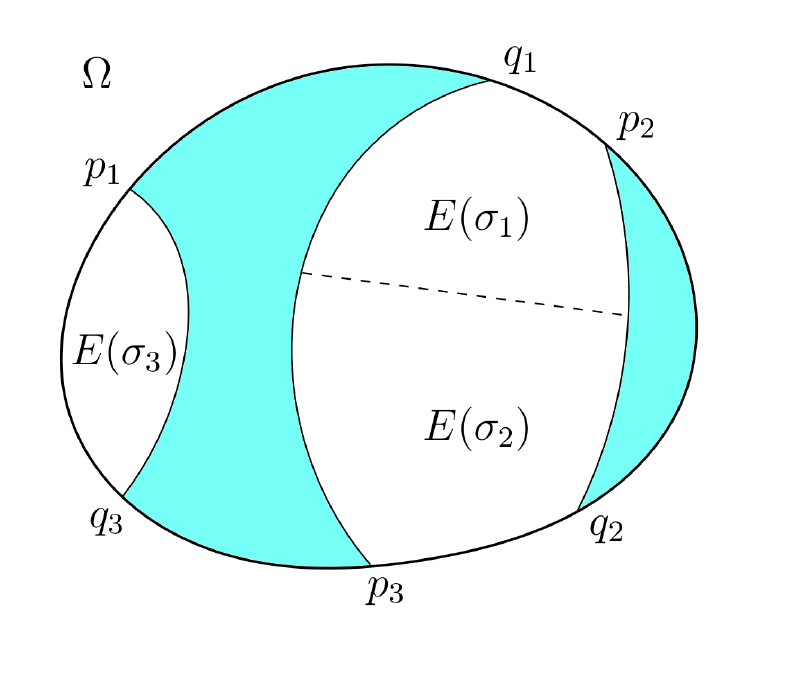}
		\caption{
A possible configuration of the sets $E(\sigma_i)$  (in the relaxed problem). Also in this example  $n=3$. The (clockwise oriented) arcs $\arc{p_1q_1}=\partial^D_1\Omega$, $\arc{p_2q_2}=\partial^D_2\Omega$, and $\arc{p_3q_3}=\partial^D_3\Omega$ are the set where $\varphi$ is prescribed and positive. In the set $\partial^0\Omega=\arc{q_1p_2}\cup\arc{q_2p_3}\cup\arc{q_3p_1}$ and on $E(\sigma)=E(\sigma_1)\cup E(\sigma_2)\cup E(\sigma_3)$ we prescribe $\psi=0$.  The  curves $\sigma_i$ joining $q_i$ to $p_{i+1}$ (with the corresponding set $E(\sigma_i)$) are indicated. On the dotted segment $\sigma_1$ and $\sigma_2$ overlaps with opposite orientations. The emphasized
region $\Omega\setminus E(\sigma)$ is the one where $\psi$ is not necessarily null.}\label{figura2}
	\end{center}
\end{figure}

Let us now come to the reasons for our study.
One motivation is the description 
of a cluster of soap films which are constrained to wet a 
given system of wires $\gamma$ emanating from a given free
boundary plane. The soap 
films are expected to  arrange in such a way to form a free boundary 
on the plane.  Therefore, 
the questions of existence of a minimal configuration 
and its regularity naturally arise.
A second motivation is related to the description of the 
singular part of the $L^1$-relaxation of the Cartesian 2-codimensional area functional
$$
\int_U \sqrt{1 + \vert \grad  u_1\vert^2 + \vert \grad u_2\vert^2 + 
({\rm det}\grad u)^2} ~dx, \qquad u = (u_1, u_2) \in C^1(U; \R^2),
$$
computed on nonsmooth maps. 
The $L^1$-relaxed area functional \cite{AcDa:94,GiMoSu:98_2}, 
denoted by $\mathcal A(\cdot;U)$, 
is mostly unknown, up to 
a few exceptions, see \cite{AcDa:94,BePa:10,BeElPaSc:19,Scala:19,vortex}. One of the remarkable exceptions is the case
 of the vortex map 
$u_V:B_l(0) \setminus \{0\}\subset\R^2\rightarrow\R^2$, defined 
by $u_V(x)=\frac{x}{|x|}$: 
in this case it can be proved that 
\begin{align}\label{value_vortex}
\mathcal A(u_V;B_l(0))
=\int_{B_l(0)}\sqrt{1+|\nabla u_V|^2}dx+\inf \mathcal F_\varphi(\sigma,\psi),
\end{align}
where the infimum is taken over all pairs $(\sigma,\psi)\in \Sigma\times BV(R_{2l})$ with $\psi=0$ a.e. on $E(\sigma)$. Here the setting 
is the following: $n=1$, $R_{2l}=(0,2l)\times(-1,1)$, $\partial^0R_{2l}=(0,2l)\times \{1\}$, $\partial^DR_{2l}=(\{0\}\times (-1,1))\cup ([0,2l]\times \{-1\})\cup (\{2l\}\times (-1,1))$, $p=(0,1)$, $q=(2l,1)$, and $\sigma$ is a unique curve in $\overline R_{2l}$ joining $p$ to $q$. The Dirichlet datum $\varphi :\partial^DR_{2l}\rightarrow [0,\infty)$ is the function $\varphi(w_1,w_2)=\sqrt{1-w_2^2}$. This setting is similar to the catenoid case, with the difference that the Dirichlet boundary is here extended to include the basis $(0,2l)\times \{-1\}$ and the free curve $\sigma$ is just one simple curve (see Figure \ref{figura1mezzo}).

\begin{figure}
	\begin{center}
		\includegraphics[width=0.6\textwidth]{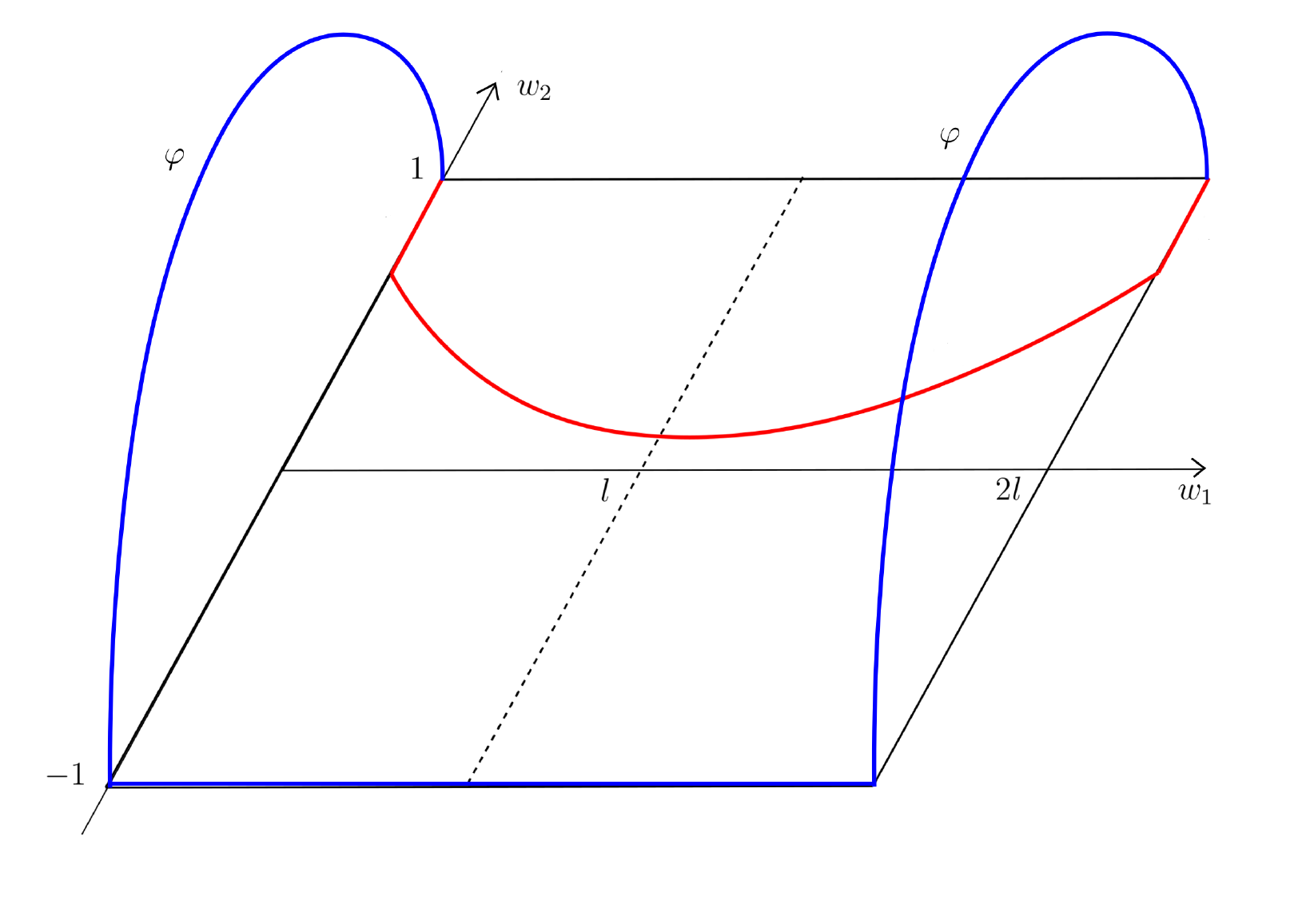}
		\caption{The domain $R_{2l}$ of  the vortex map. The graph of $\varphi$ on $\partial^DR_{2l}$ is emphasized (in particular $\varphi=0$ on the lower horizontal side), together with an admissible curve $\sigma$, which in this specific case partially overlaps the Dirichlet boundary. In this example $n=1$.}\label{figura1mezzo}
	\end{center}
\end{figure}  

In order to construct a recovery sequence for the
relaxed area \eqref{value_vortex} of the vortex map, it is essential to analyse the existence 
and regularity of minimizers of $\mathcal F_\varphi$. In particular, it is necessary to show that there is at least one sufficiently regular\footnote{Conditions provided by Theorem \ref{teo_main_intro} are sufficient.} minimizer $(\sigma,\psi)$. 
The shape of the curve $\sigma$ and the graph of $\psi$ 
are related to the vertical part of a Cartesian current in $B_l(0)\times \R^2$ which arises as limit of (the graphs of) a recovery sequence $(v_k)\subset C^1(B_l(0);\R^2)$ for $\mathcal A(u_V;B_l(0))$. 

According to what happens for the catenoid, also in this case we have a dichotomy for the behaviour of minimizers $(\sigma,\psi)$. When $l$ is small, the solution $(\sigma,\psi)$ consists of a curve $\sigma$ joining $p$ and $q$ whose interior is contained in $R_{2l}$, and its shape is so that $E(\sigma)$ is convex; at the same time the graph of $\psi$ on $R_{2l}\setminus E(\sigma)$ is a sort of half-catenoid, so that if we double it considering also its symmetric with respect to the plane containing $R_{2l}$, it becomes a sort of catenoid spanning two radius one circles, and constrained to contain the segment $(0,2l)\times \{-1\}$. When instead $l$ is larger than a certain threshold, then the solution reduces to two circles spanning the two radius one and parallel circles.

The structure of the paper is as follows.
In Section \ref{sec:preliminaries} we introduce the 
setting of the problem in detail. In order to prove existence of minimizers of $\mathcal F_\varphi$ we first restrict ourselves to prove the result in a smaller
class $\mathcal W_{\textrm{conv}}\subset\mathcal W$ of admissible pairs $(\sigma,\psi)$,
where compactness is easier and allows to make use of the direct method.
 Roughly speaking, the class $\mathcal W_{\textrm{conv}}$
accounts only for specific geometries of the free boundary $\sigma$,
 namely, it considers configurations for which each set $E(\sigma_i)$ is convex. In Section 
\ref{sec:existence_of_a_minimizer_for_the_relaxed_formulation}  we prove the existence of minimizers of $\mathcal F_\varphi$ in $\mathcal W_{\textrm{conv}}$. Next, in Section \ref{sec:minimizers_in_W}, we show the existence of minimizers in the wider class $\mathcal W$ where, essentially, $\sigma$ is not constrained to the previous geometric features; this result is contained in Corollary \ref{cor:mininmathcalW}. To show this we consider a minimizing sequence in $\mathcal W$ and we modify it, by a cut and paste procedure, in order to construct a minimizing sequence in $\mathcal W_{\textrm{conv}}$. In Section 
\ref{sec:existence_of_a_smooth_minimizer} we study the regularity properties of minimizers. Specifically, we state and prove Theorem \ref{thm:regularity}, which rephrases in a more precise way the results contained in Theorem \ref{teo_main_intro}. Theorem \ref{teo_main_intro} 
follows from Theorem \ref{thm:existence}, Corollary \ref{cor:mininmathcalW}, and Theorem \ref{thm:regularity}.
Eventually in Section \ref{sec:comparison_with_the_parametric_Plateau_problem:the_case_n=1,2}
 we compare the solutions we found with the classical minimal surfaces spanning $\Gamma$. Here, as anticipated, we restrict our analysis to the case $n=1,2$, 
the case $n=2$ essentially giving rise to either a catenoid-type minimal surface, or two disc-type surfaces spanning $\Gamma_1$ and $\Gamma_2$. The main theorems here are Theorems \ref{plateau:n1} and \ref{thm:comparison-with-classical-plateau}.
The proof of the former, for the case $n=1$, is quite simple, whereas Theorem \ref{thm:comparison-with-classical-plateau}, for the case $n=2$, requires a series of lemmas. In particular, if $S$ is a Meeks-Yau catenoid-type minimal surface, at one step, we need to employ a Steiner symmetrization of the $3$-dimensional finite perimeter set in $\Omega\times\R$ enclosed by $S$. In turn, using standard results on the condition of equality for the perimeters of a set and its symmetrization, we are able to show that the starting surface $S$ were already symmetric with respect to the plane containing $\Omega$, and already Cartesian, and the conclusion of the proof of Theorem \ref{thm:comparison-with-classical-plateau} is achieved.

%%%%%%%%%%%%%%%%%%%%%%%%%%%%%%%%%%%%%%%%%%%%%%%%%%%%%%%%%%%%%%%%%%%%%%%%
\section{Preliminaries}\label{sec:preliminaries}
%%%%%%%%%%%%%%%%%%%%%%%%%%%%%%%%%%%%%%%%%%%%%%%%%%%%%%%%%%%%%%%%%%%%%%%%

	\subsection{Area of the graph of a $BV$ function}
	\label{notation}
	Let $U\subset \R^2$ be a bounded open set. For any $\psi\in BV(U)$ we 
	denote by $D\psi$ its distributional gradient, so that
	\begin{equation*}
	D\psi=\nabla \psi\mathcal{L}^2+D^s\psi,
	\end{equation*}
	where $\nabla \psi$ is the approximate gradient of $\psi$ and $D^s\psi$ denotes the singular part of $D\psi$. We recall that the
	$L^1$-relaxed area functional reads as \cite{Giusti:84}
	\begin{align}\label{def:relaxed_area}
	\mathcal A(\psi;U):=\int_U\sqrt{1+|\nabla\psi|^2}dx+|D^s\psi|(U).
	\end{align}
	In what follows we denote by $\partial^*A$ the reduced boundary 
	of  a set of finite perimeter $A\subset\R^3$ (see \cite{AFP}).
	For any $\psi\in BV(U)$ we denote by $R_\psi\subset U$ the set of  regular points of $\psi$, namely the set of points $x\in U$ which are Lebesgue points for $\psi$, $\psi(x)$ 
coincides with the Lebesgue value of $\psi$ at $x$ and $\psi$ is approximately differentiable at $x$.
	We define the subgraph $SG_\psi$ of $\psi$ as
	\begin{equation*}
		SG_\psi:=\{(x,y)\in R_\psi\times\R\colon y<\psi(x)\}.
	\end{equation*}
	This turns out to be a finite perimeter set in $U\times\R$. Its reduced boundary 
	in $U\times \R$ is the generalised graph $\mathcal G_\psi:=\{(x,\psi(x))\colon x\in R_\psi\}$ of $\psi$, 
	which turns out to be a $2$-rectifiable set. If 
	$\jump{SG_\psi}\in\mathcal D_3(\R^3)$ 
	denotes the integral current given by integration over $SG_\psi$ and
	$\partial \jump{SG_\psi}\in\mathcal D_2(\R^3)$ is its boundary in the sense of currents, then 
	$$
	\jump{\mathcal{G}_\psi}=\partial\jump{SG_\psi}\res(U\times\R),
	$$
	with $\jump{\mathcal{G}_\psi}$ denoting the integer multiplicity $2$-current given by integration over $\mathcal{G}_\psi$ (suitably oriented;  see \cite{GiMoSu:98} for more details).
	\subsection{Hausdorff distance}
	
 If $A,B\subset\R^2$ are nonempty, the symbol $d_H(A,B)$ 
 stands for the Hausdorff distance between $A$ and $B$, that is
 \begin{equation*}
 d_H(A,B):=\max\left\{\sup_{a\in A}
 \di_B(a)\,,\,\sup_{b\in B}\di_A(b)\right\},
 \end{equation*}
 where $\di_F(\cdot)$ is the distance from the nonempty set $F\subseteq \R^2$. 
 If we restrict $d_H$ to the class of closed sets, 
 then $d_H$ defines a metric. Moreover:
 \begin{enumerate}[label=(H\arabic*)]
 	\item \label{H1}
 	$\di_A(x)\le \di_B(x)+d_H(A,B)$
 	for every $x\in\R^2$;
 	\item \label{H2} $(\compacts,d_H)$ with
 	$\compacts:=\{K\subset\R^2\, \text{nonempty and compact}\}$ is a complete metric space; 
 	\item \label{H3}
 	If $A, B\subset\R^2$ are bounded, closed, nonempty and convex sets, then $d_H(A,B)=d_H(\partial A,\partial B)$;
 	\item \label{H4}
 	If $A\in \compacts$ is convex, 
 	then there exists a sequence $(A_n)_{n} \subset \compacts$ 
 	of  convex sets with boundary of class $C^\infty$ such that
 	$d_H(A_n,A)\to0$ as $n\to\infty$;
 	\item \label{H5} Let $(A_n)_{n}$ 
be a sequence of closed convex sets in $\R^2$, $A\subset\R^2$ and $d_H(A_n,A)\to0$ as $n\to +\infty$. Then $A$ is convex as well; 
 	\item\label{H6} Let $(A_n)_{n}$ and  $A$ be compact convex subsets of $\R^2$ such that $d_H(A_n,A)\to0$ and let $x\in {\rm int}(A)$; then $x\in A_n$ definitely in $n$;
 	\item \label{H7} Let $A$ and $B$ be closed subsets of $\R^2$ with $d_H(A,B)=\eps$. Then $A\subset B^+_\eps$ and $B\subset A^+_\eps$ 
 	where, for all $E\subset \R^2$, we have set $E^+_\eps:=\{x\in\R^2\colon \di_E(x)\le\eps\}$.
 \end{enumerate}
 \begin{remark}
 	Property \ref{H1} is straightforward, while \ref{H2} is well-known. Also property \ref{H3} is easily obtained (see, e.g. \cite{W}). 
 	Concerning property \ref{H4} we refer to, e.g.,  \cite[Corollary 2]{AF}. 
 	To see \ref{H5}, from  \ref{H1} we have that $\di_{A_n} \to \di_A$ pointwise, 
 	and  therefore since $\di_{A_n}$ is convex, also $\di_A$ is convex, which implies $A$ convex\footnote{Since $A$ is closed, it 
coincides with the sublevel $\{x:d(x,A)\leq 0\}$, which is convex.}.
 	Let us now prove \ref{H6} by contradiction; 
 	assume that there exists a subsequence $(n_k)$ such that $\di_{A_{n_k}}(x)>0$ 
 	for all $k\in \mathbb N$; then $x\in  \R^2\setminus A_{n_k}$, 
  $\di_{A_{n_k}}(x)=\di_{\partial A_{n_k}}(x) $, and using \ref{H1} twice,
 	\begin{align*} 
 	\di_{\partial A}(x)&\leq \di_{\partial A_{n_k}}(x)+
 	d_H(\partial A_{n_k},\partial A)=\di_{A_{n_k}}(x)+d_H( A_{n_k},A)\\
 	&\leq \di_A(x)+ 2d_H(A, A_{n_k})=2d_H(A, A_{n_k})\rightarrow0,
 	\end{align*}
 	the first equality following from \ref{H3}. 
 	This implies $x\in \partial A$, a contradiction.
 \end{remark}

 We begin with the following standard result that will be useful later:
 
 \begin{lemma}\label{lm:charact_convex_sets}
 	Let $K\subset\R^2$ be a convex compact set with nonempty interior. 
 	Then there exists a $1$-periodic curve $\widehat \sigma\in {\rm{Lip}}(\mathbb R;\R^2)$, injective on $[0,1)$, such that $\widehat \sigma([0,1])=\partial K$ and
 	\begin{equation*}
 	\widehat\sigma(t)=\widehat \sigma(0)+\len(\widehat\sigma)\int_0^t\widehat \gamma(s)\,ds,\quad \widehat\gamma(t)=(\cos(\widehat\theta(t))\,,\,\sin(\widehat\theta(t)))\quad \text{for all }\, t\in[0,1],
 	\end{equation*}
 	with $\widehat \theta$ a non-decreasing function satisfying $\widehat \theta(t+1)-\widehat \theta(t)=2\pi$ for all $t\in\R$. 
 \end{lemma}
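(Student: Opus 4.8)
The plan is to realize $\partial K$ as an arc-length curve and then to identify the unit tangent direction with a monotone angle function, the monotonicity being exactly the analytic manifestation of convexity.

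First I would record that, since $K$ is a compact convex set with nonempty interior, its boundary $\partial K$ is a rectifiable Jordan curve of finite length $L := \H^1(\partial K) = \len(\partial K)$. Parametrizing it by arc length and orienting it counterclockwise yields an $L$-periodic map $\beta \in \Lip(\R;\R^2)$, injective on $[0,L)$, with $\beta([0,L]) = \partial K$ and $|\beta'| = 1$ a.e. Rescaling time, $\widehat\sigma(t) := \beta(Lt)$ is then $1$-periodic, injective on $[0,1)$, satisfies $\widehat\sigma([0,1]) = \partial K$ and $|\widehat\sigma'(t)| = L$ a.e.; since $\widehat\sigma$ is Lipschitz it is the integral of its derivative, so setting $\len(\widehat\sigma) = L$ and $\widehat\gamma := \widehat\sigma'/L$ (a unit vector a.e.) we immediately obtain the integral representation $\widehat\sigma(t) = \widehat\sigma(0) + \len(\widehat\sigma)\int_0^t\widehat\gamma$. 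It then remains only to produce a non-decreasing $\widehat\theta$, defined for every $t$ and satisfying $\widehat\theta(t+1)-\widehat\theta(t)=2\pi$, with $\widehat\gamma(t) = (\cos\widehat\theta(t),\sin\widehat\theta(t))$ for a.e. $t$ (the values of $\widehat\gamma$ on the negligible set of non-differentiability points being irrelevant to the integral).

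The core of the proof is therefore the monotonicity of the tangent angle. I would establish it directly from convexity: at every point $\widehat\sigma(t_0)$ where $\widehat\sigma$ is differentiable the line through $\widehat\sigma(t_0)$ in the direction $\widehat\gamma(t_0)$ is a supporting line of $K$, so in coordinates where this line is horizontal and $K$ lies below, a neighbouring arc of $\partial K$ is the graph of a concave function, whose slope---and hence the tangent angle---is monotone. Patching these local monotonicity statements along the whole curve, and using that the total turning of a convex Jordan curve is exactly $2\pi$, yields a monotone argument with increment $2\pi$ per period; choosing, say, the left-continuous representative defines $\widehat\theta$ at every $t$. An alternative, perhaps cleaner, route is to invoke the smooth approximation property \ref{H4}: for $C^\infty$ strictly convex boundaries the Gauss map gives the claim with a smooth strictly increasing $\widehat\theta_n$, and one then passes to the limit---the lengths converge because perimeter is continuous under Hausdorff convergence of convex sets, the rescaled arc-length parametrizations converge uniformly by Arzel\`a--Ascoli, and the uniformly bounded monotone angle functions $\widehat\theta_n$ admit a pointwise limit by Helly's selection theorem.

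The main obstacle I anticipate is precisely this passage from convexity to monotonicity of $\widehat\theta$ at the low regularity of a merely Lipschitz boundary: one must handle corners (jumps of $\widehat\gamma$, hence of $\widehat\theta$) and flat pieces (intervals of constancy of $\widehat\theta$) without ever assuming $C^1$ regularity, and make sure the global increment is exactly $2\pi$ rather than a larger multiple---equivalently, that the traversal never "turns back". Both the local-graph argument and the approximation argument resolve this, but the bookkeeping at the jump points (defining $\widehat\theta$ everywhere while keeping $\widehat\gamma(t)=(\cos\widehat\theta(t),\sin\widehat\theta(t))$ a.e.) is the delicate point; everything else is a routine consequence of the arc-length parametrization.
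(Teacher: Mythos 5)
Your proposal is correct, and its primary route is genuinely different from the paper's; in fact your ``alternative, perhaps cleaner, route'' \emph{is} the paper's proof. The paper argues exactly as you sketch there: it invokes property (H4) to approximate $K$ in Hausdorff distance by smooth convex bodies $K_n$, takes the smooth case of the statement for granted (constant-speed parametrizations $\widehat\sigma_n$ with smooth non-decreasing angle functions $\widehat\theta_n$ of period increment $2\pi$), bounds the lengths by sandwiching all the $K_n$ between two fixed balls, applies Helly's selection principle to the $\widehat\theta_n$, passes to the limit in the integral representation to define $\widehat\sigma$, identifies $\widehat\sigma([0,1])=\partial K$ via property (H3), and deduces injectivity from the representation together with convexity and the nonempty interior. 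Your primary, intrinsic route---arc-length parametrization of the rectifiable Jordan curve $\partial K$, then monotonicity of the tangent angle proved directly from the supporting-line property---is the classical convex-curves argument and also works, but the steps you dismiss as ``bookkeeping'' are where all the content sits: one must show (i) that at a differentiability point the supporting line is unique and $\partial K$ is locally a concave graph over it (a limit-of-supporting-lines/normals argument; note this local graph property genuinely fails at corner points, so differentiability is essential), (ii) that the non-differentiability points are at most countable with positive exterior angles, so that the local monotone liftings glue to a single global monotone $\widehat\theta$, and (iii) that the total turning is exactly $2\pi$ and not a larger multiple, e.g.\ because a bounded convex set cannot have two distinct parallel supporting lines with the same outer normal. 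What the direct route buys is self-containedness and a geometric explanation of why convexity forces monotonicity; what the paper's route buys is precisely the avoidance of (i)--(iii) at Lipschitz regularity, inheriting them from the smooth case via compactness---which is presumably why the paper chose it. One small correction to your compactness variant: (H4) produces smooth but not necessarily strictly convex approximants, so the $\widehat\theta_n$ should only be taken non-decreasing rather than strictly increasing; this changes nothing in the Helly argument.
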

 Notice that $\widehat \sigma$ is differentiable a.e. in $\mathbb R$ and $\widehat \sigma'(t)=\len(\widehat\sigma)\widehat \gamma(t)$, so that  the speed modulus of the curve $|\hat\sigma'(t)|=\len(\widehat\sigma)$ is constant and coincides with  the length of the curve  $\len(\widehat\sigma)=\int_0^1|\widehat \sigma'(s)|ds$.
 
 \begin{proof}
 	We start by approximating $K$ by convex sets  with $C^\infty$ boundary. By \ref{H4} for all $n\in\mathbb{N}$ there is   a convex compact set $K_n\subset\R^2$ with boundary of class $C^\infty$ and such that 
 	$d_H(K_n,K)\to0$ as $n\to\infty$.
 	For any $n\in\mathbb{N}$ we let $\widehat\sigma_n\in C^\infty(\mathbb R;\R^2)$ be a $1$-periodic function injectively parametrizing $\partial K_n$ on $[0,1)$; therefore $\widehat\sigma_n([0,1])=\partial K_n$, and 
 	\begin{equation*}
 	\widehat \sigma_n(t)=\widehat \sigma_n(0)+\len(\widehat\sigma_n)\int_0^t\widehat\gamma_n(s)\,ds,\quad \widehat\gamma_n(t)=(\cos(\widehat\theta_n(t))\,,\,\sin(\widehat\theta_n(t)))
 	\quad \forall t\in [0,1],
 	\end{equation*}
 	where $\widehat \theta_n\in C^\infty(\R)$ is a non-decreasing function with $\widehat\theta_n(t+1)-\widehat\theta_n(t)=2\pi$, for all $t\in \mathbb R$.
 	In view of (H2), by construction we can find $x_0\in K$, $R>r>0$ such that $B_r(x_0)\subset K_n\subset B_R(x_0)$ for all $n\in \mathbb N$, and  therefore 
 	$\mathcal{H}^1(\partial B_r(x_0))\le \len(\widehat\sigma_n)=
 	\H^1(\mathcal \partial K_n)\le\H^1(\partial B_R(x_0))$; thus, up to subsequence, $\len(\widehat\sigma_n)\to \widehat m\in\R^+$ as $n\to\infty$. Moreover, up to subsequence, we might assume $\widehat\sigma_n(0)\rightarrow p\in \partial K$. 
 	On the other hand observing that
 	\begin{equation*}
 	\int_t^{t+1}|\widehat\theta'_n(s)|ds=\int_t^{t+1}\widehat\theta_n'(s)ds=2\pi, \text{ for all }t\in \mathbb R,
 	\end{equation*}
 	we have that, again up to subsequence, $\widehat\theta_n\weakstar\widehat\theta\in BV_{{\rm loc}}(\R)$ and pointwise (by Helly selection principle), with $\widehat\theta$  a non-decreasing  function with $\widehat \theta(t+1)-\widehat \theta(t)=2\pi$ for all $t\in \mathbb R$. We also have $\widehat \gamma_n\weakstar\widehat \gamma$ in $BV_{{\rm loc}}(\mathbb R;\mathbb R^2)$ where $\widehat \gamma(t)=(\cos(\widehat \theta(t))\,,\,\sin(\widehat \theta(t)))$.\\
 	We let $\widehat \sigma\in {\rm{Lip}}(\R;\R^2)$ be the ($1$-periodic) simple closed curve defined as
 	\begin{equation}\label{sigma_boundary}
 	\widehat \sigma(t):=p+\widehat m\int_0^t\widehat \gamma(s)\,ds\quad \forall t\in\mathbb R.
 	\end{equation}
 	Note that  $\widehat m=\len(\widehat\sigma)$.
 	Then clearly $\widehat \sigma_n\to \widehat \sigma$ in $W^{1,1}([0,1];\R^2)$, since
 	\begin{align*}
 	\|\widehat \sigma'_n-\widehat \sigma'\|_{L^1([0,1];\R^2)}&= \int_0^1|\len(\widehat\sigma_n) \widehat \gamma_n(t)-\len(\widehat\sigma)\widehat \gamma(t)|dt\\
 	&\le |\len(\widehat\sigma_n)-\len(\widehat\sigma)|+\len
 	(\widehat\sigma)\int_0^1|\widehat \gamma_n(t)-\widehat \gamma(t)|dt\to0.
 	\end{align*}
 	By the continuous embedding $W^{1,1}([0,1];\R^2)\subset C^0([0,1];\R^2)$ (and by $1$-periodicity, on $\R$) we also get $\widehat \sigma_n\to\widehat \sigma$ uniformly on $[0,1]$.
 	This, together with property \ref{H3} gives
 	\begin{equation*}
 	d_H(\partial K,\widehat \sigma([0,1]))\le d_H(\partial K,\partial K_n)+d_H(\widehat \sigma_n([0,1]),\widehat \sigma([0,1]))\to0,
 	\end{equation*}
 	which in turn implies $\widehat\sigma([0,1])=\partial K$. The injectivity of $\widehat \sigma$ on $[0,1)$ follows from expression \eqref{sigma_boundary}, the fact that $\widehat m>0$, and the fact that $K$ is convex with nonempty interior. 
 	
 \end{proof}

 \begin{cor}\label{lm:charact_convex_sets2}
 	Let $K\subset\R^2$ be a convex compact set with nonempty interior. Let $q,p$ be two distinct points on $\partial K$, and let  $\arc{pq}$ be the relatively open, connected  curve contained in $\partial K$ with endpoints $q$ and $p$ clockwise ordered.
 	Then there exists an injective curve $\sigma\in {\rm{Lip}}([0,1];\R^2)$ such that $\sigma((0,1))=\arc{pq}$, $\sigma(0)=q$, $\sigma(1)=p$, and
 	\begin{equation*}
 	\sigma(t)=q+\len(\sigma)\int_0^t\gamma(s)\,ds,\quad \gamma(t)=(\cos(\theta(t))\,,\,\sin(\theta(t))\quad \text{for all }\, t\in[0,1],
 	\end{equation*}
 	with $\theta$ a non-decreasing function satisfying $\theta(1)-\theta(0)\le2\pi$.
 \end{cor}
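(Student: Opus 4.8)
The plan is to deduce the statement directly from Lemma~\ref{lm:charact_convex_sets}, by restricting the global boundary parametrization to the sub-arc determined by $q$ and $p$ and reparametrizing affinely onto $[0,1]$.

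First I would invoke Lemma~\ref{lm:charact_convex_sets} to obtain the $1$-periodic constant-speed parametrization $\widehat\sigma\in\Lip(\R;\R^2)$ of $\partial K$, injective on $[0,1)$, with $\widehat\sigma'=\len(\widehat\sigma)\,\widehat\gamma$, $\widehat\gamma=(\cos\widehat\theta,\sin\widehat\theta)$ and $\widehat\theta$ non-decreasing satisfying $\widehat\theta(t+1)-\widehat\theta(t)=2\pi$. Since $\widehat\sigma$ is injective on $[0,1)$ with $\widehat\sigma([0,1))=\partial K$, there is a unique parameter $a$ with $\widehat\sigma(a)=q$; using the $1$-periodicity I translate so that $a$ is the starting point, and I let $b>a$ be the first parameter with $\widehat\sigma(b)=p$. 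Because $p\neq q$ and $\widehat\sigma$ is injective modulo $1$, this $b$ satisfies $a<b<a+1$, and the orientation of $\widehat\sigma$ fixed by the lemma (consistent with the clockwise ordering of $q$ and $p$) guarantees that the open arc traversed is exactly $\widehat\sigma((a,b))=\arc{pq}$, with $\widehat\sigma(a)=q$ and $\widehat\sigma(b)=p$.

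I would then set $\sigma(t):=\widehat\sigma\big(a+(b-a)t\big)$ for $t\in[0,1]$. This map is Lipschitz, and it is injective on $[0,1]$: on $[0,1)$ it is the restriction of a map injective modulo $1$ to an interval of length $b-a<1$, while $\sigma(0)=q\neq p=\sigma(1)$. By construction $\sigma((0,1))=\arc{pq}$, $\sigma(0)=q$, and $\sigma(1)=p$. Differentiating yields $\sigma'(t)=(b-a)\len(\widehat\sigma)\,\widehat\gamma\big(a+(b-a)t\big)$, so the speed modulus equals the constant $(b-a)\len(\widehat\sigma)$; hence $\len(\sigma)=(b-a)\len(\widehat\sigma)$ and the required representation holds with $\gamma(t)=\widehat\gamma\big(a+(b-a)t\big)=(\cos\theta(t),\sin\theta(t))$ and $\theta(t):=\widehat\theta\big(a+(b-a)t\big)$. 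The function $\theta$ is non-decreasing because $\widehat\theta$ is and $b-a>0$.

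Finally, for the turning bound I would use the monotonicity of $\widehat\theta$ together with $b\le a+1$, which gives $\theta(1)-\theta(0)=\widehat\theta(b)-\widehat\theta(a)\le\widehat\theta(a+1)-\widehat\theta(a)=2\pi$. The only genuinely delicate point is the bookkeeping in the second step, namely selecting the parameter representatives $a<b$ so that the open arc $\widehat\sigma((a,b))$ is the prescribed arc $\arc{pq}$ rather than its complement, and so that $b-a<1$; once the correct interval is identified, everything else is a routine affine change of variables.
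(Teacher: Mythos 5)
Your proposal is correct and follows essentially the same route as the paper: invoke Lemma~\ref{lm:charact_convex_sets} to get the constant-speed $1$-periodic parametrization $\widehat\sigma$ of $\partial K$, locate the parameters of $q$ and $p$, and reparametrize the corresponding sub-interval affinely onto $[0,1]$, inheriting the angle function $\theta$ from $\widehat\theta$ and the bound $\theta(1)-\theta(0)\le 2\pi$ from $\widehat\theta(t+1)-\widehat\theta(t)=2\pi$. In fact your use of periodicity to translate the starting parameter to $q$ handles the case in which the arc $\arc{pq}$ wraps past $\widehat\sigma(0)$ more explicitly than the paper's proof, which simply picks $t_1<t_2$ in $[0,1]$ and leaves this bookkeeping to the reader.
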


 \begin{proof}
 	Lemma \ref{lm:charact_convex_sets} provides $\widehat \sigma\in \Lip ([0,1];\R^2)$ parametrizing $\partial K$. Then there are two values $t_1,t_2\in [0,1]$, $t_1<t_2$, with $q=\sigma(t_1)$ and $p=\sigma(t_2)$.  Then the existence of $\sigma$ follows by reparametrization of the interval $[t_1,t_2]$, and all the properties follows from the corresponding properties of $\widehat \sigma$.
 \end{proof}
 \subsection{Setting of the problem}\label{subsec:setting_of_the_problem}
 	We fix $\Omega\subset\R^2$ to be an open bounded convex set (strict
convexity is not required) which will be our reference domain.  Given two points $p, q \in \partial \Omega$
in clockwise order, $\arc{pq}$ stands for the relatively
open arc on $\partial \Om$ joining $p$ and $q$. 
 
Let $n\in\mathbb N$, $n \geq 1$, and let $\{p_i\}_{i=1}^n$ be 
distinct points on $\partial\Omega$ chosen in clockwise order; 
we set $p_{n+1} := p_1$. 
For all $i=1,\dots,n$ let $q_i$ be a point in
$\arc{p_ip_{i+1}}
\subset \dOm$. We set
	\begin{align}
	\partial_i^D\Omega:=\arc{p_iq_i},\qquad\nullb_i\Omega:=\arc{q_ip_{i+1}}\qquad \text{ for }i=1,\dots,n,
	\end{align}
	and 
	\begin{equation}
	\partial^D\Om:=\bigcup_{i=1}^n\partial_i^D\Om,\qquad 	\nullb\Om:=\bigcup_{i=1}^n\nullb_i\Om.
	\end{equation}
Since $\partial_i^D\Om$ and $\nullb_i\Omega$
are relatively open in $\partial \Omega$,
so are $\partial^D\Omega$ and $\nullb\Omega$. It follows that $\partial
\Om$ is the disjoint union
$$\partial\Omega=\cup_{i=1}^n\{p_i,q_i\}\cup \partial^D\Om \cup
\nullb\Om.
$$
	We fix a continuous function 
$\varphi:\partial \Omega\rightarrow [0,+\infty)$ such that 
$$
\varphi = 0  \ {\rm on }~ \nullb\Omega
\qquad {\rm and} \qquad
\varphi >0 \ {\rm on }~ 
\partial^D\Omega,
$$
see Figures \ref{figura1}, \ref{figura3}. We will make a further regularity assumption on $\varphi$: we require that the graph $\mathcal G_{\varphi\res\partial^D_i\Om}=\{(x,\varphi(x)):x\in \partial^D_i\Om\}$ of $\varphi$ on $\partial^D_i\Om$ is a Lipschitz curve in $\R^3$, for all $i=1,\dots,n$.
\begin{Remark}\rm
The hypothesis $\varphi>0$ on $\partial^D\Om$ excludes from our analysis 
the example in Figure \ref{figura1mezzo} of
the introduction.
We will further comment on this later on (see Section \ref{subsec_example}); the presence of pieces of $\partial^D\Om$ where $\varphi=0$ will bring to some additional technical difficulties that we prefer to avoid here. However, the setting in Figure \ref{figura1mezzo} can be 
easily achieved by an approximation argument. Namely, one considers a suitable regularization $\varphi_\eps$ of $\varphi$ on $\partial^D\Om$ such that $\varphi_\eps>0$, and then letting $\eps\rightarrow0$ one obtains a solution to the problem with Dirichlet datum $\varphi$.  
\end{Remark}
		We will analyse the functional $\mathcal F = \mathcal F_\varphi$ defined in \eqref{def:relaxed_functional_intro}, namely
	\begin{align}\label{def:relaxed_functional}
	\mathcal F(\sigma,\psi):=\mathcal A(\psi;\Omega)-|E(\sigma)|+\int_{\partial\Omega}|\psi-\varphi|d\mathcal H^1,
	\end{align}
where the pair $(\sigma,\psi)$ belongs to the admissible class 
$\largercomp$, defined as follows:
\begin{equation}\label{def:admissible_class_bis}
\begin{split}
&
\largercomp
:=\Big\{(\sigma,\psi)\in \curves \times BV(\Omega): \psi=0\text{ a.e. in }E(\sigma)\Big\},\\
&\curves:=\Big\{\sigma = (\sigma_1,\dots,\sigma_n)
\in ({\rm Lip}([0,1]; \overline\Om))^n \text{ satisfies (i')-(ii')}\Big\},
\end{split}
\end{equation}
where
\begin{enumerate}[label=(\roman*')]
	\item{\label{i'}} $\sigma=(\sigma_1,\dots,\sigma_n)$ 
with $\sigma_i$ injective, $\sigma_i(0)=q_i$ and $\sigma_i(1)=p_{i+1}$, for  all $i=1,\dots,n$;
	\item{\label{ii'}} For $i=1,\dots,n$,
denoting by 
$E(\sigma_i) \subset \overline \Om$ the closed region enclosed between 
$\nullb_i\Omega$ and $\sigma_i([0,1])$,
we assume 
$\textrm{int}(E(\sigma_i))\cap \textrm{int}(E(\sigma_j))= \emptyset$
for $i \neq j$ where  int denotes the interior part;
we also
set 
\begin{equation}\label{def:E}
	E(\sigma):=\bigcup_{i=1}^n E(\sigma_i).
\end{equation}
\end{enumerate}

\begin{remark}
The injectivity property in \ref{i'} guarantees that the sets $E(\sigma_i)$ 
are simply connected (not necessarily connected). 
The assumption that the interior ${\rm int}(E(\sigma_i))$ 
of the sets $E(\sigma_i)$ are mutually disjoint
is an hypothesis on the curves $\sigma_i$, which essentially translates into the fact that these curves cannot cross
transversally each other, but might overlap. Notice that ${\rm int}(E(\sigma_i))$ might be empty, as the case $\nullb_i\Om=\sigma_i([0,1])$ is not excluded.

\end{remark}

The strategy to show existence and regularity of minimizers 
 of the functional \eqref{def:relaxed_functional} 
(see \eqref{eq:min_F_storto}) 
is to reduce to study the same functional on a restricted class of competitors,  more precisely to reduce our analysis to the case where the sets $E(\sigma_i)$ are convex.
Specifically, we define:
	\begin{equation}\label{def:admissible_class}
	\begin{split}
	&\comp:=\Big\{(\sigma,\psi)\in \convexcurves\times BV(\Omega): 
\ \psi=0\text{ a.e. in }E(\sigma)\Big\},\\
&\convexcurves:=\Big\{\sigma = (\sigma_1,\dots,\sigma_n) 
\in \curves: \sigma \text{ satisfies (i)}\Big\},
	\end{split}
	\end{equation}
and: 
	\begin{enumerate}[label=(\roman*)]
		\item\label{(i)} For 
all $i=1,\dots,n$ the set $E(\sigma_i)$ is convex.
	\end{enumerate}

As we have already said, the sets $\textrm{int}(E(\sigma_i))$ 
might also be empty, since from assumption (i') we cannot exclude that $\sigma_i$ 
overlaps $\nullb_i\Omega$: Recalling that $\Omega$ is convex, this can happen, 
by (ii') and (i), only if $\arc{q_ip_{i+1}}$
is a straight segment\footnote{We shall prove 
that, for a minimizer, $\sigma_i([0,1])$ cannot intersect  
$\partial^D \Omega$ unless $\partial^D \Omega$
is locally a segment, see
Theorem \ref{thm:regularity}.}.
Clearly,
\begin{equation}
\label{eq:W_subset_hatW}
\comp\subset \largercomp.
\end{equation}

\begin{remark}\label{rem:condition_P}
Exploiting the characterization of the boundaries of convex sets 
given in Corollary \ref{lm:charact_convex_sets2}, we see that conditions \ref{i'},\ref{ii'} and \ref{(i)} for the curves in $\convexcurves$ imply the following:

\begin{enumerate}[label=(P)]
	\item\label{(P)}
For all $i=1,\dots, n$ there is a 
nondecreasing function $\theta_i\colon[0,1]\to\R$ 
with $\theta_i(1)-\theta_i(0)\le2\pi$, 
 and such that, setting $\gamma_i(t):=(\cos(\theta_i(t))\,,\,\sin(\theta_i(t)))$ for all $t\in[0,1]$, we have 
	\begin{equation*}
	\sigma_i(t)=q_i+\len(\sigma_i)\int_0^t\gamma_i(s)\,ds\quad \quad\forall t\in[0,1].
	\end{equation*}
	
\end{enumerate}
Here we have denoted the length of $\sigma_i$ by $\len(\sigma_i)$. 
\end{remark}

%%%%%%%%%%%%%%%%%%%%%%%%%%%%%%%%%%%%%%%%%%%%%%%%%%%%%%%%%%%%%%%%%%%%%%%%
\section{Existence of minimizers of $\mathcal F$ in $\comp$}
%%%%%%%%%%%%%%%%%%%%%%%%%%%%%%%%%%%%%%%%%%%%%%%%%%%%%%%%%%%%%%%%%%%%%%%%
\label{sec:existence_of_a_minimizer_for_the_relaxed_formulation}

The main result of this section reads as follows.

\begin{theorem}[\textbf{Existence of a minimizer of $\mathcal F$ in $\comp$}]\label{thm:existence} 
Let $\mathcal F$ and $\comp$ be as in \eqref{def:relaxed_functional} 
and \eqref{def:admissible_class} respectively. Then there 
is 
a solution 
to 
	\begin{equation}\label{eq:min_F_storto} 
		\min_{(\sigma,\psi)\in\comp}\mathcal F(\sigma,\psi).
	\end{equation}
	
\end{theorem}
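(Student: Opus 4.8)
The functional $\mathcal F$ is bounded below (the area term is nonnegative, $-|E(\sigma)|\ge -|\Omega|$, and the boundary integral is nonnegative), so a minimizing sequence $(\sigma_k,\psi_k)\subset\comp$ exists with $\mathcal F(\sigma_k,\psi_k)\to\inf$. I would then (i) extract compactness for both the curve component $\sigma_k$ and the $BV$ component $\psi_k$, (ii) show the limit pair is admissible, i.e.\ lies in $\comp$, and (iii) prove lower semicontinuity of $\mathcal F$ along the converging sequence.

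\textbf{Compactness.} For the curves, the key structural fact is the parametrization \ref{(P)} from Remark \ref{rem:condition_P}: each $\sigma_i$ has constant speed $\len(\sigma_i)$ and is governed by a nondecreasing angle function $\theta_i$ with total variation $\le 2\pi$. The lengths $\len(\sigma_i)$ are uniformly bounded because the $\sigma_i$ lie in the bounded set $\overline\Omega$ and enclose the convex regions $E(\sigma_i)$ (a uniform perimeter bound for convex subsets of $\overline\Omega$). Using Helly's selection theorem on the nondecreasing $\theta_i$ together with convergence of the lengths (up to subsequence), exactly as in the proof of Lemma \ref{lm:charact_convex_sets}, I obtain $\sigma_{i,k}\to\sigma_i$ in $W^{1,1}([0,1];\overline\Omega)$, hence uniformly. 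For the $BV$ component, I would first derive a uniform bound on $\mathcal A(\psi_k;\Omega)$ from the energy bound, which controls $\int_\Omega\sqrt{1+|\nabla\psi_k|^2}\,dx$ and $|D^s\psi_k|(\Omega)$, hence $\|D\psi_k\|(\Omega)$; combined with the trace term $\int_{\partial\Omega}|\psi_k-\varphi|\,d\mathcal H^1$ controlling the $L^1$ norm via the boundary datum, this gives a uniform $BV(\Omega)$ bound and thus $\psi_k\to\psi$ in $L^1(\Omega)$ with $\psi\in BV(\Omega)$, up to subsequence.

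\textbf{Admissibility of the limit.} I must check $(\sigma,\psi)\in\comp$. The limit curves satisfy \ref{i'} (the endpoint conditions $\sigma_i(0)=q_i$, $\sigma_i(1)=p_{i+1}$ pass to the limit under uniform convergence) and still admit a representation \ref{(P)}, so each limiting $E(\sigma_i)$ remains convex by \ref{H5}, giving \ref{(i)}; the disjoint-interiors condition \ref{ii'} is preserved in the limit. The crucial constraint is $\psi=0$ a.e.\ on $E(\sigma)$: since $E(\sigma_{i,k})\to E(\sigma_i)$ in the Hausdorff sense and $\psi_k=0$ on $E(\sigma_k)$, I would pass to the limit using convergence of the indicator functions $\mathbf 1_{E(\sigma_k)}\to\mathbf 1_{E(\sigma)}$ in $L^1$ (from Hausdorff convergence of convex sets, their areas converge) together with $\psi_k\to\psi$ in $L^1$, concluding $\psi=0$ a.e.\ on $E(\sigma)$.

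\textbf{Lower semicontinuity and the main obstacle.} The term $-|E(\sigma_k)|\to-|E(\sigma)|$ is \emph{continuous} by the area convergence just mentioned, which is favorable; $\mathcal A(\cdot;\Omega)$ is $L^1$-lower semicontinuous by its definition \eqref{def:relaxed_area} as a relaxed functional. \textbf{I expect the main obstacle to be the boundary trace term} $\int_{\partial\Omega}|\psi-\varphi|\,d\mathcal H^1$, because the trace operator is \emph{not} continuous under $L^1$ (or even $BV$ weak-$*$) convergence. The standard remedy is to exploit the fact that for $BV$ functions the sum of the relaxed area and the boundary penalization is jointly lower semicontinuous; concretely I would rewrite $\mathcal A(\psi;\Omega)+\int_{\partial\Omega}|\psi-\varphi|\,d\mathcal H^1$ as a relaxed area on a slightly larger domain, extending $\psi$ by $\varphi$ across $\partial\Omega$ (interpreting the trace jump as part of the generalized graph), so that the whole quantity becomes an area-type functional on an open neighborhood of $\overline\Omega$, which is then lower semicontinuous in $L^1$ by the classical result for $\mathcal A$. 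Assembling continuity of the volume term with lower semicontinuity of the combined area-plus-trace term yields $\mathcal F(\sigma,\psi)\le\liminf_k\mathcal F(\sigma_k,\psi_k)=\inf$, so $(\sigma,\psi)$ is the desired minimizer.
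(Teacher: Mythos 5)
Your proposal is correct and follows essentially the same route as the paper's proof: the direct method, with compactness of the curves obtained from the constant-speed representation (P) together with Helly's selection principle and a uniform length bound, admissibility of the limit via Hausdorff convergence of the convex sets $E(\sigma_i)$ (properties (H5)--(H6)), continuity of the volume term $\sigma\mapsto|E(\sigma)|$, and joint lower semicontinuity of $\mathcal A(\cdot;\Omega)$ plus the boundary penalization by the classical extension-across-$\partial\Omega$ argument, which is exactly the standard result the paper cites from Giusti. The only differences are matters of detail, not of method: you make explicit the $L^1$ bound for $\psi_k$ coming from the trace term and you spell out the extension trick behind the lower semicontinuity, both of which the paper leaves to the references.
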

We prove Theorem \ref{thm:existence} using the direct method. To this aim 
we need to introduce a notion of convergence in $\comp$.
\begin{definition}[\textbf{Convergence in $\comp$}]
\label{def:conv}
	We say that the sequence $(((\sigma)_k,\psi_k))_k\subset \comp
$, with $(\sigma)_k=((\sigma_1)_k,...,(\sigma_n)_k)$, converges to $(\sigma,\psi)\in \comp$ if:
	\begin{enumerate}[label=$(\alph*)$]
		\item\label{a} 
$(\sigma_{i})_k$ converges to 
 $\sigma_i$
uniformly in $[0,1]$ 
for all $i=1,\dots,n$;
		\item \label{b} $(\psi_k)_k$ converges 
to $\psi$ weakly star in $BV(\Omega)$, i.e., 
$\psi_k \to \psi$ in $L^1(\Omega)$ and 
$D \psi_k \rightharpoonup D\psi$ weakly star in $\Omega$ 
in the sense of measures as 
$k \to +\infty$.    
	\end{enumerate}
\end{definition}

\begin{remark}\label{rem:conv}
For any $i=1,\dots,n$ we have $\lim_{k \to +\infty}
d_H(E((\sigma_{i})_k),E(\sigma_i))= 0$, 
since by property \ref{H3} 
\begin{equation*}
	d_H(E((\sigma_{i})_k),E(\sigma_i))=
d_H(\partial E((\sigma_{i})_k),\partial E(\sigma_i))=
d_H((\sigma_{i})_k([0,1]),\sigma_{i}([0,1]))\to0.
\end{equation*}
\end{remark}

\begin{lemma}[\textbf{Compactness of $\comp$}]\label{lem:compactness}
	Let 
$\big(((\sigma)_{k},\psi_k)\big)_k
\subset\comp$ be a sequence with\\ $\sup_k\mathcal F((\sigma)_k,\psi_k)<+\infty$. Then 
$\big(((\sigma)_{k},\psi_k)\big)_k$ admits 
 a subsequence converging to an element of $\comp$.
\end{lemma}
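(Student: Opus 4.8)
The plan is to run the direct method, extracting convergent subsequences separately for the function part and the curve part, and then checking that the limit pair lies in $\comp$ and realizes the convergence of Definition~\ref{def:conv}; throughout, $C$ denotes a constant depending only on $\Omega$ and $\varphi$. First I would obtain $BV$-compactness of $(\psi_k)$. Since $0\le|E((\sigma)_k)|\le|\Omega|$ and the boundary term is nonnegative, the bound $\sup_k\mathcal F((\sigma)_k,\psi_k)<+\infty$ together with $\mathcal A(\psi_k;\Omega)\ge|D\psi_k|(\Omega)$ (from $\sqrt{1+|\nabla\psi_k|^2}\ge|\nabla\psi_k|$) yields a uniform bound on the total variation $|D\psi_k|(\Omega)$. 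To control the $L^1$ norm I would invoke the trace/Poincar\'e inequality for the Lipschitz (indeed convex) domain $\Omega$, namely $\|\psi\|_{L^1(\Omega)}\le C\big(|D\psi|(\Omega)+\int_{\partial\Omega}|\psi|\,d\H^1\big)$, and bound the boundary integral by $\int_{\partial\Omega}|\psi_k-\varphi|\,d\H^1+\|\varphi\|_{L^1(\partial\Omega)}$, where $\int_{\partial\Omega}|\psi_k-\varphi|\,d\H^1=\mathcal F((\sigma)_k,\psi_k)-\mathcal A(\psi_k;\Omega)+|E((\sigma)_k)|\le\mathcal F((\sigma)_k,\psi_k)+|\Omega|$ is uniformly bounded. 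Hence $(\psi_k)$ is bounded in $BV(\Omega)$, and the $BV$ compactness theorem gives a subsequence with $\psi_k\to\psi$ in $L^1(\Omega)$ (and a.e.) and $D\psi_k\weakstar D\psi$, which is exactly \ref{b} of Definition~\ref{def:conv}.

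Next I would treat the curves. For each $i$ the set $E((\sigma_i)_k)$ is convex and contained in the convex set $\overline\Omega$, so by monotonicity of the perimeter under inclusion of convex sets, $\len((\sigma_i)_k)\le\H^1(\partial E((\sigma_i)_k))\le\H^1(\partial\Omega)=:L$. By \ref{(P)} each $(\sigma_i)_k$ is then $L$-Lipschitz with values in the bounded set $\overline\Omega$, so Arzel\`a--Ascoli yields, after passing to a common subsequence for all $i=1,\dots,n$, uniform limits $(\sigma_i)_k\to\sigma_i$; this is \ref{a}. To see that $\sigma=(\sigma_1,\dots,\sigma_n)$ still enjoys \ref{(P)} I would argue as in the proof of Lemma~\ref{lm:charact_convex_sets}: the associated nondecreasing angle functions $(\theta_i)_k$, normalized so that $(\theta_i)_k(0)\in[0,2\pi)$, have total variation $(\theta_i)_k(1)-(\theta_i)_k(0)\le2\pi$, so Helly's selection theorem gives a subsequence converging pointwise to a nondecreasing $\theta_i$ with $\theta_i(1)-\theta_i(0)\le2\pi$. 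Combined with $\len((\sigma_i)_k)\to\len_i$ and dominated convergence this identifies $\sigma_i(t)=q_i+\len_i\int_0^t(\cos\theta_i(s),\sin\theta_i(s))\,ds$, so that \ref{(P)} holds for the limit curve $\sigma_i$.

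It remains to check that $(\sigma,\psi)\in\comp$. The endpoint conditions $\sigma_i(0)=q_i$, $\sigma_i(1)=p_{i+1}$ pass to the uniform limit. By Remark~\ref{rem:conv} and property~\ref{H5}, along the subsequence the Hausdorff limits $E((\sigma_i)_k)\to K_i$ exist with $K_i$ convex; identifying $\partial K_i=\sigma_i([0,1])\cup\overline{\nullb_i\Omega}$ shows $K_i=E(\sigma_i)$, whence $E(\sigma_i)$ is convex, i.e.\ \ref{(i)} holds. Injectivity of $\sigma_i$ (part of \ref{i'}) follows from \ref{(P)} together with the convexity of $E(\sigma_i)$, as in Corollary~\ref{lm:charact_convex_sets2}; the only delicate case is $\mathrm{int}(E(\sigma_i))=\emptyset$, where one checks directly that \ref{(P)} forces $\sigma_i$ to parametrize the injective segment $\overline{q_ip_{i+1}}$. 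For the mutual disjointness \ref{ii'} I would use \ref{H7}: since $E((\sigma_i)_k)\to E(\sigma_i)$ in Hausdorff distance and $\partial E(\sigma_i)$ has finite $\H^1$ measure, the characteristic functions converge in $L^1$, so $|E(\sigma_i)\cap E(\sigma_j)|=\lim_k|E((\sigma_i)_k)\cap E((\sigma_j)_k)|=0$ for $i\ne j$; being open, $\mathrm{int}(E(\sigma_i))\cap\mathrm{int}(E(\sigma_j))$ is then empty. Finally, to get $\psi=0$ a.e.\ on $E(\sigma)$, for $x\in\mathrm{int}(E(\sigma_i))$ property~\ref{H6} gives $x\in E((\sigma_i)_k)$ for $k$ large, hence $\psi_k(x)=0$ eventually; together with $\psi_k\to\psi$ a.e.\ this yields $\psi=0$ a.e.\ on $\mathrm{int}(E(\sigma))$, and since $|\partial E(\sigma)|=0$ we conclude $\psi=0$ a.e.\ on $E(\sigma)$. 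Thus $(\sigma,\psi)\in\comp$.

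The main obstacle is this last step: ensuring that the full geometric structure defining $\convexcurves$ survives the passage to the limit, in particular the injectivity of each $\sigma_i$ and the disjointness of the interiors $\mathrm{int}(E(\sigma_i))$, precisely in the degenerate situations where some $E(\sigma_i)$ collapses to a segment (so that $\sigma_i$ may overlap $\nullb_i\Omega$). It is here that the convex-curve representation \ref{(P)} and the Hausdorff-distance properties \ref{H5}--\ref{H7} carry the essential weight; the $BV$ compactness in Step~1, by contrast, is entirely standard once the boundary term is used to close the Poincar\'e estimate.
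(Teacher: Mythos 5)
Your proposal follows the paper's proof in all essentials: the same three pillars appear in the same roles, namely the total-variation bound extracted from the energy followed by $BV$ weak-$*$ compactness, the constant-speed monotone-angle representation (P) together with Helly's theorem for the curves, and the Hausdorff-distance properties (H2), (H3), (H5), (H6) for the convexity of the limit sets and for the constraint $\psi=0$ a.e.\ in $E(\sigma)$. Where you deviate you are, if anything, more careful than the paper: the trace--Poincar\'e step closing the $L^1$ bound is genuinely needed and is passed over in silence in the paper, which jumps from $\sup_k|D\psi_k|(\Om)<+\infty$ to weak-$*$ compactness; Arzel\`a--Ascoli in place of the paper's $W^{1,1}$ argument, and monotonicity of the perimeter of nested convex bodies in place of its projection/contraction argument, are equivalent; and your verification of (ii') for the limit, via $L^1$ convergence of the characteristic functions, addresses a condition the paper never checks.

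The one step that would fail is exactly the one you flag as delicate: injectivity of the limit curve when ${\rm int}(E(\sigma_i))=\emptyset$. It is not true that (P) forces such a curve to parametrize $\overline{q_ip_{i+1}}$. A constant-speed curve that starts at $q_i$, runs along the line through $q_i$ and $p_{i+1}$, overshoots $p_{i+1}$, and then doubles back to $p_{i+1}$ has a nondecreasing angle function (a value $\theta_0$ on the way out, $\theta_0+\pi$ on the way back) with total turning $\pi\le2\pi$, so it satisfies (P); yet it is not injective and its image strictly contains $\overline{q_ip_{i+1}}$. Moreover, such curves genuinely arise as uniform limits of admissible sequences with bounded energy: suppose $\partial\Om$ contains a segment $L$ with $\overline{\nullb_i\Om}\subsetneq L$ extending past $p_{i+1}$ (the setting allows this, since $\Om$ need not be strictly convex and the $\sigma_i$ may touch $\partial^D\Om$), and let $(\sigma_i)_k$ trace the two edges other than $\overline{q_ip_{i+1}}$ of the triangle with vertices $q_i$, $p_{i+1}$ and apex at height $1/k$ above a point of $L$ beyond $p_{i+1}$, with $\psi_k\equiv0$. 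Every term of this sequence lies in $\comp$ and $\mathcal F$ stays bounded, but the unique uniform limit of the curves is the doubling-back curve above, which is not in $\curves$; so no ``direct check'' can close this case, and in fact, read literally with Definition \ref{def:conv}, the conclusion of the lemma fails for this sequence. You should know that the paper's own proof has the same hole --- it proves convexity of $E(\sigma_i)$ but never verifies (i') or (ii') for the limit --- so your write-up is not less complete than the published one; but an honest repair requires modifying the limit object or the statement (for instance, replacing the limit curve by the injective parametrization of $\overline{q_ip_{i+1}}$, which preserves $E(\sigma)$, $\psi$ and the liminf inequality that the direct method needs, at the price of giving up item $(a)$ of Definition \ref{def:conv}), rather than the direct verification you propose.
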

\begin{proof}We divide the proof in two steps.\\
	
	\step 1 Compactness of $(\sigma)_k$. For simplicity we use the notation $\sigma_{ik} = (\sigma_i)_k$ for every $k\in
\mathbb N$ and $i\in\{1,\dots,n\}$. By condition (P) in Remark \ref{rem:condition_P}, for every $k \in \mathbb N$ and $i\in\{1,\dots,n\}$ 
there exists a non-decreasing 
function $\theta_{ik}\colon[0,1]\to\R$,  $\theta_{ik}(1)-\theta_{ik}(0)\le2\pi$,
such that 
 \begin{equation*}
 \sigma_{ik}(t)=q_i+ \len(\sigma_{ik})\int_0^t\gamma_{ik}(s)ds,\quad \gamma_{ik}(t):=(\cos\theta_{ik}(t)\,,\,\sin\theta_{ik}(t))
 \quad\forall t\in[0,1],
 \end{equation*}
 and with $\sigma_{ik}(1)=p_{i+1}$.
 We observe that 
 $$\len(\sigma_{ik})=\int_0^1|\sigma_{ik}'(t)|dt\le\mathcal{H}^1(\partial\Om),$$ 
since the orthogonal projection $\Pi_{ki}\colon\partial\Om\setminus\partial^0_i\Om\to E(\sigma_{ik})$ is a contraction and $\mathcal{H}^1(\partial\Om\setminus\partial^0_i\Om)\le \mathcal{H}^1(\partial\Om)$. 
 Hence, up to a (not relabelled) subsequence, 
$\len(\sigma_{ik})\to m_i\in \R^+$ as $k \to +\infty$. The number $m_i$ is positive since, for all $k$ and $i$, we have 
$\len(\sigma_{ik})\geq |q_i-p_{i+1}|>0$.
 Moreover 
\begin{equation*}
	\int_0^1|\theta'_{ik}(t)|dt=\int_0^1\theta_{ik}'(t)dt\le2\pi;
\end{equation*}
hence, up to a subsequence, $\theta_{ki}\weakstar\theta_i$ in $BV([0,1])$ and $\theta_i$ is non-decreasing with $\theta_i(1)-\theta_i(0)\le2\pi$. Furthermore $\gamma_{ik}\stackrel{*}{\rightharpoonup}\gamma_i$ in $BV([0,1];\R^2)$  with $\gamma_i(t)=(\cos(\theta_i(t))\,,\,\sin(\theta_i(t)))$. 

As a consequence $\sigma_{ik}\to\sigma_i$ in $W^{1,1}([0,1];\R^2)$, where 
\begin{equation*}
	\sigma_i(t):=q_i+m_i\int_0^t\gamma_i(s)ds=q_i+\len(\sigma_i)\int_0^t\gamma_i(s)ds.
\end{equation*}
Indeed we have
\begin{align}
\|\sigma_{ik}'-\sigma'_i\|_{L^1([0,1];\R^2)}&=\int_0^1|\len(\sigma_{ik})\gamma_{ik}(t)-\len(\sigma_i)\gamma_i(t)|dt\notag\\
	&\le \mathcal{H}^1(\partial\Om)\int_0^1|\gamma_{ik}(t)-\gamma_i(t)|dt+ |\len(\sigma_{ik})-\len(\sigma_i)|.\label{w11}
\end{align}
Now taking the limit as $k\to+\infty$ in \eqref{w11} we conclude.
Thus $\lim_{k \to +\infty}\sigma_{ik}=\sigma_i$ uniformly, hence we also conclude that $\sigma_i$ takes values in $\overline\Om$.\\
It remains to show that $E(\sigma_i)$ 
is convex for any $i \in \{1,\dots, n\}$. 
The uniform convergence of $(\sigma_{ik})$ yields 
$$
\lim_{k\to+\infty}d_H(\partial E(\sigma_{ik}),\partial E(\sigma_{i}))=0.
$$
This, together with property \ref{H3}, gives for $h\ge k$,
\begin{equation*}
\begin{aligned}
	&d_H( E(\sigma_{ik}), E(\sigma_{ih}))=
d_H(\partial E(\sigma_{ik}),\partial E(\sigma_{ih}))
\\
\leq & d_H(\partial E(\sigma_{ik}),\partial E(\sigma_i))+d_H(\partial E(\sigma_{ih}),\partial E(\sigma_i))\to 0\quad\text{as}\quad k\to+\infty,
\end{aligned}
\end{equation*}
and so $(E(\sigma_{ik}))_{k\in \mathbb N}$ is a Cauchy sequence in the space of compact subsets of $\R^2$ endowed with the Hausdorff distance (see \ref{H2}).
We find $ K\subset \R^2$ convex compact such that $d_H( E(\sigma_{ik}),K)\to0$.
Eventually from \ref{H3} we get
\begin{align*}
	&d_H(\partial K, \partial E(\sigma_i))\le d_H(\partial E(\sigma_{ik}),\partial K)+ d_H(\partial E(\sigma_{ik}),\partial E(\sigma_{i}))\\
	=&d_H(  E(\sigma_{ik}),K)+ d_H(\partial E(\sigma_{ik}),\partial E(\sigma_{i}))\to0\quad\text{as}\quad k\to+\infty.
\end{align*}
Therefore we conclude that $\partial K=\partial E(\sigma_{i})$, so $E(\sigma_{ik})\rightarrow E(\sigma_{i})$ in the Hausdorff distance,
and $E(\sigma_i)$ is convex by property \ref{H5}.

\step 2 Compactness of $(\psi_k)$.
Setting $F_k=\cup_{i=1}^nE(\sigma_{ik})$ we have  
\begin{equation*}
	|D\psi_k|(\Om)\le \mathcal{A}(\psi_k,\Om)\le \mathcal{F}((\sigma)_k,\psi_k)+|F_k|\le C<+\infty \qquad \forall k>0,
\end{equation*}
where we used that $|F_k|\le |\Om|$. Therefore, 
up to a subsequence, $\psi_k\weakstar\psi$ in $BV(\Om)$
as $k \to +\infty$. 
To conclude it remains to show that $\psi=0$ a.e.  in $E(\sigma)=\cup_iE(\sigma_{i})$.  By $\lim_{k \to +\infty}d_H(F_k,E(\sigma))=0$,  
property \ref{H6} yields
\begin{equation*}
	\text{if}\quad x\in \text{int}(E(\sigma)) \quad \text{then}\quad x\in F_k\quad\text{definitely in } k,
\end{equation*}
and hence since $\lim_{k\to +\infty} \psi_k= \psi$ a.e. in $\Om$, we infer $\psi=0$ a.e. in $ E(\sigma)$.

\end{proof}

\begin{lemma}[\textbf{Lower semicontinuity of $\mathcal F$ in $\comp$}]
\label{lem:lower_semicontinuity_of_F}
Let $\big(((\sigma)_k,\psi_k)\big)_k\subset\comp$ be a sequence converging 
to $(\sigma,\psi)\in \comp$. Then
\begin{equation*}
	\mathcal{F}(\sigma,\psi)\le\liminf_{k\to+\infty}\mathcal{F}((\sigma)_k,\psi_k).
\end{equation*}
\end{lemma}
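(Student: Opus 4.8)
The plan is to split $\mathcal F$ into a part that is lower semicontinuous and a part that is continuous (in fact upper semicontinuous) along the given convergence, and then add the two estimates. Precisely, write $\mathcal F(\sigma,\psi)=\mathcal G(\psi)-|E(\sigma)|$ with $\mathcal G(\psi):=\mathcal A(\psi;\Omega)+\int_{\partial\Omega}|\psi-\varphi|\,d\mathcal{H}^1$. I would show that $\mathcal G$ is lower semicontinuous with respect to the $L^1(\Omega)$-convergence $\psi_k\to\psi$ guaranteed by item \ref{b} of Definition \ref{def:conv}, and separately that $\limsup_k|E(\sigma)_k|\le|E(\sigma)|$. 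Since $\liminf_k\big(\mathcal G(\psi_k)-|E(\sigma)_k|\big)\ge\liminf_k\mathcal G(\psi_k)-\limsup_k|E(\sigma)_k|$, these two facts together give the claim.

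For the volume term I would use Remark \ref{rem:conv}, which yields $d_H(E(\sigma_i)_k,E(\sigma_i))\to0$ for each $i$. By property \ref{H7} this gives $E(\sigma_i)_k\subset E(\sigma_i)^+_{\eps_{ik}}$ with $\eps_{ik}:=d_H(E(\sigma_i)_k,E(\sigma_i))\to0$, whence $|E(\sigma_i)_k|\le|E(\sigma_i)^+_{\eps_{ik}}|$. Since the closed $\eps$-neighbourhoods $E(\sigma_i)^+_\eps$ decrease to the closed set $E(\sigma_i)$ as $\eps\downarrow0$, continuity of Lebesgue measure from above gives $\limsup_k|E(\sigma_i)_k|\le|E(\sigma_i)|$; because the sets $E(\sigma_i)$ (resp. $E(\sigma_i)_k$) have mutually disjoint interiors and Lipschitz, hence null, boundaries, summing over $i$ yields $\limsup_k|E(\sigma)_k|\le|E(\sigma)|$. (In fact one obtains full continuity $|E(\sigma)_k|\to|E(\sigma)|$ using the Steiner formula together with the uniform length bound $\len(\sigma_{ik})\le\mathcal{H}^1(\partial\Omega)$ from Lemma \ref{lem:compactness}, but only the displayed inequality is needed here.)

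The main obstacle is the boundary term, since the trace operator $BV(\Omega)\to L^1(\partial\Omega)$ is \emph{not} continuous under weak-star convergence, so $\int_{\partial\Omega}|\psi_k-\varphi|\,d\mathcal{H}^1$ need not converge to $\int_{\partial\Omega}|\psi-\varphi|\,d\mathcal{H}^1$. To bypass this I would transfer the trace into an interior jump on a larger domain, following the classical device for Dirichlet conditions in the $BV$ area setting. Fix an open ball $B\supset\supset\overline\Omega$ and a fixed extension $\Phi\in BV(B\setminus\overline\Omega)$ of $\varphi$, i.e. with inner trace $\varphi$ on $\partial\Omega$ and finite relaxed area $\mathcal A(\Phi;B\setminus\overline\Omega)<+\infty$; such a $\Phi$ exists since $\varphi$ is continuous (hence bounded) on $\partial\Omega$ and $\partial\Omega$ is Lipschitz, so standard trace–extension results apply. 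For $u\in BV(\Omega)$ set $\hat u:=u$ on $\Omega$ and $\hat u:=\Phi$ on $B\setminus\overline\Omega$; then $\hat u\in BV(B)$ and, since the generalized graph of $\hat u$ consists of the graph over $\Omega$, the graph over $B\setminus\overline\Omega$, and the vertical wall over $\partial\Omega$ of height $|u^--\varphi|$, one has the additivity identity
\begin{equation*}
\mathcal A(\hat u;B)=\mathcal A(u;\Omega)+\int_{\partial\Omega}|u-\varphi|\,d\mathcal{H}^1+\mathcal A(\Phi;B\setminus\overline\Omega)=\mathcal G(u)+c,
\end{equation*}
with $c:=\mathcal A(\Phi;B\setminus\overline\Omega)$ a constant independent of $u$.

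Finally I would conclude by lower semicontinuity of the relaxed area on the fixed domain $B$. Since $\psi_k\to\psi$ in $L^1(\Omega)$ and $\hat\psi_k,\hat\psi$ coincide with the fixed function $\Phi$ on $B\setminus\overline\Omega$, we have $\hat\psi_k\to\hat\psi$ in $L^1(B)$; as $\mathcal A(\cdot;B)$ is the $L^1$-relaxed area functional \eqref{def:relaxed_area}, it is $L^1(B)$-lower semicontinuous, so $\mathcal A(\hat\psi;B)\le\liminf_k\mathcal A(\hat\psi_k;B)$. Subtracting the constant $c$ gives $\mathcal G(\psi)\le\liminf_k\mathcal G(\psi_k)$, and combining this with the volume estimate of the second paragraph yields $\mathcal F(\sigma,\psi)\le\liminf_k\mathcal F((\sigma)_k,\psi_k)$, as desired. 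The delicate point throughout is precisely the identity in the third paragraph, which legitimately turns the (non-continuous) boundary penalty into an interior contribution captured by the lower semicontinuity of the relaxed area.
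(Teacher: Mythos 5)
Your proof is correct and follows essentially the same route as the paper's: the identical splitting of $\mathcal F$ into the boundary-penalized area functional $\psi\mapsto\mathcal A(\psi;\Omega)+\int_{\partial\Omega}|\psi-\varphi|\,d\mathcal{H}^1$ and the volume term $-|E(\sigma)|$, with the volume handled by the same Hausdorff-distance tools (Remark \ref{rem:conv} and property \ref{H7}). The only differences are cosmetic: the paper invokes \cite{Giusti:84} for the $L^1(\Omega)$-lower semicontinuity of the penalized functional, whereas you unfold the standard argument behind that citation (the extension-to-a-ball identity, which the paper itself employs later in \eqref{eq:equivalent_forumulation_of_F}), and the paper proves full continuity of $\sigma\mapsto|E(\sigma)|$ while you establish only the $\limsup$ inequality, which indeed suffices.
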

\begin{proof}
By a standard argument \cite{Giusti:84}, the functional
	\begin{equation*}
		\psi\in BV(\Om)\mapsto \mathcal{A}(\psi;\Om)+\int_{\partial\Om}|\psi-\varphi|d\mathcal H^1
	\end{equation*}
	is $L^1(\Om)$-lower semicontinuous.
We now show that the map $\sigma \in
\convexcurves
 \mapsto|E(\sigma)|$ 
is continuous. 
Let $(\sigma)_k \subset \convexcurves$, 
$\sigma \in\convexcurves$, and suppose that
$(\sigma_{i})_k$ uniformly converges to $\sigma_i$
 for all $i=1,\dots,n$ as $k \to +\infty$.
Set $F_k:=\cup_{i=1}^n E((\sigma_{i})_k)$ and recall that 
 $E(\sigma)=\cup_{i=1}^n E(\sigma_{i})$. 
	By Remark \ref{rem:conv} 
$\lim_{h \to +\infty} d_H(E((\sigma_{i})_k),E(\sigma_i)) = 0$ 
for all $i=1,\dots,n$ and therefore $d_H(F_k,E(\sigma)) =: \eps_k\to 0^+$. 

By invoking \ref{H7} we have $E(\sigma)\subset (F_k)^+_{\eps_k}$. 
Moreover, since $d_H((F_k)_{\eps_k}^+,E(\sigma))\leq 2\eps_k$, we get $(F_k)^+_{\eps_k}\subseteq (E(\sigma))_{2\eps_k}^+$, and so
\begin{equation*}
\vert E(\sigma)\vert\le	\vert (F_k)^+_{\eps_k}\vert 
\le \vert (E(\sigma))_{2\eps_k}^+\vert.
\end{equation*}
This implies
$$\limsup_{k\rightarrow+\infty} |F_k|\leq \limsup_{k\rightarrow+\infty} |(F_k)^+_{\eps_k}|\le\mathcal |E(\sigma)|.$$
The converse inequality is a consequence of  Fatou's Lemma and \ref{H6}, indeed
$$|E(\sigma)|\leq \int_\Om\liminf_{k\rightarrow+\infty}{\chi_{F_k}(x)}dx\leq\liminf_{k\rightarrow+\infty}\int_\Om{\chi_{F_k}(x)}dx =\liminf_{k\rightarrow+\infty}|F_k|.$$
The assertion of the lemma follows.
\end{proof}
\begin{proof}[Proof of Theorem \ref{thm:existence}] 
By Lemma \ref{lem:compactness} and Lemma \ref{lem:lower_semicontinuity_of_F} we can apply the direct method and conclude.
\end{proof}

%%%%%%%%%%%%%%%%%%%%%%%%%%%%%%%%%%%%%%%%%%%%%%%%%%%%%%%%%%%%%%%%%%%%%%%%
\section{Existence of a minimizer of $\mathcal F$ in $\largercomp$}\label{sec:minimizers_in_W}
%%%%%%%%%%%%%%%%%%%%%%%%%%%%%%%%%%%%%%%%%%%%%%%%%%%%%%%%%%%%%%%%%%%%%%%%
In this section we 
extend the previous results to the minimization of 
${\mathcal F}$  in the larger class 
$\largercomp$ of competitors.

One issue we find in minimizing the functional 
${\mathcal F}$ on $\largercomp$, 
is that the class $\curves$ in \eqref{def:admissible_class_bis} 
is not closed under uniform convergence, since
a uniform limit of elements 
in $\curves$ needs not be formed by injective curves. 
To overcome this difficulty, in Theorem \ref{teo:convexification} we prove that the infimum of 
${\mathcal F}$ over $\largercomp$ coincides with the infimum of ${\mathcal F}$ over $\comp$. 
Thus in particular, 
by Theorem \ref{thm:existence}, we derive the existence of a minimizer for 
${\mathcal F}$ in $\largercomp$ (Corollary \ref{cor:mininmathcalW}).

\begin{theorem}[\textbf{Existence of a minimizer of $\mathcal F$ in $\largercomp$}]\label{teo:convexification}
 There exists $(\sigmamin,\psimin)\in\comp$ such that
	\begin{equation*}
	{\mathcal{F}}(\sigmamin,\psimin)=	\inf_{(s,\zeta)\in
\largercomp }{\mathcal{F}}(s,\zeta).
	\end{equation*}
	Moreover every connected component of $E(\sigmamin)$ is convex.
\end{theorem}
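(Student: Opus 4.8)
The plan is to prove Theorem \ref{teo:convexification} by taking a minimizing sequence in $\largercomp$ and constructing from it a competitor in the convex class $\comp$ whose energy does not exceed the limit of the minimizing values; combined with the inclusion $\comp\subset\largercomp$ from \eqref{eq:W_subset_hatW} and the already-established existence in $\comp$ (Theorem \ref{thm:existence}), this forces equality of the two infima and produces the desired minimizer. Concretely, let $((s_k,\zeta_k))_k\subset\largercomp$ be a minimizing sequence for $\mathcal F$ over $\largercomp$. The core of the argument is a \emph{convexification} operation on the curves: for each index $i$ I would replace the set $E((s_i)_k)$ by (a suitable convex hull of) its convex replacement $\widetilde E$, and replace $\zeta_k$ by a modified function $\widetilde\zeta_k$ that is set to $0$ on the enlarged region and is otherwise a careful surgery of $\zeta_k$. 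The point is to show that this cut-and-paste does not increase $\mathcal F$. Since $\comp\subset\largercomp$ gives $\inf_{\comp}\mathcal F\ge\inf_{\largercomp}\mathcal F$, and the convexified sequence lies in $\comp$ (or can be approximated in $\comp$) with $\mathcal F(\widetilde s_k,\widetilde\zeta_k)\le\mathcal F(s_k,\zeta_k)+o(1)$, passing to the limit and applying Theorem \ref{thm:existence} yields a minimizer $(\sigmamin,\psimin)\in\comp$ realizing $\inf_{\largercomp}\mathcal F$.

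For the energy estimate under convexification I would analyse the three terms of $\mathcal F$ separately. Enlarging each $E((s_i)_k)$ to a convex set $\widetilde E_i\supseteq E((s_i)_k)$ strictly increases the subtracted term $|E(\sigma)|$, which \emph{helps} lower the energy; the difficulty is to control the area term $\mathcal A(\widetilde\zeta_k;\Omega)$ and the boundary term $\int_{\partial\Omega}|\widetilde\zeta_k-\varphi|\,d\mathcal H^1$ after forcing $\widetilde\zeta_k=0$ on the added region $\widetilde E_i\setminus E((s_i)_k)$. The natural way to bound the added area is to use that, when we zero out $\zeta_k$ across the new boundary, the relaxed area $\mathcal A(\psi;\Omega)=\int_\Omega\sqrt{1+|\nabla\psi|^2}+|D^s\psi|(\Omega)$ picks up a singular (jump) contribution along $\partial\widetilde E_i$ equal to the $\mathcal H^1$-measure of the jump, and this must be offset by the area gained elsewhere and by the increase in $|E(\sigma)|$. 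Here the convexity of $\Omega$ enters crucially: the boundary of the convex hull lies inside $\Omega$ (using \ref{ii'}, \ref{(i)} and Corollary \ref{lm:charact_convex_sets2}), so the surgery is geometrically admissible and the new curves $\widetilde\sigma_i$ still join $q_i$ to $p_{i+1}$ inside $\overline\Omega$ with $E(\widetilde\sigma_i)$ convex.

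I expect the main obstacle to be exactly this area bookkeeping: showing that replacing $E(s_i)$ by its convex version, together with setting $\psi$ to zero on the enlargement, does not cost more total energy than it saves. The delicate case is when the original curves $s_i$ are non-injective in the limit or overlap the Dirichlet portion $\partial^D\Omega$ (where $\varphi>0$), since there the boundary integral $\int_{\partial\Omega}|\psi-\varphi|$ can jump. A clean way to organise the proof is to perform the convexification componentwise and to use a comparison of each curve $s_i$ with the corresponding boundary arc of the convex hull of $E(s_i)$, exploiting that the orthogonal projection onto a convex set is $1$-Lipschitz (as already used in the length bound of Lemma \ref{lem:compactness}) so that projecting the graph of $\zeta_k$ does not increase the area. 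The final clause, that every connected component of $E(\sigmamin)$ is convex, then follows because the convexified construction makes each $E(\widetilde\sigma_i)$ convex and, by property \ref{H5}, convexity is preserved in the Hausdorff limit; one only needs to note that distinct components arise from distinct indices $i$ (or from the disjointness of interiors in \ref{ii'}), each of which is convex by construction.
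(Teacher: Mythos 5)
Your overall architecture matches the paper's: show $\inf_{\comp}\mathcal F\le\inf_{\largercomp}\mathcal F$ by an energy-non-increasing convexification and then invoke Theorem \ref{thm:existence}. But the key technical ingredient is missing, and the mechanism you propose in its place does not work as stated. After replacing $E(\sigma_i)$ by its convex hull and setting $\psi=0$ on the added region $U$, the energy comparison reduces (after the bookkeeping you correctly set up) to the inequality
\begin{equation*}
\int_l |\psi_U|\,d\mathcal H^1\;\le\;\mathcal A(\psi;U)+\int_{\partial U\setminus l}|\psi_U|\,d\mathcal H^1-|V|,
\end{equation*}
where $l$ is a segment of $\partial\,{\rm conv}(E(\sigma_i))$, $U$ is the region enclosed between $l$ and $\partial E(\sigma_i)$, $\psi_U$ is the interior trace, and $V\subset U$ is the part where $\psi$ already vanishes (because $U$ may overlap other components $E(\sigma_j)$). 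This is the content of the paper's Lemma \ref{lem_prelimiary}, proved by \emph{reflecting} the graph of $\psi$ across the plane $\{x_3=0\}$ and slicing the resulting integral current by vertical planes orthogonal to $l$; the correction $-|V|$ comes precisely from the degeneration of the doubled graph over the zero set, and it is essential, since without it the gain in the $-|E(\sigma)|$ term does not cover the new jump over $l$. Your substitute --- ``the orthogonal projection onto a convex set is $1$-Lipschitz, so projecting the graph does not increase area'' --- produces nothing here: the graph of $\psi$ over $U$ already lies inside the convex cylinder ${\rm conv}(E(\sigma_i))\times\R$, so the nearest-point projection onto it is the identity on that graph, and the $1$-Lipschitz projection fact was used in Lemma \ref{lem:compactness} only to bound \emph{lengths} of the curves $\sigma_{ik}$. (A projection onto the vertical plane containing $l$ could be made to work, but then the substance is a covering/constancy argument for the pushed-forward current, i.e.\ exactly the slicing argument you have not supplied, together with the $-|V|$ correction.)

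Two further points. First, convexifying ``componentwise'' can make the hulls ${\rm conv}(E(\sigma_i))$ overlap, violating condition \ref{ii'}; the paper handles this by working with the connected components of $G=\cup_i E(\sigma_i)$ and iteratively merging intersecting hulls into the hull of their union (Lemma \ref{lem:convexification_bis}), and your proposal is silent on this. Relatedly, your justification of the final clause is incorrect: property \ref{H5} guarantees that each Hausdorff limit $E(\sigma_i)$ is convex, but disjoint convex sets along your minimizing sequence can converge to \emph{touching} convex sets, and a connected component of their union need not be convex. The paper avoids this by applying the convexification directly to a fixed minimizer of $\mathcal F$ in $\comp$ (no limit passage is needed: one compares an arbitrary element of $\largercomp$, not a sequence), so that the resulting $E(\sigma)$ is a finite union of \emph{mutually disjoint} closed convex sets. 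Second, the boundary term $\int_{\partial\Om}|\psi-\varphi|\,d\mathcal H^1$, whose possible jump under surgery you flag but do not control, is absorbed in the paper by extending $\psi$ as $\widehat\varphi$ to a ball $B\supset\overline\Om$ and rewriting $\mathcal F$ as in \eqref{eq:equivalent_forumulation_of_F}; some device of this kind is needed to make the cut-and-paste estimate uniform up to $\partial\Om$.
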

\begin{remark}
Since the $\sigma_i$'s may overlap, the assumption that every 
$E(\sigma_i)$ is convex does not imply in general that every connected component of $E(\sigma)=\cup_{i=1}^nE(\sigma_i)$ is convex.  
\end{remark}
As a direct consequence of Theorem \ref{teo:convexification} we have:

\begin{cor}\label{cor:mininmathcalW}
	Let $(\sigmamin,\psimin)\in\comp$ be a minimizer as in Theorem \ref{thm:existence}. Then $(\sigmamin,\psimin)$ is also a minimizer of ${\mathcal F}$ in the class $\largercomp$.
\end{cor}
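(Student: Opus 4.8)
The plan is to show that passing from the larger class $\largercomp$ to the convex class $\comp$ does not raise the infimum, and then to invoke the existence result already proved in $\comp$. Since $\comp\subset\largercomp$ by \eqref{eq:W_subset_hatW}, one has trivially $\inf_{\comp}\mathcal F\ge\inf_{\largercomp}\mathcal F$, so the whole content is the reverse inequality $\inf_{\comp}\mathcal F\le\inf_{\largercomp}\mathcal F$. To get it I would show that every competitor $(\sigma,\psi)\in\largercomp$ can be replaced by a competitor $(\tilde\sigma,\tilde\psi)\in\comp$ with $\mathcal F(\tilde\sigma,\tilde\psi)\le\mathcal F(\sigma,\psi)$. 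A preliminary reduction is useful: replacing $\psi$ by $\max\{\psi,0\}$ lowers none of the three terms of $\mathcal F$ (the area term decreases, the constraint $\psi=0$ on $E(\sigma)$ is preserved, and $\int_{\partial\Om}|\psi-\varphi|\,d\mathcal H^1$ does not increase because $\varphi\ge0$), so I may assume $\psi\ge0$.

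The construction is a convexification (\emph{cut and paste}) of each $E(\sigma_i)$. Set $\tilde E_i:=\mathrm{conv}(E(\sigma_i))$, which is still contained in $\overline\Om$ because $\Om$ is convex, and let $\tilde\sigma_i$ be the curve parametrizing $\partial\tilde E_i\setminus\nullb_i\Om$ furnished by Corollary \ref{lm:charact_convex_sets2}; then $E(\tilde\sigma_i)=\tilde E_i$ is convex, so condition \ref{(i)} holds. The region added to $E(\sigma_i)$ is a union of \emph{pockets} $P_i:=\tilde E_i\setminus E(\sigma_i)$, each bounded by a concave arc $D\subset\sigma_i([0,1])$ and by the corresponding chord $C\subset\tilde\sigma_i([0,1])$, the two sharing their endpoints. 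I then define $\tilde\psi:=0$ on $\bigcup_i\tilde E_i$ and $\tilde\psi:=\psi$ elsewhere; keeping $\tilde\psi=\psi$ outside the pockets is what makes the comparison local.

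The energy estimate is the heart of the matter and is done pocket by pocket. The term $-|E(\sigma)|$ decreases by $\sum_i|P_i|$ under convexification. Over a single pocket, the old surface $\Sigma^{\mathrm old}$ (the generalized graph of $\psi$ over $P_i$ together with the vertical wall of height $\psi$ over the concave arc $D$) is replaced by the flat floor at height $0$ over $P_i$, of area $|P_i|$, together with the vertical wall $\Sigma_C$ of area $\int_{C}\psi\,d\mathcal H^1$ over the chord. The contributions $|P_i|$ from the flat floor cancel exactly the gain $-|P_i|$ coming from $-|E(\sigma)|$, so that
\[
\mathcal F(\tilde\sigma,\tilde\psi)-\mathcal F(\sigma,\psi)
=\sum_{i}\Big(\int_{C_i}\psi\,d\mathcal H^1-\mathrm{Area}(\Sigma^{\mathrm old}_i)\Big).
\]
Thus it suffices to prove $\int_{C}\psi\,d\mathcal H^1\le\mathrm{Area}(\Sigma^{\mathrm old})$ for each pocket. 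Here I would use a contraction argument: let $V$ be the vertical plane containing the chord $C$, so that $\Sigma_C\subset V$ with $\mathrm{Area}(\Sigma_C)=\int_C\psi\,d\mathcal H^1$; the orthogonal projection $\pi_V$ onto $V$ is $1$-Lipschitz, hence area non-increasing, and since $D$ and $C$ share endpoints and $\psi\ge0$ one checks by a covering/continuity argument that $\pi_V(\Sigma^{\mathrm old})\supseteq\Sigma_C$. Therefore
\[
\int_{C}\psi\,d\mathcal H^1=\mathrm{Area}(\Sigma_C)\le \mathrm{Area}\big(\pi_V(\Sigma^{\mathrm old})\big)\le \mathrm{Area}(\Sigma^{\mathrm old}),
\]
which yields $\mathcal F(\tilde\sigma,\tilde\psi)\le\mathcal F(\sigma,\psi)$. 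To make the singular part $|D^s\psi|$ rigorous, this comparison should be phrased on the reduced boundary $\partial^*SG_\psi$, i.e.\ on the generalized graph $\mathcal G_\psi$ and its vertical parts, rather than on smooth pieces.

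The hard part I expect is \textbf{preserving admissibility}: convexifying all the $E(\sigma_i)$ at once may destroy the disjoint-interiors condition \ref{ii'}, since convex hulls of sets with disjoint interiors can overlap. I would resolve this either by performing the surgery one index at a time (verifying that the above estimate survives each step) or, following the route announced in the introduction, by applying the construction to a minimizing sequence and passing to the limit through the compactness and lower semicontinuity of Lemma \ref{lem:lower_semicontinuity_of_F}; one must also treat separately the pockets abutting $\partial\Om$, where the boundary term $\int_{\partial\Om}|\psi-\varphi|\,d\mathcal H^1$ is affected. Granting this, taking the infimum over $(\sigma,\psi)\in\largercomp$ gives $\inf_{\comp}\mathcal F\le\inf_{\largercomp}\mathcal F$, hence equality; combined with Theorem \ref{thm:existence} this produces $(\sigmamin,\psimin)\in\comp$ realizing $\inf_{\largercomp}\mathcal F$. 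Finally, for the \emph{moreover} assertion I would argue by contradiction using the very same estimate: since $(\sigmamin,\psimin)$ minimizes $\mathcal F$ over all of $\largercomp$, if some connected component of $E(\sigmamin)$ were non-convex, convexifying that component would yield a competitor in $\largercomp$ with strictly smaller energy, contradicting minimality; hence every connected component of $E(\sigmamin)$ is convex.
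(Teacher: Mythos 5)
Your overall route is the same as the paper's: prove $\inf_{\comp}\mathcal F=\inf_{\largercomp}\mathcal F$ by an energy-non-increasing modification of an arbitrary competitor and then invoke Theorem \ref{thm:existence}; and your chord inequality $\int_C\psi\,d\mathcal H^1\le\mathcal H^2(\Sigma^{\rm old})$, obtained by projecting onto the vertical plane through the chord, is a legitimate geometric variant of the paper's trace estimate (Lemma \ref{lem_prelimiary}, proved there by slicing graph currents along lines orthogonal to the chord). The genuine gap is exactly the point you flag as ``the hard part,'' and neither of your two suggested repairs closes it. Taking $\tilde E_i:={\rm conv}(E(\sigma_i))$ separately for each $i$ can produce sets with overlapping interiors (for instance when $E(\sigma_j)$ sits inside a concavity of $E(\sigma_i)$, a configuration allowed by \ref{ii'} and illustrated in the paper's figures), and then the modified pair violates \ref{ii'}, so it is not admissible even in $\largercomp$, let alone in $\comp$. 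Doing the surgery one index at a time does not remove the obstruction: the overlap of the final family of hulls is independent of the order of the operations. Passing to a minimizing sequence does not help either, since each modified term of the sequence must already belong to $\comp$ before Lemmas \ref{lem:compactness} and \ref{lem:lower_semicontinuity_of_F} can be applied. What is missing is the merging procedure that the paper carries out in Lemma \ref{lem:convexification_bis}: one convexifies the connected components of the whole set $E(\sigma)$, and whenever a hull meets other components one replaces the entire cluster by the convex hull of its union, iterating; this terminates in finitely many steps (each merge strictly decreases the number of sets) and yields pairwise disjoint closed convex sets $\widetilde F_1,\dots,\widetilde F_{\tilde n}$ containing $E(\sigma)$. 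Admissible curves of $\convexcurves$ are then rebuilt a posteriori: among the pairs $(q_i,p_{i+1})$ assigned to a given $\widetilde F_j$, all but one of the curves are taken to be the chords $\overline{q_ip_{i+1}}$ and the last one traces the remaining part of $\partial\widetilde F_j$; each resulting $E(\sigma_i)$ is convex, being an intersection of convex sets with half-planes.

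There are also two defects in the energy bookkeeping. First, the exact cancellation you claim between the floors $|P_i|$ and the gains $-|P_i|$ fails precisely when a pocket of $E(\sigma_i)$ contains part of another component: there $\psi$ already vanishes and that region was already counted in $|E(\sigma)|$, so the gain in $-|E(\sigma)|$ is only $|P_i\setminus\bigcup_j E(\sigma_j)|$, and what you need is the strengthened inequality $\int_C\psi\,d\mathcal H^1\le\mathcal H^2(\Sigma^{\rm old})-|V|$ with $V:=P_i\cap\bigcup_{j\ne i}E(\sigma_j)$; this is exactly the role of the term $-|E_{B_r}|$ in Lemma \ref{lem_prelimiary} and of the set $V$ in the proof of Lemma \ref{lem:convexification}. (Your projection proof can deliver it, because the flat floor over $V$ projects onto an $\mathcal H^2$-null subset of the vertical plane, but this must be stated and used.) Second, the pockets meeting $\partial\Om$ cannot simply be ``treated separately'' and postponed: $\sigma_i$ may run along $\partial^D\Om$, and the interaction with the Dirichlet term is what the paper neutralizes once and for all by extending $\psi$ as $\widehat\varphi$ to a ball $B\supset\overline\Om$ and rewriting $\mathcal F(\sigma,\psi)=\mathcal A(\psi;B)-|E(\sigma)|-\mathcal A(\psi;B\setminus\overline\Om)$, see \eqref{eq:equivalent_forumulation_of_F}. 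Finally, the strict inequality invoked in your closing argument for the convexity of the components of $E(\sigma)$ is not justified as written (the pocket estimate is not strict in general, and the convexified competitor again faces the admissibility problem); fortunately the Corollary itself only requires the equality of the two infima, which, combined with Theorem \ref{thm:existence} --- or directly with Theorem \ref{teo:convexification} --- gives the claim.
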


For the reader convenience 
we split the proof of Theorem \ref{teo:convexification} 
into a sequence of intermediate results: Lemmas \ref{lem_prelimiary}, \ref{lem:convexification}, \ref{lem:convexification_bis},
and the conclusion.
First we need to introduce some notation.

Let $(\sigma,\psi)\in \largercomp$.
We fix an extension $\widehat \varphi\in W^{1,1}(B)$ 
of $\varphi$ on an open ball $B\supset\overline\Omega$. 
Extending $\psi$ in  $B \setminus \overline\Omega$ as $\widehat \varphi$
(still denoting by $\psi$ such an extension), we can 
rewrite $\mathcal F(\sigma,\psi)$ as
\begin{equation}\label{eq:equivalent_forumulation_of_F}
{\mathcal F}(\sigma,\psi)=\mathcal A(\psi;B)-|E(\sigma)|-\mathcal A(\psi;B\setminus \overline\Omega).
\end{equation}

\begin{lemma}\label{lem_prelimiary}
	Let  $u\in BV(\R\times (0,+\infty))$ be a nonnegative function with compact support in an open  ball   $B_r$. Then
\begin{equation}\label{eq:trace}
\int_{(\R\times\{0\})\cap B_r}u(s)\;d\mathcal H^1(s)\leq \mathcal A(u;B_r\cap (\R\times(0,+\infty)))-|E_{B_r}|,
\end{equation}
where
$$E_{B_r}:=\{x\in B_{r}\cap (\R\times (0,+\infty)):u(x)=0\}.$$
\end{lemma}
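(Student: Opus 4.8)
The plan is to reduce the two--dimensional estimate to a one--dimensional one by slicing in the vertical direction $x_2$, after first rewriting the right--hand side transparently. Write $U:=B_r\cap(\R\times(0,+\infty))$. Since the approximate gradient of a $BV$ function vanishes $\mathcal L^2$--a.e.\ on any of its level sets, we have $\nabla u=0$ a.e.\ on $E_{B_r}=\{u=0\}$, whence $\int_{E_{B_r}}\sqrt{1+|\nabla u|^2}\,dx=|E_{B_r}|$ and
\begin{equation*}
\mathcal A(u;U)-|E_{B_r}|=\int_{\{u>0\}\cap U}\sqrt{1+|\nabla u|^2}\,dx+|D^s u|(U).
\end{equation*}
It then suffices to bound the trace integral by this quantity. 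Geometrically the left--hand side $\int u\,d\mathcal H^1$ is the area of the vertical wall of the subgraph of $u$ standing over the segment $(\R\times\{0\})\cap B_r$, and the goal is to dominate it by the ``lateral'' area of the graph over the set $\{u>0\}$; the subtraction of $|E_{B_r}|$ reflects exactly that over $\{u=0\}$ the graph is horizontal and contributes no descent.

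The one--dimensional building block I would establish is: if $g\in BV((0,+\infty))$ is nonnegative and compactly supported, with trace $g_0\ge0$ at $0^+$, then
\begin{equation*}
g_0\ \le\ \int_{\{g>0\}}\sqrt{1+|g'|^2}\,ds+|D^s g|\big((0,+\infty)\big).
\end{equation*}
To prove it, let $g^*$ be the precise representative, which admits one--sided limits: as $a\to0^+$ one has $g^*(a)\to g_0$, while $g^*(b)\to0$ as $b\to+\infty$ by compact support, so that $|Dg|((0,+\infty))\ge g_0$. Since $g'=0$ a.e.\ on $\{g=0\}$, we get $|Dg|((0,+\infty))=\int_{\{g>0\}}|g'|\,ds+|D^s g|((0,+\infty))\le \int_{\{g>0\}}\sqrt{1+|g'|^2}\,ds+|D^s g|((0,+\infty))$, which is the claim. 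Restricting the absolutely continuous term to $\{g>0\}$ is precisely what will yield the $-|E_{B_r}|$ after reassembling.

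Finally I would assemble the estimate by slicing. For a.e.\ $x_1$ the slice $g_{x_1}:=u(x_1,\cdot)$ lies in $BV$, its derivative is the corresponding slice of $D_2u$, and by the standard identification of boundary traces with slice traces (see \cite{AFP}) its trace at $0^+$ equals $u(x_1,0)$ for a.e.\ $x_1$. Applying the one--dimensional inequality to each $g_{x_1}$ and integrating in $x_1$, Fubini's theorem together with the compatibility of the singular part with slicing gives
\begin{align*}
\int_{(\R\times\{0\})\cap B_r}u\,d\mathcal H^1
&\le\int_{\{u>0\}\cap U}\sqrt{1+|\partial_2 u|^2}\,dx+|D^s_2 u|(U)\\
&\le\int_{\{u>0\}\cap U}\sqrt{1+|\nabla u|^2}\,dx+|D^s u|(U)=\mathcal A(u;U)-|E_{B_r}|,
\end{align*}
where in the last line I used $|\partial_2 u|\le|\nabla u|$ and $|D^s_2 u|\le|D^s u|$. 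The only genuinely delicate points, which I would invoke rather than recompute, are the identification of the trace of $u$ on $\{x_2=0\}$ with the traces of its one--dimensional slices and the fact that the absolutely continuous/singular splitting passes to slices; both are classical facts of $BV$ theory, and they are the main (technical) obstacle to a fully self--contained argument.
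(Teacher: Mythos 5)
Your proof is correct, but it follows a genuinely different route from the paper's. The paper argues by doubling: it introduces the open region $L$ between the generalized graphs of $u$ and $-u$, proves the identity
$2\mathcal{A}(u;B_r\cap(\R\times(0,+\infty)))=\mathcal{H}^2\bigl(Z\cap\partial^*L\bigr)+2|E_{B_r}|$,
and then slices the current carried by $\mathcal{G}_u-\mathcal{G}_{-u}$ with the vertical planes $\{x_1=t\}$: for a.e.\ $t$ the sliced $1$-current has boundary $\delta_{(t,0,u(t,0))}-\delta_{(t,0,-u(t,0))}$, hence mass at least $2u(t,0)$, and integration in $t$ concludes. You instead stay entirely at the level of the $BV$ function: you absorb the term $-|E_{B_r}|$ through the (correct) observation that $\nabla u=0$ a.e.\ on $\{u=0\}$, prove the elementary one-dimensional bound $g_0\le\int_{\{g>0\}}\sqrt{1+|g'|^2}\,ds+|D^sg|((0,+\infty))$, and reassemble via Fubini together with the standard $BV$ slicing theorems (the absolutely continuous and singular parts of $D_2u$, and the boundary trace, all pass to a.e.\ one-dimensional slice). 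The two arguments slice along the same family of vertical lines, and both must at some point identify the trace of $u$ on $\R\times\{0\}$ with its limit along a.e.\ slice --- the paper justifies this by strict approximation with smooth maps, you by quoting the trace--slice compatibility from \cite{AFP}; so the technical burden is comparable. What each approach buys: yours is more elementary and avoids currents, subgraphs and the perimeter identity altogether, and it makes the role of the zero set completely transparent (it is exactly where the graph is horizontal); the paper's current-theoretic doubling, on the other hand, is of a piece with the machinery used in the rest of the paper (subgraph currents, symmetrization, Meeks--Yau comparison), where the same geometric picture of the vertical wall over the segment reappears.
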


Notice that the 
function $u$ is defined only on the half-plane $\R\times (0,+\infty)$, and 
in \eqref{eq:trace} the symbol $u(s)$ denotes its trace on the line $\R\times \{0\}$.

\begin{proof}

We denote by $x=(x_1,x_2)\in \R^2$ the coordinates in $\R^2$.
Set $\openhalfplane := \R \times (0,+\infty)$, $Z := (B_r\cap \openhalfplane) \times \R$.
			Let 
	\begin{equation*}
	L:=\{(x,y)\in Z \colon x\in R_u,\ y\in(-u(x),u(x))\} \subset \R^3,
	\end{equation*}
where $R_u$ is the set of regular points of $u$.	We have, recalling the notation in Section \ref{notation},
	\begin{equation}\label{eq:claim_step1*}
	\begin{split}
	2\mathcal{A}(u; B_r\cap\openhalfplane)&=\mathcal{A}(u; B_r\cap\openhalfplane)+\mathcal{A}(-u; B_r\cap\openhalfplane)
\\
	&=\mathcal{H}^2(\partial^*(Z \cap SG_{u}))
+\mathcal{H}^2(\partial^*(Z \cap SG_{-u})
)\\
	&=\mathcal{H}^2(Z \cap \partial^*L)+
	2\vert E_{B_r}\vert.
	\end{split}
	\end{equation}
	
Suppose $B_r\cap(\R\times\{0\})=(a,b)\times \{0\}$.  Then, looking at $\mathcal{G}_u$ as an integral current, a slicing argument yields 
	\begin{equation}\label{eq:claim_step2*}
	\begin{split}
	\mathcal{H}^2(Z \cap \partial^*L)&\ge\int_a^b
\mathcal{H}^1(Z \cap\{x_1=t\}\cap \partial^* L)dt\\
	&=\int_a^b\mathcal{H}^1(Z\cap\{x_1=t\}\cap (\textrm{spt}(\mathcal{G}_u-\mathcal{G}_{-u})))dt\\
	&\ge\int_a^b2u(t,0)dt
	= 2\int_{(\R\times\{0\})\cap B_r}u(s)\;d\mathcal H^1(s)\,,
	\end{split}
	\end{equation}
	where the last inequality follows from the following fact:  If we denote by $\jump{\mathcal G_u}_t$ the slice of the current $\jump{\mathcal G_{u}}$ on the line $\{x_1=t\}$, then
	$$\partial \jump{\mathcal G_{u}}_t=\delta_{(t,0,u(t,0))}-\delta_{(t,s_t,0)}
\qquad 
{\rm for~ a.e.~} t\in(a,b), 
$$
	where $s_t\ge0 $ is such that $(t,s_t)=B_r\cap (\{t\}\times \R^+)$, and in 
writing $\delta_{(t,s_t,0)}$ we are using that $u$ has compact support in $B_r$.
	This can be seen, for instance, by approximation of $u$ by smooth maps\footnote{With respect to the strict convergence of $BV(
B_r \cap (\R\times\{0\}))$, which guarantees the approximation 
also of the trace of $u$ on $\partial \big(B_r \cap (\R\times\{0\})\big)$.}.
	Therefore
	\begin{equation*}
	\partial(\jump{\mathcal{G}_u}_t-\jump{\mathcal{G}_{-u}}_t)=\delta_{(t,0,u(t,0))}-\delta_{(t,0,-u(t,0))}
\qquad	{\rm for~ a.e.~} t\in(a,b).
	\end{equation*}
This justifies the last inequality in \eqref{eq:claim_step2*}, and the proof is achieved.
	\end{proof}
 
 We now turn to two 
technical lemmas which are necessary to prove Theorem \ref{teo:convexification}.
 We need to introduce a class of sets whose boundaries are regular enough so that  the trace of a $BV$ function on them is well-defined. 
 Precisely we say that an open subset of $\R^2$ is \textit{piecewise Lipschitz} if 
 it can be written as the union of a finite family of  (not necessarily disjoint) Lipschitz open sets.
 Notice that, by \eqref{def:relaxed_area} if  $V\subset\subset U$ is a piecewise Lipschitz subset  of  an open and bounded $U\subset\R^2$,  then 
 \begin{equation}\label{def:relaxed_area1}
 	\mathcal{A}(\psi,\overline V)=\mathcal{A}(\psi, V)+\int_{\partial V}|\psi^+-\psi^-|d\mathcal{H}^1,
 \end{equation}
 where $\psi^+$ (respectively $\psi^-$) denotes the trace of $\psi\res V$ (respectively $\psi\res(U\setminus\overline V)$) on $\partial V$.

\begin{lemma}[\textbf{Reduction of energy, I}]
\label{lem:convexification}
	For $N\ge1$ let $F_1,\dots,F_N$ be  
	 nonempty connected subsets of $\overline \Om$,
each $F_i$ being the closure of a piecewise Lipschitz 
set, with $F_i \cap F_j= \emptyset$ for $i,j \in \{1,\dots,
N\}$, $i\neq j$. 
	Let $\psi\in BV(B)$ satisfy
	\begin{equation}\label{eq:psi_0_union}
	\psi=0\quad\text{a.e. in}\quad G:=\bigcup_{i=1}^NF_i\quad\text{and}\quad
	\psi=\widehat{\varphi}\quad\text{in}\quad B\setminus{\Om}\,.
	\end{equation}
 Then, for any $i\in\{1,\dots,N\}$,
\begin{align*}
\mathcal A(\psi_i^\star;B)-|G_i^\star|-\mathcal A(\psi_i^\star;B\setminus \overline\Omega)\leq  
\mathcal A(\psi;B)-|G|-\mathcal A(\psi;B\setminus \overline\Omega),
\end{align*}
where 
\begin{equation}\label{def:E_star,Psi_star}
G_i^\star:=\bigcup_{j\neq i}F_j\cup {\rm conv}(F_i)\quad\text{and}\quad
	\psi_i^\star :=\begin{cases}
	0&\text{{\rm in} } {\rm conv}(F_i)\\
	\psi&\text{{\rm otherwise}.}
	\end{cases}
\end{equation}
\end{lemma}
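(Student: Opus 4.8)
The plan is to localize the modification to $C:={\rm conv}(F_i)$, reduce the claim to a boundary--trace inequality on $C$, and then prove that inequality by slicing perpendicular to the straight \emph{chords} that form the new part of $\partial C$, in the spirit of Lemma \ref{lem_prelimiary}.

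First I would fix $i$, set $V:={\rm int}(C)$, and dispose of the degenerate case in which $F_i$ lies on a line (then $C=F_i$, so $\psi_i^\star=\psi$, $G_i^\star=G$ and there is nothing to prove). Otherwise $V\neq\emptyset$ and, since $\Omega$ is convex and $F_i\subseteq\overline\Omega$, we have $C\subseteq\overline\Omega$, hence $V\subset\subset B$ and $V\subseteq\Omega$. Because $\psi_i^\star=\psi$ outside $C$ and $\psi_i^\star=0$ on $V$, the modification is supported in $C$; in particular the terms $\mathcal A(\cdot;B\setminus\overline\Omega)$ agree for $\psi$ and $\psi_i^\star$. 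A convex set has Lipschitz boundary, so I may apply \eqref{def:relaxed_area1} to both functions on $\overline V$, splitting $\mathcal A(\cdot;B)$ into the contribution on $B\setminus\overline V$ (equal for the two), the contribution on $V$, and the jump on $\partial V=\partial C$. Writing $\psi^+,\psi^-$ for the traces of $\psi$ on $\partial C$ from inside and outside, and using $(\psi_i^\star)^+=0$, $(\psi_i^\star)^-=\psi^-$, $\mathcal A(\psi_i^\star;V)=|V|$, together with $|G_i^\star|-|G|=|V\setminus G|=|V|-|V\cap G|$ (see \eqref{def:E_star,Psi_star}), the asserted inequality reduces to
\begin{equation*}
\mathcal A(\psi;V)\ \ge\ |V\cap G|+\int_{\partial V}\big(|\psi^-|-|\psi^+-\psi^-|\big)\,d\mathcal H^1 .
\end{equation*}

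Next I would clean this up. Replacing $\psi$ by $|\psi|$ on $V$ does not increase $\mathcal A(\cdot;V)$ (the absolutely continuous part is unchanged while jumps can only shrink), leaves $|V\cap G|$ untouched (the zero set only grows), and by $\big||\psi^+|-|\psi^-|\big|\le|\psi^+-\psi^-|$ it only enlarges the right--hand side; hence it suffices to treat $\psi\ge0$. Bounding $|\psi^-|-|\psi^+-\psi^-|\le\psi^+$, I am reduced to the trace inequality
\begin{equation*}
\mathcal A(\psi;V)\ \ge\ |V\cap G|+\int_{\partial V}\psi^+\,d\mathcal H^1,\qquad \psi\ge0,\ \ \psi=0\ \text{on}\ G\supseteq F_i .
\end{equation*}
Two geometric facts drive the proof. \emph{(a)} Since $F_i$ is connected, every line meeting $V={\rm int}({\rm conv}(F_i))$ already meets $F_i$: otherwise $F_i$ would lie in one of the two open half--planes cut by the line, forcing $C$ into that same half--plane and contradicting that the line crosses $V$. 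Thus along each such line the corresponding slice of $\psi$ attains the value $0$. \emph{(b)} $\partial C\setminus\overline{F_i}$ is a union of straight segments (the chords of the convex hull), and since $\psi^+=0$ on $\partial C\cap F_i$, the trace $\psi^+$ is supported on these chords. On each chord $\Gamma$ I would then run the doubling argument of Lemma \ref{lem_prelimiary}: with $L=\{(x,y):x\in V,\ |y|<\psi(x)\}$ one has $2\mathcal A(\psi;V)=\mathcal H^2(\partial^*L\cap(V\times\R))+2|V\cap\{\psi=0\}|$, and slicing $\partial^*L$ by lines \emph{orthogonal} to $\Gamma$ is sharp there, the projection factor $|\nu_{\partial C}\cdot e|$ being $1$ on $\Gamma$; by \emph{(a)} each slice runs from $\pm\psi^+$ on $\Gamma$ down to $0$ at its crossing of $F_i$, so its length is at least $2\psi^+$, producing $\int_\Gamma\psi^+$ with the correct constant, while the flat part $|V\cap\{\psi=0\}|\ge|V\cap G|$ supplies the remaining term.

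The main obstacle is the passage from this single--chord estimate to the full inequality. The chords generically point in different directions, so no single slicing direction is sharp for all of them at once; instead one must assemble the bound chord by chord over the disjoint ``pockets'' (the connected components of ${\rm int}(C)\setminus F_i$ capped by the chords), simultaneously accounting for the inner traces of $\psi$ on $\partial F_i\cap V$, which are precisely the jump terms absorbed when one writes $\mathcal A(\psi;V)\ge|V\cap G|+\mathcal A(\psi;V\setminus\overline G)$. Verifying the disjointness of the orthogonal ``shadow'' regions that makes the slicewise estimates additive, and handling the possibly infinitely many chords when $\partial F_i$ is curved—most cleanly by first approximating $F_i$ by sets with polygonal convex hull—is the technical core; here the convexity of $C$ and the piecewise--Lipschitz regularity of the $F_j$ (which guarantees well--defined traces on all the relevant boundaries) are exactly what make the bookkeeping close.
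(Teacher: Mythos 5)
Your proposal rests on exactly the paper's key tool: Lemma \ref{lem_prelimiary}, applied in coordinates in which a straight chord of $\partial\,{\rm conv}(F_i)$ lies on the slicing axis, with the flat region $\{\psi=0\}$ supplying the area term. What differs is the global organization, and the difference lands precisely on what you call the main obstacle. You reduce the lemma to a single trace inequality on all of $\partial\,{\rm conv}(F_i)$ and must then run the chord estimates simultaneously; the paper never forms this global inequality. Instead it modifies $\psi$ one chord at a time (Step 1: set $\psi=0$ on the single pocket $U$ enclosed between $\partial F_i$ and one segment $l$, proving \eqref{eq:claim_bis*} by your slicing argument), and then iterates countably often (Step 2), concluding by weak-$*$ compactness, lower semicontinuity of $\mathcal A(\cdot;B)$, and $|G_h|\to|G_i^\star|$. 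So sequential iteration plus semicontinuity is the paper's resolution of your assembly problem. Within your own scheme, though, the assembly is easier than you fear: the pockets are disjoint open sets, each application of Lemma \ref{lem_prelimiary} (to $\psi\res W$ extended by zero) consumes only $\mathcal A(\psi;W)$ plus the jump of $\psi$ across $V\cap\partial W\subset\partial F_i$, and these are mutually disjoint pieces of the measure $\mathcal A(\psi;\cdot)$, so countable additivity sums all the per-pocket bounds at once. No disjointness of ``shadow'' regions is needed (each slicing stays inside its own pocket), the polygonal approximation is an unnecessary detour that would itself require a nontrivial convergence of traces, and holes of $F_i$ need no filling in your scheme since $\mathcal A(\psi;H)\ge|H|\ge|H\cap G|$, whereas the paper removes them first.

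Two facts you assert without justification do need arguments, and as written they are the genuine gaps. (i) That the inner trace $\psi^+$ vanishes $\mathcal H^1$-a.e. on $\partial\,{\rm conv}(F_i)\cap F_i$: this is not automatic, since near such a point the inside of the hull could a priori be occupied mostly by a pocket where $\psi\neq0$. It is true by a blow-up argument: at $\mathcal H^1$-a.e. such point the supporting line of the convex hull and the approximate tangent line of $\partial F_i$ (existing a.e. by the piecewise Lipschitz hypothesis) must coincide, forcing $F_i$ to have full relative density inside the hull, whence the trace is $0$. The paper needs the analogous fact (vanishing of the trace of $\psi$ on $\partial U\setminus l$ from the $F_i$ side) and also treats it lightly, so this is not a defect relative to the paper. (ii) That each pocket is capped by exactly one chord, so that each pocket has a single sharp slicing direction: this follows from the connectedness of $F_i$ by a Jordan-curve separation argument, and is used tacitly by the paper as well. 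With (i), (ii), and the countable-additivity assembly above, your route closes and is a legitimate, slightly more ``static'' alternative to the paper's iteration.
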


\begin{proof}
Fix $i\in\{1,\dots,N\}$.
	By the convexity of $\Omega$, we have $\psi=\psi_i^\star$ in $B\setminus \overline\Omega$, hence it suffices to show that 
	\begin{equation*}
\mathcal A(\psi_i^\star;B)-|G_i^\star|\leq\mathcal A(\psi;B)-|G|.
	\end{equation*}
We start by observing that we may assume $F_i$ to be simply connected. 
Indeed, if not, we can replace it  with the set obtained by filling the 
holes of $F_i$, and by setting $\psi$ equal to zero in the holes. 
This procedure reduces the energy. 
Indeed, since  $F_i$ is piecewise Lipschitz, any hole $H$ of 
it satisfies $\partial H\subset \cup_{j=1}^n\partial A_j$ 
where $A_j$'s are the Lipschitz sets whose union is $F_i$. 
Hence the trace of $\psi\res H$ on $\partial H$ is well-defined, 
and the external trace $\psi\res (B\setminus H)$ vanishes. 

We have that $(\partial {\rm conv}(F_i))\setminus\partial F_i$ is a countable union of segments. We will next modify $\psi$ by iterating at most countably many operations, setting $\psi=0$ in the region between each of these segments and $\partial F_i$.
	\medskip

		\textit{Step 1: Base case.} Let $l$ be one of such segments, and
	$U$ be the open region enclosed between $\partial F_i$ and $l$.
	We define $\psi'\in BV(\Om)$ as
	\begin{equation*}
	\psi':=\begin{cases}
	0&\text{in}\ \ U\\
	\psi &\text{otherwise}\,.
	\end{cases}
	\end{equation*}
	We claim that
	\begin{equation}\label{eq:claim_bis*}
	\mathcal{A}(\psi';B)-|G'|\le\mathcal{A}(\psi;B)-|G|\,,
	\end{equation}
	where $G':= G \cup
	\overline U$. 
	To prove the claim we introduce the sets 
	$$  
H:={\rm int}(F_i\cup U),\qquad 
	V:=U\cap(\cup_{j\neq i}F_j).
	$$
	Note that $H$ is a piecewise Lipschitz set.
	By construction
	\begin{equation*}
	|G'|=|{H}|+ |\cup_{j\neq i}F_j|-|V|\,,
	\end{equation*}
	and \eqref{eq:claim_bis*} will follow if we show that 
	\begin{equation*}
		\begin{split}
		\mathcal{A}(\psi';B)-|{H}| \le \mathcal{A}(\psi;B)-|\cup_j F_j|+|\cup_{j\neq i}F_j|-|V|=\mathcal{A}(\psi;B)-|F_i\cup V|\,.
		\end{split}
	\end{equation*}
	Since $|H|=|F_i\cup V|+|U\setminus V|$, this can also be written as 
	\begin{equation*}
	\mathcal{A}(\psi';B)
	\le\mathcal{A}(\psi;B)+|U\setminus V|\,.
	\end{equation*}
	In turn $\mathcal{A}(\psi';B)=\mathcal{A}(\psi';\overline U)+\mathcal{A}(\psi';B\setminus \overline U)$ (and similarly for $\psi$), so we have reduced ourselves with proving 
	\begin{equation}\label{eq:claim_tris**}
	\mathcal{A}(\psi';\overline U)
	\le\mathcal{A}(\psi;\overline U)+|U\setminus V|\,.
	\end{equation}
	In view of the definition of $\psi'$ which is zero
 in $U$, we have\footnote{Notice that we use the precise integral formula \eqref{def:relaxed_area1} thanks to the boundary regularity of $U$. More precisely we have $\partial U\setminus l\subset \partial F_i\subset \cup_{j=1}^n\partial A_j$, where $A_j$'s are the Lispchitz sets whose union is $F_i$.}
	$\mathcal{A}(\psi';\overline U)=\int_l|{\psi}^+|d\mathcal H^1+|U|$ ($\psi^+$ denoting the trace of $\psi\res (B\setminus U)$ on the segment $l$) implying that \eqref{eq:claim_tris**} is equivalent to
	\begin{equation*}
\int_l|{\psi}^+|d\mathcal H^1
	\le\mathcal{A}(\psi;\overline U)-|V|\,.
	\end{equation*}
	Finally, if $\psi_U$ denotes the trace of $\psi\res U$ on $l$, we write $\mathcal{A}(\psi;\overline U)=\mathcal{A}(\psi;\overline U\setminus l)+\int_l|\psi^+-\psi_U|d\mathcal H^1$, and the expression above is equivalent to 
		\begin{equation}\label{eq:claim_tris*}
	\int_l|{\psi}^+|d\mathcal H^1
	\le\int_l|\psi^+-\psi_U|d\mathcal H^1+\mathcal{A}(\psi;\overline U\setminus l)-|V|\,.
	\end{equation}
	We now prove \eqref{eq:claim_tris*}.
Fix a Cartesian coordinate system $(x_1,x_2)$ so that $l$ belongs to
 the $x_1$-axis and $U$ belongs to the half-plane $\{x_2>0\}$. 	Let $u$ be an extension of $\psi$ in $\R\times(0,+\infty)$ which vanishes outside $U$.  
	Lemma \ref{lem_prelimiary},
	applied to $u$ with the ball $B_r=B$, implies
	$$	\int_l|{\psi}_U|d\mathcal H^1=\int_{\{x_2=0\}\cap B}u \;d\mathcal H^1\leq \mathcal A(u;B\cap (\R\times (0,+\infty)))-|E_B|\leq \mathcal A(\psi;\overline U\setminus l)-|V|.$$
	Here the last inequality follows by recalling that $\psi$ (and thus $u$) vanishes on $V$.
	From this and the inequality $\int_l|{\psi}^+|d\mathcal H^1\leq \int_l|\psi^+-\psi_U|d\mathcal H^1+\int_l|{\psi}_U|d\mathcal H^1
$ the proof of \eqref{eq:claim_tris*} is achieved, so that \eqref{eq:claim_bis*} follows.

	\textit{Step 2: Iterative case.}  
	We set $\partial({\rm conv}(F_i))
	\setminus\partial F_i=\cup_{j=1}^\infty l_j$ with $l_j$ mutually disjoint segments. 
	For every $h\ge1$ we define the pair  $(\psi_{h},G_{h})$ as follows:
	\begin{itemize}
		\item if $h=1$ 
		\begin{equation*}
		\psi_{1}:=\begin{cases}
		0&\text{in}\ U_1\\
		\psi&\text{otherwise,}
		\end{cases}\qquad \text{and}\quad
		G_{1}:=G\cup \overline U_1\,,
		\end{equation*}
	where $U_1$ is the open region enclosed between $\partial F_i$ and $l_1$. We also define $H_1:={\rm int}(\overline{F_i\cup U_1})$.
		\item if $h\ge2$ 
		\begin{equation*}
		\psi_{h}:=\begin{cases}
		0&\text{in}\ U_h\\
		\psi_{h-1}&\text{otherwise,}
		\end{cases}\quad\text{and}\quad
		G_{h}:=G_{h-1}\cup\overline U_h \,,
		\end{equation*}
		where $U_h$ is the open region enclosed between $\partial H_{h-1}$ and $l_h$ and $H_h:={\rm int}(\overline{H_{h-1}\cup U_h})$.
	\end{itemize}
	By construction each $H_{h}$ is 
 simply connected and piecewise Lipschitz, $H_{h}\subset H_{h+1}$,  $G_{h}\subset G_{h+1}\subset\overline\Om$ for every $h\ge1$, and moreover
	\begin{equation}\label{limit_E*}
	\lim_{h\to+\infty}	|H_{h}|=|{\rm conv}(F_i)|\,,\qquad 
	\lim_{h\to+\infty}	|G_{h}|=|G^\star_i|\,,
	\end{equation}
	where $G^\star_i:=\cup_{h=1}^\infty G_h=\cup_{j\neq i}F_j\cup {\rm conv}(F_i) $. For any $h\geq2$ we apply step 1, 
	 and after $h$ iterations we get 
	\begin{equation}\label{iterative_proc*}
	\mathcal{A}(\psi_{h};B)-|G_{h}|\le \mathcal{A}(\psi_{h-1};B)-|G_{h-1}|\le\cdot\cdot\cdot\le 	\mathcal{A}(\psi_{1};B)-|G_{1}|\le	\mathcal{A}(\psi;B)-|G|\,.
	\end{equation}
	In particular,
	\begin{equation*}
	|D\psi_{h}|(B)\le\mathcal{A}(\psi_{h};B)\le \mathcal{A}(\psi;B)+|G_{h}\setminus G|\le \mathcal{A}(\psi;B)+|\Om\setminus G|\,, 
	\end{equation*}
	for all $h\ge1$, and then we easily see that, up to a subsequence, $\psi_{h}\weakstar\psi^\star_i$ in $BV(B)$, where $\psi_i^*$ is defined as in \eqref{def:E_star,Psi_star}.
	Now the lower semicontinuity of $\mathcal{A}(\cdot;B)$ yields
	\begin{equation}\label{lsc_A*}
	\liminf_{h\to+\infty}\mathcal{A}(\psi_{h},B)\ge
	\mathcal{A}(\psi_i^\star;B)\,.
	\end{equation}
	Finally, gathering together \eqref{limit_E*}-\eqref{lsc_A*} we infer
	\begin{equation*}
	\mathcal{A}(\psi_i^\star;B)-|G^\star_i|\le
	\liminf_{h\to+\infty}\mathcal{A}(\psi_{h};B)-\lim_{h\to+\infty}|G_{h}|\le
	\mathcal{A}(\psi;B)-|G|\,.
	\end{equation*}
	This concludes the proof.	
\end{proof}
\begin{lemma}[\textbf{Reduction of energy, II}]\label{lem:convexification_bis}
Let $N\ge 1$, $F_1,\dots,F_N,G$ and $\psi$ be
as in Lemma \ref{lem:convexification}. 
Then there exist $\tilde n \in \{1, \dots, N\}$ and mutually disjoint closed convex sets $\widetilde F_1,\dots,\widetilde F_{\tilde n}\subset\overline\Om$  
such that 
\begin{equation}\label{A}
G\subset \bigcup_{i=1}^{\tilde n}\widetilde F_i=:G^\star\,,
\end{equation}
and 
		\begin{equation}\label{B}
			\mathcal{A}(\psi^\star;B)-|G^\star|-\mathcal{A}(\psi^\star;B\setminus \overline\Om)\le \mathcal A(\psi; B)-|G|-\mathcal{A}(\psi;B\setminus\overline{\Om})\,,
	\end{equation}
	where
		\begin{equation}\label{C}
	\psi^\star:=\begin{cases}
	0&\text{in}\ G^\star\\
	\psi&\text{otherwise}\,.
	\end{cases}
	\end{equation}
\end{lemma}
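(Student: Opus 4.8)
The plan is to obtain $G^\star$ by applying Lemma~\ref{lem:convexification} repeatedly, once for each connected piece, and then merging any convex sets that end up touching. First I would apply Lemma~\ref{lem:convexification} with $i=1$ to pass from $(\psi,G)$ to $(\psi_1^\star, G_1^\star)$, where $F_1$ has been replaced by $\mathrm{conv}(F_1)$; the lemma guarantees that the energy $\mathcal A(\cdot;B)-|\cdot|-\mathcal A(\cdot;B\setminus\overline\Omega)$ does not increase and that $G\subset G_1^\star$. I would then apply the lemma again to the index corresponding to $F_2$, convexifying it, and so on through all $N$ original pieces. After these finitely many steps I reach a pair $(\widehat\psi,\widehat G)$ with $\widehat G=\bigcup_{i=1}^N \mathrm{conv}(F_i)$, each summand convex, the energy still controlled from above by the original, and $\widehat\psi=0$ on $\widehat G$, $\widehat\psi=\widehat\varphi$ on $B\setminus\Omega$.

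The remaining issue is that the convexified sets $\mathrm{conv}(F_i)$ need no longer be mutually disjoint, whereas the original $F_i$ were. So next I would address overlaps: whenever two of the current convex sets intersect (or merely touch), I replace the pair by the convex hull of their union, which is again convex, and set $\psi$ to zero on the enlarged set. The key point is that taking the convex hull of a union of two intersecting convex sets only enlarges the zero region, so $|G^\star|$ grows while the area term $\mathcal A(\psi^\star;B)$ cannot increase — this is exactly the same energy bookkeeping as in Lemma~\ref{lem:convexification}, since filling in the region between the old boundaries and the new convex boundary by zero is an energy-reducing operation by the trace inequality of Lemma~\ref{lem_prelimiary}. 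I would phrase this merging as a finite iteration: at each step the number of connected components of the zero set strictly decreases, so after at most $N-1$ merges the process terminates with mutually disjoint closed convex sets $\widetilde F_1,\dots,\widetilde F_{\tilde n}$, with $\tilde n\le N$.

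The main obstacle I expect is the merging step rather than the convexification step: I must verify carefully that replacing two overlapping convex sets by the convex hull of their union is genuinely energy-non-increasing, and that the intermediate sets remain the closures of piecewise Lipschitz sets so that the traces appearing in \eqref{def:relaxed_area1} and in Lemma~\ref{lem_prelimiary} are well-defined. Convexity is preserved under convex hull of a union, and boundedness inside $\overline\Omega$ follows from convexity of $\Omega$ (so $\mathrm{conv}(F_i)\subset\overline\Omega$); the piecewise Lipschitz regularity of a convex set's boundary is automatic. To run Lemma~\ref{lem_prelimiary} on each newly created gap region I again use a local Cartesian frame aligned with each boundary segment of the convex hull lying outside the old sets, exactly as in Step~1 of the proof of Lemma~\ref{lem:convexification}.

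Finally I would assemble the estimates: chaining the finitely many inequalities from the convexification phase with those from the merging phase yields
\begin{equation*}
\mathcal A(\psi^\star;B)-|G^\star|-\mathcal A(\psi^\star;B\setminus\overline\Omega)\le \mathcal A(\psi;B)-|G|-\mathcal A(\psi;B\setminus\overline\Omega),
\end{equation*}
which is \eqref{B}, while the inclusion $G\subset G^\star$ in \eqref{A} is immediate since each step only enlarges the zero set. The definition \eqref{C} of $\psi^\star$ is consistent with the construction because at every stage $\psi$ was set to zero precisely on the growing set $G^\star$. This completes the plan; the only genuinely delicate verification is the energy monotonicity under the convex-hull merge, and everything else is bookkeeping built on Lemmas~\ref{lem_prelimiary} and~\ref{lem:convexification}.
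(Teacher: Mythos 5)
Your overall engine is the right one (finitely many applications of Lemma \ref{lem:convexification}, plus merging of overlapping hulls), but the order in which you apply it creates a genuine gap. Lemma \ref{lem:convexification} is stated, and proved, under the hypothesis that the family $F_1,\dots,F_N$ is \emph{mutually disjoint}. After your first application, the current family is $\{{\rm conv}(F_1),F_2,\dots,F_N\}$, and ${\rm conv}(F_1)$ may well intersect $F_2,\dots,F_N$ — this is exactly the phenomenon you defer to your merging phase. Hence your second application of Lemma \ref{lem:convexification}, convexifying $F_2$ in the presence of ${\rm conv}(F_1)$, is not covered by the lemma's hypotheses, and neither are any of the later ones; the same objection recurs in your merging phase whenever more than one pair of hulls overlaps, since merging one pair would again be an application of the lemma to a family that still contains other intersecting members. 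You yourself flag the energy monotonicity of the merge as ``the only genuinely delicate verification'', but you never prove it, and it cannot simply be quoted from Lemma \ref{lem:convexification} because the disjointness hypothesis fails at that point. (One could try to redo the proof of Lemma \ref{lem:convexification} without disjointness — the place where disjointness enters is the measure bookkeeping $|G'|=|H|+|\cup_{j\neq i}F_j|-|V|$ — but that re-proof is precisely the work your proposal omits.)

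The paper's proof avoids the problem by interleaving the two phases so that every single application of Lemma \ref{lem:convexification} is legitimate. It maintains the invariant that at each stage at most one member of the current family (the most recently created one, indexed by $k$) may intersect the others, while all remaining members are pairwise disjoint. If the $k$-th set is disjoint from everything (case (a)), the next non-convex set is convexified — a legal application, since the whole family is then disjoint. If not (case (b)), the $k$-th set together with all the sets it meets forms a single \emph{connected} closed piecewise Lipschitz set, disjoint from the rest of the family, and the merge is performed by applying Lemma \ref{lem:convexification} to this union. So the ``delicate'' merging estimate is not a new fact needing separate verification: it is the same lemma applied to a connected union, which is only admissible because overlaps are absorbed as soon as they are created. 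Each step either convexifies one more set or strictly decreases the number of sets, so the algorithm terminates in finitely many steps with a disjoint convex family $\widetilde F_1,\dots,\widetilde F_{\tilde n}$, and chaining the inequalities gives \eqref{B}. If you reorganize your argument in this way — merge immediately after any convexification that creates an overlap, rather than postponing all merges to the end — your proof becomes correct.
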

\begin{proof} 
	\textit{Base case: $(h=1)$.} We take the sets
	\begin{equation}\label{sets_base_case}
		{\rm conv}(F_1)\,,\ F_2\,,\ \dots\,,\ F_N\quad\text{and}\quad G_1^\star:=\cup_{i=2}^NF_{i}\cup{\rm conv}(F_1)\,,
	\end{equation}
	and let 
	\begin{equation*}
	\psi_1^\star:=\begin{cases}
0&\text{in}\ G_1^\star\\
\psi&\text{otherwise}\,.
	\end{cases}
	\end{equation*}
	Then by Lemma \ref{lem:convexification},
	\begin{align}\label{base_case}
	\mathcal A(\psi_1^\star;B)-|G_1^\star|-\mathcal A(\psi_1^\star;B\setminus \overline\Omega)\leq  
	\mathcal A(\psi;B)-|G|-\mathcal A(\psi;B\setminus \overline\Omega)\,.
	\end{align}
The next step is not necessary if $N=1$. 

	\textit{Iterative step: $(h>1)$}. Suppose $N >1$.
	 Let $1< h\leq m\le N$ be natural numbers, and let $F_{1,h},\dots,F_{m,h}$ be connected closed subsets of $\overline \Om$ 
with nonempty interior that satisfy the following property: There exists $1\le k < h$ such that:
	\begin{enumerate}[label=($\arabic*$)]
		\item \label{1}$F_{1,h},\dots,F_{k,h}$ are convex;
	\item\label{2} $F_{i,h}\cap F_{j,h}=\emptyset$ for all $i, j\ne k$, $i\ne j$. %$F_{1,h}\,,\dots,F_{k-1,h}\,,F_{k+1,h}\,,\dots,F_{m,h}$ are mutually disjoint.
	\end{enumerate}
Notice that, if $m>1$, for $h=2$ the sets 
	\begin{equation*}
	F_{1,2}:={\rm conv}(F_1)\,,\ F_{2,2}:=F_2\,,\ \dots\,,\ F_{N,2}:=F_N\,,
\end{equation*}
in the base case satisfy \ref{1}, \ref{2} with $m=N$ and $k=1$.
	We then set $I_k:=\{1\le i\le m,\,i\ne k\colon F_{i,h}\cap F_{k,h}\ne\emptyset\}$ and construct a new family of sets using the following algorithm, distinguishing the two cases (a) and (b):
	\begin{itemize}
		\item[(a)] If $I_k=\emptyset$ we define the sets
		\begin{equation*}
				F_{i,h+1}:=\begin{cases}
	F_{i,h}& \text{for}\ i\neq{k+1}\\
	{\rm conv}(F_{k+1,h})&\text{for}\ i=k+1\,,
				\end{cases}
		\end{equation*}
		and 
		\begin{equation*}
		G_{h+1}^\star:=\cup_{i=1}^mF_{i,h+1}\,;
		\end{equation*}
		\item[(b)] if $I_k\ne\emptyset$, 
		up to relabelling the indices, we may assume that
		\begin{align*}
		I_k=\{k_1,k_1+1,\dots k_2\}\setminus\{k\},
	\end{align*} 
			for some $k_1\ne k_2$ with $1\leq k_1\le k\le k_2\leq m$,
		so that 
		$$\{1,\dots,m\}\setminus\{k\}\setminus I_k=\{1,\dots,k_1-1\}\cup\{k_2+1,\dots,m\}.$$ 
	Then we set
	\begin{equation*}
			F_{i,h+1}:=	\begin{cases}
		F_{i,h}& \text{for}\ i=1,\dots,k_1-1\\
		{\rm conv}(F_{k,h}\cup(\cup_{j\in I_k}F_{j,h}))& \text{for}\ i=k_1\\
		F_{i+k_2-k_1,h}& \text{for}\ i=k_1+1,\dots, m-k_2+k_1\,,
			\end{cases}
		\end{equation*}
		and
		\begin{equation*}
		G_{h+1}^\star:=\cup_{i=1}^{m-k_2+k_1}F_{i,h+1}\,.
		\end{equation*}
	\end{itemize}
In both cases (a) and (b) a direct check shows that the produced sets satisfy properties (1) and (2).

	We define also the function
		\begin{equation*}
	\psi_{h+1}^\star:=\begin{cases}
	0&\text{in}\ G_{h+1}^\star\\
	\psi_h^\star&\text{otherwise}\,.
	\end{cases}
	\end{equation*}
Then, by induction, for all $h$ we use Lemma \ref{lem:convexification}, and 
in view of \eqref{base_case} we infer
			\begin{align*}
		\mathcal A(\psi_{h+1}^\star;B)-|G_{h+1}^\star|-\mathcal A(\psi_{h+1}^\star;B\setminus \overline\Omega)\leq  &
		\mathcal A(\psi_h^\star;B)-|G_h^\star|-\mathcal A(\psi_h^\star;B\setminus \overline\Omega)\\
		&\leq  
		\mathcal A(\psi;B)-|G|-\mathcal A(\psi;B\setminus \overline\Omega)\,.
		\end{align*}
		\textit{Conclusion.} If $N=1$ it is sufficient to apply only the base case. If instead $N>1$ after a finite number $h^\star\le N$ of iterations we obtain a collections of mutually disjoint and closed convex sets $F_1:=F_{1,h^\star},\dots,F_{\tilde n}:=F_{\tilde n,h^\star}$ with $1\le \tilde n\le n$ such that 
		$$G\subset \cup_{i=1}^{\tilde n}F_i=:G^\star\,,$$ 
		and 
		\begin{equation*}\mathcal{A}(\psi^\star;B)-|G^\star|-\mathcal{A}(\psi^\star;B\setminus \overline\Om)\le \mathcal A(\psi; B)-|G|-\mathcal{A}(\psi;B\setminus\overline{\Om})\,,
		\end{equation*}
		with
		\begin{equation*}
		\psi^\star:=\psi_{h^\star}^\star=\begin{cases}
		0&\text{in}\ G^\star\\
		\psi&\text{otherwise}\,.
		\end{cases}
		\end{equation*}
\end{proof}

\begin{proof}[Proof of Theorem \ref{teo:convexification}]
	By Theorem \ref{thm:existence} it is enough to show that
	\begin{equation*}
	\inf_{(\sigma,\psi)
\in\largercomp} {\mathcal{F}}(\sigma,\psi)=
\inf_{(\sigma,\psi)\in \comp}
\mathcal{F}(\sigma,\psi)\,.
	\end{equation*}
Since from 
\eqref{eq:W_subset_hatW}
it follows
\begin{equation*}
\inf_{(\sigma,\psi)\in\largercomp}
{\mathcal{F}}(\sigma,\psi)\le\inf_{(\sigma,\psi)\in\comp }\mathcal{F}(\sigma,\psi),
\end{equation*}
we only need to show the converse inequality.
Take a pair $(\bar\sigma,\bar\psi)\in\largercomp$; we 
suitably modify $(\bar\sigma,\bar\psi)$ into a new pair 
$(\sigma,\psi)\in\comp$ satisfying
\begin{equation*}
	\mathcal{F}(\sigma,\psi)\le{\mathcal F}(\bar\sigma,\bar\psi),
\end{equation*}
and this will conclude the proof.

Let $E(\bar\sigma_1),\dots,E(\bar\sigma_n)$  be the closed sets with mutually disjoint interiors corresponding to $\bar\sigma$ (as in \ref*{ii'}
of Section \ref{subsec:setting_of_the_problem}) and let $G:=\cup_{i=1}^nE(\bar\sigma_i)$.  Consider the (closure of the) connected components $F_1,\dots, F_N$ of $G$, $N\leq n$ . 
Then by Lemma \ref{lem:convexification_bis} there exist $1\le \tilde n\le N$ and $\widetilde F_1,\dots,\widetilde F_{\tilde n}\subset\overline\Om$ mutually disjoint closed and convex satisfying \eqref{A}, \eqref{B} and \eqref{C}.
Therefore, by construction, for every $i=1,\dots,n$,
$q_i$ and $p_{i+1}$ belong to $\widetilde F_j$ for a unique $j\in\{1,\dots,\tilde n\}$.
For every $j=1,\dots,\tilde n$ we denote by
\begin{equation*}
q_{j_1},p_{j_1+1},\dots, q_{j_{n_j}},p_{j_{n_j}+1},
\end{equation*}
the ones that belong to $F_j$. Then we conclude by 
taking $(\sigma,\psi)\in\comp$  with  $\sigma:=(\sigma_1,\dots,\sigma_n)$ and
\begin{equation*}
\sigma_{j_k}([0,1])=\begin{cases}
\overline{q_{j_k}p_{j_k+1}}&\text{for}\quad k=1,\dots,n_j-1\\
\partial F_j\setminus\Bigl(\cup_{h=1}^{n_j}\partial^0_{j_h}\Om\Bigr)\cup
\Bigl(\cup_{h=1}^{n_j-1}\overline{q_{j_h}p_{j_h+1}}\Bigr)&\text{for}\quad k=n_j,
\end{cases}
\end{equation*}
 for every $j=1,\dots,\tilde n$ and $\psi:=\psi^\star$. 
 \end{proof}

%
%%%%%%%%%%%%%%%%%%%%%%%%%%%%%%%%%%%%%%%%%%%%%%%%%%%%%%%%%%%%%%%%%%%%%%%%
\section{Regularity of minimizers}\label{sec:existence_of_a_smooth_minimizer}
%%%%%%%%%%%%%%%%%%%%%%%%%%%%%%%%%%%%%%%%%%%%%%%%%%%%%%%%%%%%%%%%%%%%%%%%
In this section we investigate regularity properties of minimizers of $\mathcal F$.
The main result  reads as follows.
\begin{theorem}[\textbf{Structure of minimizers}]\label{thm:regularity}
Every minimizer $(\sigma,\psi)\in\comp$ of $\mathcal F$ in 
$\largercomp$, namely 
\begin{equation*}
{\mathcal{F}}(\sigma,\psi)=\min_{(s,\zeta)\in\largercomp} 
{\mathcal{F}}(s,\zeta)\,,
\end{equation*}
satisfies the following properties:
\begin{enumerate}[label=\arabic*]
	\item\label{1.}
Each connected component of $E(\sigma)$ is convex;
	\item\label{2.}  $\psi$ is positive and real analytic 
in $\Om\setminus E(\sigma)$;
	\item\label{3.}  If $\partial^D_i\Om$ is not 
a segment 
for some $i=1,\dots,n$, then $\partial E(\sigma)\cap
\partial_i^D\Om=\emptyset$, $\psi$ is continuous up to $\partial_i^D\Om$, and
	 $\psi=\varphi$ on $\partial_i^D\Om$;
	 \item\label{4.} If $\partial^D_i\Om$ is a segment  for some $i=1,\dots,n$, then either $\partial E(\sigma)\cap
	 \partial_i^D\Om=\emptyset$ or $\partial E(\sigma)\cap
	 \partial_i^D\Om=\partial_i^D\Om$. In the first case $\psi$ 
is continuous up to $\partial_i^D\Om$ and 
	 $\psi=\varphi$ on $\partial_i^D\Om$.
	 \end{enumerate}
 Moreover, there is a minimizer $(\sigma,\psi)\in\comp$ such that 
 \begin{enumerate}[label=5]
 		\item\label{5.}
$\Om\cap \partial E(\sigma)$ consists of a finite number of disjoint curves of class $C^\infty$, and $\psi$ is continuous and null on $\partial E(\sigma)\setminus \partial^D\Omega$.
	\end{enumerate}
\end{theorem}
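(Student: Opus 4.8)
The plan is to establish the five assertions in order, using the energy-reduction machinery of Section~\ref{sec:minimizers_in_W} for the structural claims (\ref{1.}),(\ref{3.}),(\ref{4.}) and classical elliptic/free-boundary regularity for (\ref{2.}),(\ref{5.}). For (\ref{1.}) I would argue by contradiction. Since $(\sigma,\psi)$ minimizes $\mathcal F$ over $\largercomp$, applying Lemma~\ref{lem:convexification_bis} to the connected components $F_1,\dots,F_N$ of $E(\sigma)$ produces a competitor with convex components and $\mathcal F(\psi^\star)\le\mathcal F(\sigma,\psi)$, so equality must hold. It then remains to upgrade the inequalities of Lemma~\ref{lem:convexification} to strict ones whenever a component is non-convex: filling a single dent $U$ strictly decreases $-|E|$ by $|U|>0$ while, by the trace estimate \eqref{eq:claim_tris*}, the area term can increase by at most the compensating amount, with equality only when the dent is empty. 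Hence each component is already convex, and each $\Omega\cap\partial E(\sigma)$ is a convex (in particular Lipschitz) arc joining two marked points.

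For (\ref{2.}), with $E:=E(\sigma)$ fixed the function $\psi$ minimizes $\psi\mapsto\mathcal A(\psi;\Omega)+\int_{\partial\Omega}|\psi-\varphi|\,d\mathcal H^1$ among $BV$ maps vanishing on $E$, since $-|E(\sigma)|$ is then constant. I would first show $\psi\ge0$ via the competitor $|\psi|$: one has $\mathcal A(|\psi|;\Omega)\le\mathcal A(\psi;\Omega)$ and, because $\varphi\ge0$, $\int_{\partial\Omega}||\psi|-\varphi|\le\int_{\partial\Omega}|\psi-\varphi|$, both strict unless $\psi\ge0$; as $|\psi|=0$ on $E$, minimality forces $\psi=|\psi|\ge0$. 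On the open set $\Omega\setminus E$ the map $\psi$ is an unconstrained local area minimizer, hence a weak solution of the minimal surface equation, so De Giorgi--Nash--Moser, Schauder theory and analyticity of solutions of analytic elliptic equations give real-analyticity; the strong maximum principle then upgrades $\psi\ge0$ to $\psi>0$, since a nonnegative solution vanishing at an interior point would vanish identically, which is incompatible with the positive Dirichlet trace.

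For (\ref{3.})--(\ref{4.}) the heart is a local corner-cutting comparison. Suppose a convex component of $E(\sigma)$ contacts $\partial^D_i\Omega$ along a set of positive length: there the trace of $\psi$ from the $E$-side is $0$ while $\varphi>0$, so a vertical ``curtain'' of area $\int\varphi\,d\mathcal H^1$ is paid in the boundary integral \eqref{def:relaxed_functional}. I would remove the thin cap $D$ cut off by the chord of $\partial^D_i\Omega$ and span the freed region by an explicit slanted graph equal to $\varphi$ on $\partial^D_i\Omega$ and to $0$ along the chord, keeping the modified set convex (hence the pair in $\comp$). The horizontal displacement furnished by the strict convexity of $\partial^D_i\Omega$ lets this spanning surface be strictly cheaper than the curtain, i.e.\ $\mathcal A<\int\varphi$, contradicting minimality; this forces $\partial E(\sigma)\cap\partial^D_i\Omega=\emptyset$. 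When $\partial^D_i\Omega$ is a segment the same comparison is an exact equality (no horizontal room), so contact is not penalized, and combining this neutrality with the convexity of the components yields the all-or-nothing alternative of (\ref{4.}). In the no-contact regime $\psi$ solves the minimal surface equation up to $\partial^D_i\Omega$ with the Lipschitz datum $\varphi$, and a barrier argument using $\varphi>0$ and the regularity of the graph of $\varphi|_{\partial^D_i\Omega}$ gives continuity and attainment $\psi=\varphi$.

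For (\ref{5.}) I would select, among the minimizers, one for which $\psi$ attains $0$ continuously along $\Omega\cap\partial E(\sigma)$, i.e.\ whose generalized graph carries no vertical wall over the free boundary: a singular part of height $\psi^+>0$ sitting over $\Omega\cap\partial E(\sigma)$ can be removed at no energy cost by enlarging $E$ to the contact set $\{\psi=0\}$, equivalently by letting the analytic graph meet the horizontal plane tangentially, as in the half-catenoid of the Introduction. For such a minimizer $\Omega\cap\partial E(\sigma)=\Omega\cap\partial\{\psi>0\}$ is the free boundary of a nonnegative solution of the minimal surface equation, and combining the convexity from (\ref{1.}) with classical free-boundary regularity for minimal graphs yields that it is a finite union of disjoint curves of class $C^\infty$. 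The main obstacle I expect is precisely the corner-cutting comparison of (\ref{3.})--(\ref{4.}): one must construct the local competitor and bookkeep the three contributions of $\mathcal F$ simultaneously so that strict convexity produces a genuinely strict decrease while flatness produces exact equality, which is where Lemma~\ref{lem_prelimiary} and the precise trace formula \eqref{def:relaxed_area1} are indispensable; a close second is the free-boundary regularity in (\ref{5.}), where the graph meets the plane with vertical slope and care is needed to conclude smoothness of the contact curve.
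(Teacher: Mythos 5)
Your treatment of items 1 and 2 follows essentially the paper's route: item 1 comes from the convexification machinery of Lemmas \ref{lem:convexification} and \ref{lem:convexification_bis} (via Theorem \ref{teo:convexification}), and item 2 from interior analyticity of area minimizers, the strong maximum principle, and boundary continuity on convex domains (the paper cites Giusti's Theorems 14.13, C.4 and 15.9 for exactly these steps, plus your truncation argument for nonnegativity). The real content of the theorem, however, is in items 3--5, and there your proposal has genuine gaps.

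First, the corner-cutting comparison. Your slanted-graph competitor yields a strict gain only when the contact set lies on a genuinely curved sub-arc, and even then only after a quantitative estimate (the gain from replacing arc length by chord length is cubic in the chord length, while the price paid for the cap's base area and the steep gradient must be shown to be of higher order, which requires the cap to be thin relative to $\varphi$ -- delicate near $p_i,q_i$ where $\varphi$ vanishes). More importantly, the hypothesis of item 3 is only that $\partial_i^D\Om$ is not a segment, not that it is strictly convex: it may contain straight pieces, and the contact set may be an isolated point, a straight sub-segment, or a countable union of such components sitting inside locally flat parts of $\partial_i^D\Om$; there is then no cap to cut and your construction gives nothing. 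These are precisely Cases B and C of Step 1 in the paper, which are handled by assembling an auxiliary Jordan curve from the graphs of $\varphi$ and of $\widehat\psi$ over connecting arcs and comparing with a classical Plateau solution through Lemma \ref{lem:plateau_solution}. Second, your argument for item 4 is not a proof and in fact points the wrong way: if partial contact with a straight $\partial_i^D\Om$ were ``energy-neutral'', one could manufacture minimizers exhibiting partial contact, contradicting the dichotomy you are trying to prove; the paper instead shows that partial contact is strictly suboptimal by a local Plateau comparison in a small ball centred at a transition point between contact and non-contact (Step 2). Third, for item 5 the assertion that a vertical wall over $\Om\cap\partial E(\sigma)$ ``can be removed at no energy cost by enlarging $E$'' is exactly the statement that must be proved: enlarging $E$ does not remove the jump of $\psi$, and one must modify the surface itself without increasing $\mathcal F$. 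This is the content of Lemma \ref{lem:continuita-bordo-E}, where each arc of $\Om\cap\partial E(\sigma)$ is replaced by the free trace $\beta_{p,q}$ of a classical Plateau solution; its smoothness comes from Rado's lemma and the structure results of Lemma \ref{lem:plateau_solution}, and the comparison uses the trace-convergence Lemmas \ref{lem:prop_distanza_da_convesso}--\ref{lem:limite_di_traccia}. No off-the-shelf ``free-boundary regularity for minimal graphs'' applies to this mixed graph/flat-plane configuration, and the finiteness of the family of curves is a consequence of items 3--4 (the endpoints must lie among the finitely many points $p_i,q_j$), not of regularity theory.
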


\begin{remark}
If $\partial_i^D\Omega$ is a straight segment nothing ensures that $\partial E(\sigma)\cap \partial_i^D\Omega=\emptyset$. However, if this intersection is nonempty, then it must be 
$\partial_i^D\Omega\subset\partial E(\sigma)$. The 
prototypical example is given by the classical catenoid, as explained in the introduction (see also Figure \ref{figura1}) where, if the basis of the rectangle $\Omega=R_{2\ell}$ is large enough, a solution $\psi$ is identically zero, and $\partial^D\Omega\subset\partial E(\sigma)$. 

This also explains why in point \ref{5.} of Theorem \ref{thm:regularity} we write $\partial E(\sigma)\setminus \partial^D\Omega$, since $\partial^D\Omega$ might be partially included in $\partial E(\sigma)$ if $\partial_i^D\Omega$ is a segment (for some $i=1,\dots,n$).	
\end{remark}

For the reader convenience we divide the proof in a number of steps. 
\begin{lemma}\label{lem:analicity}
	Every minimizer $(\sigma,\psi)\in\comp$ of $\mathcal F$ in 
	$\largercomp$ satisfies \ref{1.}, \ref{2.} and
$\psi=\varphi$ on $\partial^D\Om\setminus\partial E(\sigma)$.

\end{lemma}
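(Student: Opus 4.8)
The plan is to establish the three claims of Lemma~\ref{lem:analicity} in the order \ref{1.}, \ref{2.}, then the boundary identity $\psi=\varphi$ on $\partial^D\Om\setminus\partial E(\sigma)$. Property~\ref{1.} is essentially a restatement of the last assertion of Theorem~\ref{teo:convexification}: since any minimizer of $\mathcal F$ on $\largercomp$ lies in $\comp$ and arises through the convexification procedure, each connected component of $E(\sigma)$ is convex; alternatively, since $(\sigma,\psi)\in\comp$ each $E(\sigma_i)$ is convex, and a connected component of $E(\sigma)=\cup_i E(\sigma_i)$ that were non-convex could be replaced by its convex hull strictly lowering the energy via Lemma~\ref{lem:convexification}, contradicting minimality. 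So \ref{1.} follows by a direct appeal to the reduction-of-energy lemmas already proved.

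For \ref{2.}, the idea is an interior comparison argument. On the open set $\Om\setminus E(\sigma)$ the term $|E(\sigma)|$ and the boundary integral $\int_{\partial\Om}|\psi-\varphi|\,d\mathcal H^1$ are unaffected by variations of $\psi$ compactly supported in $\Om\setminus E(\sigma)$, so $\psi$ locally minimizes the relaxed area $\mathcal A(\cdot;\Om)$ there. First I would argue that $\psi$ has no singular part in $\Om\setminus E(\sigma)$: a local minimizer of the area functional among $BV$ competitors must in fact be a Sobolev (indeed smooth) solution of the minimal surface equation, since $|D^s\psi|$ could otherwise be removed to decrease the energy (standard for the area functional, see \cite{Giusti:84}). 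Hence $\psi$ solves the minimal surface equation weakly in $\Om\setminus E(\sigma)$, and by the classical interior regularity theory for the non-parametric minimal surface equation $\psi$ is real analytic there. Positivity of $\psi$ in $\Om\setminus E(\sigma)$ I would obtain from the strong maximum principle: $\psi\ge 0$ because $\psi=0$ on $E(\sigma)$ and $\varphi\ge 0$ on $\partial\Om$ (a truncation $\psi\wedge 0$ or $\psi\vee 0$ argument shows a minimizer may be taken nonnegative, and then analyticity plus the maximum principle forces strict positivity unless $\psi\equiv 0$ on a component, which is excluded where the boundary datum is positive).

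The boundary identity $\psi=\varphi$ on $\partial^D\Om\setminus\partial E(\sigma)$ is where the genuine work lies, and I expect it to be the main obstacle. The functional only penalizes the boundary discrepancy through the term $\int_{\partial\Om}|\psi-\varphi|\,d\mathcal H^1$, so a priori the trace of $\psi$ need not equal $\varphi$; the point is that at a minimizer it must, away from $\partial E(\sigma)$. The strategy is a local perturbation near a point $x_0\in\partial^D_i\Om\setminus\partial E(\sigma)$: working in the extended formulation \eqref{eq:equivalent_forumulation_of_F} where $\psi$ is extended by $\widehat\varphi$ outside $\overline\Om$, the boundary penalization becomes a jump term $\int_{\partial\Om}|\psi^+-\psi^-|\,d\mathcal H^1$ of the relaxed area across $\partial\Om$, as in \eqref{def:relaxed_area1}. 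One then exploits that $\mathcal A(\psi;B)$ sees the interior trace of $\psi$ and the exterior trace $\widehat\varphi=\varphi$, and that a minimizer of the area functional cannot sustain a jump across a smooth (non-segment) boundary arc without being able to lower the area by flattening the graph toward $\varphi$. Concretely I would compare $\psi$ with the competitor obtained by replacing the trace by $\varphi$ in a small boundary neighborhood and estimate the area change; the Lipschitz regularity assumed on $\mathcal G_{\varphi\res\partial^D_i\Om}$ and the analyticity from \ref{2.} should give that any residual jump strictly increases the energy. The subtle point to handle carefully is the interplay between the geometry of $\partial\Om$ (curved versus flat) and the attainment of the boundary datum, which is exactly why \ref{3.} and \ref{4.} are stated separately in Theorem~\ref{thm:regularity}; here in Lemma~\ref{lem:analicity} it suffices to prove attainment on $\partial^D\Om\setminus\partial E(\sigma)$, deferring the distinction between segment and non-segment arcs to the subsequent analysis.
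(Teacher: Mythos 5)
Your treatment of \ref{1.} and \ref{2.} matches the paper in substance: the paper disposes of \ref{1.} by citing Theorem \ref{teo:convexification}, and of analyticity by citing \cite[Theorem 14.13]{Giusti:84} together with the strong maximum principle \cite[Theorem C.4]{Giusti:84}, which is exactly the content of your interior comparison argument. (One ordering remark: in the paper positivity is deduced \emph{after} the boundary identity, since excluding the alternative $\psi\equiv0$ on a component requires knowing that the trace equals $\varphi>0$ somewhere on $\partial^D\Om\setminus\partial E(\sigma)$; your write-up invokes this exclusion before having proved attainment, so as stated it is mildly circular, though easily reordered.)

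The genuine gap is in the third claim, which you correctly identify as the main obstacle but do not actually close. The paper's proof is a one-line application of \cite[Theorem 15.9]{Giusti:84}: \emph{because $\Omega$ is convex}, the boundary of $\Omega$ has nonnegative curvature at every point, and at such points a local minimizer of the penalized functional $\mathcal A(\cdot;\Om)+\int_{\partial\Om}|\cdot-\varphi|\,d\mathcal H^1$ attains a continuous datum continuously. This is a barrier-type theorem, not a soft comparison. Your proposed mechanism --- ``a minimizer cannot sustain a jump across a smooth boundary arc without being able to lower the area by flattening the graph toward $\varphi$'' --- is precisely the statement to be proved, and it is false without a curvature hypothesis: on domains with a concave boundary portion, minimizers of this same relaxed functional genuinely detach from the Dirichlet datum (this is why the curvature condition appears in the classical attainment theory). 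The boundary penalty in the relaxed functional is calibrated to equal the cost of a vertical wall, so replacing a boundary jump by a transition region in the interior can cost \emph{more} than the wall; whether it costs less is exactly the content of the curvature/barrier argument. Your outline never uses the convexity of $\Omega$ at this step, so no local jump-removal estimate of the kind you describe can succeed. Relatedly, your closing remark misattributes the role of the segment/non-segment dichotomy: that distinction (items \ref{3.} and \ref{4.} of Theorem \ref{thm:regularity}) governs whether $\partial E(\sigma)$ can touch $\partial_i^D\Om$, not whether $\psi$ attains $\varphi$; attainment away from $\partial E(\sigma)$ holds on straight and curved arcs alike, precisely by convexity via \cite[Theorem 15.9]{Giusti:84}.
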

\begin{proof} Item \ref{1.} follows by Theorem \ref{teo:convexification}.
By \cite[Theorem 14.13]{Giusti:84} 
we also have that $\psi$ is real analytic in $\Om\setminus E(\sigma)$. Together with the strong maximum principle \cite[Theorem C.4]{Giusti:84}, this implies that,
in $\Om\setminus E(\sigma)$, 
either $\psi>0$ or $\psi\equiv0$. 
	On the other hand, 
since $\Omega$ is convex we can apply \cite[Theorem 15.9]{Giusti:84} and get that $\psi$ is continuous up to $\partial^D\Om\setminus\partial E(\sigma)$; in particular
	\begin{equation}\label{eq:boundary_value}
		\psi=\varphi>0\quad\text{on}\ \partial^D\Omega\setminus\partial E(\sigma)\,,
	\end{equation}
	which in turn implies $\psi>0$ in $\Omega\setminus E(\sigma)$ .

\end{proof}

\begin{lemma}\label{lem:plateau_solution}
	Let $\Gamma \subset \R^3$ 
be a rectifiable, simple, closed and non-planar 
curve satisfying the following properties:
	\begin{enumerate}[label=$(\arabic*)$]
		\item \label{(1)}$\Gamma\subset\partial(F\times\R)$ for some closed bounded convex set $F\subset\R^2$ with nonempty interior;
		\item \label{(2)}$\Gamma$ is symmetric with respect to the horizontal plane $\R^2\times\{0\}$;
		\item \label{(3)} There are an arc $\arc{pq}\subset\partial F$, with endpoints $p$ and $q$, and
		$f\in C^0( \arc{pq}\cup\{p,q\};[0,+\infty))$  such that $f$ is positive in $\arc{pq}$ and
		\begin{equation}\label{ass:sym-Gamma}
					\Gamma\cap\{x_3\ge0\}=\mathcal{G}_f\cup (\{p\}\times[0,f(p)])\cup (\{q\}\times[0,f(q)]).
		\end{equation}

	\end{enumerate}
	Let $S$  be a solution to the 
classical Plateau problem for $\Gamma$, i.e., a disc-type area-minimizing
surface among all disc-type surfaces spanning $\Gamma$. 
	Then:
	\begin{enumerate}[label=$(\arabic*')$]
		\item \label{1'}$\beta_{p,q}:=S\cap (\R^2\times\{0\})\subset F$ 
is a simple curve of class $C^\infty$ joining $p$ and $q$ such that $\beta_{p,q}\cap\partial F=\{p,q\}$;
		\item\label{2'} $S$ is symmetric with respect to $\R^2\times\{0\}$;
		\item \label{3'}The surface $S^+:=S\cap\{x_3\ge0\}$ is the graph of a 
function $\widetilde \psi\in W^{1,1}(U_{p,q})\cap C^0(\overline{U}_{p,q})$, where $U_{p,q}\subset{\rm int} (F)$ is the open region enclosed between $\arc{pq}$  and $\beta_{p,q}$. Moreover $\widetilde\psi$ is analytic in $U_{p,q}$;
		\item \label{4'}The curve $\beta_{p,q}$ is contained in the closed
convex hull of $\Gamma$, and $F\setminus U_{p,q}$ is convex.
	\end{enumerate}

\end{lemma}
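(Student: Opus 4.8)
The plan is to deduce the statement from the embeddedness and convex-hull theorems of Meeks and Yau \cite{MY}, combined with a vertical Steiner symmetrization that simultaneously yields the symmetry and the Cartesian structure. First I would record that, since $F$ is convex, the solid cylinder $F\times\R$ is convex, and $\Gamma\subset\partial F\times[-M,M]$ with $M:=\max_{\arc{pq}}f$. By \cite{MY} the least-area disc $S$ is then embedded and contained in the closed convex hull ${\rm conv}(\Gamma)\subset F\times[-M,M]$; moreover, by the maximum principle it meets the cylinder $\partial F\times\R$ only along $\Gamma$. Being embedded with $\partial S=\Gamma\subset\partial F\times\R$, the surface $S$ together with the ``cap'' $D_\Gamma\subset\partial F\times\R$ bounded by $\Gamma$ on the cylinder encloses a set of finite perimeter $W\subset F\times\R$ with $\partial^*W=S\cup D_\Gamma$ up to $\mathcal H^2$-null sets.

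The core step is a Steiner symmetrization of $W$ in the vertical direction $x_3$. I would replace, for each $(x_1,x_2)$, the vertical slice of $W$ by the centred segment of the same length, obtaining $W^\star$ with ${\rm Per}(W^\star)\le{\rm Per}(W)$. Because $\Gamma$ is symmetric and each vertical line over $\partial F$ meets $W$ in a segment already symmetric about $\{x_3=0\}$, the cap $D_\Gamma$ is left unchanged; hence the competitor surface $S^\star:=\partial^*W^\star\setminus D_\Gamma$ still spans $\Gamma$ and satisfies $\mathcal H^2(S^\star)\le\mathcal H^2(S)$. Minimality of $S$ forces equality, and I would invoke the standard characterization of the equality case in Steiner's perimeter inequality to conclude that $W$ coincides, slice by slice, with its symmetrization: each vertical slice of $W$ is a single segment symmetric about $\{x_3=0\}$. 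This gives at once $(2')$, the symmetry of $S$, and the fact that over the interior of its projection $S^+:=S\cap\{x_3\ge0\}$ is the graph of a nonnegative function $\widetilde\psi$. I expect the equality-case analysis (ruling out vertical portions and non-symmetric rearrangements, and transferring the measure-theoretic conclusion to the embedded smooth surface) to be the main technical obstacle.

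With symmetry in hand, $S$ meets $\{x_3=0\}$ orthogonally, so the intersection is transversal and $\beta_{p,q}:=S\cap(\R^2\times\{0\})$ is a smooth embedded $1$-manifold. Since $\Gamma\cap\{x_3=0\}=\{p,q\}$ and $S$ touches $\partial F\times\R$ only along $\Gamma$, every interior point of $\beta_{p,q}$ lies in ${\rm int}(F)$ and $\beta_{p,q}\cap\partial F=\{p,q\}$; as $\beta_{p,q}$ is exactly the portion of $\partial U_{p,q}$ not lying on $\partial F$, it is a single arc from $p$ to $q$, which proves $(1')$. The region $U_{p,q}$ enclosed between $\arc{pq}$ and $\beta_{p,q}$ is the projection of the graph $S^+$, and $\widetilde\psi\in W^{1,1}(U_{p,q})$ solves the minimal surface equation there; by \cite[Theorem 14.13]{Giusti:84} it is analytic in $U_{p,q}$, and a barrier argument using the convexity of $F$ (as in the proof of Lemma \ref{lem:analicity}, via \cite[Theorem 15.9]{Giusti:84}) gives $\widetilde\psi\in C^0(\overline U_{p,q})$ with the prescribed boundary values $f$ on $\arc{pq}$ and $0$ on $\beta_{p,q}$, which is $(3')$.

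Finally, for $(4')$ the inclusion $\beta_{p,q}\subset{\rm conv}(\Gamma)$ is immediate from the convex-hull property. Observing that the symmetric pairs $(x,\pm f(x))\in\Gamma$ have midpoints $(x,0)$, one gets $\arc{pq}\subset{\rm conv}(\Gamma)\cap\{x_3=0\}$, while the vertical projection of ${\rm conv}(\Gamma)$ is the convex cap bounded by $\arc{pq}$ and the chord $\overline{pq}$; hence ${\rm conv}(\Gamma)\cap\{x_3=0\}$ equals that cap and $\beta_{p,q}$ lies inside it. To obtain the convexity of $F\setminus U_{p,q}$ I would extend $\widetilde\psi$ by $0$ to all of $F$ and note that $\mathcal H^2(S^+)=\mathcal A(\widetilde\psi;F)-|F\setminus U_{p,q}|$; since $S$ minimizes area and the curve $\beta_{p,q}$ is unconstrained, the pair $(\beta_{p,q},\widetilde\psi)$ minimizes exactly the functional $\mathcal A(\cdot;F)-|\cdot|$ treated in Lemmas \ref{lem:convexification} and \ref{lem:convexification_bis}, whose reduction-of-energy argument forces the set $F\setminus U_{p,q}$ where $\widetilde\psi=0$ to be convex. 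This completes the plan.
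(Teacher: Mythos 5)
Your proposal takes a genuinely different route from the paper: the paper proves this lemma by working on the conformal parametrization $\Phi\colon\overline B_1\to S$ given by \cite{MY}, applying Rado's lemma to the harmonic coordinate $\Phi_3$ (for \ref{1'}), a cut-and-reflect comparison plus analyticity (for \ref{2'}), and maximum-principle arguments for harmonic functions $d_\nu\circ\Phi$ (for \ref{3'} and \ref{4'}); the Steiner symmetrization of the enclosed region is reserved in the paper for the much harder annulus case (Theorem \ref{crucial_teo}, Lemmas \ref{lemma_step2}--\ref{lemma_step5}). Unfortunately, as written your symmetrization argument has a genuine gap precisely at its central step. The set $S^\star=\partial^*W^\star\setminus D_\Gamma$ produced by symmetrizing $W$ is, a priori, only the reduced boundary of a finite perimeter set: it is not known to be a disc-type surface spanning $\Gamma$, nor to admit a parametrization in the class over which $S$ is minimal. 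Hence the inequality $\mathcal H^2(S^\star)\le\mathcal H^2(S)$ cannot be played against the minimality of $S$ to ``force equality'': minimality of $S$ only compares it with disc-type competitors. To close this you would need to first prove graphicality and continuity of the symmetrized boundary and then build a parametrization from it --- exactly what the paper does in Lemmas \ref{lemma_step3} and \ref{lem:construction-of-parametrization}, and there this construction invokes Lemma \ref{lem_tec2} and the regularity Theorem \ref{thm:regularity}, both of which are proved \emph{using} Lemma \ref{lem:plateau_solution}. So your route, carried out with the paper's tools, is circular; carried out independently, the step you flag as ``the main technical obstacle'' is the whole difficulty, and it is not resolved. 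Note also that even granting equality of perimeters, the equality case of Steiner's inequality \cite{CCF} does not by itself give $W=W^\star$; the paper needs a separate argument (Lemma \ref{lemma_step5}) to upgrade ``slices are segments'' to symmetry.

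Two further, smaller gaps. For \ref{1'}, your claim that $\beta_{p,q}$ is a single arc is circular: $U_{p,q}$ is \emph{defined} via $\beta_{p,q}$, so it cannot be used to show connectedness; one must rule out closed components of $S\cap\{x_3=0\}$ (in the paper this is automatic from Rado's lemma; in your setting it needs a maximum-principle/analyticity argument showing a closed loop would force a planar piece of $S$). Similarly, transversality of $S\cap\{x_3=0\}$ requires excluding horizontal tangent planes there, which again uses symmetry plus analyticity plus non-planarity of $\Gamma$, not just ``orthogonality''. For \ref{4'}, the appeal to Lemmas \ref{lem:convexification} and \ref{lem:convexification_bis} cannot conclude: those reduction-of-energy lemmas give only non-strict inequalities, so from minimality of $(\beta_{p,q},\widetilde\psi)$ they yield the existence of \emph{some} minimizer with convex zero set, not that $F\setminus U_{p,q}$ itself is convex; moreover the premise that $(\beta_{p,q},\widetilde\psi)$ minimizes the non-parametric functional again presupposes the parametric/non-parametric comparison that is unavailable at this stage. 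The paper instead proves \ref{4'} directly, by applying the maximum principle to the harmonic function $d_W\circ\Phi$ associated with a separating vertical plane.
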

\begin{remark}
	If  the function $f$ in \ref{(3)} is such that $f(p)=f(q)=0$ then \eqref{ass:sym-Gamma} becomes $\Gamma\cap\{x_3\ge0\}=\mathcal{G}_f$. 
For later convenience we prove Lemma \ref{lem:plateau_solution} under the more general assumption \ref{(3)}.

\end{remark}
\begin{proof} 
Even though several arguments are 
standard, we give the proof for completeness.

	\textit{Step 1: $\beta_{p,q}$ is a simple curve  joining $p$ and $q$. } \\
	Let $B_1\subset\R^2$ be the open unit disc centred at the origin and let
	$\Phi=(\Phi_1,\Phi_2,\Phi_3)\colon\overline{B}_1\to S\subset\R^3$ 
be a parametrization of $S$ with $\Phi(\partial B_1)=\Gamma$,
	that is harmonic, conformal, and therefore analytic in $B_1$, continuous up to $\partial B_1$.
	Further by \ref{(1)} it follows that 
	$\Phi$ is an embedding  and hence injective (see \cite{MY} and also \cite[page 343]{DHS}).\\
	By assumption \eqref{ass:sym-Gamma} we have $\{w\in\partial B_1\colon\Phi_3(w)=0\}=\{\Phi^{-1}(p,0),\Phi^{-1}(q,0)\}$, so that $\Phi_3$ changes sign only twice on $\partial B_1$. By applying Rado's lemma (see e.g. \cite[Lemma 2, page 295]{DHS}) to the harmonic function $\Phi_3$ we deduce that $\nabla\Phi_3\ne0$ in $B_1$ and in particular $\{w\in B_1\colon\Phi_3(w)>0\}$ and $\{w\in B_1\colon\Phi_3(w)<0\}$ 
are connected, and $\{w\in B_1\colon\Phi_3(w)=0\}$ is a  simple smooth curve in $B_1$ joining $\Phi^{-1}(p,0)$ and $\Phi^{-1}(q,0)$.
	By the injectivity of $\Phi$ we have that $S\cap(\R^2\times\{0\})=\Phi(\{w\in B_1\colon\Phi_3(w)=0\})$ is a simple smooth curve joining $p$ and $q$.
\smallskip
	
	\textit{Step 2: $S$ is symmetric with respect to the horizontal plane $\R^2\times\{0\}$.}\\
	By step 1 the sets $\{w\in \overline B_1\colon\Phi_3(w)\ge0\}$ and $\{w\in \overline B_1\colon\Phi_3(w)\le0\}$ 	are simply connected and the two surfaces
	$$S^+:=\Phi(\{w\in \overline B_1\colon\Phi_3(w)\ge0\})\,,\quad S^-:=\Phi(\{w\in \overline B_1\colon\Phi_3(w)\le0\})$$ 
have the topology of the disc.	We assume without loss of generality that $\mathcal{H}^2(S^+)\le \mathcal{H}^2(S^-)$.	Let
	$${\rm Sym}(S^+):=\{(x',x_3)\colon (x',-x_3)\in S^+\}\,,\quad \widetilde S:= S^+\cup{\rm Sym}(S^+)\,.$$
	Then $\widetilde S$ is symmetric surface of disc-type with $\partial\widetilde S =\Gamma$ and 
	\begin{equation*}
	\mathcal{H}^2(\widetilde S)=2\mathcal{H}^2(S^+)\le\mathcal{H}^2(S^+)+\mathcal{H}^2(S^-)=\mathcal{H}^2(S)\,.
	\end{equation*}
	In particular $\widetilde S$ is a symmetric solution to the 
Plateau problem for $\Gamma$. Further $S=\widetilde S$ on a relatively 
open subset of $S$; hence, since they are real 
analytic surfaces, they must coincide, $S=\widetilde S$.

	\smallskip
	\textit{Step 3: $S^+$ 
is the graph of a function $\widetilde \psi\in W^{1,1}(U_{p,q})\cap C^0(\overline U_{p,q})$.}\\
	To show this it is enough to 
check the validity of the following
	\smallskip 
	
	\textit{Claim:} Every vertical plane $\Pi$ is tangent to ${\rm int}(S)$ at most at one point.
	\smallskip

	In fact by step 2 this readily implies that ${\rm int}(S^+)$ has no points with vertical tangent plane and hence we can conclude.
	We prove the claim arguing by contradiction as in \cite[page 97]{vortex}, that is we assume there is a vertical plane $\Pi$ tangent to ${\rm int}(S)$ at $x'$ and $x''$ with $x'\ne x''$.
	We define the linear map  $d_\nu(x):=(x-x')\cdot\nu $ with $\nu$ a unit normal to $\Pi$, so that clearly $\Pi=\{x\in\R^3\colon d_\nu(x)=0\}$.
	Since $F$ is convex $\Pi\cap (\partial F\times\{0\})$ contains at most two points. By properties \ref{(1)}-\ref{(3)} each of these points is either the projection on the horizontal plane of one or two points of $\Pi\cap\Gamma$, or the projection on the horizontal plane of one of the vertical segments $\{p\}\times[0,f(p)]$ and $\{q\}\times[0,f(q)]$. Hence $\Pi\cap \Gamma$ contains either:
	\begin{itemize}
		\item 	 at most two points and a segment;
		\item two segments;
		\item four points.
	\end{itemize}
	Without loss of generality we restrict our analysis to the last case (the others are simpler to treat), namely we assume that there are four (clockwise ordered) points $w_1,\dots,w_4\in\partial B_1$ such that 
	$\Pi\cap\Gamma=\{\Phi(w_1),\dots,\Phi(w_4)\}$, that is $d_\nu\circ\Phi(w_i)=0$ for $i=1,\dots,4$. We may also assume  
$d_\nu\circ\Phi>0$ on $\arc{w_1w_2}\cup\arc{w_3w_4}$ and
	$d_\nu\circ\Phi<0$ on $\arc{w_2w_3}\cup\arc{w_4w_1}$.  Here $\arc{w_iw_j}$ denotes the relatively open arc in $\partial B_1$ joining $w_i$ and $w_j$ for $i,j\in\{1,\dots,4\}$.  
	\\
	Notice that the function $d_\nu\circ\Phi\colon \overline B_1\to \R$ is harmonic in $B_1$, continuous up to $\partial B_1$ and vanishes at $w_1,\dots,w_4$; hence, 
by classical arguments \cite[Section 437]{Nitsche:89} we see that %$i=1,2$ satisfy $$\nabla (f\circ\Phi)(x_i)=\nabla f(\Phi(x_i))\nabla\Phi(x_i) =\nu\nabla\Phi(x_i) =0\,,$$
	the set $\{w\in B_1\colon d_\nu\circ\Phi=0\}$, in a neighbourhood of $w':=\Phi^{-1}(x')$ (respectively $w'':=\Phi^{-1}(x'')$), is the union of a number $m\geq 2$ of analytic curves crossing at $w'$ (respectively $w''$). Thus near $w'$ and $w''$ the set $\{w\in B_1\colon d_\nu\circ\Phi(w)>0\}$ is the union of at least two disjoint open regions $A_{1,1}$, $A_{1,2}$ and $A_{2,1}$, $A_{2,2}$ respectively such that $\overline A_{1,1}\cap\overline A_{1,2}=\{w'\}$, $\overline A_{2,1}\cap\overline A_{2,2}=\{w''\}$. Moreover each $A_{i,j}$ belongs either to the  connected component of $\{w\in B_1\colon d_\nu\circ\Phi(w)>0\}$ containing  $\arc{w_1w_2}$ or to the one containing $\arc{w_3w_4}$. Up to relabelling the indices we have two possibilities.
	\begin{enumerate}[label= Case \arabic*:]
		\item $A_{1,1}$ and $A_{1,2}$ belong to the same connected component containing $\arc{w_1w_2}$. Then we can find two simple curves $\alpha_1$, $\alpha_2$ contained in $A_{1,1}$ and $A_{1,2}$ respectively, that connect $w'$ to a point in $\arc{w_1w_2}$ and such that the region enclosed by the curve $\alpha_1\cup\alpha_2$ intersects $\{w\in B_1\colon d_\nu\circ\Phi(w)<0\}$. Since $d_\nu\circ \Phi>0$ on $\alpha_1\cup\alpha_2$ by the maximum principle we have a contradiction.
		\item $A_{1,1}$ and $A_{2,1}$ belong to the connected component containing $\arc{w_1w_2}$ while $A_{1,2}$ and $A_{2,2}$ belong to the connected component containing $\arc{w_3w_4}$. Then we can find four simple curves $\alpha_{i,j}$ (with $i,j=1,2$) contained respectively in $A_{i,j}$, such that $\alpha_{1,1}$ (respectively $\alpha_{2,1}$) connects $w'$ (respectively $w''$) to a point in $\arc{w_1w_2}$ and $\alpha_{1,2}$ (respectively $\alpha_{2,2}$)  connects $w'$ (respectively $w''$) to $\arc{w_3w_4}$. Then the region enclosed by the curve $\cup_{i,j}\alpha_{i,j}$ intersects $\{w\in B_1\colon d_\nu\circ\Phi(w)<0\}$, while $d_\nu\circ\Phi>0$  on $\cup_{i,j}\alpha_{i,j}$, which again by the maximum principle gives a contradiction.
	\end{enumerate}
	Thus the claim follows.
	
	\smallskip
	\textit{Step 4: The curve $\beta_{p,q}$ is contained in the closed
		convex hull of $\Gamma$, and the set $F\setminus U_{p,q}$ is convex.}\\
	Let $\pi(\Gamma)\subset\partial F$ be the projection of $\Gamma$ onto the plane $\R^2\times\{0\}$. By \cite[Theorem 3, pag. 343]{DHS} the relative interior of $S$ is strictly contained in the convex hull of $\Gamma$, thus in particular
	the curve $\beta_{p,q}$ (respectively $\beta_{p,q}\setminus\{p,q\}$) is contained (respectively strictly contained) in the same half-plane (with respect to the line $\overline{pq}$) that contains $\pi(\Gamma)$. 
	
	Now, assume by contradiction that $F\setminus U_{p,q}$ is not convex. Then there are $p',q'\in\beta_{p,q}$ with the following properties:	
	\begin{itemize}
		\item The open region $U'$ enclosed by $\beta_{p,q}$ and the segment $\overline{p'q'}$ is non-empty and contained in $U_{p,q}$;
		\item the points $p$ and $q$ and the set $U'$ lie on the same side with respect to the line containing $\overline{p'q'}$.
	\end{itemize}
	Let then $d_W\colon\R^3\to\R$ be an affine function that vanishes on the vertical plane containing $\overline{p'q'}$ and is positive on the half-space $W^+$ containing $p,q$ and $U'$. 
	We now observe that $\Gamma\cap W^+$ is the union of two connected subcurves $\Gamma_1$ and $\Gamma_2$, containing $p$ and $q$ respectively. As a consequence $\Phi^{-1}(\Gamma_1)=\arc{w_1w_2}$ and $\Phi^{-1}(\Gamma_2)=\arc{w_3w_4}$ for some $w_1,w_2,w_3,w_4\in\partial B_1$ (clockwise oriented).\\
		On the other hand since $d_W>0$  on $U'$ we can find $t'\in\partial U'\setminus \overline{p'q'}$ such that $d_W\circ\Phi(\Phi^{-1}(t'))=d_W(t')>0$ with $\Phi^{-1}(t')\in B_1$.\\ 
Once again by the harmonicity of $d_W\circ\Phi\colon\overline{B}_1\to\R$ we deduce the existence of a curve $\alpha\subset\{w\in B_1\colon d_W\circ\Phi(w)>0\}$ joining $\Phi^{-1}(t')$ to one of $\arc{w_1w_2}$ and $\arc{w_3w_4}$. Hence $\Phi(\alpha)\subset \Phi( B_1)=\widetilde\psi(U_{p,q})$ is a curve joining $t'$ to one of $\Gamma_1$ and $\Gamma_2$, say $\Gamma_1$. 
This implies that the projection $\pi(\Phi(\alpha))$ of $\Phi(\alpha)$ onto the horizontal plane $\R^2\times\{0\}$ is a curve contained in $U_{p,q}$ that connects $t'$ to $\pi(\Gamma_1)$. So in particular, the curve $\pi(\Phi(\alpha))$ cannot be included in the half-plane $W^+$. But this contradicts the fact that $\alpha\subset\{w\in B_1\colon d_W\circ\Phi(w)>0\}$ (this is because the values of $d_W$ at a point $x$ and $\pi(x)$ are the same).
\end{proof}

We need also the following technical results on the distance function 
$\di_F$ from a convex set $F$.

\begin{lemma}\label{lem:prop_distanza_da_convesso}
	Let $F\subset\R^2$ be bounded, closed and convex.  
	 Then $\Delta\di_F\in L^{\infty}_{\rm loc}(\R^2\setminus F)\cap L^1(B\setminus F)$ for every ball $B$ with $F\subset\subset B$.
\end{lemma}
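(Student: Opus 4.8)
The plan is to exploit the two structural features of the exterior distance function from a convex set: the eikonal identity $|\grad\di_F|=1$ together with the fact that the nearest-point projection onto a convex set is a contraction, and the linear growth of the perimeters of the outer parallel sets (Steiner formula). For $t\ge0$ I write $F_t:=\{x\in\R^2:\di_F(x)\le t\}$; since $F$ is convex each $F_t$ is convex, and for $t>0$ one has $\partial F_t=\{\di_F=t\}$.

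First I would establish the local bound. For closed convex $F$ the projection $\pi_F\colon\R^2\to F$ is single-valued and $1$-Lipschitz, $\di_F\in C^1(\R^2\setminus F)$, and
\begin{equation*}
\grad\di_F(x)=\frac{x-\pi_F(x)}{\di_F(x)},\qquad |\grad\di_F(x)|=1\qquad\text{for }x\notin F.
\end{equation*}
On any compact $K\subset\R^2\setminus F$ we have $\di_F\ge\delta:=\dist(K,F)>0$, so $x\mapsto x-\pi_F(x)$ is Lipschitz (constant $\le2$) while $1/\di_F$ is Lipschitz and bounded on $K$; hence $\grad\di_F$ is Lipschitz on $K$, i.e.\ $\di_F\in C^{1,1}_{\rm loc}(\R^2\setminus F)$. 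Thus the a.e.-defined Hessian $D^2\di_F$ lies in $L^\infty_{\rm loc}(\R^2\setminus F)$, and in particular $\Delta\di_F\in L^\infty_{\rm loc}(\R^2\setminus F)$, which is the first assertion. Moreover, since $\di_F$ is globally convex its a.e.\ Hessian is positive semidefinite, so $\Delta\di_F\ge0$ a.e.; differentiating $|\grad\di_F|^2=1$ gives $D^2\di_F\,\grad\di_F=0$ a.e., so in two dimensions $\Delta\di_F$ equals the single remaining eigenvalue, namely the curvature of the level curve $\partial F_{\di_F(x)}$ at $x$, and the interior tangent ball $B_{\di_F(x)}(\pi_F(x))\subset F_{\di_F(x)}$ forces this curvature to be $\le1/\di_F(x)$. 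Hence $0\le\Delta\di_F\le1/\di_F$ a.e.

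For the global $L^1$ estimate I would integrate the nonnegative function $\Delta\di_F$ over the annular regions $A_{a,b}:=\{a<\di_F<b\}$, $0<a<b$. Since $\overline{A_{a,b}}\subset\R^2\setminus F$ is compact, $\di_F\in C^{1,1}(\overline{A_{a,b}})$ and $\partial A_{a,b}=\partial F_a\cup\partial F_b$ consists of convex (hence Lipschitz) curves, so the divergence theorem applies to $\grad\di_F$; as the outward normal of $A_{a,b}$ is $\grad\di_F$ on $\partial F_b$ and $-\grad\di_F$ on $\partial F_a$,
\begin{equation*}
\int_{A_{a,b}}\Delta\di_F\,\di x=\int_{\partial F_b}\grad\di_F\cdot\nu\,\di\H^1+\int_{\partial F_a}\grad\di_F\cdot\nu\,\di\H^1=\H^1(\partial F_b)-\H^1(\partial F_a).
\end{equation*}
Applying the Steiner formula to the convex body $F_a$ (which has nonempty interior since $a>0$), and using $F_b=(F_a)_{b-a}$, gives $\H^1(\partial F_b)-\H^1(\partial F_a)=2\pi(b-a)$, whence $\int_{A_{a,b}}\Delta\di_F=2\pi(b-a)$. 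Letting $a\to0^+$ and invoking monotone convergence ($\Delta\di_F\ge0$) yields $\int_{\{0<\di_F<b\}}\Delta\di_F\,\di x=2\pi b$. Choosing $b:=\sup_{B}\di_F<\infty$ we have $B\setminus F\subset\{0<\di_F<b\}$, so $\int_{B\setminus F}\Delta\di_F\le2\pi b<\infty$; together with $\Delta\di_F\ge0$ this is exactly $\Delta\di_F\in L^1(B\setminus F)$.

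The main obstacle is that $F$ is an arbitrary convex set, so $\partial F$ may be nonsmooth and the naive pointwise bound $\Delta\di_F\le1/\di_F$ is \emph{not} integrable up to $\partial F$ (its coarea integral contains the divergent term $\int_0^{\,\cdot}\H^1(\partial F)/t\,\di t$ whenever $\H^1(\partial F)>0$). The resolution is to avoid bounding by $1/\di_F$ and instead use the exact flux identity above: the total turning of each convex level curve $\partial F_t$ equals $2\pi$, not the crude $\H^1(\partial F_t)/t$, and this is precisely what the Steiner computation encodes. The only additional care concerns excluding $\partial F$ itself, since the distributional Laplacian of $\di_F$ on all of $\R^2$ carries a singular part $\H^1\res\partial F$ arising from the jump of $\grad\di_F$ across $\partial F$; because the statement concerns the \emph{open} set $\R^2\setminus F$ this boundary mass is not included, and the argument stays within $\{\di_F\ge a\}$ with $a>0$ throughout.
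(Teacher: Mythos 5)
Your proof is correct, and it shares the paper's basic mechanism: since $\di_F$ is convex, $\Delta\di_F\ge 0$ a.e.\ in $\R^2\setminus F$, so the $L^1$ bound reduces, via the divergence theorem on regions staying away from $F$, to a flux bound coming from $|\grad\di_F|=1$, followed by a limit as the inner boundary collapses onto $\partial F$. Within that skeleton your execution differs in three ways. (i) You prove $\di_F\in C^{1,1}_{\rm loc}(\R^2\setminus F)$ directly from $\grad\di_F(x)=(x-\pi_F(x))/\di_F(x)$ and the $1$-Lipschitz property of the projection $\pi_F$, whereas the paper cites Cannarsa--Sinestrari. (ii) You integrate over the annuli $\{a<\di_F<b\}$ between two parallel convex bodies and compute the flux \emph{exactly} via the Steiner formula, $\H^1(\partial (F^+_b))-\H^1(\partial (F^+_a))=2\pi(b-a)$; the paper instead works on $B\setminus F^+_\eta$, justifies the integration by parts by smooth approximation of $\grad\di_F$, and is content with the crude uniform bound $\H^1(\partial B\cup\partial(F^+_\eta))\le C$. (iii) Your limiting step is monotone convergence in $a$, the paper's is arbitrariness of $\eta$ --- the same step in different clothes. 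What your route buys is a sharper, fully self-contained conclusion: the exact identity $\int_{\{0<\di_F<b\}}\Delta\di_F\,dx=2\pi b$, hence $\|\Delta\di_F\|_{L^1(B\setminus F)}\le 2\pi\sup_B\di_F$, with no external regularity citation. What the paper's buys is brevity: no Steiner formula and no need to check that the parallel body has nonempty interior (a hypothesis your Steiner step genuinely requires, and which you correctly secure by taking $a>0$). Your side remarks --- the pointwise bound $\Delta\di_F\le 1/\di_F$, its non-integrability up to $\partial F$, and the singular part $\H^1\res\partial F$ of the distributional Laplacian across $\partial F$ --- are accurate and correctly flagged as unnecessary for the argument.
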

\begin{proof}
By \cite[Theorem 3.6.7 pag. 75]{Can-Sin} it follows that $\di_F\in C^{1,1}_{\rm loc}(\R^2\setminus F)$, hence $\nabla^2\di_F\in L^{\infty}_{\rm loc}(\R^2\setminus F;\R^{2\times 2})$.
Therefore we only have to check that $\Delta\di_F\in L^1(B\setminus F)$.\\
	Let $\eta>0$ be fixed sufficiently small.
Select $(f_k)_{k\in\mathbb{N}}\subset C^1_c(\R^2; \R^2)$ such that 
$f_k\to \nabla\di_F$ in $W^{1,1}(B
	\setminus F_{\eta/2}^+)$ as $k\to+\infty$.
	By the divergence theorem we have
	\begin{equation}\label{div-formula}
		\int_{B\setminus F^+_\eta}\Div f_k\;dx=\int_{
\partial B\cup\partial (F^+_\eta)}f_k\cdot\nu_\eta \;d\mathcal{H}^1,
	\end{equation}
	with $\nu_\eta$ the outer unit normal to $\partial B\cup\partial (F^+_\eta)$.
By taking the limit as $k\rightarrow \infty$ we get
	\begin{equation}\label{lim_k}
		\lim_{k\to+\infty}	\int_{B\setminus F^+_\eta}\Div f_k\;dx=\int_{B\setminus F^+_\eta}\Delta\di_F \;dx\,,
	\end{equation}
	and
		\begin{equation}\label{lim_k_1}
		\lim_{k\to+\infty}	\int_{\partial B\cup\partial (F^+_\eta)}f_k\cdot\nu_\eta\; d\mathcal{H}^1=	\int_{\partial B\cup\partial (F^+_\eta)}\nabla\di_F\cdot\nu_\eta\; d\mathcal{H}^1\,,
	\end{equation}
	where \eqref{lim_k_1} follows by using that $\partial(F^+_\eta)$ is of class $C^{1,1}$ and hence $f_k\res(\partial B\cup\partial (F^+_\eta))\to\nabla\di_F\res(\partial B\cup\partial (F^+_\eta))$ 
in $L^1(\partial B\cup\partial (F^+_\eta))$.
Since $\di_F$ is convex we have $\Delta\di_F\ge0$ a.e. in $\R^2\setminus F$, moreover
$|\nabla\di_F|=1$ in $\R^2\setminus F$; then gathering together \eqref{div-formula}, \eqref{lim_k}, \eqref{lim_k_1}  we have
\begin{equation*}
\int_{B\setminus F^+_\eta}|\Delta\di_F|\; dx=
\int_{B\setminus F^+_\eta}\Delta\di_F \;dx=
	\int_{\partial B\cup\partial (F^+_\eta)}\nabla\di_F\cdot\nu_\eta \;d\mathcal{H}^1\le \mathcal{H}^1(\partial B\cup\partial(F^+_\eta))\le C,
\end{equation*}
	with $C>0$ independent of $\eta$. 
By the arbitrariness of $\eta>0$, the thesis follows.
\end{proof}
\begin{cor}\label{cor:integration_by_parts}
	Let $U\subset\R^2$ be a bounded open set with Lipschitz boundary. Let $F\subset\R^2$ be closed and convex such that $U\cap F=\emptyset$ and let $\psi\in W^{1,1}(U)\cap L^\infty(U)\cap C^0(U)$. Then the following formula holds:
		\begin{equation*}
	\begin{split}
-	\int_{U}\psi\Delta\di_{F}\,dx
	=
	\int_{U}  \nabla\psi\cdot\nabla\di_{F}\,dx- \int_{\partial U}\psi\,\gamma\,d\mathcal{H}^1,
	\end{split}
	\end{equation*}
	where $\nu$ is the outer normal to $\partial U$ and $\gamma$ denotes the normal trace of $\nabla\di_F$ on $\partial U$. 
	\end{cor}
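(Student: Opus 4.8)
The plan is to read the claimed identity as the Gauss--Green (divergence) theorem applied to the vector field $\psi\,\grad\di_F$ on $U$. Since $\Div(\psi\,\grad\di_F)=\grad\psi\cdot\grad\di_F+\psi\,\Delta\di_F$ and, by definition, $\grad\di_F\cdot\nu=\gamma$ on $\partial U$, the formula is exactly
\[
\int_U \Div(\psi\,\grad\di_F)\,dx=\int_{\partial U}\psi\,\gamma\,d\mathcal{H}^1.
\]
The delicate point is that $\grad\di_F$ is only a bounded field ($|\grad\di_F|=1$ on $\R^2\setminus F$) whose distributional divergence is the $L^1$ function $\Delta\di_F$ (by Lemma \ref{lem:prop_distanza_da_convesso}); thus $\grad\di_F$ is a divergence--measure field with absolutely continuous divergence, and $\gamma$ is its normal trace on the Lipschitz set $\partial U$. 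The regularity of $\di_F$ is lost only along $\partial F$, but since $U\cap F=\emptyset$ and $\Delta\di_F$ is integrable up to $\partial F$, this can be handled by an interior approximation.

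First I would truncate away from $F$. For $\eta>0$ set $U_\eta:=U\cap\{\di_F>\eta\}$; on $\overline{U_\eta}\subset\{\di_F\ge\eta\}\subset\R^2\setminus F$ the function $\di_F$ is of class $C^{1,1}$, so $\grad\di_F$ is Lipschitz there, and because $\psi\in W^{1,1}(U)\cap L^\infty(U)$ the field $\psi\,\grad\di_F$ belongs to $W^{1,1}(U_\eta)$. Every $\eta>0$ is a regular value of $\di_F$ (as $|\grad\di_F|=1$), so for a.e. $\eta$ the set $U_\eta$ is a bounded Lipschitz domain and the classical divergence theorem gives
\[
\int_{U_\eta}\bigl(\psi\,\Delta\di_F+\grad\psi\cdot\grad\di_F\bigr)\,dx=\int_{\partial U_\eta}\psi\,\grad\di_F\cdot\nu_\eta\,d\mathcal{H}^1,
\]
where $\nu_\eta$ is the outer normal of $U_\eta$ and the trace of $\psi$ on the interior portion $U\cap\{\di_F=\eta\}$ of $\partial U_\eta$ is unambiguous thanks to $\psi\in C^0(U)$.

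Next I would let $\eta\to0^+$. Since $U\cap F=\emptyset$ we have $|U\setminus U_\eta|=|U\cap F^+_\eta|\to0$, so by dominated convergence (using $\psi\in L^\infty$ and $\Delta\di_F\in L^1(U)$) and by absolute continuity of the integral (using $\grad\psi\in L^1$, $|\grad\di_F|\le1$), the two volume integrals converge to the corresponding integrals over $U$. For the boundary term I would split $\partial U_\eta$ into the genuine part $\partial U\cap\{\di_F>\eta\}$, where $\di_F$ is smooth and $\grad\di_F\cdot\nu_\eta\to\gamma$ pointwise, and the interior cut $\Sigma_\eta:=U\cap\{\di_F=\eta\}$, on which $\grad\di_F\cdot\nu_\eta\equiv-1$. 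As $\eta\to0$ the cut $\Sigma_\eta$ collapses onto $\partial U\cap\partial F$, where the outer normal to $U$ satisfies $\nu=-\grad\di_F$ and hence $\gamma=-1$; so the two pieces together should converge to $\int_{\partial U}\psi\,\gamma\,d\mathcal{H}^1$.

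The main obstacle is precisely this last convergence of the boundary integral together with the very meaning of $\gamma$: for the bounded field $\grad\di_F$ with merely $L^1$ divergence, $\gamma$ is not a pointwise object on all of $\partial U$ but the normal trace in the sense of divergence--measure fields, and identifying $\lim_{\eta\to0}\int_{\partial U_\eta}\psi\,\grad\di_F\cdot\nu_\eta$ with $\int_{\partial U}\psi\,\gamma\,d\mathcal{H}^1$ is exactly the Gauss--Green formula of Anzellotti / Chen--Frid for such fields. I would therefore either invoke that theory directly (with $\psi\in W^{1,1}\cap L^\infty\cap C^0$ an admissible test function and $\gamma$ the associated normal trace), or, to keep the argument self-contained, prove the boundary convergence by hand through the above interior approximation, verifying that the family of classical normal traces $\grad\di_F\cdot\nu_\eta$ is Cauchy in the appropriate weak sense and defines $\gamma$ in the limit.
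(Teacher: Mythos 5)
Your proposal is correct and follows essentially the same route as the paper: the paper's proof simply observes that $|\nabla\di_F|=1$ on $\R^2\setminus F$ and $\Delta\di_F\in L^1(U)$ (by Lemma \ref{lem:prop_distanza_da_convesso}), so that $\nabla\di_F$ is a bounded field with integrable divergence, and then invokes Anzellotti's Gauss--Green formula \cite[Theorem 1.9]{A} — exactly the ``invoke that theory directly'' branch of your argument, with $\gamma$ the normal trace in that sense. Your alternative hands-on approximation via $U_\eta=U\cap\{\di_F>\eta\}$ is a reasonable sketch of what underlies that theory, and you correctly flag that its only delicate point (identifying the limit of the boundary terms with $\int_{\partial U}\psi\,\gamma\,d\mathcal H^1$) is precisely what the cited result settles.
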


\begin{proof}
	We have $|\nabla \di_F|=1$ in $\R^2\setminus F$, moreover since $U\cap F=\emptyset$, by
	Lemma \ref{lem:prop_distanza_da_convesso} we deduce also $\Delta\di_F\in L^1(U)$. Therefore the thesis readily follows by applying \cite[Theorem 1.9]{A}.
\end{proof}

\begin{remark}\label{rem:traccia}
The normal trace $\gamma$ of $\nabla\di_F$ on $\partial F$ equals $1$ $\mathcal H^1$-a.e. on $\partial F$. Indeed, from Corollary \ref{cor:integration_by_parts} we have that for all $\varphi\in C^1_c(\R^2;\R^2)$ it holds
\begin{align}
-	\int_{\R^2\setminus F^+_\eta}\varphi\Delta\di_{F}\,dx
	&=
	\int_{\R^2\setminus F^+_\eta}  \nabla\varphi\cdot\nabla\di_{F}\,dx- \int_{\partial(F_\eta^+)}\varphi\,\gamma\,d\mathcal{H}^1\nonumber\\
	&=\int_{\R^2\setminus F^+_\eta}  \nabla\varphi\cdot\nabla\di_{F}\,dx- \int_{\partial(F_\eta^+)}\varphi\,d\mathcal{H}^1,\nonumber
\end{align}
where we have used that $\partial (F_\eta^+)$ being a level set of $\di_F$, it results $\nabla\di_F=\nu_\eta$ on it. 
Letting $\eta\rightarrow 0$ and using that $\Delta \di_F\in L^1(B\setminus F)$ for all balls $B$, we infer 
\begin{align}
-\int_{\R^2\setminus F}\varphi\Delta\di_{F}\,dx
&=
\int_{\R^2\setminus F}  \nabla\varphi\cdot\nabla\di_{F}\,dx- \int_{\partial F}\varphi\,d\mathcal{H}^1.\nonumber
\end{align}
By the arbitrariness of $\varphi$ and again by Corollary \ref{cor:integration_by_parts}, the claim follows.

\end{remark}

\begin{lemma}\label{lem:limite_di_traccia}
 Let $F\subset\overline\Om$ be closed and convex with non-empty interior, 
and let $\delta>0$.
	 Let $\psi\in W^{1,1}((F^+_{\delta}\setminus F)\cap\Om)\cap L^\infty((F^+_{\delta}\setminus F)\cap\Om)\cap C^0((F^+_{\delta}\setminus F)\cap\Om)$. Then 
	\begin{equation}\label{eq:lim_tracce}
		\lim_{\eps\to0^+,\;\eps<\delta} \int_{\Om\cap\partial(F^+_\eps)}\psi\,d\mathcal{H}^1=\int_{\Om\cap\partial F}\psi\,d\mathcal{H}^1\,.
	\end{equation}
\end{lemma}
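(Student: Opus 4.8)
The plan is to relate the two traces by integrating by parts over the thin shell
\[
A_\eps:=\{x\in\Om:\ 0<\di_F(x)<\eps\}=({\rm int}(F^+_\eps)\cap\Om)\setminus F,
\]
enclosed between $\partial F$ and the level set $\{\di_F=\eps\}=\partial(F^+_\eps)$, using the field $\nabla\di_F$, for which $|\nabla\di_F|=1$ and, by Lemma \ref{lem:prop_distanza_da_convesso}, $\Delta\di_F\in L^1(B\setminus F)$. First I would apply Corollary \ref{cor:integration_by_parts} with $U=A_\eps$ (note $A_\eps\cap F=\emptyset$ and $\psi\in W^{1,1}(A_\eps)\cap L^\infty(A_\eps)\cap C^0(A_\eps)$ since $\eps<\delta$), obtaining
\[
\int_{\partial A_\eps}\psi\,\gamma\,d\mathcal{H}^1=\int_{A_\eps}\nabla\psi\cdot\nabla\di_F\,dx+\int_{A_\eps}\psi\,\Delta\di_F\,dx .
\]
The boundary $\partial A_\eps$ splits, up to $\mathcal{H}^1$-null sets, into the outer part $\Om\cap\partial(F^+_\eps)$, the inner part $\Om\cap\partial F$, and the lateral part $\partial\Om\cap\{0<\di_F<\eps\}$. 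Along the outer part $\nabla\di_F$ is the outward unit normal of $A_\eps$, so $\gamma=1$; along $\Om\cap\partial F$ the outward normal of $A_\eps$ equals $-\nabla\di_F$, so $\gamma=-1$ by Remark \ref{rem:traccia}; along the lateral part $\gamma=\nabla\di_F\cdot\nu_\Om$. Substituting gives the key identity
\[
\int_{\Om\cap\partial(F^+_\eps)}\psi\,d\mathcal{H}^1-\int_{\Om\cap\partial F}\psi\,d\mathcal{H}^1=R(\eps),
\]
where
\[
R(\eps):=\int_{A_\eps}\nabla\psi\cdot\nabla\di_F\,dx+\int_{A_\eps}\psi\,\Delta\di_F\,dx-\int_{\partial\Om\cap\{0<\di_F<\eps\}}\psi\,(\nabla\di_F\cdot\nu_\Om)\,d\mathcal{H}^1 .
\]

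The second step is to show $R(\eps)\to0$ as $\eps\to0^+$. Since the sets $\{0<\di_F<\eps\}$ decrease to $\emptyset$, we have $|A_\eps|\to0$; as $\nabla\psi\in L^1$ and $\psi\,\Delta\di_F\in L^1$ (here $\psi\in L^\infty$ and $\Delta\di_F\in L^1(B\setminus F)$ by Lemma \ref{lem:prop_distanza_da_convesso}), absolute continuity of the Lebesgue integral forces the two volume integrals to vanish. For the lateral term, $\partial\Om\cap\{0<\di_F<\eps\}\downarrow\emptyset$ and $\partial\Om$ has finite length, so continuity of $\mathcal{H}^1$ from above gives $\mathcal{H}^1(\partial\Om\cap\{0<\di_F<\eps\})\to0$; together with $\|\psi\|_\infty<\infty$ and $|\nabla\di_F\cdot\nu_\Om|\le1$ this makes the lateral integral vanish too. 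Hence $R(\eps)\to0$, and the key identity yields exactly \eqref{eq:lim_tracce}.

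The main obstacle is the regularity of $A_\eps$ required to invoke Corollary \ref{cor:integration_by_parts}. The outer boundary $\{\di_F=\eps\}$ is of class $C^{1,1}$ for every $\eps>0$ (it bounds the outer parallel body of the convex set $F$), while $\partial F$ and $\partial\Om$ are Lipschitz by convexity; thus $\partial A_\eps$ is Lipschitz away from the points where $\{\di_F=\eps\}$ meets $\partial\Om$ and where $\partial F$ touches $\partial\Om$. The former meeting is transversal for a.e. $\eps\in(0,\delta)$, and the contact set $\partial F\cap\partial\Om$ does not obstruct the Green formula (either it is a shared boundary arc, which produces no cusp, or it is $\mathcal{H}^1$-negligible and removable by exhausting $A_\eps$ with Lipschitz subdomains, the contributions near the contact points vanishing by integrability of $\nabla\psi$ and $\Delta\di_F$). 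I would therefore establish the identity for a.e. $\eps\in(0,\delta)$; since $R(\eps)\to0$ along all $\eps\to0^+$ and $\eps\mapsto\int_{\Om\cap\partial(F^+_\eps)}\psi\,d\mathcal{H}^1$ depends continuously on $\eps$ (continuously varying $C^{1,1}$ level curves integrated against the continuous function $\psi$), the a.e. convergence upgrades to the full limit, completing the proof.
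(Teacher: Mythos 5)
Your proposal is correct and follows essentially the same route as the paper's proof: integrate $\nabla\di_F$ by parts over the thin shell between $\Om\cap\partial F$ and $\Om\cap\partial(F^+_\eps)$ (the paper's $T_\eps$, your $A_\eps$) using Corollary \ref{cor:integration_by_parts} and Remark \ref{rem:traccia}, and then kill the two volume terms by absolute continuity of the integral (via $|\nabla\di_F|=1$, $\psi\in L^\infty$, $\Delta\di_F\in L^1$ from Lemma \ref{lem:prop_distanza_da_convesso}) and the lateral term by the vanishing $\mathcal H^1$-measure of $\partial\Om\cap\{0<\di_F<\eps\}$. The only point of departure is your closing paragraph on the Lipschitz regularity of the shell: the paper applies the corollary to $T_\eps$ for every $\eps$ without comment, so your a.e.-$\eps$-plus-continuity patch is added caution rather than a different method (and note its continuity claim would itself need care at exceptional $\eps$ where $\partial(F^+_\eps)$ overlaps $\partial\Om$ on a set of positive length, though such jumps are $O(\|\psi\|_\infty\,\mathcal H^1(\partial\Om\cap\partial(F^+_\eps)))$ and harmless as $\eps\to0^+$).
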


\begin{proof}
	Let $\eps\in(0,\delta)$ and $T_\eps:=(F^+_\eps\setminus F)\cap \Omega$. Since  $T_\eps\cap F=\emptyset$, by Corollary \ref{cor:integration_by_parts} we get
	\begin{equation}\label{eq:lim_tracce*}
	\begin{split}
	-\int_{T_\eps}\psi\Delta\di_{F}\,dx
	=
	\int_{T_\eps}  \nabla\psi\cdot\nabla\di_{F}\,dx- \int_{\partial T_\eps}\psi\,\gamma\,d\mathcal{H}^1\,.
	\end{split}
	\end{equation}
	By Remark \ref{rem:traccia} 
we have
	\begin{equation}\label{eq:lim-tracce1}
	\begin{split}
	-\int_{T_\eps}\psi\Delta\di_{F}\,dx
	=&\int_{T_\eps}  \nabla\psi\cdot\nabla\di_{F}\,dx\\&+ \int_{\Om\cap\partial F}\psi\,d\mathcal{H}^1
	-\int_{\Om\cap\partial(F_\eps^+)}\psi\,d\mathcal{H}^1
	-\int_{((F^+_\eps)\setminus F)\cap\partial \Om}\psi\,\gamma\,d\mathcal{H}^1\,.
	\end{split}
	\end{equation}
Now
	\begin{equation}\label{eq:lim-tracce2}
	\lim_{\eps\to0^+} \Big|\int_{T_\eps}  \nabla\psi\cdot\nabla\di_{F}\,dx
\Big|	\le\lim_{\eps\to0^+} \int_{T_\eps} |\nabla \psi|\,dx=0\,,
	\end{equation}
	and
	\begin{equation}\label{eq:lim-tracce3}
	\lim_{\eps\to0^+} \Big|	\int_{(F^+_\eps\setminus F)\cap\partial\Om}\psi\,\gamma\,d\mathcal{H}^1\Big|\le 
	\lim_{\eps\to0^+}	\int_{(F^+_\eps\setminus F)\cap\partial\Om}\psi\,d\mathcal{H}^1=0\,.
	\end{equation}
	Moreover, since $\Delta\di_{F}\in L^1(T_\eps)$ by Lemma \ref{lem:prop_distanza_da_convesso}, we deduce also 
	\begin{equation}\label{eq:lim-tracce4}
	\lim_{\eps\to0^+}\Big| 	\int_{T_\eps}-\psi\,\Delta\di_{F}\,dx\Big|\le  \|\psi\|_{L^\infty}\lim_{\eps\to0^+}	\int_{T_\eps}|\Delta \di_{F}|\,dx=0\,.
	\end{equation}
	Finally gathering together \eqref{eq:lim-tracce1}-\eqref{eq:lim-tracce4} we infer \eqref{eq:lim_tracce}.
\end{proof}
\begin{remark} \label{rem:limite_di_traccia} Let $F$, $\delta$  and $\psi$ be as in Lemma \ref{lem:limite_di_traccia}. 
Let $\alpha$ be any connected component of $\Om\cap\partial F$,  and for every $0<\eps<\delta$  let $\alpha_\eps$ be the corresponding component of $\Om\cap\partial (F^+_\eps)$; namely, if $\pi_F$ is the orthogonal projection onto the convex closed set $F$, setting
$$\widehat\alpha_\eps:=\{x\in \partial (F^+_\eps):\pi_F(x)\in \alpha\},$$ 
then one has $\alpha_\eps:=\widehat\alpha_\eps\cap \Om$. 
 Arguing as in Lemma \ref{lem:limite_di_traccia}, we can show that 
\begin{equation*}
	\lim_{\eps\to0^+} 
	\int_{\alpha_\eps}\psi\,d\mathcal{H}^1=\int_{\alpha}\psi\,d\mathcal{H}^1\,.
\end{equation*}
\end{remark}

\begin{lemma}\label{lem:continuita-bordo-E}
Let $(\sigma,\psi)\in \comp$ be a minimizer as in 
Theorem \ref{teo:convexification}.
 Then there is a minimizer $(\widehat\sigma,\widehat\psi)\in \comp$ 
with the following properties:
\begin{enumerate}
	\item\label{lem1.}$\partial E(\widehat\sigma)\cap\partial\Om=\partial E(\sigma)\cap\partial\Om$;
	\item\label{lem2.}  $\widehat\psi$ is continuous and null
	 on $\Om\cap\partial E(\widehat\sigma)$.
\end{enumerate}
\end{lemma}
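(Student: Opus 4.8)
The plan is to leave the free boundary untouched, i.e. to take $\widehat\sigma=\sigma$; then property \ref{lem1.} holds trivially because $\partial E(\widehat\sigma)\cap\partial\Om=\partial E(\sigma)\cap\partial\Om$, and the entire content becomes the production, for this same $\sigma$, of a \emph{continuous} minimizer $\widehat\psi$ that vanishes on $\Om\cap\partial E(\sigma)$. By Lemma \ref{lem:analicity} the given $\psi$ is analytic and positive in $\Om\setminus E(\sigma)$, equals $\varphi$ on $\partial^D\Om\setminus\partial E(\sigma)$, and vanishes on $E(\sigma)$; hence its only possible interior discontinuity is a jump across $\Om\cap\partial E(\sigma)$, where it has outer trace $g\ge0$ and inner trace $0$. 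Since $\psi=0$ a.e. on $E(\sigma)$, this jump contributes exactly $\int_{\Om\cap\partial E(\sigma)}g\,d\mathcal H^1$ to $|D^s\psi|(\Om)$. Using the additivity of the relaxed area across the piecewise-Lipschitz interface $\Om\cap\partial E(\sigma)$ one gets $\mathcal A(\psi;\Om)=\mathcal A(\psi;\Om\setminus E(\sigma))+|E(\sigma)|+\int_{\Om\cap\partial E(\sigma)}g\,d\mathcal H^1$, so that, $\sigma$ being frozen, minimality of $\psi$ is equivalent to minimality for the free-boundary area functional $\zeta\mapsto \mathcal A(\zeta;\Om\setminus E(\sigma))+\int_{\Om\cap\partial E(\sigma)}|\zeta|\,d\mathcal H^1+\int_{\partial^D\Om\setminus\partial E(\sigma)}|\zeta-\varphi|\,d\mathcal H^1$. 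The goal is to exhibit a minimizer of this functional with null outer trace on $\Om\cap\partial E(\sigma)$.

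I would argue on each connected component $F$ of $E(\sigma)$ separately, these being convex by Theorem \ref{teo:convexification}; for $\lambda$ large the collars below are disjoint, so the components do not interact. The geometric engine is the distance $\di_F$, which by Lemma \ref{lem:prop_distanza_da_convesso} satisfies $|\nabla\di_F|=1$ and $\Delta\di_F\ge0$ with $\Delta\di_F\in L^1$ near $F$, and whose normal trace on $\partial F$ equals $1$ (Remark \ref{rem:traccia}). I would then cap $\psi$ with the cone generated by this distance, setting
\[
\widehat\psi:=\min\{\psi,\ \lambda \di_F\}\ \text{ in }\Om\setminus E(\sigma),\qquad \widehat\psi:=0\ \text{ on }E(\sigma),
\]
for a large but fixed $\lambda>0$. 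Because $\lambda \di_F$ is continuous and vanishes on $\partial F$, the competitor $\widehat\psi$ descends continuously to $0$ on $\Om\cap\partial F$ and the jump is removed, while $\widehat\psi=\psi$ off the thin collar $T_\lambda:=\{0<\di_F<\psi/\lambda\}$, so that $\widehat\psi=\varphi$ is preserved on $\partial^D\Om\setminus\partial E(\sigma)$ and $(\sigma,\widehat\psi)\in\comp$.

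It then remains to prove $\mathcal F(\sigma,\widehat\psi)\le\mathcal F(\sigma,\psi)$; by minimality of $\psi$ this forces equality, whence $\widehat\psi$ is a minimizer, continuous and null on $\Om\cap\partial E(\sigma)$, which is exactly property \ref{lem2.}. The two functions differ only in $T_\lambda$, where $\psi$ carries the vertical wall $\int_{\Om\cap\partial F}g\,d\mathcal H^1$ together with its own graph area, while $\widehat\psi=\lambda\di_F$ carries only the conical lateral area $\int_{T_\lambda}\sqrt{1+\lambda^2}\,dx$ and no jump. I would bound the latter by slicing along the level sets $\partial(F^+_\eps)$ of $\di_F$, converting the collar integral into a boundary trace via Lemma \ref{lem:limite_di_traccia} and Remark \ref{rem:limite_di_traccia}, and using the integration-by-parts formula for $\di_F$ (Corollary \ref{cor:integration_by_parts}); the model inequality is the pointwise triangle inequality stating that a straight descent from height $0$ to the value of $\psi$ across $T_\lambda$ is shorter than the broken path made of the wall followed by the graph of $\psi$, and the distance-function estimates promote it to the global bound on $\mathcal A(\widehat\psi;\Om)$.

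The main obstacle is precisely this energy inequality, which is genuinely borderline: a vertical wall and a continuous descent carry the \emph{same} relaxed area in the degenerate limit, so a crude finite-slope replacement can in fact \emph{increase} the area through the curvature (Jacobian) factors of the collar. The inequality must therefore be closed by exploiting the convexity of $F$ — equivalently $\Delta\di_F\ge0$ and the favourable orientation of the level sets $\partial(F^+_\eps)$ — together with the fact that, for the \emph{minimizer} $\psi$, the trace inequality of Lemma \ref{lem_prelimiary} is saturated along $\Om\cap\partial E(\sigma)$ (otherwise a zero-filling as in Lemma \ref{lem:convexification} would strictly lower the energy, contradicting minimality). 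Once $\mathcal F(\sigma,\widehat\psi)\le\mathcal F(\sigma,\psi)$ is secured for a suitable fixed $\lambda$, no limiting procedure is needed: the continuity and nullity of $\widehat\psi$ on $\Om\cap\partial E(\widehat\sigma)=\Om\cap\partial E(\sigma)$ are built into the construction, and property \ref{lem1.} is immediate from $\widehat\sigma=\sigma$.
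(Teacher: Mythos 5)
There is a genuine gap, and it sits exactly where you place it yourself: the energy inequality $\mathcal F(\sigma,\widehat\psi)\le\mathcal F(\sigma,\psi)$ for the capped competitor $\widehat\psi=\min\{\psi,\lambda \di_F\}$ is never proved, and the mechanism you invoke to close it points in the wrong direction. For a convex component $F$ the level sets of $\di_F$ \emph{lengthen} as one moves outward ($\mathcal H^1(\{\di_F=s\})\approx L+\kappa s$ along an arc of length $L$ with total turning $\kappa>0$); this is exactly what $\Delta \di_F\ge 0$ says. By the coarea formula, the cone $\lambda\di_F$ over the collar $T_\lambda$ has area $\sqrt{1+\lambda^2}\,|T_\lambda|\approx \int_\alpha g\,d\mathcal H^1+\tfrac{\kappa g^2}{2\lambda}$, i.e.\ it \emph{exceeds} the wall it replaces by a term of order $1/\lambda$ produced precisely by convexity; what you save, $\mathcal A(\psi;T_\lambda)$, is also $O(1/\lambda)$ and in the worst case ($\psi$ close to the constant $g$ near $\alpha$) is only about $\tfrac{gL}{\lambda}$. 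Hence whenever $\kappa g>2L$ (a short, sharply curved arc carrying a tall wall) your replacement strictly increases $\mathcal F$ for every large $\lambda$: no choice of $\lambda$ rescues it, and convexity is an obstruction, not a help. The appeal to ``saturation'' of Lemma \ref{lem_prelimiary} is empty here, because the components of $E(\sigma)$ are already convex by Theorem \ref{teo:convexification}, so the zero-filling of Lemma \ref{lem:convexification} changes nothing and yields no information on the trace of $\psi$ along $\Om\cap\partial E(\sigma)$. There is also a boundary leak: near the corners $q_i,p_{i+1}$ both $\varphi$ and $\di_F$ vanish, so no fixed $\lambda$ guarantees $\lambda\di_F\ge\varphi$ on $\partial^D\Om\setminus\partial E(\sigma)$; where this fails, $\widehat\psi$ violates the Dirichlet datum and $\int_{\partial\Om}|\widehat\psi-\varphi|\,d\mathcal H^1$ picks up the extra positive contribution $\int(\varphi-\lambda\di_F)^+\,d\mathcal H^1$, which your bookkeeping does not absorb.

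More fundamentally, freezing $\widehat\sigma=\sigma$ is a strictly stronger assertion than the lemma, and the lemma is phrased the way it is on purpose: it preserves only $\partial E(\widehat\sigma)\cap\partial\Om$, not $\sigma$ itself. If the minimizer $\psi$ genuinely carries a wall over an arc $\alpha\subset\Om\cap\partial E(\sigma)$, removing it at zero energy cost in general requires moving the free boundary. This is what the paper does: around each arc $\alpha$ it builds the space curve $\Gamma_\eps$ from the graph of $\psi$ over the pushed-out curve $\alpha_\eps$ and its reflection, takes a classical Plateau solution (Lemma \ref{lem:plateau_solution}: symmetric, Cartesian, with smooth free curve $\beta_{p,q}$), proves the key comparison \eqref{eq:comparison-with-plateau} by gluing $S_\eps$ to the graph of $\psi$ over the annular region between the two tubes and invoking the trace limits of Lemma \ref{lem:limite_di_traccia} and Remark \ref{rem:limite_di_traccia}, and then replaces $\alpha$ by the in general \emph{different} curve $\beta_{p,q}$, iterating over the at most countably many arcs via the compactness Lemma \ref{lem:compactness}. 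Your plan has no analogue of this free-boundary replacement; the failure of your estimate in the regime $\kappa g>2L$ is not a mere technicality but a sign that, with $\sigma$ frozen, a wall-free minimizer need not exist at all.
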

The second condition means essentially that $\widehat\psi$ vanishes on $\Om\cap\partial E(\widehat\sigma)$ when considering its trace from the side of $\Om\setminus E(\widehat\sigma)$.
\begin{proof} 

We know by Lemma \ref{lem:analicity} that $(\sigma,\psi)$ satisfies the following properties:
	\begin{itemize}
		\item
		Each connected component of $E(\sigma)$ is convex;
		\item $\psi$ is positive and real analytic 
		in $\Om\setminus E(\sigma)$;
		\item $\psi=\varphi$ on $\partial^D\Om\setminus\partial E(\sigma)$.
	\end{itemize}
	
In what follows we are going to modify $(\sigma,\psi)$ near each arc of $\partial E(\sigma)$ using an iterative argument in order to get a new minimizer $(\widehat\sigma,\widehat\psi)\in\comp$ that satisfies \ref{lem1.}-\ref{lem2.}. To this aim we denote by $F_1,\dots,F_k$ with $1\le k\le n$ the closed connected components of  $E(\sigma)$; we also set 
	$	\delta_0:=\min_{i\ne j}\dist(F_i,F_j)>0$. 	Moreover by the first property we deduce that $\Om\cap\partial E(\sigma)$ is the union of an at most countable family of pairwise disjoint arcs with endpoints in $\partial\Om$, i.e.,
\begin{equation*}
\Om\cap	\partial E(\sigma)=
\bigcup_{i=1}^k\bigcup_{j=1}^\infty \alpha_{i,j}\,,
\end{equation*}
where $\alpha_{i,j}$ is a connected component of $\Om\cap\partial F_i$ for $i\in\{1,
\dots,k\}$, $j\ge1$\footnote{Notice that at this stage we do not have any information about the geometry of the set $\partial\Om\cap\partial E(\sigma)$, and $\Om\cap\partial F_i$ could a priori be the union of infinitely many connected components.}.\\

		\textit{Step 1: Base case.}  Let  $\alpha$ be one of the connected components of $\Om\cap\partial F$, with $F:=F_i$ for some $i\in\{1,\dots,k\}$. 
		In this step we construct a new minimizer $(\sigma^\alpha,\psi^\alpha)\in\comp$ such that $\partial E(\sigma^\alpha)\cap\partial\Om=\partial E(\sigma)\cap\partial\Om$
		and 
		$\psi^\alpha$ is continuous and null on $\alpha'$, where
		$\alpha'\subset\Om\cap\partial E(\sigma^\alpha)$ is a suitable curve that replaces $\alpha$ and has the same endpoints as $\alpha$.\\

	For $\eps\in(0,\delta_0/2)$ we define the stripe $$\widehat T_\eps(\alpha):=\{x\in\Om\setminus F\colon\dist(x,\alpha)<\eps\}\subset F^+_\eps\setminus F\,,$$ and consider the planar curve $\alpha_\eps$ in $\overline\Om$ defined as in Remark \ref{rem:limite_di_traccia}. 
	Let $T_\eps(\alpha)$ be the connected component of $\widehat T_\eps(\alpha)$ whose boundary contains $\alpha_\eps$. Let $L_\eps$ be defined as
	$$ L_\eps:= \partial T_\eps(\alpha)\cap \partial\Om,
	$$
	so that in particular
	$\partial T_\eps(\alpha)=\alpha\cup\alpha_\eps\cup L_\eps \,.$
	Let $p,q\in \partial\Om$ be the endpoints of $\alpha$ (and then also the endpoints of  $\alpha_\eps\cup L_\eps$, which are independent of $\eps$).
	We define the curves
	$$\Gamma_\eps:=\Gamma_\eps^+\cup\Gamma_\eps^-\,,\quad
	\Gamma_\eps^+:=\mathcal{G}_{\psi\res\alpha_\eps}\cup \mathcal{G}_{\varphi\res L_\eps}\cup l^+\,,\quad \Gamma_\eps^-:=\mathcal{G}_{-\psi\res\alpha_\eps}\cup \mathcal{G}_{-\varphi\res L_\eps}\cup l^-\,,$$
	where 
	\begin{equation*}
		l^+:=(\{p\}\times[0,\varphi(p)])\cup (\{q\}\times[0,\varphi(q)])\,,\quad
			l^-:=(\{p\}\times[-\varphi(p),0])\cup(\{q\}\times[-\varphi(q),0])\,.
	\end{equation*}
	By observing that $L_\eps\subset\partial^D\Om\setminus\partial E(\sigma)$ and recalling that $\psi=\varphi$ on $\partial^D\Om\setminus\partial E(\sigma)$ we deduce that
	 $\Gamma_\eps$ is a closed non-planar curve in $\R^3$ that satisfies assumptions \ref{(1)}-\ref{(3)} of Lemma \ref{lem:plateau_solution}.
	In particular a solution $S_\eps$ to the classical Plateau problem corresponding to $\Gamma_\eps$ is a disc-type surface such that:
	\begin{enumerate}
		\item $\beta_{p,q}^\eps:=S_\eps\cap(\R^2\times\{0\})$ is a simple curve of class $C^\infty$ joining $p$ and $q$;
		\item $S_\eps$ is symmetric with respect to the horizontal plane;
		\item the surface $S^+_\eps:=S_\eps\cap\{x_3\ge0\}$ 
is the graph of a function $\psi_{p,q}^\eps\in W^{1,1}( U_{p,q}^\eps)
\cap C^0(\overline U_{p,q}^\eps)$, where $U_{p,q}^\eps\subset F\cup T_\eps(\alpha)$ is the open region enclosed between $\alpha_\eps\cup L_\eps$ and $\beta_{p,q}^{\eps}$;
		\item the curve $\beta_{p,q}^\eps$ is contained in the closed
		convex hull of $\Gamma_\eps$ and  $( F\cup T_\eps(\alpha))\setminus U_{p,q}^\eps$ is convex.
	\end{enumerate}
We would like to compare the area of $S_\eps^+$ with the area of the generalized graph of $\psi$ on $\overline {T_\eps(\alpha)}$. This is not immediate since, due to the fact that $\psi$ is just $BV$, we cannot, a priori, conclude that this generalized graph is of disc-type\footnote{This is due to the jump of $\psi$ on $\partial F$ which is, in general, not regular enough.}. Hence we proceed as follows. 
	We fix $\bar\eps\in(0,\delta_0/2)$; we claim that 
	\begin{equation}\label{eq:comparison-with-plateau}
	\mathcal{A}(\psi_{p,q}^{\bar\eps}; U_{p,q}^{\bar\eps})
	\le	
	\mathcal{A}(\psi; T_{\bar\eps}(\alpha))+
	\int_{\alpha}\psi\res{T_{\bar\eps}(\alpha)}\,d\mathcal{H}^1\,.
	\end{equation}
	Since $\psi$ is analytic in $T_{\bar\eps}(\alpha)\subset\Om\setminus E(\sigma)$, by Lemma \ref{lem:limite_di_traccia} and Remark \ref{rem:limite_di_traccia} it follows that 
	\begin{equation}\label{claim}
	\lim_{\eps\to0^+,\;\eps<\bar\eps} 
	\int_{\alpha_\eps}\psi\res{T_{\bar\eps}(\alpha)}\,d\mathcal{H}^1=\int_{\alpha}\psi\res{T_{\bar\eps}(\alpha)}\,d\mathcal{H}^1\,.
	\end{equation}
We take
	$$T_{\eps}^{\bar\eps}(\alpha):={T_{\bar\eps}(\alpha)\setminus \overline {T_{\eps}(\alpha)}}\quad\text{and}\quad Y_{\bar\eps}:=S_{\eps}\cup\mathcal{G}_{\psi\res T_\eps^{\bar\eps}(\alpha)} \cup\mathcal{G}_{-\psi\res T_\eps^{\bar\eps}(\alpha)}\,. $$
	Since $S_\eps$ is a disc-type surface and $\psi$ is analytic in $T_\eps^{\bar\eps}(\alpha)$ it turns out that $Y_{\bar\eps}$ is also a disc-type surface satisfying $\partial Y_{\bar\eps}=\Gamma_{\bar\eps}$. Therefore using that $S_{\bar\eps}$ and $S_\eps$ are solutions to the Plateau problems corresponding to $\Gamma_{\bar\eps}$ and $\Gamma_\eps$ respectively, we have
	\begin{equation*}
	\begin{split}
	\mathcal{H}^2(S_{\bar\eps})\le	\mathcal{H}^2(Y_{\bar\eps})&= 2\mathcal{H}^2(\mathcal{G}_{\psi\res T_\eps^{\bar\eps}(\alpha)} )+\mathcal{H}^2(S_\eps)\\
	&\le 2\mathcal{H}^2(\mathcal{G}_{\psi\res T_{\bar\eps}(\alpha)}) +
	2\int_{\alpha_\eps\cup L_\eps}\psi\res{T_{\bar\eps}(\alpha)}\, d\mathcal{H}^1\\
	&= 2\mathcal{H}^2(\mathcal{G}_{\psi\res T_{\bar\eps}(\alpha)}) + 2\int_{\alpha_\eps}\psi\res{T_{\bar\eps}(\alpha)}\, d\mathcal{H}^1+
	2\int_{ L_\eps}\psi\res{T_{\bar\eps}(\alpha)}\,d\mathcal{H}^1\,.
	\end{split}
	\end{equation*}
	Passing to the limit as $\eps\to0^+$, by \eqref{claim} and the fact that $\mathcal{H}^1(L_\eps)\to0$, we obtain
	\begin{equation*}
	\mathcal{H}^2(S_{\bar\eps})\le	
	2\mathcal{H}^2(\mathcal{G}_{\psi\res T_{\bar\eps}(\alpha)}) +
	2\int_{\alpha}\psi\res{T_{\bar\eps}(\alpha)} \,d\mathcal{H}^1,
	\end{equation*}
	which yields
	\begin{equation*}
	\mathcal{A}(\psi_{p,q}^{\bar\eps};U_{p,q}^{\bar\eps})=
	\mathcal{H}^2(S_{\bar\eps}^+)\le	
	\mathcal{H}^2(\mathcal{G}_{\psi\res T_{\bar\eps}(\alpha)}) +
	\int_{\alpha}\psi\res{T_{\bar\eps}(\alpha)}\,d\mathcal{H}^1=\mathcal{A}(\psi; T_{\bar\eps}(\alpha))+
	\int_{\alpha}\psi\res{T_{\bar\eps}(\alpha)}\,d\mathcal{H}^1,
	\end{equation*}
	and \eqref{eq:comparison-with-plateau} is proved.
	
We now define $E^\alpha:=(E(\sigma)\cup T_{\bar\eps}(\alpha))\setminus U^{\bar\eps}_{p,q}$ and 
	\begin{equation*}
\psi^\alpha:=\begin{cases}
0&\text{in}\ E^\alpha\\
\psi^{\bar\eps}_{p,q}& \text{in}\ U^{\bar\eps}_{p,q} \\
\psi& \text{otherwise}\,.
\end{cases}
\end{equation*}
	By \eqref{eq:comparison-with-plateau} and using that $U^{\bar\eps}_{p,q}\cup E^\alpha=E(\sigma)\cup T_{\bar\eps}(\alpha)$ we derive
\begin{equation}\label{eq:area_psi'}
\begin{split}
\mathcal{A}(\psi^\alpha;\Om)-|E^\alpha|
&= \mathcal A(\psi^{\bar\eps}_{p,q};U^{\bar\eps}_{p,q})+
\mathcal{A}(\psi; \Om\setminus(U^{\bar\eps}_{p,q}\cup E^\alpha))
\\
&= \mathcal A(\psi^{\bar\eps}_{p,q};U^{\bar\eps}_{p,q})+
\mathcal{A}(\psi; \Om\setminus(T_{\bar\eps}(\alpha)\cup E(\sigma)))
\\
&\le \mathcal A(\psi; T_{\bar\eps}(\alpha))+
\int_{\alpha}\psi\res{T_{\bar\eps}(\alpha)}\,d\mathcal{H}^1+\mathcal{A}(\psi; \Omega\setminus T_{\bar\eps}(\alpha))-|E(\sigma)|\\
&
=
\mathcal{A}(\psi; \Omega)-|E(\sigma)|\,.
\end{split}
\end{equation}
	It remains to construct $\sigma^\alpha\in \Sigma_{\rm conv}$. Without loss of generality we may assume 
$$\sigma_1([0,1]),\dots,\sigma_h([0,1])\subset F\quad\text{and}\quad \sigma_{h+1}([0,1]),\dots,\sigma_n([0,1])\not\subset F$$ 
	for some $h\le n$, notice that if $h=n$ the second family of curves is empty. Then we define $\sigma^\alpha:=(\sigma^\alpha_1,\dots,\sigma_h^\alpha,\sigma_{h+1},\dots,\sigma_n)\in\Lip([0,1];\overline\Om)^n$ where if $h>1$
	\begin{equation*}
			\sigma^\alpha_{i}([0,1])=\begin{cases}
				\overline{q_{i}p_{i+1}}&\text{for}\quad i=1,\dots,h-1\\
				\partial (F\cup T_{\bar\eps}(\alpha)\setminus U^{\bar{\eps}}_{p,q})\setminus\Bigl((\cup_{i=1}^{h}\partial^0_i\Om)\cup
				(\cup_{i=1}^{h-1}\overline{q_{i}p_{i+1}})\Bigr)&\text{for}\quad i=h,
			\end{cases} 
		\end{equation*}
	where $	\overline{q_{i}p_{i+1}}$ is the segment joining $q_i$ to $p_{i+1}$;	if instead $h=1$ we simply set 
	\begin{equation*}
	\sigma^\alpha_{1}([0,1])=	\partial (F\cup T_{\bar\eps}(\alpha)\setminus U^{\bar{\eps}}_{p,q})\setminus\partial_1^0\Om.
	\end{equation*}

Clearly the pair $(\sigma^\alpha,\psi^\alpha)$ belongs to $\comp$, and by \eqref{eq:area_psi'} it satisfies 
\begin{equation*}
\begin{split}
{\mathcal F}(\sigma^\alpha,\psi^\alpha)=
{\mathcal F}(\sigma, \psi)\,.
\end{split}
\end{equation*}
Moreover 
$
\partial E(\sigma^\alpha)\cap\partial\Om=\partial E(\sigma)\cap\partial\Om$ and $\psi^\alpha$ is continuous and null on $\alpha'$, where
\begin{equation}\label{gamma'}
\alpha':=\beta_{p,q}^{\bar\eps}\subset \Om\cap\partial E(\sigma^\alpha)\,.
\end{equation}
Summarizing, we have replaced the curve $\alpha$ with $\alpha'$, ensuring that the new function $\psi^\alpha$ is now continuous and null on $\alpha'$. 

	\textit{Step 2: Iterative case.}  In this step we construct a minimizer $(\widehat\sigma,\widehat\psi)\in\comp$ that satisfies the thesis by iterating step one at most a countable number of  times.\\
	We first consider $F=F_1$ and apply step 1 for each $\alpha_{1,j}$ with $j\ge1$.
	More precisely we define the pair $(\sigma_{1,j},\psi_{1,j})\in\comp$ as follows:
	\begin{itemize}
		\item if $j=1$ we set
		\begin{equation*}
		(\sigma_{1,1},\psi_{1,1}):=(\sigma^{\alpha_{1,1}},\psi^{\alpha_{1,1}} )\,,
		\end{equation*}
	where $(\sigma^{\alpha_{1,1}},\psi^{\alpha_{1,1}} )\in\comp$ is  a minimizer constructed as in step 1 with $\alpha=\alpha_{1,1}$;
		\item if  $j>1$ we set
		\begin{equation*}
			(\sigma_{1,j},\psi_{1,j}):=	(\sigma_{1,j-1}^{\alpha_{1,j}},\psi_{1,j-1}^{{\alpha}_{1,j}})\,,
		\end{equation*}
		where $(\sigma_{1,j-1}^{\alpha_{1,j}},\psi_{1,j-1}^{{\alpha}_{1,j}})\in\comp$ is a minimizer constructed as in step 1 with $(\sigma,\psi)=(\sigma_{1,j-1},\psi_{1,j-1})$ and $\alpha=\alpha_{1,j}$.
	\end{itemize}
Since
	$
		\mathcal{F}(\sigma_{1,j},\psi_{1,j})=\mathcal F(\sigma,\psi)
$
		for all $j\ge1$, by  Lemma \ref{lem:compactness} it follows that $(\sigma_{1,j},\psi_{1,j})$ converges to $(\sigma_1,\psi_1)\in\comp$ in the sense of Definition \ref{def:conv}. Moreover by construction we have that for every $j\ge1$ the pair $(\sigma_{1,j},\psi_{1,j})$  satisfies
		\begin{equation*}
		\partial E(\sigma_{1,j})\cap\partial\Om=	\partial E(\sigma)\cap\partial\Om\,,
		\end{equation*} 
		and $\psi_{1,j}$ is continuous and null on $\cup_{h=1}^j\alpha'_{1,h}\subset \Om\cap\partial E(\sigma_{1,j})\cap\partial F_1$,
		where $\alpha'_{1,h}$ are defined as in \eqref{gamma'}. As a consequence $(\sigma_1,\psi_1)$ satisfies 
			\begin{equation*}
			\partial E(\sigma_{1})\cap\partial\Om=	\partial E(\sigma)\cap\partial\Om\,,
		\end{equation*} 
		 and $\psi_1$ is continuous and null on $\cup_{j=1}^\infty\alpha'_{1,j}\subset \Om\cap\partial E(\sigma_{1})\cap\partial F_1$.
	Moreover 
		\begin{equation*}
	\Om\cap	\partial E(\sigma_1)=(\cup_{j=1}^\infty\alpha'_{1,j})\cup
		(\cup_{i=2}^k\cup_{j=1}^\infty \alpha_{i,j})\,,
	\end{equation*}
	Now repeating the argument above for the pair $(\sigma_1,\psi_1)$ and $i=2$  we obtain a  new minimizer $(\sigma_2,\psi_2)\in\comp$  satisfying 
		\begin{equation*}
		\partial E(\sigma_{2})\cap\partial\Om=	\partial E(\sigma)\cap\partial\Om\,,
	\end{equation*}
$\psi_2$ is continuous and null on $\cup_{j=1}^\infty(\alpha'_{1,j}\cup\alpha'_{2,j})\subset \Om \cap \partial E(\sigma_{1})\cap(\partial F_1\cup\partial F_2)$
and
	\begin{equation*}
		\Om \cap \partial E(\sigma_2)=(\cup_{i=1}^2\cup_{j=1}^\infty\alpha'_{i,j})\cup
		(\cup_{i=3}^k\cup_{j=1}^\infty \alpha_{i,j})\,.
	\end{equation*}
Iterating this process a finite number of times we finally get a minimizer $(\widehat\sigma,\widehat\psi)\in\comp$  with the required properties.
\end{proof}

We are finally in the position to conclude the proof of Theorem \ref{thm:regularity}.

\begin{proof}[Proof of Theorem \ref{thm:regularity}]
Let $(\sigma,\psi)\in\comp$ be any minimizer as in Theorem \ref{teo:convexification}.
By Lemma \ref{lem:analicity} we know that $(\sigma,\psi)$ satisfies properties \ref{1.}, \ref{2.} and
\begin{equation*}
	\psi=\varphi\quad\text{on}\quad\partial^D\Om\setminus\partial E(\sigma)\,.
\end{equation*}
Moreover by Lemma \ref{lem:continuita-bordo-E} there is a minimizer $(\widehat\sigma,\widehat\psi)\in\comp$ such that  
\begin{equation}\label{eq:prop1}
	\partial E(\widehat{\sigma})\cap\partial\Om=	\partial E(\sigma)\cap\partial\Om\,,
\end{equation}
and $\widehat\psi$ is continuous and null on $\Om\cap\partial E(\widehat\sigma)$.

It remains to show that if $\partial^D_i\Om$ is not straight for some $i=1,
\dots,n$, then $$\partial E(\sigma)\cap\partial^D_i\Om=\partial E(\widehat\sigma)\cap\partial^D_i\Om=\emptyset\,.$$ 
If instead $\partial^D_i\Om$ is straight for some $i=1,\dots,n$ we prove that property \ref{4.} holds.
Eventually we show that  there is a minimizer that satisfies property \ref{5.}.
This will be achieved in a number of steps.

\step 1 Assuming that there is $i\in\{1,\dots,n\}$ such that $\partial_i^D\Om$ is not straight, we show that $\partial_i^D\Om\cap E(\widehat\sigma)=\emptyset$. To prove this we proceed by analysing three different cases.\\

\textit{Case A}:	
Suppose, to the contrary, 
that there is a non-straight\footnote{Namely, $\arc{ab}$ is not contained in a line.} arc $\arc{ab}$ (with endpoints $a\ne b$) in $\partial_i^D\Om\cap\partial E(\widehat\sigma)$. Thus in particular $\arc{ab}\subset\cup_{j=1}^n\widehat\sigma_j([0,1])$. We may assume without loss of generality that $\arc{ab}\subset\widehat\sigma_1([0,1])$.\\
Then we consider the curves
\begin{equation}\label{def:curve}
\Gamma:=\Gamma^+\cup\Gamma^-\,,\quad
\Gamma^+:=\mathcal{G}_{\varphi\res\arc{ab}}\cup l^+\,,\quad	\Gamma^-:=\mathcal{G}_{-\varphi\res\arc{ab}}\cup l^-\,,
\end{equation}
where 
\begin{equation*}
l^+:=(\{a\}\times[0,\varphi(a)])\cup  (\{b\}\times[0,\varphi(b)])\,,\quad
l^-:=(\{a\}\times[-\varphi(a),0])\cup  (\{b\}\times[-\varphi(b),0])\,.
\end{equation*}
In this way $\Gamma$ satisfies the assumptions of Lemma \ref{lem:plateau_solution} and hence a solution $S$ to the Plateau problem spanning $\Gamma$ is a disc-type surface such that: 
\begin{enumerate}[label=\roman*.]
	\item\label{i} $\beta_{a,b}:=S\cap (\R^2\times\{0\})$ 
is a simple curve of class $C^\infty$ joining $a$ and $b$;
	\item\label{ii}$S$ is symmetric with respect to $\R^2\times\{0\}$;
	\item \label{iii} the surface $S^+:=S\cap\{x_3\ge0\}$ 
is the graph of a function $\psi_{a,b}\in W^{1,1}( U_{a,b})\cap C^0(\overline U_{a,b})$, where $U_{a,b}\subset E(\widehat\sigma_1)$  is the open region enclosed between $\arc{ab}$ and $\beta_{a,b}$;
	\item\label{iv} the curve $\beta_{a,b}$ is contained in the closed
	convex hull of $\Gamma$ and $E(\widehat\sigma_1)\setminus U_{a,b}$ is convex.
\end{enumerate}
The inclusion $U_{a,b}\subset E(\widehat\sigma_1)$ follows since $\arc{ab}\subset\widehat\sigma_1([0,1])$, $E(\widehat\sigma_1)$ is convex, and $S$ is contained in the convex envelope of $\Gamma$.
Furthermore by the minimality of $S$ one has
\begin{equation}\label{eq:contradiction}
\mathcal{A}(\psi_{a,b};U_{a,b})=\mathcal{H}^2(S^+)< \int_{\arc{ab}}\varphi\,d\mathcal{H}^1= \int_{\arc{ab}}|\widehat\psi-\varphi|\,d\mathcal{H}^1\,.
\end{equation}
Here the strict inequality follows since the vertical wall spanning $\Gamma$ given by $\{(x',x_3)\colon x'\in\arc{ab},\ x_3\in[-\varphi(x'),\varphi(x')]\}$ is a disc-type surface but, since $\arc{ab}$ is not a segment, cannot be a solution to the Plateau problem.
We now consider the pair $(\widetilde\sigma,\widetilde\psi)\in\comp$ given by 
\begin{equation}\label{def:sigma,psi}
\widetilde\sigma:=(\widetilde\sigma_1,\widehat\sigma_2,\dots,\widehat\sigma_n)\,,\qquad
	\widetilde\psi:=\begin{cases}
	0&\text{in }\widetilde E\,,\\
	\psi_{a,b}&\text{in }U_{a,b}\,,\\
\widehat	\psi&\text{otherwise}\,,
	\end{cases}
\end{equation}
where $\widetilde\sigma_1$ is such that $\widetilde\sigma_1([0,1])=(\widehat\sigma_1([0,1])\setminus\arc{ab})\cup\beta_{a,b}$ and   $\widetilde{E}:= E(\widehat\sigma)\setminus U_{a,b}=E(\widetilde\sigma)$.
Then noticing that $\widehat\psi=0$ in $U_{a,b}$, $E(\widehat\sigma)=E(\widetilde\sigma)\cup U_{a,b}$, and recalling \eqref{eq:contradiction}, we get
\begin{equation*}
\begin{split}
	{\mathcal F}(\widetilde\sigma,\widetilde\psi)&=\mathcal{A}(\widetilde\psi;\Om)-|E(\widetilde\sigma)|+\int_{\partial\Om}|\widetilde\psi-\varphi|\,d\mathcal{H}^1\\
	&=\mathcal{A}(\widehat\psi;\Om\setminus U_{a,b})+\mathcal{A}(\psi_{a,b};U_{a,b})-|E(\widetilde\sigma)|+\int_{\partial\Om}|\widetilde\psi-\varphi|\,d\mathcal{H}^1\\
	&=\mathcal{A}(\widehat\psi;\Om)+\mathcal{A}(\psi_{a,b};U_{a,b})-|E(\widehat\sigma)|+\int_{\partial\Om}|\widehat\psi-\varphi|\,d\mathcal{H}^1\\
	&<\mathcal{A}(\widehat\psi;\Om)-|E(\widehat\sigma)|+\int_{\partial\Om}|\widetilde\psi-\varphi|\,d\mathcal{H}^1+\int_{\arc{ab}}|\widehat\psi-\varphi|\,d\mathcal{H}^1\\
	&=\mathcal{A}(\widehat\psi;\Om)-|E(\widehat\sigma)|+\int_{\partial\Om}|\widehat\psi-\varphi|\,d\mathcal{H}^1=	{\mathcal F}(\widehat\sigma,\widehat\psi)\,,
\end{split}
\end{equation*}
where the penultimate equality follows from the fact that $\widetilde\psi$ is continuous and equal to $\varphi$ on $\arc{ab}$ while the traces of  $\widetilde\psi$ and $\widehat\psi$ coincide on $\partial\Om\setminus\arc{ab}$.
This contradicts the minimality of $(\widehat\sigma,\widehat\psi)$. \\

\textit{Case B}: Suppose by contradiction that the set $\partial_i^D\Om\cap\partial E(\widehat\sigma)$ contains  an isolated point $c$ or has a straight segment $\overline{cc'}$ as isolated connected component. 
Then there are two arcs $\arc{ab}\subset\partial_i^D\Om$ and $\arc{a'b'}\subset\partial E(\widehat\sigma)$ with either $a\neq a'$ or $b\neq b'$ (and with endpoints $a\ne b$ and $a'\ne b'$) such that $\overline{aa'}\cap\overline{bb'}=\emptyset$ and  $\arc{ab}\cap\arc{a'b'}=\{c\}$ (respectively $\arc{ab}\cap\arc{a'b'}=\overline{cc'}$). Notice also that, since $\partial_i^D\Om$ is not straight, the segment $\overline{cc'}$ does not coincide with $\partial_i^D\Om$ and hence the arc $\arc{ab}$ can be chosen so that it properly contains the segment $\overline{cc'}$. We consider the curves
\begin{equation}\label{eq:gamma}
\Gamma:=\Gamma^+\cup\Gamma^-\,,\quad \Gamma^+:=\mathcal{G}_{\varphi\res\arc{ab}}
\cup\mathcal{G}_{\widehat\psi\res\overline{aa'}}\cup\mathcal{G}_{\widehat\psi\res\overline{bb'}}
\,,\quad
\Gamma^-:=\mathcal{G}_{-\varphi\res\arc{ab}}
\cup\mathcal{G}_{-\widehat\psi\res\overline{aa'}}
\cup\mathcal{G}_{-\widehat\psi\res\overline{bb'}}\,.
\end{equation}
Notice that $\Gamma^\pm$ connect $a'$ to $b'$.
By applying again Lemma \ref{lem:plateau_solution} to the nonplanar curve $\Gamma$ and arguing as in case A we obtain the contradiction also in this case.\\

\textit{Case C}: More generally, assume by contradiction that both the sets $\partial_i^D\Om\cap\partial E(\widehat\sigma)$ and  $\partial_i^D\Om\setminus\partial E(\widehat\sigma)$ are nonempty. 
Then we can find a not flat  arc $\arc{ab}\subset\partial_i^D\Om$ such that the following holds\footnote{This is a consequence of the fact that $\arc{ab}\setminus\partial E(\widehat\sigma)$ is relatively open in $\arc{ab}$, so it is an at most countable union of disjoint relatively open arcs.}: there are pairs of points $\{c_j,d_j\}_{j\in \mathbb N}\subset\partial_i^D\Om\cap\partial E(\widehat\sigma)$ such that the arcs $\arc{ad_0}$, $\arc{c_0b}$, and $\{\arc{c_jd_j}\}_{j=1}^\infty$ are mutually disjoint and 
\begin{equation*}
\arc{ab}\setminus\partial E(\widehat\sigma)=\arc{ad_0}\cup(\cup_{j=1}^\infty\arc{c_jd_j})\cup\arc{c_0b}\,.
\end{equation*}
Without loss of generality, we might assume that all the points $c_j,d_j\in\widehat\sigma_1([0,1])$.
For all $j\ge1$ we denote by $V_j$ the region enclosed by $\arc{c_jd_j}$ and $\partial E(\widehat\sigma)$\footnote{These regions are simply connected since $c_j,d_j\in\widehat\sigma_1([0,1])$.}. We now argue as in case B and choose $a',b'\in \widehat\sigma_1([0,1])$. Additionally, let $V_0=V_0^a\cup V_0^b$, with $V_0^a$ (respectively $V_0^b$) be the region enclosed between $\partial E(\widehat\sigma)$ and $\overline{aa'}\cup\arc{ad_0}$ ($\partial E(\widehat\sigma)$ and $\overline{bb'}\cup\arc{c_0b}$, respectively). We finally define $\Gamma$ correspondingly, as in \eqref{eq:gamma}. 
Again by Lemma \ref{lem:plateau_solution} the solution $S$ to the Plateau problem corresponding to $\Gamma$ satisfies properties  \ref{i}-\ref{iv} with $a'$ and $b'$ in place of  $a$ and $b$ respectively.
Moreover by the minimality of $S$ for every $N\ge1$ there holds\footnote{The right-hand side is the area of the surface given by the (positive) subgraph of $\varphi$ on $\arc{ab}\setminus 
\cup_{j=1}^N\arc{c_jd_j}$ and the graph of $\widehat\psi$ on the region $\cup_{j=0}^NV_j$, which is of disc-type. 
To see this we use that the trace of $\widehat\psi$ on the 
subarcs of $\partial E(\widehat\sigma)$ between the points $c_j$ and $d_j$ 
is zero (and between $a'$ and $d_0$, and $d_0$ and $b'$).}
\begin{equation}\label{eq:contradiction_bis}
\mathcal{A}(\psi_{a',b'};U_{a',b'})=\mathcal{H}^2(S^+)\le \int_{\arc{ab}}\varphi\,d\mathcal{H}^1-\int_{\arc{ad_0}\cup\arc{c_0b}}\varphi\,d\mathcal{H}^1-\sum_{j=1}^N\int_{\arc{c_jd_j}}\varphi\,d\mathcal{H}^1+\sum_{j=0}^N \mathcal{A}(\psi,V_j)\,.
\end{equation}
In particular by taking the limit as $N\to\infty$ in \eqref{eq:contradiction_bis} we get
\begin{equation}\label{eq:contradiction_tris}
\mathcal{A}(\psi_{a',b'};U_{a',b'})=\mathcal{H}^2(S^+)\le \int_{\arc{ab}\setminus\partial E(\widehat\sigma)}\varphi\,d\mathcal{H}^1+\mathcal{A}(\widehat\psi,\cup_{j=0}^\infty V_j)\,.
\end{equation}
Let $(\widetilde\sigma,\widetilde\psi)\in\comp$ be defined as in \eqref{def:sigma,psi}, then observing that $\widehat\psi=0$ in $U_{a',b'}\setminus(\cup_{j=0}^\infty V_j)$, $E(\widehat\sigma)=E(\widetilde\sigma)\cup (U_{a',b'}\setminus\cup_{j=0}^\infty V_j)$ and using \eqref{eq:contradiction_tris}  we  deduce 
\begin{equation*}
\begin{split}
{\mathcal F}(\widetilde\sigma,\widetilde\psi)
&=\mathcal{A}(\widehat\psi;\Om\setminus U_{a',b'})+\mathcal{A}(\psi_{a',b'};U_{a',b'})-|E(\widetilde\sigma)|+\int_{\partial\Om}|\widetilde\psi-\varphi|\,d\mathcal{H}^1\\
&=\mathcal{A}(\widehat\psi;\Om\setminus(\cup_{j=0}^\infty V_j))+\mathcal{A}(\psi_{a',b'};U_{a',b'})-|E(\widehat\sigma)|+\int_{\partial\Om}|\widetilde\psi-\varphi|\,d\mathcal{H}^1\\
&\le\mathcal{A}(\widehat\psi;\Om\setminus(\cup_{j=0}^\infty V_j))-|E(\widehat\sigma)|+\int_{\partial\Om}|\widetilde\psi-\varphi|\,d\mathcal{H}^1+\int_{\arc{ab}\cap\partial E(\widehat\sigma)}\varphi\,d\mathcal{H}^1+\mathcal{A}(\widehat\psi;\cup_{j=0}^\infty V_j)\\
&=\mathcal{A}(\widehat\psi;\Om)-|E(\widehat\sigma)|+\int_{\partial\Om}|\widehat\psi-\varphi|\,d\mathcal{H}^1=	{\mathcal F}(\widehat\sigma,\widehat\psi)\,,
\end{split}
\end{equation*}
which in turn implies 
\begin{equation}\label{eq:contradiction_four}
{\mathcal F}(\widetilde\sigma,\widetilde\psi)\le
{\mathcal F}(\widehat\sigma,\widehat\psi)\,.
\end{equation}
To conclude we need to show that the inequality in \eqref{eq:contradiction_four} is strict. To this aim we choose $c\in\{c_j\}_{j=1}^\infty$. 
Consider the curves $\Gamma_1$ and $\Gamma_2$ defined as follows 

\begin{equation*}
	\Gamma_1:=\Gamma_1^+\cup\Gamma_1^-\,,\quad \Gamma_1^+:=\mathcal{G}_{\varphi\res\arc{ac}}
	\cup\mathcal{G}_{\widehat\psi\res\overline{aa'}}\cup l^+
	\,,\quad
	\Gamma_1^-:=\mathcal{G}_{-\varphi\res\arc{ac}}
	\cup\mathcal{G}_{-\widehat\psi\res\overline{aa'}}\cup l^-\,,
\end{equation*}
\begin{equation*}
	\Gamma_2:=\Gamma_2^+\cup\Gamma_2^-\,,\quad \Gamma_2^+:=\mathcal{G}_{\varphi\res\arc{cb}}
	\cup\mathcal{G}_{\widehat\psi\res\overline{bb'}}\cup l^+
	\,,\quad
	\Gamma_2^-:=\mathcal{G}_{-\varphi\res\arc{cb}}
	\cup\mathcal{G}_{-\widehat\psi\res\overline{bb'}}\cup l^-\,,
\end{equation*}
where 
\begin{equation*}
	l^+:=(\{c\}\times[0,\varphi(c)])\,,\quad
	l^-:=(\{c\}\times[-\varphi(c),0])\,.
\end{equation*}
 Let $S_1$ and $S_2$ be the solutions to the Plateau problem corresponding to $\Gamma_1$ and $\Gamma_2$ respectively, so that properties \ref{i}-\ref{iv} are satisfied with $c$ in place of $b'$ and $a'$ respectively.
By the minimality of $S$ we have
\begin{equation}\label{eq:ab}
\mathcal{A}(\psi_{a',b'},U_{a',b'})<\mathcal{A}(\psi_{a',c},U_{a',c})+\mathcal{A}(\psi_{c,b'},U_{c,b'})\,.
\end{equation}\label{eq:ac}

On the other hand by arguing as above\footnote{With the arc $\arc{ac}$ ($\arc{cb}$, respectively) in place of $\arc{ab}$.} we conclude 
\begin{equation}
\mathcal{A}(\psi_{a',c},U_{a',c})\le\int_{\arc{ac}\cup\partial E(\widehat\sigma)}\varphi\,d\mathcal{H}^1+\mathcal{A}(\widehat\psi,\cup_{j\in I_1} V_j\cup V^a_0)\,,
\end{equation}
and 
\begin{equation}\label{eq:cb}
\mathcal{A}(\psi_{c,b'},U_{c,b'})\le\int_{\arc{cb}\cup\partial E(\widehat\sigma)}\varphi\,d\mathcal{H}^1+\mathcal{A}(\widehat\psi,\cup_{j\in I_2} V_i\cup V^b_0)\,,
\end{equation}
where $I_1:=\{j\colon \arc{c_jd_j}\subset\arc{ac}\}$ and $I_2:=\{j\colon \arc{c_jd_j}\subset\arc{cb}\}$. Gathering together \eqref{eq:ab}-\eqref{eq:cb} we derive
\begin{equation*}
\mathcal{A}(\psi_{a',b'},U_{a',b'})<
\int_{\arc{ab}\cup\partial E(\widehat\sigma)}\varphi\,d\mathcal{H}^1+\mathcal{A}(\widehat\psi,\cup_{j=0}^\infty V_j)\,,
\end{equation*}
which in turn implies 
\begin{equation*}
{\mathcal F}(\widetilde\sigma,\widetilde\psi)<
{\mathcal F}(\widehat\sigma,\widehat\psi)\,,
\end{equation*}
and thus the contradiction.

\step 2 Assuming there is $i\in\{1,\dots,n\}$ such that $\partial_i^D\Om$ is a straight segment, and we show that either $\partial E(\widehat\sigma)\cap\partial_i^D\Om=\emptyset$ or $\partial E(\widehat\sigma)\cap\partial_i^D\Om=\partial_i^D\Om$.\\
Suppose by contradiction that  $\partial E(\widehat\sigma)\cap\partial_i^D\Om\ne\emptyset$ and also $\partial_i^D\Om\setminus \partial E(\widehat\sigma)\neq \emptyset$. Without loss of generality we can restrict to the case $\partial E(\widehat\sigma)\cap\partial_i^D\Om=\partial F\cap\partial_i^D\Om$ with $F$ any connected component of $E(\widehat\sigma)$. Since $F$ is convex and $\partial_i^D\Om$ is a segment $\partial F\cap\partial_i^D\Om$  has to be connected, i.e., it is either a single point  $a$ or a segment $\overline{aa'}\neq \partial_i^D\Om$.\\
In both cases we then consider a (small enough) ball $B$ centred at $a$ such that $B\cap E(\widehat\sigma)=B\cap F$ (in the second case we also require that the radius of $B$ is smaller than $\overline{aa'}$). 

If $\partial F\cap\partial_i^D\Om=\{a\}$ we let $\{p,q\}:=\partial B\cap\partial F$ and $\{b,c\}:=\partial B\cap\partial_i^D\Om$ (with $b,p$ and $c,q$ lying on the same side with respect to $a$).  Then 
we define the curves
\begin{equation*}
\Gamma:=\Gamma^+\cup\Gamma^-\,,\quad	\Gamma^+:=\mathcal{G}_{\varphi\res\overline{bc}}\cup \mathcal{G}_{\psi\res\arc{bp}}	\cup \mathcal{G}_{\psi\res\arc{cq}}\,,\quad \Gamma^-:=\mathcal{G}_{-\varphi\res\overline{bc}}\cup \mathcal{G}_{-\psi\res\arc{bp}}
	\cup \mathcal{G}_{-\psi\res\arc{cq}}\,,
\end{equation*}
where $\arc{bp}$, $\arc{cq}$ denote the arcs in $\partial B$ joining $b$ to $p$ and $c$ to $q$ respectively.

If $\partial F\cap\partial_i^D\Om=\overline{aa'}$ we let $\{p,q\}:=\partial B\cap\partial F$ and $\{b,c\}:=\partial B\cap\partial_i^D\Om$ where we identify $q$ and $c$. Then we consider the curves 
\begin{equation*}
	\Gamma:=\Gamma^+\cup\Gamma^-\,,\quad	\Gamma^+:=\mathcal{G}_{\varphi\res\overline{bc}}\cup \mathcal{G}_{\psi\res\arc{bp}}	\cup (\{c\}\times[0,\varphi(c)])\,,\quad \Gamma^-:=\mathcal{G}_{-\varphi\res\overline{bc}}\cup \mathcal{G}_{-\psi\res\arc{bp}}
	\cup (\{c\}\times[-\varphi(c),0])\,.
\end{equation*}
By applying again Lemma \ref{lem:plateau_solution} to $\Gamma$ and arguing as above we get the contradiction.

\step 3 We show that there is a minimizer $(\widetilde\sigma,\widetilde\psi)$ that satisfies property \ref{5.}.  \\
We first notice that $\widehat\psi$ is continuous and null on $\partial E(\widehat\sigma)\setminus\partial^D\Om$.
Moreover by steps 1 and 2 it follows that $\partial E(\widehat\sigma)\cap\Om$ is the union of a finite number of pairwise disjoint Lipschitz curves each of them joining each $p_i$ for $i=1,\dots,n$ to each of  the $q_j$ for some $j=1,\dots,n$. 
To conclude it is enough to replace each curve, without increasing the energy, with a smooth one having the same endpoints.
More precisely, let $\gamma$ be any of such curves. Reasoning as in the proof of  Lemma \ref{lem:continuita-bordo-E} step 1, we can replace $(\widehat\sigma,\widehat\psi)$ with a new minimizer $(\sigma^\gamma,\psi^\gamma)\in\comp$ such that $\partial E(\sigma^\gamma)\cap\partial\Om=\partial E(\sigma)\cap\partial\Om$
and 
$\psi^\gamma=0$ on $\gamma'$, where
$\gamma'\subset\partial E(\sigma^\gamma)\cap\Om$ is a suitable smooth curve that replaces $\gamma$ and has the same endpoints of $\gamma$.
In particular $\psi^\gamma$ is continuous and null on $\partial E(\sigma^\gamma)\setminus\partial^D\Om$.
Eventually iterating this procedure for each curve in $\partial E(\widehat\sigma)\setminus\partial\Om$ we can construct a new minimizer
$(\widetilde\sigma,\widetilde\psi)$  with the required properties.
\end{proof}

\subsection{The example of the catenoid containing a segment}\label{subsec_example}

Consider the setting depicted in Figure \ref{figura1mezzo}. Here 
$\Omega=R_{2l}=(0,2l)\times (-1,1)$, 
$n=1$, $\partial^D\Om=(\{0,2l\}\times (-1,1))\cup ((0,2l)\times \{-1\})$ and $\partial^0\Om=(0,2l)\times \{1\}$, $p=(0,1)$, $q=(2l,1)$.
The map $\varphi$ is $\varphi(w_1,w_2)=\sqrt{1-w_2^2}$, and thus 
vanishes on $[0,2l]\times \{-1\}$; for this reason this case is not covered by our analysis. However we can find a solution as in Theorem \ref{teo_main_intro} also in this case, by an approximation procedure.

Precisely, for $\eps>0$ and consider an approximating sequence $(\varphi)_\eps$ of  continuous Dirichlet data, with $\mathcal G_{\varphi_\eps}$ Lipschitz, which tends to $\varphi$ uniformly and satisfy $\varphi_\eps=0$ on $\partial^0\Om$, $\varphi_\eps>0$ on $\partial^D\Om$. Let $(\sigma_\eps,\psi_\eps)$ be a solution as in Theorem \ref{teo_main_intro} corresponding to the boundary datum $\varphi_\eps$; as $\mathcal F(\sigma_\eps,\psi_\eps)$ is equibounded\footnote{We can indeed always bound it from above by $|\Om|+\int_{\partial_D\Om}|\varphi_\eps|d\mathcal H^1$.}, arguing as in the proof of  Lemma \ref{lem:compactness}, we can see that, up to 
a subsequence, $((\sigma_\eps,\psi_\eps))$ tends to some $(\sigma,\psi)\in \mathcal W_{\textrm{conv}}$, which minimizes the functional $\mathcal F$ with Dirichlet condition $\varphi$.
In this case however we cannot guarantee that $\sigma$ does not touch $\partial^D\Om$, even if this is not a straight segment. 
This is essentially due to the presence of the 
portion $[0,2l]\times \{-1\}$ of $\partial \Om$ 
where $\varphi$ is zero, which does not allow to apply the arguments used in the proof of Theorem \ref{thm:regularity}.

In particular, it can be seen that if $l$ is large enough, the solution $(\sigma,\psi)$ splits and becomes degenerate, 
being $\psi\equiv0$ and the functional $\mathcal F$ pays only the area of two vertical half discs of radius $1$. Under a certain threshold instead the solution satisfies the regularity properties stated in Theorem \ref{thm:regularity}, and in particular $\psi=\varphi$ on $\partial^D\Omega$, and $\sigma$ is the graph of a smooth convex function passing through $p$ and $q$. We refer to \cite{vortex} for details and comprehensive proofs of these facts.

%%%%%%%%%%%%%%%%%%%%%%%%%%%%%%%%%%%%%%%%%%%%%%%%%%%%%%%%%%%%%%%%%%%%%%%%
\section{Comparison with the parametric Plateau problem: The case $n=1,2$}
\label{sec:comparison_with_the_parametric_Plateau_problem:the_case_n=1,2}
%%%%%%%%%%%%%%%%%%%%%%%%%%%%%%%%%%%%%%%%%%%%%%%%%%%%%%%%%%%%%%%%%%%%%%%%
In this section we compare the solutions provided by Theorems \ref{thm:existence} and \ref{thm:regularity} with the solutions to the classical Plateau problem in parametric form.
Specifically, motivated by the example of the catenoid, we will restrict our analysis to the classical 
disc-type and annulus-type Plateau problem.
These configurations correspond to the cases $n=1$ and $n=2$ respectively, i.e.,
the Dirichlet boundary $\partial^D\Om$ is either an open arc  or the union of two open arcs of $\partial\Om$ with disjoint closure.
 Due to the highly involved geometric arguments, 
we do not discuss the case $n>2$, which requires further investigation.

Thus, in this section we assume $n=1,2$. We first discuss the case $n=1$ which is a consequence of  Lemma \ref{lem:plateau_solution}, and then the case $n=2$.

\subsection{The case $n=1$}\label{sec:the_case_n=1}

Let $n=1$. Let $p_1,q_1\in\partial\Om$, $\partial^D\Om=\partial^D_1\Om$, $\varphi$ be as in Section \ref{subsec:setting_of_the_problem} and 
consider the space curve $\gamma_1:=\mathcal{G}_{\varphi\res\partial_1^D\Om}$ joining $p_1$ to $q_1$.
We define the curve 
$$\Gamma:=\gamma_1\cup {\rm Sym}(\gamma_1),$$
where ${\rm Sym}(\gamma_1):=\mathcal{G}_{-\varphi\res\partial_1^D\Om}$,
and 
consider the classical Plateau problem in parametric form 
spanning $\Gamma$. More precisely we look for a solution to
\begin{equation}\label{plateau_1}
m_1(\Gamma):=\inf_{\Phi\in\mathcal{P}_1(\Gamma)}\int_{B_1}|\partial_{w_1}\Phi\wedge\partial_{w_2}\Phi|dw,
\end{equation}
where 
\begin{equation}\label{def:adm_Phi}
	\begin{split}
	\mathcal{P}_1(\Gamma):=\Big\{\Phi\in H^1( B_1;\R^3)\cap C^0(\overline B_1;\R^3)&\text{ such that }\Phi\res\partial B_1\colon\partial B_1\to\Gamma \\&\text{ is a weakly monotonic parametrization of $\Gamma$}
\Big\}.
	\end{split}
\end{equation}
Then the following holds:
\begin{theorem}[\textbf{The disc-type Plateau problem ($n=1$)}]\label{plateau:n1}
 Let $\Phi\in\mathcal{P}_1(\Gamma)$ be a solution to \eqref{plateau_1} and let 
$$S^+:=\Phi(\overline B_1)\cap \{x_3\ge0\}\qquad\text{and}\qquad S^-:=\Phi(\overline B_1)\cap \{x_3\le0\}.$$
Then there exists a minimizer $(\sigma,\psi)\in \comp$ of $\mathcal F$ in 
$\mathcal W$ 
satisfying properties \ref{1.}-\ref{5.} of  Theorem \ref{thm:regularity} and such that
\begin{equation}\label{eq:graph}
	S^{\pm}=\mathcal G_{\pm\psi\res (\overline{\Om\setminus E(\sigma)})}.
\end{equation}
Conversely let $(\sigma,\psi)\in \comp$ be a minimizer of $\mathcal F$ in $\mathcal W$ satisfying properties \ref{1.}-\ref{5.} of  Theorem \ref{thm:regularity}. Then the disc-type surface 
$$S:=\mathcal G_{\psi\res (\overline{\Om\setminus E(\sigma)})}\cup\mathcal G_{-\psi\res (\overline{\Om\setminus E(\sigma)})}$$
is a solution to the classical Plateau problem associated to $\Gamma$, i.e., there is $\Phi\in\mathcal{P}_1(\Gamma)$
 solution to \eqref{plateau_1} such that $\Phi(\overline{B_1})=S$.
\end{theorem}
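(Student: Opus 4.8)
The plan is to reduce both implications to the single identity $\min_{\mathcal W}\mathcal{F}=\tfrac12\, m_1(\Gamma)$, coupled with an explicit correspondence between a minimizing pair and the two halves of a doubled minimal disc. Throughout I assume $\partial^D_1\Om$ is not contained in a line, so that $\Gamma$ is a genuinely non-planar Jordan curve and Lemma~\ref{lem:plateau_solution} applies with the convex set $F=\overline\Om$; the degenerate case in which $\partial^D_1\Om$ is a segment makes $\Gamma$ planar and is treated separately (the area-minimizing disc is then the flat region bounded by $\Gamma$, matching the degenerate pair $\psi\equiv0$, $E(\sigma)=\overline\Om$). For the forward direction, given a solution $\Phi$ of \eqref{plateau_1} with $S=\Phi(\overline B_1)$, I apply Lemma~\ref{lem:plateau_solution} with $F=\overline\Om$, $p=p_1$, $q=q_1$, $\arc{pq}=\partial^D_1\Om$ and $f=\varphi\res\partial^D_1\Om$ (so that $f(p_1)=f(q_1)=0$ and $\Gamma\cap\{x_3\ge0\}=\mathcal{G}_{\varphi\res\partial^D_1\Om}$). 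The lemma yields the symmetry of $S$, the smooth curve $\beta_{p_1,q_1}=S\cap(\R^2\times\{0\})$ joining $p_1,q_1$ inside $\Om$, and the representation $S^+=\mathcal{G}_{\widetilde\psi}$ with $\widetilde\psi$ analytic on $U_{p_1,q_1}$ and continuous on $\overline U_{p_1,q_1}$, where $\overline\Om\setminus U_{p_1,q_1}$ is convex. I then set $E(\sigma):=\overline\Om\setminus U_{p_1,q_1}$, let $\sigma_1$ parametrize $\beta_{p_1,q_1}$ from $q_1$ to $p_1$, and define $\psi:=\widetilde\psi$ on $U_{p_1,q_1}$ and $\psi:=0$ on $E(\sigma)$; this produces $(\sigma,\psi)\in\comp$ satisfying \eqref{eq:graph} by construction and properties \ref{1.}--\ref{5.} directly from the conclusions of the lemma.

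The crucial computation is an energy identity: for every pair in $\comp$ satisfying \ref{1.}--\ref{5.} one has $\mathcal F(\sigma,\psi)=\mathcal H^2\big(\mathcal{G}_{\psi\res\overline{\Om\setminus E(\sigma)}}\big)=\tfrac12\mathcal H^2(S)$, where $S$ is the doubled graph $\mathcal{G}_{\psi\res\overline{\Om\setminus E(\sigma)}}\cup\mathcal{G}_{-\psi\res\overline{\Om\setminus E(\sigma)}}$. Indeed, property \ref{5.} forces $\psi$ to be continuous across $\Om\cap\partial E(\sigma)$ with vanishing trace from both sides, so $\psi\in W^{1,1}(\Om)$ with $D^s\psi=0$ and $\mathcal{A}(\psi;\Om)=\mathcal H^2(S^+)+|E(\sigma)|$; moreover $\psi=\varphi$ on $\partial^D\Om$ by \ref{3.}--\ref{4.}, while the trace of $\psi$ on $\partial^0\Om$ vanishes (as $\partial^0\Om\subset\partial E(\sigma)$), whence $\int_{\partial\Om}|\psi-\varphi|\,d\mathcal H^1=0$. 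Substituting into \eqref{def:relaxed_functional} gives the identity. In particular the pair built from $\Phi$ has $\mathcal F(\sigma,\psi)=\mathcal H^2(S^+)=\tfrac12 m_1(\Gamma)$, so $\min_{\mathcal W}\mathcal F\le\tfrac12 m_1(\Gamma)$.

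For the reverse inequality I invoke Theorems~\ref{thm:existence}, \ref{teo:convexification} and \ref{thm:regularity} to obtain a minimizer $(\sigma_0,\psi_0)\in\comp$ of $\mathcal F$ satisfying \ref{1.}--\ref{5.}. Since $\Om\setminus E(\sigma_0)$ is a topological disc, $S_0^+=\mathcal{G}_{\psi_0\res\overline{\Om\setminus E(\sigma_0)}}$ is a disc, and gluing it to its reflection along $\beta_{p_1,q_1}$ gives a disc-type surface $S_0$ spanning $\Gamma$; because $\psi_0$ solves the minimal surface equation in $\Om\setminus E(\sigma_0)$ (it is analytic by Lemma~\ref{lem:analicity}), $S_0^+$ is a minimal graph of finite area and hence admits a finite–energy conformal parametrization, so $S_0$ is an admissible competitor for \eqref{plateau_1} and $m_1(\Gamma)\le\mathcal H^2(S_0)=2\mathcal F(\sigma_0,\psi_0)=2\min_{\mathcal W}\mathcal F$. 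Combining the two inequalities yields $\min_{\mathcal W}\mathcal F=\tfrac12 m_1(\Gamma)$. Consequently the pair built in the forward step attains the minimum, hence is the required minimizer; conversely, any minimizer $(\sigma,\psi)$ satisfying \ref{1.}--\ref{5.} has $\mathcal H^2(S)=2\mathcal F(\sigma,\psi)=m_1(\Gamma)$ for its doubled graph $S$, so $S$ is an area-minimizing disc and a conformal reparametrization provides $\Phi\in\mathcal P_1(\Gamma)$ solving \eqref{plateau_1} with $\Phi(\overline B_1)=S$.

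I expect the main obstacle to be the parametrization-regularity underlying the doubling step: verifying that the doubled graph is genuinely of disc type and, above all, that it is an admissible competitor in $\mathcal P_1(\Gamma)$ (equivalently, that its area bounds $m_1(\Gamma)$ and that an area-minimizing doubled graph is the image of a Douglas–Radó solution). This is where I rely on the minimizer being a minimal graph, on the smoothness of $\beta_{p_1,q_1}$ to glue two finite-energy disc maps along a common boundary arc, and on classical Plateau theory (e.g. \cite{DHS}); the bookkeeping of the degenerate planar case ($\partial^D_1\Om$ a segment) also requires separate, if routine, attention.
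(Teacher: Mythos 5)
Your overall skeleton coincides with the paper's: the forward construction via Lemma \ref{lem:plateau_solution} applied with $F=\overline\Om$, the energy identity $2\mathcal F=\mathcal H^2(\mbox{doubled graph})$ for pairs satisfying \ref{1.}--\ref{5.}, and the reduction of both implications to the identity $2\min_{\largercomp}\mathcal F=m_1(\Gamma)$. However, there is a genuine gap at the step you yourself flag as "the main obstacle", and it is not a routine verification: both your reverse inequality $m_1(\Gamma)\le 2\min\mathcal F$ and your converse statement require producing, from a regular minimizer $(\sigma,\psi)$, a map $\Phi\in\mathcal P_1(\Gamma)$ (so $\Phi\in H^1(B_1;\R^3)\cap C^0(\overline B_1;\R^3)$, weakly monotone on $\partial B_1$) whose image is the doubled graph and whose area integral equals its $\mathcal H^2$-measure. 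You assert this by saying that the minimal graph "admits a finite-energy conformal parametrization" and can be glued to its reflection. Three things block this. First, the natural parametrization (the graph map) is only $W^{1,1}\cap C^0$, not $H^1$: along the free curve $\sigma_1$ the surface generically has vertical tangent planes, so $|\nabla\psi|$ need not be square-integrable, and properties \ref{1.}--\ref{5.} give no $L^2$ gradient bound. Second, an abstract uniformization of the open minimal graph gives no continuity up to $\partial B_1$ nor monotonicity of the boundary values, which is exactly what membership in $\mathcal P_1(\Gamma)$ demands; the induced metric degenerates at the boundary and Carath\'eodory-type extension is not free here. Third, the cleaner route you hint at -- reflect and treat $S$ as a smooth compact minimal disc spanning $\Gamma$ -- needs the free-boundary orthogonality (vertical contact of the graph along $\sigma_1$) to make the double smooth across $\sigma_1$ via Schwarz reflection; this is a regularity statement that is not among properties \ref{1.}--\ref{5.} and is never proved in this framework. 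Relatedly, invoking that "$\psi_0$ solves the minimal surface equation" conflates criticality with area-minimization: minimality of the graph does not by itself produce an admissible competitor, nor does it say the graph over a subdomain minimizes area among discs spanning its boundary.

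The paper closes exactly this hole with Lemmas \ref{lem_tec1} and \ref{lem_tec2} (stated for $n=2$ and applied verbatim in the $n=1$ proof): one exhausts $\Om\setminus E(\sigma)$ by sets $H_k$ on which $\psi$ is Lipschitz, proves -- using the global energy bound $2\mathcal F(\sigma,\psi)\le m_1(\Gamma)$, not the minimal surface equation -- that the graph over $H_k$ is itself the Douglas--Rad\'o solution for its own boundary curve $\lambda_k$ (claim \eqref{claim_speranza}), and then passes to the limit using Fr\'echet convergence $\lambda_k\to\lambda$ and the continuity of $m_1$ under this convergence, thereby obtaining a harmonic conformal parametrization in $H^1\cap C^0(\overline B_1)$ with the correct image; doubling is then done by gluing reparametrizations on half-discs. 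Your proof would be complete if you either reproduced this exhaustion argument or supplied an independent proof of boundary-regular uniformization (including the vertical-contact/free-boundary regularity along $\sigma_1$); as written, the decisive analytic content of the theorem is assumed rather than proved. A minor positive note: your separate treatment of the degenerate case where $\partial^D_1\Om$ is a segment (planar $\Gamma$, where Lemma \ref{lem:plateau_solution} does not apply and \eqref{eq:graph} degenerates) addresses a point the paper's proof silently skips.
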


\subsection{The case $n=2$}\label{sec:6.2}

Let $n=2$. Let $\Om$, $p_1,q_1,p_2,q_2\in\partial\Om$, $\partial^D\Om$, $\partial_1^D\Om$, $\partial_2^D\Om$, $\varphi$ be as in Section \ref{subsec:setting_of_the_problem} and 
consider the space curve $\gamma_i:=\mathcal{G}_{\varphi\res\partial_i^D\Om}$ joining $p_i$ to $q_i$ for $i=1,2$.
 We define the curves
\begin{equation*}
	\Gamma_1:=\gamma_1\cup{\rm Sym}(\gamma_1),\qquad 	\Gamma_2:=\gamma_2\cup{\rm Sym}(\gamma_2),
\end{equation*}
where ${\rm Sym}(\gamma_i):=\mathcal{G}_{-\varphi\res\partial_i^D\Om}$ for $i=1,2$.
We consider the 
classical Plateau problem in parametric form spanning the curve 
$$
\Gamma:=\Gamma_1\cup\Gamma_2.
$$
Precisely we set $\openannulus\subset\R^2$ to be an open annulus 
enclosed between two concentric circles $C_1$ and $C_2$,
 and we look for a solution to 
\begin{equation}\label{catenoid_plateau}
m_2(\Gamma):=\inf_{\Phi\in\mathcal{P}_2(\Gamma)}\int_{\openannulus}|\partial_{w_1}\Phi\wedge\partial_{w_2}\Phi|dw,
\end{equation}
where
\begin{equation*}
	\begin{split}
\mathcal{P}_2(\Gamma):=\Big\{\Phi\in H^1(\openannulus;\R^3)\cap C^0(\overline\Sigma_{\rm ann};\R^3) &\text{ such that }\Phi(\partial \openannulus)=\Gamma\text{ and }\Phi\res C_j:C_j\rightarrow \Gamma_j\\&\text{ is a weakly monotonic parametrization of $\Gamma_j$ for $j=1,2$}\Big\}.
	\end{split}
\end{equation*}

Here the crucial assumption that we require is that the curves $\Gamma_j$ 
have the orientation inherited by the orientation\footnote{Once we fix 
an orientation of $\partial \Omega$, the orientation of the graph 
$\mathcal G_\varphi$ 
of $\varphi$ is inherited, since 
$\mathcal G_\varphi$ 
is standardly defined as the push-forward of the current of integration on $\partial_D\Om$ by the map $x\mapsto(x,\varphi(x))$. } of the graph of $\varphi$ on $\partial_j^D\Om$.\\
Due to the specific geometry of  $\Gamma$ we can appeal to Theorem \ref{myexistence} below (which is a consequence of \cite[Theorem 1 and Theorem 5]{MY}) to deduce the existence of a minimizer. This might not be true for a more general $\Gamma$. To this purpose for $j=1,2$ we consider
the minimization problem defined in \eqref{plateau_1} for the curve $\Gamma_j$, namely
 \begin{align}\label{minimizationSigma}
 m_1(\Gamma_j)=\inf_{\Phi\in\mathcal{P}_1(\Gamma_j)}\int_{B_1}|\partial_{w_1}\Phi\wedge\partial_{w_2}\Phi|dw,
 \end{align}
with $\mathcal{P}_1(\Gamma_j)$ defined as in \eqref{def:adm_Phi}.

\begin{remark}
	By standard arguments one sees that $m_2(\Gamma)\le m_1(\Gamma_1)+m_1(\Gamma_2)$. Indeed, two disc-type surfaces can be joined by a very thin tube (with arbitrarily small area) in order to change the topology of the two discs into an annulus-type surface. 
\end{remark}

\begin{definition}\label{def:MY}
Let $\Phi\in\mathcal{P}_2(\Gamma)$ be a solution to \eqref{catenoid_plateau}. 
We say that $\Phi$ is a $\mathcal{MY}$ solution to \eqref{catenoid_plateau} if 
$\Phi$ is harmonic, conformal, and it is  an embedding. In particular, in such a case, $m_2(\Gamma)=\mathcal{H}^2(\immclosedann)$.
\end{definition}
\begin{theorem}[\textbf{Meeks and Yau}]\label{myexistence}
	Suppose $m_2(\Gamma)<m_1(\Gamma_1)+m_1(\Gamma_2)$. Then there exists a $\mathcal{MY}$ solution $\Phi\in\mathcal{P}_2(\Gamma)$ to \eqref{catenoid_plateau}. Furthermore, every minimizer of \eqref{catenoid_plateau} is a $\mathcal{MY}$ solution.
\end{theorem}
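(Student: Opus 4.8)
The plan is to deduce Theorem \ref{myexistence} from the two structural results of Meeks and Yau \cite{MY}, after checking that our geometric configuration meets their hypotheses. The starting observation is that the convexity of $\Om$ makes the solid cylinder $K:=\overline\Om\times\R$ a convex subset of $\R^3$, whose (piecewise smooth) boundary is therefore mean convex and serves as a barrier. By construction $\Gamma=\Gamma_1\cup\Gamma_2\subset\partial K$, the two components being disjoint Jordan curves, each contained in one of the vertical strips $\overline{\partial_j^D\Om}\times\R$; together they bound a surface of annular topology inside $K$. This is exactly the setting in which the least-area annulus results of \cite{MY} are formulated, and the role of the hypothesis $m_2(\Gamma)<m_1(\Gamma_1)+m_1(\Gamma_2)$ will be to guarantee that a minimizer does not degenerate.

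First I would establish existence of a conformal harmonic minimizer. By the classical solution of the Plateau problem for the annulus (Douglas, Courant, and in the present parametric formulation Tomi--Tromba), one minimizes the Dirichlet energy over maps from annuli $\openannulus$ of variable conformal modulus subject to the monotone boundary conditions defining $\mathcal{P}_2(\Gamma)$. The only way a minimizing sequence can fail to converge to an annulus is by pinching, i.e. the modulus tending to a degenerate value, in which case the energy splits into the two disc contributions and the limiting value is $m_1(\Gamma_1)+m_1(\Gamma_2)$. The strict inequality is precisely the Douglas condition ruling this out, so the infimum is attained by a map $\Phi$ that is harmonic and conformal on $\openannulus$. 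Since the area of any map is dominated by its Dirichlet energy, with equality exactly for conformal maps, $\Phi$ also realizes $m_2(\Gamma)$, and $m_2(\Gamma)=\mathcal{H}^2(\immclosedann)$.

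Next I would invoke the embeddedness theorems of \cite{MY} (their Theorems 1 and 5): a least-area annulus spanning two Jordan curves on the boundary of a convex body, once it is known not to degenerate, is an embedding. The convexity of $K$ is the decisive hypothesis here, and it is guaranteed by the convexity of $\Om$. Thus the minimizer $\Phi$ produced above is harmonic, conformal and an embedding, i.e. a $\mathcal{MY}$ solution in the sense of Definition \ref{def:MY}.

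Finally, for the assertion that \emph{every} minimizer is a $\mathcal{MY}$ solution, I would argue as follows. Let $\Psi\in\mathcal{P}_2(\Gamma)$ attain $m_2(\Gamma)$. By the classical reparametrization theory (Morrey's $\eps$-conformality together with the uniformization of the induced conformal structure on the annulus), $\Psi$ may be reparametrized to be conformal without changing its area; being area-minimizing and conformal it is harmonic, and its conformal modulus realizes the optimal Douglas energy, which by the strict inequality is nondegenerate. Applying the embeddedness result of \cite{MY} once more yields that $\Psi$ is an embedding, hence a $\mathcal{MY}$ solution. I expect the main obstacle to be the non-degeneration argument: verifying carefully that the strict Douglas inequality excludes pinching of the annulus (both in the existence step and for an arbitrary minimizer after conformal reparametrization) and matching this with the precise hypotheses under which \cite{MY} delivers embeddedness of the least-area annulus in the convex body $K$. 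The embeddedness itself is the deepest ingredient and is entirely supplied by \cite{MY}.
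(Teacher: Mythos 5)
Your proposal is correct and follows essentially the same route as the paper: the paper's entire proof of Theorem \ref{myexistence} is the citation ``See \cite{MY}'', and the surrounding text explains that the statement is a consequence of \cite[Theorem 1 and Theorem 5]{MY}, which is exactly what you invoke (Douglas condition to rule out degeneration to two discs, convexity of the cylinder $\overline\Om\times\R$ to meet the Meeks--Yau barrier hypothesis, and their embeddedness theorems). Your expansion---including the conformal reparametrization step needed to make sense of ``every minimizer is a $\mathcal{MY}$ solution''---is a faithful unpacking of that citation rather than a different argument.
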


\begin{proof}
See \cite{MY}.
\end{proof}
This result allows to prove the following:
\begin{theorem}[\textbf{The annulus-type Plateau problem ($n=2$)}]
	\label{thm:comparison-with-classical-plateau}
	The following hold:
	\begin{enumerate}[label=$(\roman*)$]
		\item \label{1plateau}	Suppose $m_2(\Gamma)<m_1(\Gamma_1)+m_1(\Gamma_2)$.
		Let $\Phi\in\mathcal{P}_2(\Gamma)$ 
be a  $\mathcal{MY}$ solution to \eqref{catenoid_plateau} and let 
		\begin{equation*}
S:=\immclosedann, \qquad
			S^+:=S\cap\{x_3\ge0\},\qquad
			S^-:=S\cap\{x_3\le0\}.
		\end{equation*}
		Then there exists a minimizer $(\sigma,\psi)\in \comp$ 
of $\mathcal F$ in $\mathcal W$ satisfying properties \ref{1.}-\ref{5.} of  Theorem \ref{thm:regularity} and such that
		\begin{equation}\label{param_to_nonparam}
			S^{\pm}=\mathcal G_{\pm\psi\res (\overline{\Om\setminus E(\sigma)})}.
		\end{equation}
	\item\label{2plateau} Suppose  $m_2(\Gamma)= m_1(\Gamma_1)+m_1(\Gamma_2)$. For $j=1,2$ let $\Phi_j\in\mathcal{P}_1(\Gamma_j)$ be a solution to \eqref{minimizationSigma} and let $S_j:=\Phi_j(\overline B_1)$. Let also
	\begin{equation*}
		S^+:=(S_1\cup S_2)\cap\{x_3\ge0\}\qquad\text{and}\qquad
		S^-:=(S_1\cup S_2)\cap\{x_3\le0\}.
	\end{equation*}
	Then $S_1\cap S_2=\emptyset$ and there exists a minimizer $(\sigma,\psi)\in \comp$ of $\mathcal F$ in $\mathcal W$ satisfying properties \ref{1.}-\ref{5.} of  Theorem \ref{thm:regularity} and such that  \eqref{param_to_nonparam} holds.
	\item \label{3plateau}	
Conversely, let $(\sigma,\psi)\in \comp$ be a minimizer of $\mathcal F$ in $\mathcal W$ satisfying properties \ref{1.}-\ref{5.} of  Theorem \ref{thm:regularity}. Then the surface 
	$$S:=\mathcal G_{\psi\res (\overline{\Om\setminus E(\sigma)})}\cup\mathcal G_{-\psi\res (\overline{\Om\setminus E(\sigma)})}$$
	is either an annulus-type surface or the union of two disjoint disc-type surfaces, and is a solution to the classical Plateau problem associated to $\Gamma$. More precisely, either there is a $\mathcal{MY}$ solution $\Phi\in\mathcal{P}_2(\Gamma)$ to \eqref{catenoid_plateau}  with 
$S=\immclosedann$, or there are $\Phi_j\in\mathcal{P}_1(\Gamma_j)$ solutions to \eqref{minimizationSigma} for $j=1,2$, such that $S=\Phi_1(\overline B_1)\cup \Phi_2(\overline B_1)$ and $\Phi_1(\overline B_1)\cap \Phi_2(\overline B_1)=\emptyset$.
	\end{enumerate}\end{theorem}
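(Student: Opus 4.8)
The plan is to reduce both directions of the theorem to a single \emph{area identity}: for any symmetric Cartesian competitor $S=\mathcal G_{\psi\res(\overline{\Om\setminus E(\sigma)})}\cup\mathcal G_{-\psi\res(\overline{\Om\setminus E(\sigma)})}$ associated with a pair $(\sigma,\psi)\in\comp$ satisfying \ref{1.}--\ref{5.} of Theorem \ref{thm:regularity}, one has $\mathcal H^2(S)=2\,\mathcal F(\sigma,\psi)$. Indeed, by \ref{2.} and \ref{5.} the function $\psi$ is analytic and positive on $\Om\setminus E(\sigma)$ and continuous and null across $\Om\cap\partial E(\sigma)$, so $D^s\psi$ carries no interior jump and $\mathcal A(\psi;\Om)-|E(\sigma)|=\int_{\Om\setminus E(\sigma)}\sqrt{1+|\nabla\psi|^2}\,dx=\mathcal H^2(S^+)$; the boundary term $\int_{\partial\Om}|\psi-\varphi|\,d\mathcal H^1$ either vanishes (where $\psi=\varphi$ on $\partial^D\Om\setminus\partial E(\sigma)$, and $\psi=\varphi=0$ on $\partial^0\Om$) or, in the segment case of \ref{4.}, equals the area of the corresponding vertical wall of $S^+$. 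Writing $\mathsf m:=\min_{\largercomp}\mathcal F$, this identity together with the trivial bound $m_2(\Gamma)\le m_1(\Gamma_1)+m_1(\Gamma_2)$ reduces the whole statement to proving $\mathsf m=\tfrac12 m_2(\Gamma)$ and matching the minimizers on both sides.

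First I would establish the lower bound $m_2(\Gamma)\le 2\mathsf m$, which yields the converse part \ref{3plateau}. Take any minimizer $(\sigma,\psi)\in\comp$ with \ref{1.}--\ref{5.} and form $S$. Since $\Om$ is a topological disc and $E(\sigma)$ is a union of convex components meeting $\partial\Om$ exactly along $\partial^0\Om$, the set $\Om\setminus E(\sigma)$ is either an annular domain (the two arcs of $\partial^0\Om$ sitting on distinct boundary curves) or splits into two simply connected pieces; doubling $\psi$ across the zero set then produces an annulus-type surface, respectively two disc-type surfaces, spanning $\Gamma$. In either case $S$ is an admissible Plateau competitor, so $m_2(\Gamma)\le\mathcal H^2(S)=2\mathsf m$ (using $m_2\le m_1(\Gamma_1)+m_1(\Gamma_2)$ in the two-disc case). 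Once $\mathsf m=\tfrac12 m_2(\Gamma)$ is known, this same $S$ realizes the classical minimum: it must be an annulus when $m_2<m_1(\Gamma_1)+m_1(\Gamma_2)$ (two discs would cost at least $m_1(\Gamma_1)+m_1(\Gamma_2)>2\mathsf m$) and either type when equality holds.

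For the reverse bound $\mathsf m\le\tfrac12 m_2(\Gamma)$ I split into the two regimes. When $m_2(\Gamma)=m_1(\Gamma_1)+m_1(\Gamma_2)$, each curve $\Gamma_j$ satisfies the hypotheses of Lemma \ref{lem:plateau_solution}, so a disc solution $S_j$ is symmetric with $S_j^+$ a graph over a region whose complement in the relevant convex set is convex; gluing the two graph functions gives $(\sigma,\psi)\in\comp$ with $\mathcal F(\sigma,\psi)=\tfrac12(m_1(\Gamma_1)+m_1(\Gamma_2))=\tfrac12 m_2(\Gamma)$, hence $\mathsf m\le\tfrac12 m_2(\Gamma)$. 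The disjointness $S_1\cap S_2=\emptyset$ claimed in \ref{2plateau} follows because, were the two graphs to meet, a cut-and-paste surgery would produce an annulus-type surface of strictly smaller area, contradicting $m_2=m_1(\Gamma_1)+m_1(\Gamma_2)$; here the convexity of $\Om$ and the fact that $\partial^0\Om$ has exactly two components keep the enclosed regions $E(\sigma_1),E(\sigma_2)$ separated. This settles \ref{2plateau}, and the identical $n=1$ construction via Lemma \ref{lem:plateau_solution} proves Theorem \ref{plateau:n1}.

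The crux is \ref{1plateau}, the regime $m_2(\Gamma)<m_1(\Gamma_1)+m_1(\Gamma_2)$, where by Theorem \ref{myexistence} every minimizer is an embedded $\mathcal{MY}$ solution $S=\immclosedann$; Lemma \ref{lem:plateau_solution} no longer applies since the parameter domain is an annulus. Instead I would exploit embeddedness: $S$ bounds a finite-perimeter solid region $\widehat E\subset\Om\times\R$, and I Steiner-symmetrize $\widehat E$ in the vertical direction $x_3$. Because $\Gamma$ is symmetric and $\Om$ is convex, the symmetrized set $\widehat E^{\,s}$ still spans $\Gamma$ inside the cylinder, has vertical slices equal to symmetric intervals, and satisfies $P(\widehat E^{\,s};\Om\times\R)\le P(\widehat E;\Om\times\R)$; the associated symmetric Cartesian pair then has energy at most $\tfrac12\mathcal H^2(S)=\tfrac12 m_2(\Gamma)$, giving $\mathsf m\le\tfrac12 m_2(\Gamma)$ and hence $\mathsf m=\tfrac12 m_2(\Gamma)$. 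The hard part is obtaining the explicit graph representation \eqref{param_to_nonparam}: minimality of $S$ forces \emph{equality} of perimeters, and I must invoke the rigidity results for Steiner symmetrization to conclude that $\widehat E$ is itself symmetric with each slice a single interval, i.e.\ that $S$ was already symmetric and Cartesian. The delicate points — ruling out the degeneracies those rigidity theorems permit (vertical portions of $\partial\widehat E$, or slices symmetric only after a vertical translation) using the analyticity and minimality of $S$ together with the convexity of $\Om$ — are where the bulk of the work lies. Once they are resolved, $S^+$ is literally a graph, $\psi$ and the convex sets $E(\sigma_i)$ are read off (their convexity coming from convex-hull arguments as in property \ref{4'} of Lemma \ref{lem:plateau_solution}), and the resulting $(\sigma,\psi)$ is the desired minimizer satisfying \eqref{param_to_nonparam}.
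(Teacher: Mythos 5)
Your overall strategy is the same as the paper's: part \ref{1plateau} via Steiner symmetrization of the solid region enclosed by the embedded $\mathcal{MY}$ surface together with the equality case of the Steiner perimeter inequality, part \ref{2plateau} via Lemma \ref{lem:plateau_solution} applied to each disc, part \ref{3plateau} by doubling the Cartesian graph, all tied together by the identity $2\min_{\comp}\mathcal F=m_2(\Gamma)$ (the paper's Theorem \ref{teorema-finale}). However, two genuine gaps remain. The first is an ordering/circularity problem in your ``lower bound first'' step: you assert that the doubled generalized graph $S$ of a minimizer $(\sigma,\psi)$ is ``an admissible Plateau competitor'', hence $m_2(\Gamma)\le\mathcal H^2(S)$. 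Admissibility means exhibiting a map in $\mathcal P_2(\Gamma)$, i.e.\ an $H^1\cap C^0$ parametrization of $S$ from $\overline\Sigma_{\rm ann}$, weakly monotone on $\partial\openannulus$; the natural Cartesian parametrization (Lemma \ref{lem_tec1}) is only $W^{1,1}$, and upgrading it is exactly the content of Lemma \ref{lem_tec2}, whose proof (claim \eqref{claim_speranza}, via the exhausting domains $H_k$, disc-type Plateau solutions on them, and Fr\'echet convergence of $\lambda_k$) uses the a priori inequality \eqref{eq:lem_tec2}, namely $2\mathcal F(\sigma,\psi)\le m_2(\Gamma)$ --- precisely the bound you propose to prove afterwards. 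This is why the paper establishes the upper bound $2\min_{\comp}\mathcal F\le m_2(\Gamma)$ first (Step 1 of Theorem \ref{teorema-finale}) and only then runs the parametrization argument; as written, your first step is unjustified.

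The second gap is that in the crux \ref{1plateau} you defer not only the rigidity degeneracies (which you flag) but also the topological and convexity preliminaries without which the symmetrized object is not even a competitor in $\comp$: one must prove that $\pi(\immclosedann)$ is simply connected (paper's Lemma \ref{lemma_step1}) and that the two components of $\overline\Om\setminus\pi(\immclosedann)$ are convex regions bounded by Lipschitz curves joining the correct endpoints (Lemma \ref{lemma_step4}). Your suggestion that this convexity comes ``from convex-hull arguments as in property $(4')$ of Lemma \ref{lem:plateau_solution}'' does not go through: that lemma's proof (Rado's lemma and the maximum principle for harmonic functions on $B_1$) is specific to disc-type parametrizations, whereas here the parameter domain is an annulus, and the paper needs genuinely different arguments --- pairs of opposite vertical half-planes, harmonicity of $d\circ\Phi$, and crucially the fact that $n=2$, so that such half-planes cannot meet both Dirichlet arcs simultaneously. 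Likewise, the existence of the enclosed finite-perimeter region $\widehat E$ is not immediate from embeddedness (the surface is not closed; the paper proves it in Lemma \ref{lemma_step2} via Federer's decomposition theorem and a multiplicity argument), and in \ref{2plateau} the disjointness $S_1\cap S_2=\emptyset$ follows at once from $S_j\subset D_j$ with $D_1\cap D_2=\emptyset$ (disjoint convex hulls), whereas your cut-and-paste surgery claim of a \emph{strict} area decrease would itself need a smoothing or strong-maximum-principle argument. In short: right strategy, but the proposal contains one circular step and leaves the hardest lemmas unproven.
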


\subsection{Toward the proofs of Theorems \ref{plateau:n1} 
and \ref{thm:comparison-with-classical-plateau}: preliminary lemmas}

In order to prove Theorems \ref{plateau:n1} and 
\ref{thm:comparison-with-classical-plateau}, we collect some technical lemmas.

\begin{lemma}\label{lem_tec1}
	Let $n=2$, and $(\sigma,\psi)\in\comp$ be a minimizer 
of $\mathcal F$ in 
$\admclassconv$
satisfying properties \ref{1.}-\ref{5.} of Theorem \ref{thm:regularity}. 
	\begin{itemize}
\item[(a)] 
Suppose that $\overline{\Om\setminus E(\sigma)}$ is simply connected. 
Then there exists an injective map $\Phi\in W^{1,1}(\openannulus;\R^3)\cap C^0(\overline{\openannulus};\R^3)$ such that 
$$\immclosedann=\mathcal G_{\psi\res (\overline{\Om\setminus E(\sigma)})}\cup\mathcal G_{-\psi\res (\overline{\Om\setminus E(\sigma)})},$$
and $\Phi\res C_j\colon C_j\to \Gamma_j$ is a weakly monotonic parametrization of $\Gamma_j$ for $j=1,2$.
\item[(b)] Suppose that $\Om\setminus E(\sigma)$ 
consists of two connected components, whose closures  $F_1$ and $F_2$ are disjoint, 
with 
$F_j \supseteq \partial^D_j\Om$ for $j=1,2$. 
Then there exist two injective maps $\Phi_1,\Phi_2\in W^{1,1}(B_1;\R^3)\cap C^0(\overline{B_1};\R^3)$ such that 
$$\Phi_j(\overline{B_1})=\mathcal G_{\psi\res {F_j}}
\cup\mathcal G_{-\psi\res{F_j}},\qquad j=1,2,$$
and $\Phi_j\res \partial B_1\colon\partial B_1\to \Gamma_j$ is a weakly monotonic parametrization of $\Gamma_j$ for $j=1,2$.
	\end{itemize}
	\end{lemma}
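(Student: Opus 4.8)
The plan is to build both parametrizations by hand out of the graph of $\psi$ and its reflection, after reducing the problem to a purely topological gluing. Throughout I use that, by properties \ref{1.}--\ref{5.} of Theorem \ref{thm:regularity}, the components of $E(\sigma)$ are convex, $\psi$ is analytic and \emph{strictly positive} in the open set $\Om\setminus E(\sigma)$, $\psi=\varphi$ on $\partial^D\Om\setminus\partial E(\sigma)$, and $\psi$ is continuous and null on $\Om\cap\partial E(\sigma)$; moreover $\psi\in W^{1,1}(\Om\setminus E(\sigma))\cap C^0(\overline{\Om\setminus E(\sigma)})$, since $\mathcal A(\psi;\Om\setminus E(\sigma))<\infty$ and $\psi$ has no singular part in the open set where it is analytic. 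Writing $R:=\overline{\Om\setminus E(\sigma)}$, the decisive consequence of positivity is that the zero set of $\psi$ inside $R$ is exactly $Z:=\partial R\cap\{\psi=0\}\subset\partial R$; hence the graph $D^+:=\mathcal G_{\psi\res R}$ and its reflection $D^-:=\mathcal G_{-\psi\res R}$ meet precisely along $Z\times\{0\}$ and are otherwise separated by the sign of $x_3$, so that $D^+\cap D^-=Z$.

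For part (a) I first describe $\partial R$. Since each $E(\sigma_i)$ encloses $\nullb_i\Om$, one has $\nullb\Om\subset\partial E(\sigma)$, so $\partial R\cap\partial\Om=\overline{\partial^D_1\Om}\cup\overline{\partial^D_2\Om}$; because $R$ is simply connected it is a closed Jordan domain, and removing the two arcs $\partial^D_1\Om,\partial^D_2\Om$ (which have disjoint closures) from the Jordan curve $\partial R$ leaves exactly the two components of $Z$, call them $\zeta_1,\zeta_2$ (the two smooth arcs of $\Om\cap\partial E(\sigma)$ provided by \ref{5.}, together with any degenerate segment of $\nullb\Om$). Thus $\partial R$ reads, in cyclic order, $\partial^D_1\Om,\,\zeta_1,\,\partial^D_2\Om,\,\zeta_2$, and $\immclosedann:=D^+\cup D^-$ is obtained by gluing two copies of the disc $R$ along the two disjoint boundary arcs $\zeta_1,\zeta_2$. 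By additivity of the Euler characteristic $\chi(\immclosedann)=2\chi(R)-\chi(Z)=2-2=0$, and the surface is orientable with two boundary circles, namely the doublings of $\partial^D_1\Om$ and $\partial^D_2\Om$, which are $\Gamma_1$ and $\Gamma_2$; hence $\immclosedann$ is an annulus. To produce $\Phi$, I cut $\openannulus$ along two disjoint arcs $L_1,L_2$ joining $C_1$ to $C_2$, obtaining two closed topological discs $A^+,A^-$, and choose bi-Lipschitz homeomorphisms $h^+\colon\overline{A^+}\to R$, $h^-\colon\overline{A^-}\to R$ that agree on $L_1\cup L_2$, send the cuts onto $\zeta_1,\zeta_2$ and the circle arcs $C_j\cap\overline{A^\pm}$ onto $\partial^D_1\Om,\partial^D_2\Om$ (respecting cyclic order). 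Such maps exist because $R$ is a Lipschitz Jordan domain with finitely many corners, matching four boundary arcs. I then set
\begin{equation*}
\Phi:=\big(h^+,\ \psi\circ h^+\big)\ \text{ on }\overline{A^+},\qquad
\Phi:=\big(h^-,\ -\psi\circ h^-\big)\ \text{ on }\overline{A^-}.
\end{equation*}

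Since $\psi=0$ on $Z$ and $h^+=h^-$ on the cuts, the two formulas agree on $L_1\cup L_2$, so $\Phi\in C^0(\overline{\openannulus};\R^3)$; being $W^{1,1}$ on each half (as $\psi\in W^{1,1}(R)$ and $h^\pm$ is bi-Lipschitz) with continuous matching across the Lipschitz cuts, one gets $\Phi\in W^{1,1}(\openannulus;\R^3)$. Injectivity holds because $\Phi$ is a graph over $h^\pm$ on each half and $\Phi(\overline{A^+})\cap\Phi(\overline{A^-})=Z\times\{0\}=\Phi(L_1\cup L_2)$, where the two pieces coincide; the identity $\immclosedann=\mathcal G_{\psi\res R}\cup\mathcal G_{-\psi\res R}$ is then immediate, and on $C_j$ the map traverses $\gamma_j$ over $A^+$ and ${\rm Sym}(\gamma_j)$ over $A^-$, meeting at $(p_j,0)$ and $(q_j,0)$, so $\Phi\res C_j$ is a weakly monotonic parametrization of $\Gamma_j$. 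Part (b) is identical with the disc in place of the annulus: each $F_j$ is a closed topological disc whose boundary splits into the single arc $\partial^D_j\Om$ (where $\psi=\varphi>0$) and the complementary arc $Z_j$ (where $\psi=0$), so doubling along $Z_j$ yields a disc with boundary $\Gamma_j$; I cut $B_1$ along a diameter $L$ into half-discs $B^\pm$, take bi-Lipschitz $h^\pm\colon\overline{B^\pm}\to F_j$ agreeing on $L$, sending $L$ to $Z_j$ and the semicircles to $\partial^D_j\Om$, and define $\Phi_j=(h^+,\psi\circ h^+)$ on $B^+$ and $\Phi_j=(h^-,-\psi\circ h^-)$ on $B^-$, the verifications being verbatim as above.

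I expect the main obstacle to be the construction of the bi-Lipschitz homeomorphisms $h^\pm$ carrying the prescribed decomposition of $\partial R$ (resp.\ $\partial F_j$) onto the canonical cut-decomposition of the reference domain while keeping $\Phi\in W^{1,1}$: one must control the behaviour at the finitely many corners $p_i,q_i$, where a Dirichlet arc meets a zero-arc, and check that the induced interior angles lie in $(0,2\pi)$ so that the elementary bi-Lipschitz angular rescaling of sectors can be glued into a global map. The other point requiring genuine care, rather than routine bookkeeping, is the reduction carried out in the first paragraph, namely that property \ref{2.} ($\psi>0$ in the interior) forces $\{\psi=0\}\cap R\subset\partial R$; this is exactly what makes the gluing set one-dimensional and guarantees that the doubled object is an embedded annulus (resp.\ disc) rather than a more degenerate surface.
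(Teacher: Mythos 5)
Your construction follows, in substance, the same route as the paper's proof: double the graph of $\psi$, cut the reference annulus along two disjoint arcs joining $C_1$ to $C_2$ into two topological discs (these are exactly the paper's two half-annuli), map each one homeomorphically onto $\overline{\Om\setminus E(\sigma)}$ and compose with the graph maps $p\mapsto(p,\pm\psi(p))$, then glue along the arcs where $\psi=0$; part (b) is the same doubling with $B_1$ in place of $\openannulus$. Your preliminary reduction (positivity of $\psi$ in $\Om\setminus E(\sigma)$ forces the two graphs to meet exactly along the boundary zero set, so the gluing locus is one-dimensional), the bookkeeping of $\partial R$ into the four arcs $\partial^D_1\Om,\zeta_1,\partial^D_2\Om,\zeta_2$, and the injectivity and weak-monotonicity checks are all correct; the Euler-characteristic computation is redundant given the explicit gluing, but harmless.

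The one step that does not go through as written is the existence of the \emph{bi-Lipschitz} homeomorphisms $h^\pm$. You assert that ``such maps exist because $R$ is a Lipschitz Jordan domain with finitely many corners,'' but a Jordan domain is bi-Lipschitz equivalent to a disc only if it is a chord-arc (Lipschitz) domain, and this fails precisely when a cusp occurs, i.e.\ when the interior angle of $R=\overline{\Om\setminus E(\sigma)}$ at one of the corners $p_i,q_i$ degenerates to $0$. Nothing in properties 1--5 of Theorem \ref{thm:regularity} excludes that the free arc $\zeta_i$ arrives tangentially to $\partial\Om$ at its endpoints: convexity of $E(\sigma_i)$ and of $\Om$ bounds the corner angle above by $\pi$ but gives no positive lower bound. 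You flag this yourself (``check that the induced interior angles lie in $(0,2\pi)$'') but do not, and from the stated hypotheses cannot, verify it. The paper sidesteps exactly this point by using a weaker but unconditional uniformization: a homeomorphism of class $H^1$ between $\overline{\Om\setminus E(\sigma)}$ and the disc, obtained from a flat disc-type Plateau solution spanning $\partial(\Om\setminus E(\sigma))$, composed with a Lipschitz homeomorphism between the disc and the half-annulus; such a map exists for any Jordan domain with rectifiable boundary, cusps included. (In exchange, your bi-Lipschitz route would make the $W^{1,1}$ regularity of $\psi\circ h^\pm$ immediate, a point the paper's composition leaves more implicit; so your argument closes if you either rule out the cusps or relax $h^\pm$ to maps that are bi-Lipschitz away from the four corners with controlled behaviour there.)
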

\begin{proof}
(a). Since $\overline{\Om\setminus E(\sigma)}$ is simply connected, the 
maps 
\begin{equation}\label{eq:Psi_tilde}
\widetilde \Psi^\pm\in W^{1,1}({\Om\setminus E(\sigma)};\R^3)
\cap C^0(\overline{\Om\setminus E(\sigma)};\R^3), 
\qquad \widetilde \Psi^\pm(p):=(p,\pm\psi(p)),
\end{equation}
 are disc-type parametrizations of  $\mathcal G_{\pm\psi\res (\overline{\Om\setminus E(\sigma)})}$.
	
	Now, using a homeomorphism of class $H^1$ between $\overline{\Om\setminus E(\sigma)}$ and a disc, we can parametrize\footnote{For instance,
we can consider a (flat) disc-type Plateau  solution 
spanning $\partial(\Om\setminus E(\sigma))$. 
Then we can employ a Lipschitz homeomorphism between the disc and the half-annulus.}
$\overline{\Om\setminus E(\sigma)}$ with a half-annulus, obtained as the region enclosed between two concentric half-circles 
with endpoints $A_1,A_2,A_3,A_4$ (in the order) on the same diameter, and the two segments $\overline{A_1A_2}$ and $\overline{A_3A_4}$. Then we construct a parametrization $\Psi^+$ of $\mathcal G_{\psi\res (\overline{\Om\setminus E(\sigma)})}$ from the half-annulus, such that $\Psi^+(A_1)=(q_1,0)$, $\Psi^+(A_2)=(p_2,0)$, $\Psi^+(A_3)=(q_2,0)$, $\Psi^+(A_4)=(p_1,0)$, and sending weakly monotonically the two half-circles 
into $\gamma_1$ and $\gamma_2$, and the two segments into $\sigma_1$ and 
$\sigma_2$, respectively.
Similarly, we construct  a parametrization $\Psi^-$ of $\mathcal G_{-\psi\res (\overline{\Om\setminus E(\sigma)})}$ from 
another copy of a half-annulus, just by setting $\Psi^-:=\textrm{Sym}(\Psi^+)$ (the symmetric of $\Psi^+$ with respect to the plane containing $\Omega$). \\
	Eventually, glueing the two half-annuli 
along the two segments, we obtain a parametrization $\Phi$ 
of $\mathcal G_{\psi\res (\overline{\Om\setminus E(\sigma)})}
\cup \mathcal G_{-\psi\res (\overline{\Om\setminus E(\sigma)})}$ 
defined on  $\overline\Sigma_{\rm ann}$.
	By the continuity of $\psi$ on $\partial^D\Omega$  we have that $ \Phi$ parametrizes $\Gamma_i$ on $C_i$, $i=1,2$.
	
(b). 	It is sufficient to argue as in case (a),  
by replacing $\Omega\setminus E(\sigma)$ in turn with $F_1$ and $F_2$ and 
$\openannulus$ with $B_1$ to find $\Phi_1$ and $\Phi_2$, respectively. 
\end{proof}

\begin{lemma}\label{lem_tec2}
Let $n=2$, and  $(\sigma,\psi)\in\comp$ be a minimizer of 
$\mathcal F$ in 
$\mathcal W$ 
satisfying properties \ref{1.}-\ref{5.} of Theorem \ref{thm:regularity}.
	\begin{itemize}
\item[(a)] 
Suppose that $\overline{\Om\setminus E(\sigma)}$ is simply connected and 
\begin{equation}\label{eq:lem_tec2}
\mathcal H^2(\mathcal G_{\psi\res (\overline{\Om\setminus E(\sigma)})}\cup\mathcal G_{-\psi\res (\overline{\Om\setminus E(\sigma)})})\leq m_2(\Gamma).
\end{equation}
Let $\Phi$ be the parametrization given by Lemma \ref{lem_tec1} (a).
Then there exists a reparametrization of the annulus $\openannulus$ 
such that, using it to reparametrize $\Phi$, the corresponding map (still denoted by $\Phi$)
belongs to $\mathcal P_2(\Gamma)$ and solves  \eqref{catenoid_plateau}. 
\item[(b)] Suppose that $\Om\setminus E(\sigma)$ 
consists of two connected components whose $F_1$ and $F_2$ are disjoint, and    
$F_j \supset \partial_j^D \Om$ for $j=1,2$, and
$$\mathcal H^2(\mathcal G_{\psi\res {F_j}}
\cup\mathcal G_{-\psi\res {F_j}})\leq m_1(\Gamma_j),
\qquad j=1,2.$$
 Let $\Phi_1,\Phi_2$  be the maps given by Lemma \ref{lem_tec1} (b). Then, for $j=1,2$, there is a reparametrization of $\Phi_j$ belonging to $\mathcal P_1(\Gamma_j)$ and solving \eqref{minimizationSigma}.
	\end{itemize}
\end{lemma}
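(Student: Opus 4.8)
The guiding principle is that the parametric area functional $\Phi\mapsto\int|\partial_{w_1}\Phi\wedge\partial_{w_2}\Phi|\,dw$ is invariant under reparametrization and, on an injective map satisfying Lusin's property (N), coincides with the $\H^2$-measure of its image. Hence the entire issue is to upgrade the regularity of the map $\Phi$ produced in Lemma \ref{lem_tec1}(a) from $W^{1,1}$ to $H^1$ (keeping it continuous and weakly monotonic on $\partial\openannulus$), after which admissibility in $\mathcal P_2(\Gamma)$ together with the area bound \eqref{eq:lem_tec2} forces minimality. So for part (a) I would first record that, $\Phi$ being injective, the area formula gives
\[
\int_{\openannulus}|\partial_{w_1}\Phi\wedge\partial_{w_2}\Phi|\,dw=\H^2(\immclosedann)=\H^2\big(\mathcal G_{\psi\res(\overline{\Om\setminus E(\sigma)})}\cup\mathcal G_{-\psi\res(\overline{\Om\setminus E(\sigma)})}\big)\le m_2(\Gamma),
\]
the last inequality being exactly \eqref{eq:lem_tec2}; this value is unchanged by any admissible reparametrization.

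The heart of the argument is the $H^1$ reparametrization. By properties \ref{2.} and \ref{5.} of Theorem \ref{thm:regularity}, the surface $S:=\immclosedann$ is, in its interior, the graph of the analytic solution $\psi$ of the minimal surface equation on $\Om\setminus E(\sigma)$, and along the free curves $\Om\cap\partial E(\sigma)$ (where $\psi=0$) the natural boundary condition makes the doubled surface meet the horizontal plane orthogonally, so $S$ extends smoothly across them; since $\overline{\Om\setminus E(\sigma)}$ is simply connected, $S$ is a finite-area topological annulus with rectifiable boundary circles $\Gamma_1,\Gamma_2$. The naive graph parametrization $p\mapsto(p,\psi(p))$ is only $W^{1,1}$, because $|\nabla\psi|$ may be unbounded (hence $\psi\notin H^1$) precisely where $S$ becomes vertical, i.e.\ near $\sigma$ and near the points $p_i,q_i$; this is exactly what prevents the map of Lemma \ref{lem_tec1} from lying in $H^1$. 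To repair this I would pass to isothermal coordinates for the induced metric and invoke the classical uniformization of finite-area annuli (as in \cite{DHS}), obtaining a conformal, hence $H^1$, parametrization $\widehat\Phi$ of $S$ over some standard annulus $A_\rho$, continuous up to the boundary, mapping the two boundary circles weakly monotonically onto $\Gamma_1,\Gamma_2$, and with Dirichlet energy equal to $\H^2(S)$. Since the modulus $\rho$ need not equal that of the fixed annulus $\openannulus$, I would postcompose with a bi-Lipschitz diffeomorphism $\openannulus\to A_\rho$ respecting the two boundary components: this preserves membership in $H^1\cap C^0$ and weak monotonicity on $\partial\openannulus$, and leaves the area unchanged. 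The resulting map (still denoted $\Phi$) then belongs to $\mathcal P_2(\Gamma)$.

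With $\Phi\in\mathcal P_2(\Gamma)$ of area $\H^2(S)\le m_2(\Gamma)$, the definition of $m_2(\Gamma)$ as an infimum over $\mathcal P_2(\Gamma)$ forces $\int|\partial_{w_1}\Phi\wedge\partial_{w_2}\Phi|\,dw=m_2(\Gamma)$, i.e.\ $\Phi$ solves \eqref{catenoid_plateau}. Part (b) is entirely analogous, carried out separately on $F_1$ and $F_2$: each $\overline{F_j}$ being a simply connected graph domain, the doubled surface $\mathcal G_{\psi\res F_j}\cup\mathcal G_{-\psi\res F_j}$ is a topological disc, and the Riemann mapping theorem (so there is now no modulus ambiguity) provides a conformal $H^1\cap C^0$ parametrization over $B_1$, weakly monotonic on $\partial B_1$ onto $\Gamma_j$; membership in $\mathcal P_1(\Gamma_j)$ together with the hypothesized bound by $m_1(\Gamma_j)$ then yields a solution of \eqref{minimizationSigma}.

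The step I expect to be the genuine obstacle is the conformal reparametrization \emph{up to the boundary}: one must ensure both that the isothermal coordinates extend continuously to $\partial(\Om\setminus E(\sigma))$ — delicate precisely at the vertical points on $\sigma$ and at the corners $p_i,q_i$, where the graph map degenerates — and that the induced boundary map is weakly monotonic onto each $\Gamma_j$. By contrast, the modulus mismatch with the fixed $\openannulus$ is only a minor point, resolved by the diffeomorphism, since for admissibility in $\mathcal P_2(\Gamma)$ one needs $H^1$ regularity and not conformality.
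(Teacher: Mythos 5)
Your end-game (get a map in $\mathcal P_2(\Gamma)$ whose area is at most $m_2(\Gamma)$, then let the infimum force equality) matches the paper's, and you are right that the modulus mismatch is a minor point since the functional is the parametric area. But the central step of your argument --- a conformal reparametrization of the doubled graph $S$, continuous up to the boundary and weakly monotonic on it --- is exactly where the difficulty lies, and you leave it unproven. Worse, the route you sketch for it relies on facts that are not available: you assert that along the free curves $\sigma$ the graph meets the plane $\{x_3=0\}$ orthogonally, so that $S$ ``extends smoothly'' across them. No such free-boundary condition has been derived for minimizers of $\mathcal F$; at this point of the paper nothing excludes a crease of $S$ along $\sigma$, unbounded $\nabla\psi$ with non-orthogonal contact, or degenerate behavior at the corners $p_i,q_i$. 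Without that, $S$ is not known to be a smooth (or even Lipschitz) annulus, so classical uniformization of finite-area annuli as in \cite{DHS} does not apply to it, and continuity plus weak monotonicity of the boundary values of an isothermal parametrization cannot be invoked. In effect, the regularity of $S$ across $\sigma$ is part of what Section 6 is designed to \emph{prove} (via Meeks--Yau and Steiner symmetrization), so assuming it here is close to circular.

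The paper circumvents precisely this obstacle, and its mechanism has no counterpart in your proposal. It exhausts $\Om\setminus E(\sigma)$ by interior domains $H_k$ on which $\psi$ is analytic, hence Lipschitz, so the truncated graphs $\mathcal G_{\psi\res H_k}$ are genuinely nice disc-type surfaces spanning Jordan curves $\lambda_k$ converging in the sense of Fr\'echet to $\lambda=\sigma_1\cup\sigma_2\cup\gamma_1\cup\gamma_2$. The crucial claim is that $\mathcal H^2(\mathcal G_{\psi\res H_k})=m_1(\lambda_k)$ for every $k$, i.e.\ each truncated graph is itself a Plateau minimizer for its own boundary; this is proved by a gluing/contradiction argument that uses the hypothesis \eqref{eq:lem_tec2} together with the inequality $2m_1(\lambda)\ge m_2(\Gamma)$. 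Note that \eqref{eq:lem_tec2} therefore does real work in the middle of the proof, not only at the end as in your proposal. Once each truncated graph is a minimizer, it admits a conformal $H^1\cap C^0$ parametrization by the classical disc-type theory (applied to Lipschitz graphs, where it is unproblematic), and these parametrizations are compact: their Dirichlet energies equal the areas, and $m_1(\lambda_k)\to m_1(\lambda)$ by Fr\'echet convergence of the boundaries. The limit map parametrizes the full graph $\mathcal G_{\psi\res(\overline{\Om\setminus E(\sigma)})}$, and the gluing construction of Lemma \ref{lem_tec1} then produces the annulus-type parametrization. If you want to repair your argument, this interior-exhaustion step, with the minimality of the truncated graphs, is the missing idea.
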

\begin{proof}
(a). 
Fix a point $\widetilde p\in \Om\setminus E(\sigma)$ and set $\widetilde \Psi^+_k:=\widetilde\Psi^+\res H_k$, 
where $\widetilde \Psi$ is defined in \eqref{eq:Psi_tilde} and $H_k$ is the 
connected component of 
	$$\widetilde H_k:=\{p\in \overline{\Om\setminus E(\sigma)}:\textrm{dist}(p,\partial(\overline{\Om\setminus E(\sigma)}) )\geq 1/k\}
$$
containing $\widetilde p$.

	For $k \in \mathbb N$ large enough  $H_k$ is  
simply connected  with rectifiable boundary. 
In particular $\widetilde \Psi^+_k$ parametrizes a disc-type surface, and 
using the regularity of $ \psi$ in ${\Om\setminus E(\sigma)}$,  
it follows that $\widetilde \Psi^+_k$ is Lipschitz continuous. 
Furthermore, 
$\widetilde \Psi^+_k\res \partial H_k$ 
parametrizes a Jordan curve,
and these curves
converge, in the sense of Fr\'echet (see \cite[Theorem 4, Section 4.3]{DHS}) 
as $k\rightarrow +\infty$, to the curve having image
 $\widetilde \Psi^+(\partial ({\Om\setminus E(\sigma)}) ) )=:\lambda$.  
Notice that
	\begin{align}\label{lambda}
	\lambda=\sigma_1\cup\sigma_2\cup\gamma_1\cup \gamma_2.
	\end{align}
Call $\lambda_k$ the image of the curve  given by 
$\widetilde \Psi^+_k\res \partial H_k$.
	Let $\mathcal{P}_1(\lambda_k)$, $\mathcal{P}_1(\lambda)$, $m_1(\lambda_k)$, $m_1(\lambda)$ be defined as in \eqref{def:adm_Phi} and \eqref{plateau_1} with $\lambda_k$ and $\lambda$ in place of $\Gamma$ respectively.
	Up to reparametrizing $\overline B_1$ (see footnote 15),
 $\widetilde\Psi^+_k$ belongs to $\mathcal{P}_1(\lambda_k)$, therefore 
	$$\mathcal H^2(	\mathcal G_{\psi\res H_{k}})=\int_{H_k}|\partial_{w_1}\widetilde \Psi_k^+\wedge \partial_{w_2}\widetilde \Psi_k^+ |dw\geq m_1(\lambda_k)\qquad\forall k\geq1.$$
We claim that   equality holds in the previous expression, namely
	\begin{align}\label{claim_speranza}
	\mathcal H^2(	\mathcal G_{\psi\res H_k})=m_1(\lambda_k)\qquad \forall k\geq1.
	\end{align}
	Indeed, assume by contradiction that $\mathcal H^2(	\mathcal G_{\psi\res H_{k_0}})>m_1(\lambda_{k_0})$ for some $k_0\geq1$,
and pick $\delta>0$ with
	\begin{equation}\label{eq:absurd}
\mathcal H^2(	\mathcal G_{\psi\res H_{k_0}})\geq \delta+
m_1(\lambda_{k_0}).
	\end{equation}
Take $\Phi_{k_0}\in \mathcal{P}_1(\lambda_{k_0})$
	a solution to $m_1(\lambda_{k_0})$.
	For $k> k_0$, as $H_{k_0}\subset H_k$, by a 
glueing argument\footnote{This is done, for instance, by glueing an external annulus to a disc, and using $\Phi_{k_0}$ from the disc, and a reparametrization of $\mathcal G_{\psi\res(H_k\setminus H_{k_0})}$ from the annulus.}, we can find $\Phi_k\in\mathcal{P}_1(\lambda_k)$ such that $\Phi_k(\overline{B_1})=\Phi_{k_0}(\overline{B_1})\cup \mathcal G_{\psi\res(H_k\setminus H_{k_0})}$. Thus by \eqref{eq:absurd} we have
$$
\begin{aligned}
\mathcal H^2(	\mathcal G_{\psi\res H_{k}})
\geq & 
\delta + m_1(\lambda_{k_0})
+
\mathcal H^2(	\mathcal G_{\psi\res (H_k\setminus H_{k_0})})
\\
= & 
\delta + \mathcal H^2(\Phi_{k_0}(\overline B_1))
+
\mathcal H^2(	\mathcal G_{\psi\res (H_k\setminus H_{k_0})})
\geq
\delta+ m_1(\lambda_k)
\qquad \forall k> k_0.
\end{aligned}
$$
	Letting $k\rightarrow +\infty$, since $\lambda_k\rightarrow\lambda$ in the sense of Fr\'echet, we have $m_1(\lambda_k)\rightarrow m_1(\lambda)$ \cite[Theorem 4, Section 4.3]{DHS}.
In particular, from the previous inequality we infer
	\begin{equation*}
	\mathcal{F}(\sigma,\psi)=\mathcal{H}^2(\mathcal G_{\psi\res(\overline{\Om\setminus E(\sigma)})})\ge\delta+m_1(\lambda).
	\end{equation*}
	Hence we conclude $$\mathcal{H}^2(\mathcal G_{\psi\res(\overline{\Om\setminus E(\sigma)})}\cup \mathcal G_{-\psi\res(\overline{\Om\setminus E(\sigma)})})\geq 2\delta+2m_1(\lambda)\geq  2\delta+m_2(\Gamma),$$
	which contradicts \eqref{eq:lem_tec2}.
	In the last inequality we have used that $2m_1(\lambda)\geq m_2(\Gamma)$; this follows from the fact that a disc-type parametrization of a minimizer for $m_1(\lambda)$ can be reparametrized on a half-annulus (as in the proof of Lemma \ref{lem_tec1}), and glued with another reparametrization of it on the other half-annulus, so to obtain a parametrization of an annulus-type surface spanning $\Gamma$ which is admissible for \eqref{catenoid_plateau}.	
	 Hence claim \eqref{claim_speranza} follows.
	 
	Now, since $\psi$ is Lipschitz continuous on $\overline H_k$, for 
all $k\in \mathbb N$ there exists a parametrization $\Psi_k\in H^1(B_1;\R^3)\cap C^0(\overline {B_1};\R^3)$ with $\Psi_{k}(\partial B_1)=\lambda_{k}$ monotonically which solves the classical disc-type 
Plateau problem {spanning} $\lambda_k$ and such that 
	$$\Psi_k(B_1)=\mathcal G_{\psi\res H_k}.$$
	Letting $k\rightarrow +\infty$
and using that the Dirichlet energy of $\Psi_k$ equals the area of $\mathcal G_{\psi\res H_k}$, we 
conclude that $(\Psi_k)$ tends to a map $\Psi\in H^1(B_1;\R^3)\cap C^0(\overline {B_1};\R^3)$ 
with $\Psi(\partial B_1)=\lambda$ {weakly} monotonically, and that is a solution of the classical disc-type Plateau problem with $$\Psi(\overline B_1)=\mathcal{G}_{\psi\res(\overline{\Om\setminus E(\sigma)})}.$$
	Arguing as in the proof of Lemma \ref{lem_tec1} we finally get a parametrization $\Phi:
\overline\Sigma_{\rm ann}\rightarrow \R^3$ which belongs to $\mathcal P_2(\Gamma)$ and parametrizes $\mathcal{G}_{\psi\res(\overline{\Om\setminus E(\sigma)})}\cup \mathcal{G}_{-\psi\res(\overline{\Om\setminus E(\sigma)})}$.
	This concludes the proof of (a). 

(b). 	It is sufficient to argue as in case (a),  
by replacing $\Omega\setminus E(\sigma)$ in turn with $F_1$ and $F_2$ and 
$\openannulus$ with $B_1$ to find $\Phi_1$ and $\Phi_2$, respectively. 
\end{proof}

We can now start the proof of Theorems \ref{plateau:n1} and 
\ref{thm:comparison-with-classical-plateau}. 

\subsection{Proof of Theorem \ref{plateau:n1}}

\begin{proof}[Proof of Theorem \ref{plateau:n1}] Let $\Phi\in\mathcal{P}_1(\Gamma)$ be a solution to \eqref{plateau_1}. 
	The curve $\Gamma$ satisfies the assumptions of Lemma \ref{lem:plateau_solution}, hence the minimal disc-type surface $S:=\Phi(\overline{B_1})$ satisfies the following properties:
	\begin{itemize}
		\item $\beta_{p_1,q_1}:=S\cap (\R^2\times\{0\})\subset \overline \Om$ 
		is a simple curve of class $C^\infty$ 
joining $p_1$ and $q_1$ and such that $\beta_{p_1,q_1}\cap\partial\Om=\{p_1,q_1\}$;
		\item $S$ is symmetric with respect to $\R^2\times\{0\}$;
		\item the surface $S^+=S\cap\{x_3\ge0\}$ is the graph of a 
		function $\widetilde\psi\in W^{1,1}(U_{p_1,q_1})\cap C^0(\overline U_{p_1,q_1})$, where $U_{p_1,q_1}\subset \Om$ is the open region enclosed between $\partial_1^D\Om$  and $\beta_{p_1,q_1}$. Moreover $\widetilde\psi$ is analytic in $U_{p_1,q_1}$;
		\item the curve $\beta_{p_1,q_1}$ is contained in the closed
		convex hull of $\Gamma$, and $\Om\setminus U_{p_1,q_1}$ is convex.
	\end{itemize}
	
	Let  $(\sigma,\psi)\in\comp$ be given by
	$$\sigma:=\sigma_1\qquad\text{and}\qquad
	\psi:=\begin{cases}
	0& \text{in } \Om\setminus U_{p_1,q_1}\\
	\widetilde\psi & \text{in } U_{p_1,q_1},
	\end{cases}$$
	where $\sigma_1([0,1])=\beta_{p_1,q_1}$.
	Clearly \eqref{eq:graph} holds. Moreover $\mathcal H^2(S)=2\mathcal F(\sigma,\psi)=m_1(\Gamma)$. It remains to show that this is a minimizer of $\mathcal F$.
	Let $(\sigma',\psi')\in\comp$ be a minimizer  of $\mathcal F$  that satisfies properties \ref{1.}-\ref{5.} of  Theorem  \ref{thm:regularity} and  consider the disc-type surface with boundary $\Gamma$ given by $S':=\mathcal{G}_{\psi'\res(\overline{\Om\setminus E(\sigma')})}
	\cup \mathcal{G}_{-\psi'\res(\overline{\Om\setminus E(\sigma')})}$. 
	Since $(\sigma,\psi)$ is admissible for $\mathcal F$, we deduce
	$$\mathcal{H}^2(S')=2\mathcal F(\sigma',\psi')\leq m_1(\Gamma).$$
	Then we are in the hypotheses of Lemma \ref{lem_tec2} and so  there is a parametrization
	$\Phi'\in \mathcal P_1(\Gamma) $ with $\Phi'(\overline B_1)=S'$.
	By minimality of  $(\sigma',\psi')$ and of $S$ we have
	\begin{equation}
	\mathcal H^2(S)\le	\mathcal{H}^2(S')=2\mathcal{F}(\sigma',\psi')\le 2\mathcal{F}(\sigma,\psi)=\mathcal H^2(S).
	\end{equation}
	Hence $(\sigma,\psi)$ is a minimizer of $\mathcal F$ in $\mathcal W$ and $\Phi'$ is  a solution to \eqref{plateau_1}.
	
	Conversely, let $(\sigma,\psi)\in \comp$ be a solution that 
satisfies properties \ref{1.}-\ref{5.} of  Theorem \ref{thm:regularity}.
 Let $\widetilde\Phi$ be a solution to \eqref{plateau_1}; then we can find $(\widetilde\sigma,\widetilde\psi)\in \mathcal W$ whose 
doubled graph $\widetilde S=\mathcal G_{\widetilde\psi\res (\overline{\Om\setminus E(\widetilde\sigma)})}\cup\mathcal G_{-\widetilde\psi\res (\overline{\Om\setminus E(\widetilde\sigma)})}$ satisfies 
	$$ \mathcal H^2(S)=2\mathcal F(\sigma,\psi)\leq2\mathcal F(\widetilde\sigma,\widetilde\psi)=\mathcal H^2(\widetilde S)=m_1(\Gamma).$$ Arguing as before  we  find a map $\Phi\in \mathcal P_1(\Gamma)$ parametrizing $S$. 
	We conclude  that $\Phi$ is a solution to \eqref{plateau_1}, and the theorem is proved.
\end{proof}

\subsection{Proof of Theorem \ref{thm:comparison-with-classical-plateau}}
The proof of 
Theorem \ref{thm:comparison-with-classical-plateau} 
is much more involved, so we divide it in a number of steps. We start with a result (which can be seen as the counterpart of
Lemma \ref{lem:plateau_solution} for the Plateau problem defined in \eqref{catenoid_plateau}) that will be crucial to prove \ref{1plateau}. 
In what follows we denote by $\pi\colon\R^3\to\R^2\times\{0\}$ the orthogonal projection. 
\begin{theorem}\label{crucial_teo}
Suppose $m_2(\Gamma)<m_1(\Gamma_1)+m_1(\Gamma_2)$ and let
$\Phi\in\mathcal{P}_2(\Gamma)$ be a $\mathcal{MY}$ solution to \eqref{catenoid_plateau}.
 Then the minimal surface $\immclosedann$ satisfies the following properties: 
	\begin{enumerate}[label=$(\arabic*)$]
		\item \label{crucial1} The set $\pi(\immclosedann)$ 
is simply connected in $\overline\Omega$; $\Om\cap\partial \pi(\immclosedann)$
		consists of two disjoint embedded curves   $\beta_1$ and $\beta_2$ of class $C^\infty$ joining $q_1$ to $p_2$, and $q_2$ to $p_1$, respectively. Moreover, the closed region $E_i$ 
enclosed between $\partial_i^0\Om$ and $\beta_i$, $i=1,2$, is convex;
		\item\label{crucial2} $\immclosedann$ is symmetric with respect to the plane $\R^2\times\{0\}$;
		\item\label{crucial3}  $\immclosedann\cap(\R^2\times\{0\})=\beta_1\cup\beta_2$; 
		\item\label{crucial4} $S^+:=\immclosedann\cap \{x_3\ge0\}$ is Cartesian. Precisely,  it is 
the graph of a function $\widetilde\psi\in W^{1,1}({\rm int}(\pi(\immclosedann)))\cap C^0({\pi(\immclosedann)})$. 
	\end{enumerate}
\end{theorem}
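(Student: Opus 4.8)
The plan is to treat Theorem \ref{crucial_teo} as the annulus-type counterpart of Lemma \ref{lem:plateau_solution}, exploiting two structural facts: the curve $\Gamma=\Gamma_1\cup\Gamma_2$ is symmetric with respect to $\R^2\times\{0\}$ (since $\Gamma_j=\gamma_j\cup{\rm Sym}(\gamma_j)$ and $\varphi$ vanishes at the endpoints $p_j,q_j$, so $\Gamma$ contains no vertical segments), and $\Gamma$ lies on the boundary of the convex cylinder $\Om\times\R$. Being a $\mathcal{MY}$ solution, $\Phi$ is harmonic, conformal and an embedding, so $S:=\immclosedann$ is a smooth embedded minimal annulus contained in the convex hull of $\Gamma$, in particular in $\overline\Om\times\R$. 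I would first establish properties \ref{crucial2} and \ref{crucial4} \emph{simultaneously} by a Steiner symmetrization argument, and then deduce \ref{crucial3} and \ref{crucial1} from the resulting symmetric Cartesian structure together with a convex-hull/maximum-principle argument mirroring Step 4 of Lemma \ref{lem:plateau_solution}.

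For the symmetrization step, since $\Phi$ is an embedding the annulus $S$ separates $\Om\times\R$, and I would let $V\subset\Om\times\R$ be the finite-perimeter region it encloses, so that $\partial^* V\cap(\Om\times\R)=S$ while the remaining part of $\partial V$ lies on $\partial\Om\times\R$. Let $V^\star$ be the Steiner symmetral of $V$ in the $x_3$-direction with respect to $\{x_3=0\}$. Because the cylinder $\Om\times\R$ is invariant under $x_3\mapsto -x_3$, the Steiner inequality for the relative perimeter in $\Om\times\R$ gives $\H^2(S^\star)\le\H^2(S)$, where $S^\star:=\partial^* V^\star\cap(\Om\times\R)$ is symmetric and, by the very definition of Steiner symmetrization, is a double graph $\{x_3=\pm\widetilde\psi(x')\}$; moreover, since the slices of $V$ over points of $\partial\Om$ tend to the symmetric intervals cut out by $\Gamma$, the trace of $S^\star$ is again $\Gamma$. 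One checks that $S^\star$, a symmetric double graph over a domain of $\overline\Om$ with trace $\Gamma$, is an admissible annulus-type competitor for \eqref{catenoid_plateau} (two graphs glued along the two arcs where $\widetilde\psi=0$ form an annulus, as in Lemma \ref{lem_tec1}). Hence $\H^2(S^\star)\ge m_2(\Gamma)=\H^2(S)$, forcing equality in the Steiner inequality. Invoking the standard characterization of the equality case then yields that $V$ coincides, up to the vertical translation allowed in each column, with $V^\star$; the symmetry of $\Gamma$ rules out nontrivial translations, so $V=V^\star$. Thus $S$ is symmetric (property \ref{crucial2}) and $S^+$ is the graph of $\widetilde\psi\in W^{1,1}({\rm int}(\pi(S)))\cap C^0(\pi(S))$ (property \ref{crucial4}).

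Once $S$ is symmetric and Cartesian, property \ref{crucial3} is immediate: by symmetry $S$ meets $\R^2\times\{0\}$ orthogonally, so $S\cap(\R^2\times\{0\})$ is exactly the set where $\widetilde\psi$ vanishes, namely the smooth $1$-manifold $\Om\cap\partial\pi(S)$, whose components I denote $\beta_1,\beta_2$ (smoothness following from interior analyticity of the minimal surface and transversality). For property \ref{crucial1}, note that $\pi(\Gamma)=\partial^D\Om=\partial^D_1\Om\cup\partial^D_2\Om$, so $\pi(S)=\overline\Om\setminus(E_1\cup E_2)$ where $E_1,E_2$ are the two complementary regions adjacent to the two components $\partial^0_1\Om,\partial^0_2\Om$ of $\partial^0\Om$; this pins down that $\beta_1$ joins $q_1$ to $p_2$ and $\beta_2$ joins $q_2$ to $p_1$, and shows $\pi(S)$ is simply connected once the $E_i$ are convex. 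Finally, convexity of each $E_i$ is obtained exactly as in Step 4 of Lemma \ref{lem:plateau_solution}: if some $E_i$ were not convex one finds a chord and an affine function $d_W$ positive on the side containing $\pi(S)$, and the harmonicity of $d_W\circ\Phi$ together with the strict convex-hull inclusion of ${\rm int}(S)$ (as in \cite[Theorem 3, pag.\ 343]{DHS}) yields a contradiction via the maximum principle.

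The main obstacle is the symmetrization step. Unlike the disc case, the annular topology prevents a direct Rado-type analysis of the nodal set of $\Phi_3$ (which could in principle contain a loop encircling the hole), which is precisely why I route the symmetry through Steiner symmetrization. The delicate points there are making the enclosed set $V$ and the trace of $\partial V$ on $\partial\Om\times\R$ precise, verifying that the symmetral $S^\star$ is a genuine annulus-type competitor so that minimality forces equality, and applying the correct rigidity theorem for the equality case of the Steiner perimeter inequality together with the boundary symmetry to upgrade equality to $V=V^\star$. The subsequent deduction of \ref{crucial1} and \ref{crucial3} is comparatively routine, being a transcription of the disc-type arguments of Lemma \ref{lem:plateau_solution}.
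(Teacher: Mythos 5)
Your core mechanism --- Steiner symmetrization of the enclosed region plus rigidity in the equality case --- is exactly the engine of the paper's proof (Lemmas \ref{lemma_step2}, \ref{lemma_step3}, \ref{lem:construction-of-parametrization}, \ref{lemma_step5}), and you correctly diagnose why a Rado-type nodal analysis of $\Phi_3$ fails on the annulus. But the order in which you deduce the four properties creates a genuine circularity at the decisive step. To force equality in the Steiner inequality you need the symmetral $S^\star$ to be an admissible competitor, i.e.\ to carry some annulus-type parametrization (or, alternatively, to define an admissible pair for the non-parametric problem). Gluing the two graphs $\{x_3=\pm\widetilde\psi\}$ into an annulus spanning $\Gamma$ requires knowing that $\pi(V)$ is simply connected and that $\{\widetilde\psi=0\}\cap\Om$ consists of exactly two arcs joining $q_1$ to $p_2$ and $q_2$ to $p_1$ --- which is precisely property \ref{crucial1}, the property you postpone until after symmetrization. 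The paper avoids this by proving Lemma \ref{lemma_step1} (simple connectedness of $\pi(\immclosedann)$) and Lemma \ref{lemma_step4} (the two Lipschitz arcs $\beta_1,\beta_2$ and convexity of $E_1,E_2$) \emph{first}, via annulus-specific harmonic-function and maximum-principle arguments (ruling out nodal loops that encircle the hole); these structural facts are then inputs to the equality argument. For the same reason your final appeal to ``Step 4 of Lemma \ref{lem:plateau_solution}'' for convexity is not a transcription: that argument lives on the disc, where preimages of half-space traces are boundary arcs of $\partial B_1$, while on $\openannulus$ one must additionally exclude non-contractible nodal loops, which is what the separate proof of Lemma \ref{lemma_step4} does.

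There is a second, related gap: admissibility is not only topological but also a regularity issue. The Steiner symmetral only gives $\widetilde\psi\in BV$; an element of $\mathcal P_2(\Gamma)$ must be $H^1\cap C^0$ with weakly monotone boundary values, and one cannot simply ``check'' that a $BV$ double graph admits such a parametrization. The paper spends Lemma \ref{lemma_step3} proving continuity of $\widetilde\psi$ (via the vertical-tangent-plane analysis), and then, crucially, does \emph{not} parametrize the symmetral directly: in Lemma \ref{lem:construction-of-parametrization} it compares with a minimizer $(\sigma,\psi)$ of $\mathcal F$, whose regularity is already known from Theorem \ref{thm:regularity}, and applies Lemma \ref{lem_tec2} to that minimizer to conclude $2\mathcal F(\sigma,\psi)=m_2(\Gamma)$ and hence equality in Steiner; only afterwards does it upgrade $(\widetilde\sigma,\widetilde\psi)$ to an $\mathcal F$-minimizer, gaining analyticity, and parametrize it. Finally, your rigidity step is compressed: equality in the Steiner inequality, via \cite[Theorem 1.1]{CCF}, yields that $\partial^* E$ is a union of two $BV$ graphs with reflected normals, not directly that $E$ is a columnwise translate of its symmetral; the paper's Lemma \ref{lemma_step5} still has to show each vertical line over ${\rm int}(\pi(E))$ meets $S$ transversally in exactly two points, deduce smoothness of the two graphs, and integrate the normal condition along a path from $\partial^D\Om$ to kill the constant. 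Your boundary-symmetry idea is the right one for that last step, but as written the argument from ``equality'' to ``$V=V^\star$'' skips the part where all the work lies.
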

The proof of Theorem \ref{crucial_teo} is a consequence of Lemmas \ref{lemma_step1}, \ref{lemma_step4}, \ref{lemma_step2}, \ref{lemma_step3}, \ref{lem:construction-of-parametrization}, and \ref{lemma_step5} below.
\begin{lemma}\label{lemma_step1}
	Suppose $m_2(\Gamma)<m_1(\Gamma_1)+m_1(\Gamma_2)$ and let
	$\Phi\in\mathcal{P}_2(\Gamma)$ be a $\mathcal{MY}$ solution to \eqref{catenoid_plateau}.
	Then $\pi(\immclosedann)$ is a simply connected region in $\overline\Omega$ containing $\partial_1^D\Om\cup\partial_2^D\Om$.
\end{lemma}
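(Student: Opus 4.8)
The plan is to split the statement into its two easy inclusions, which follow from general properties of area–minimizing surfaces, and the genuinely topological assertion of simple connectedness, to which I would devote the real effort. For the inclusion $\pi(\immclosedann)\subset\overline\Om$ I would use that $\Gamma=\Gamma_1\cup\Gamma_2$ lies on the lateral cylinder $\partial\Om\times\R$; since $\Om$ is convex the whole convex hull of $\Gamma$ sits inside the convex set $\overline\Om\times\R$, so the convex hull property for minimal surfaces \cite[Theorem 3, pag. 343]{DHS} gives $\immclosedann\subset\overline\Om\times\R$ and hence $\pi(\immclosedann)\subset\overline\Om$. The inclusion of the Dirichlet arcs is immediate: from $\Phi(\partial\openannulus)=\Gamma$ we get $\Gamma\subset\immclosedann$, and projecting the symmetric curves yields $\pi(\Gamma_j)=\overline{\partial_j^D\Om}$ (here one uses $\varphi(p_j)=\varphi(q_j)=0$), so $\partial_1^D\Om\cup\partial_2^D\Om\subset\pi(\immclosedann)$. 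Connectedness of $\pi(\immclosedann)$ is clear because $\immclosedann$ is connected.

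The core of the lemma is that $\pi(\immclosedann)$ is simply connected, i.e.\ that $\R^2\setminus\pi(\immclosedann)$ has no bounded component. I would argue by contradiction. Suppose $W$ is a bounded component; then necessarily $W\subset\subset\Om$, since a component touching $\partial\Om$ along $\partial^0\Om\setminus\pi(\immclosedann)$ would be joined to the unbounded exterior. For any $\bar x'\in W$ the vertical line $\{\bar x'\}\times\R$ is then disjoint from $\immclosedann$. Using that $\Phi$ is an embedding (Definition \ref{def:MY}, available because $m_2(\Gamma)<m_1(\Gamma_1)+m_1(\Gamma_2)$ via Theorem \ref{myexistence}, which also forces $\immclosedann$ to be a non-degenerate annulus with two distinct boundary curves), I would cap $\immclosedann$ with the two lens-shaped disks $D_j\subset\partial\Om\times\R$ that each $\Gamma_j$ bounds on the cylinder, producing an embedded $2$-sphere $\Sigma^*=\immclosedann\cup D_1\cup D_2$ which, by Jordan--Brouwer, bounds a solid $3$-ball $\mathcal R\subset\overline\Om\times\R$. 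Since $\bar x'\in\Om$, the line $\{\bar x'\}\times\R$ also misses $D_1\cup D_2$, hence misses $\Sigma^*$, and being unbounded it lies in the \emph{unbounded} complementary component of $\Sigma^*$.

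The contradiction must then come from the geometry of $\partial\immclosedann$. The decisive feature is that the boundary curves project to \emph{arcs} of $\partial\Om$: the harmonic map $g:=\pi\circ\Phi$ sends each $C_j$ onto $\partial_j^D\Om$ traversed forth and back, so $g|_{\partial\openannulus}$ has winding number $0$ about every $\bar x'\in\Om$ and $g_*\colon\pi_1(\overline\openannulus)\to\pi_1(\R^2\setminus\{\bar x'\})$ is trivial. This is precisely what should exclude the ``vertical–axis catenoid'', whose boundary circles would instead wind once around $\bar x'$. However, a folded boundary alone does not forbid the image from encircling $\bar x'$, so a pure winding count is insufficient; I would upgrade it by invoking the interior maximum principle through the third coordinate $\Phi_3$, which is harmonic and has exactly four sign changes on $\partial\openannulus$ at the four points $\Phi^{-1}(p_j,0),\Phi^{-1}(q_j,0)$. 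A Rado-type nodal analysis (as in the proof of Lemma \ref{lem:plateau_solution}) then gives $\nabla\Phi_3\neq0$ in the interior and shows the nodal set is a union of embedded arcs, so $\immclosedann$ cannot encircle an interior vertical tube; this forces every component of $\overline\Om\setminus\pi(\immclosedann)$ to meet $\partial^0\Om$, contradicting $W\subset\subset\Om$.

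I expect the simple connectedness to be the main obstacle, and within it the delicate point is not the capping and Jordan--Brouwer bookkeeping but reconciling the folded boundary with embeddedness and minimality: one genuinely needs the interior maximum principle for $\Phi_3$, rather than a homotopy or degree argument, to rule out the enclosed vertical tube.
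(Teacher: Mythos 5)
Your treatment of the two inclusions and the reduction to ruling out a bounded complementary component $W$ is fine and matches the paper's setup, and you are right to discard the pure winding-number argument as insufficient. The gap is in the step that is supposed to replace it. First, the Rado-type analysis you invoke does not apply: Rado's lemma, as used in the paper's Lemma \ref{lem:plateau_solution} following \cite{DHS}, concerns a harmonic function on the \emph{disc} whose boundary trace changes sign exactly twice, whereas $\Phi_3$ lives on the annulus $\openannulus$ and changes sign twice on \emph{each} of the two boundary circles. On an annulus no such lemma yields $\nabla\Phi_3\neq0$, and the nodal set of $\Phi_3$ may a priori contain a homotopically nontrivial closed curve (compare $\log|w|-c$), a configuration that the maximum principle alone does not exclude. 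Second, and more importantly, even if you knew that $\{\Phi_3=0\}$ consisted of two embedded arcs joining $C_1$ to $C_2$, this only controls the intersection of $\immclosedann$ with the \emph{horizontal} plane, while the existence of the hole $W$ is a statement about \emph{vertical} lines missing the surface; these are independent pieces of information. In particular, knowing that $S\cap\{x_3\ge 0\}$ and $S\cap\{x_3\le 0\}$ are disc-type does not prevent their projections from failing to be simply connected (a disc can project onto an annulus), so your concluding claim that "this forces every component of $\overline\Om\setminus\pi(\immclosedann)$ to meet $\partial^0\Om$" is precisely the assertion to be proved and is left unproved.

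What the paper actually does at this point genuinely uses vertical geometry through the hole: it fixes cylindrical coordinates about the axis $\pi^{-1}(P)$, $P\in W$, chooses half-planes $\Pi_{\theta_1}$, $\Pi_{\theta_2}$ meeting $\partial_1^0\Om$ and $\partial_2^0\Om$, shows via convexity and the clockwise ordering of $p_1,q_1,p_2,q_2$ that the antipodal half-planes $\Pi_{\theta_1+\pi}$, $\Pi_{\theta_2+\pi}$ cannot simultaneously meet $\partial_1^D\Om$ and $\partial_2^D\Om$ (this is where $n=2$ enters crucially), and then, in each of three resulting cases, applies the maximum principle to the harmonic functions $d_i\circ\Phi$, where $d_i$ are signed distances from these \emph{vertical} planes. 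The plane sections $\Phi^{-1}(\immclosedann\cap\Pi_\theta)$ are then closed loops in $\openannulus$; none can be null-homotopic (otherwise $d_i\circ\Phi$ would vanish identically on the enclosed disc, contradicting analyticity of $\Phi$), hence each winds once around the annulus, and comparing two such loops traps a region (an annular one, or one bounded by a loop and $C_2$) on which $d_i\circ\Phi$ has a sign contradiction. If you want to salvage your outline, the harmonic functions to analyze are these vertical-plane distances through $P$, not the height $\Phi_3$; your capping construction and Jordan--Brouwer bookkeeping, while correct, play no role in closing the argument.
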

\begin{proof}
	We recall that $\Phi:\overline\Sigma_{\rm ann}\rightarrow \R^3$ is an embedding. The fact that  $\pi(\immclosedann)$ is a subset of $\overline\Omega$ and contains $\partial_1^D\Om\cup\partial_2^D\Om$ follows from the fact that the interior of $\immclosedann$ is contained in the convex hull of $\Gamma$.
	So it remains to show that $\pi(\immclosedann)$ is simply connected.\\
	Suppose by contradiction that  $\pi(\immclosedann)$ is not simply connected.
Let $H$  be a hole of it, namely a region in $\Om$ surrounded by a loop contained in $\pi(\immclosedann)$ and such that $H\cap 
\pi(\immclosedann)=\emptyset$; choose a point $P\in H$.  
	We will search for a contradiction by exploiting that  $\openannulus$ is an annulus and using that the map $\Phi$ is analytic and harmonic.
	
	Let $\theta$ be the angular coordinate of a cylindrical coordinate system $(\rho,\theta,z)$ in $\R^3$ centred at $P$ and with  $z$-axis the vertical line $\pi^{-1}(P)$. For $\theta \in [0,2\pi)$ we consider the half-plane orthogonal to $\R^2\times\{0\}$ defined by $$\Pi_\theta:=\{(\rho,\theta,z)\colon\rho>0,z\in\R\}.$$ 
		\begin{figure}
		\begin{center}
			\def\svgwidth{0.40\textwidth}
			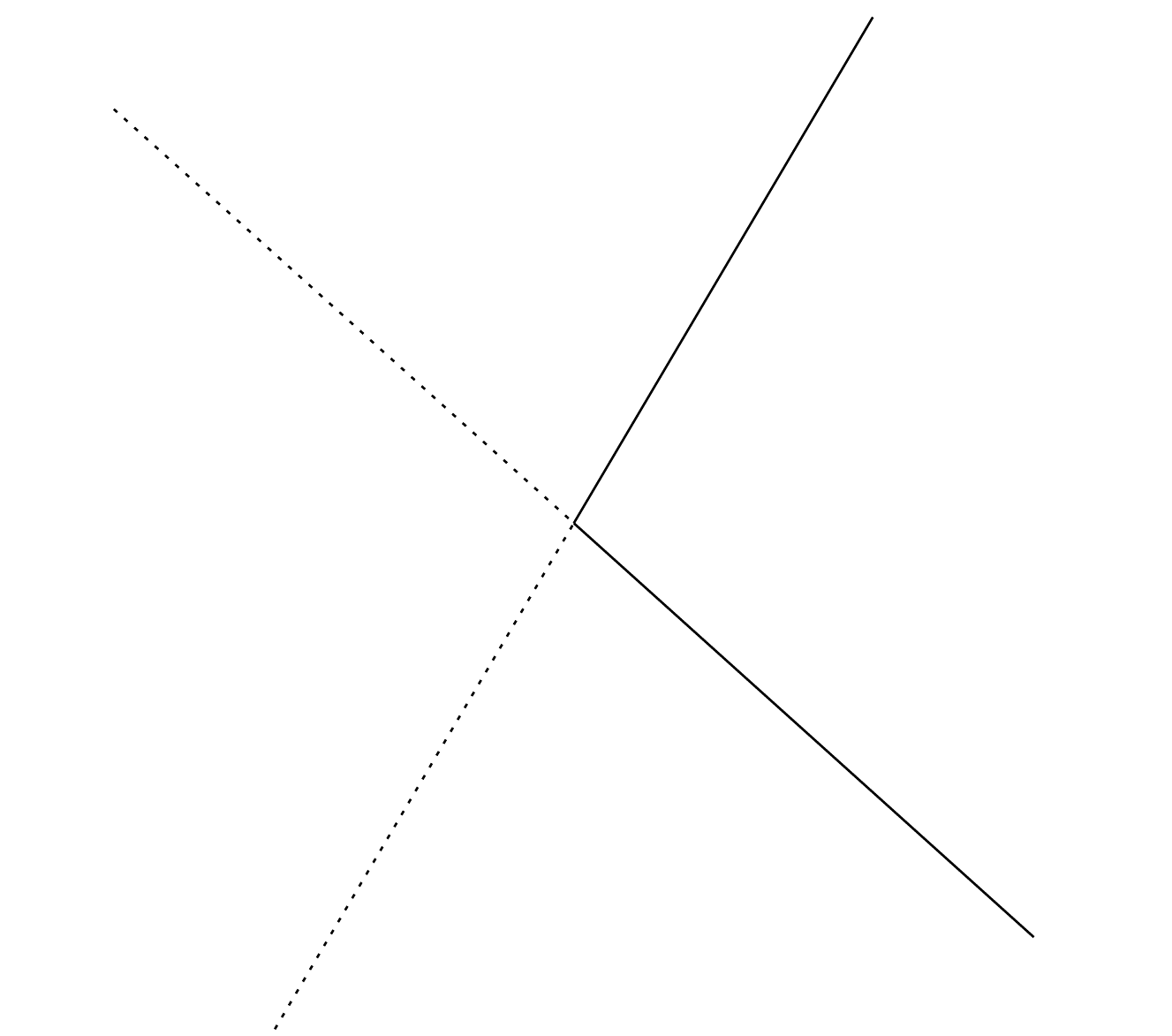
			\caption{The horizontal section of  
two planes $\Pi_{\theta_1}$ and $\Pi_{\theta_2}$ intersecting $\partial^0_1\Om$ and $\partial^0_2\Om$, respectively.
			}\label{fig-costruzione}
		\end{center}
	\end{figure}
 Now we fix two values $\theta_1$ and $\theta_2$ so that $\Pi_{\theta_1}$ and $\Pi_{\theta_2}$ intersect (the interior of) $\partial_1^0\Om$ and $\partial_2^0\Om$ respectively. The half-planes\footnote{The angles are considered $(\textrm{mod}\; 2\pi)$.} $\Pi_{\theta_1+\pi}$ and $\Pi_{\theta_2+\pi}$  might intersect $\partial^D\Om$ (see Figure \ref{fig-costruzione}). However, since the points $p_1$, $q_1$, $p_2$, $q_2$, are in clockwise order on $\partial \Om$, and $\Omega$ is convex, it is not difficult to conclude the following assertion:
	  \medskip
	
The half-planes $\Pi_{\theta_1+\pi}$ and $\Pi_{\theta_2+\pi}$ cannot intersect the two components $\partial_1^D\Om$ and $\partial_2^D\Om$ of $\partial^D\Om$ at the same time. 
	\medskip
	
In other words: If, for instance, $\Pi_{\theta_1+\pi}$ intersects $\partial_1^D\Om$, then $\Pi_{\theta_2+\pi}$ does not intersect $\partial_2^D\Om$. Let us prove the assertion in the form of the last statement, being the other 
cases similar. This is trivial, since, if $\Pi_{\theta_1}$ intersects $\partial_1^0\Om$ and $\Pi_{\theta_1+\pi}$ intersects $\partial_1^D\Om$ (as in Figure \ref{fig-costruzione}), we have that $\Pi_{\widehat\theta}$ intersects $\partial_1^D\Om\cup\partial_1^0\Om$ for all $\widehat\theta\in [\theta_1,\theta_1+\pi]$. As either $\theta_2$ or $\theta_2+\pi$ belongs to $[\theta_1,\theta_1+\pi]$, we have that $\Pi_{\theta_2}\cup\Pi_{\theta_2+\pi}$ intersects $\partial_1^D\Om\cup\partial_1^0\Om$. Since by hypothesis $\Pi_{\theta_2}$ intersects $\partial_2^0\Om$, it follows that $\Pi_{\theta_2+\pi}$ does not intersect $\partial_2^D\Om$, and the statement follows.

Moreover, since $\Pi_{\theta_1}$ intersects $\partial^0_1\Om$ and $\Pi_{\theta_2}$ intersects $\partial^0_2\Om$, it is straightforward that:
	\medskip
	
	If $\Pi_{\theta_1+\pi}$ intersects $\partial_1^0\Om$ then also  $\Pi_{\theta_2+\pi}$ intersects $\partial_1^0\Om$. 
	\medskip

	We are now ready to conclude the proof of the lemma. We have to discuss the  following cases:
	\begin{itemize}
		\item[(1)] $\Pi_{\theta_1+\pi}$ intersects $\partial^0\Om$;
		\item[(2)] $\Pi_{\theta_1+\pi}$ intersects $\partial_1^D\Om$;
		\item[(3)] $\Pi_{\theta_1+\pi}$ intersects $\partial_2^D\Om$.
	\end{itemize} 
	By hypothesis on $P$, for all $\theta\in[0,2\pi)$ the intersection between $\immclosedann$ and $\Pi_\theta$ consists of a 
family of smooth simple curves, either closed or with endpoints on $\Gamma$. Correspondingly, $\Phi^{-1}(\immclosedann\cap \Pi_\theta)$ is a family of closed curves in 
$\overline\Sigma_{\rm ann}$, possibly with endpoints on $C_1\cup C_2$.\\
	 In particular, since $\Pi_{\theta_1} \cap \partial^0_1\Om \neq \emptyset$, the set\footnote{Since $\Pi_{\theta_1} \cap \partial^D\Om = \emptyset$ these curves must be closed in $\openannulus$.} $\Phi^{-1}
(\immclosedann\cap \Pi_{\theta_1})$ is a family of closed curves in $\openannulus$.
	 
	In case (1) also $\Phi^{-1}(\immclosedann\cap \Pi_{\theta_1+\pi})$ consists of closed curves in $\openannulus$.
 Take two loops $\alpha$ and $\alpha'$ in $ \Phi^{-1}(
\immclosedann\cap \Pi_{\theta_1})$ and in $ \Phi^{-1}(
\immclosedann\cap \Pi_{\theta_1+\pi})$ respectively.
  Let $d_1$ be the signed distance function from the plane $\overline\Pi_{\theta_1}\cup\Pi_{\theta_1+\pi}$, positive on $\partial_2^D\Om$. Since $d_1\circ\Phi$ changes its sign when one crosses 
transversally $\alpha$ and $\alpha'$, we easily see that both $\alpha$ and $\alpha'$ cannot be homotopically trivial  in $\openannulus$ (by harmoniticy of $d_1\circ\Phi$, if for instance $\alpha $ is homotopically trivial in $\openannulus$, $d_1\circ\Phi=0$ in the region enclosed by $\alpha$, i.e. the image of $\Phi$ is locally flat, contradicting the analyticity of $\Phi$). Hence, since $\Phi$ is an embedding, they run exactly one time around $C_1$; as a consequence, they must be homotopically equivalent to each other in $\openannulus$. On the other hand, they do not intersect each other 
($\Phi$ is an embedding), so they bound an annulus-type region in $\openannulus$, and by harmonicity $d_1\circ\Phi$ is constantly null in this region. 
This would imply again that the image by $\Phi$ of this annulus is contained in $\overline \Pi_{\theta_1}\cup\Pi_{\theta_1+\pi}$, a contradiction.

In case (2), by recalling our assertion, we 
deduce that $\Pi_{\theta_2+\pi}$ might intersect either $\partial^0\Om$ or $\partial^D_1\Om$.
Further  we can exclude that $\Pi_{\theta_2+\pi}$ intersects $\partial^0\Om$ (otherwise, we repeat the argument for case (1) switching the role of $\theta_1$ and $\theta_2$). 
Therefore  the only remaining possibility is that $\Pi_{\theta_2+\pi}$ intersects $\partial_1^D\Om$ (see Figure \ref{fig-costruzione}).
Let $d_2$ be the signed distance function from $\Pi_{\theta_2}\cup\Pi_{\theta_2+\pi}$ positive on $\partial_2^D\Om$. In particular, $d_i\circ\Phi$, $i=1,2$, is positive on the  circle $C_2$ of 
$\overline\Sigma_{\rm ann}$. By hypothesis on $d_i$, $i=1,2$, we see that $d_1$ is positive on $\Pi_{\theta_2}$, and $d_2$ is positive on $\Pi_{\theta_1}$. 

As in case (1), let $\alpha\in \Phi^{-1}(\immclosedann\cap \Pi_{\theta_1})$ and
$\beta\in \Phi^{-1}(\immclosedann\cap \Pi_{\theta_2})$ 
 be two loops. We know that $\alpha$ and $\beta$ are closed in $\openannulus$.
   Again, we conclude that $\alpha$ and $\beta$ are homotopically equivalent in $\openannulus$, and both run one time around $C_2$. Assume without loss of generality that $\alpha$ encloses $\beta$, which in turn encloses $C_2$. Since $d_2\circ\Phi$ is positive on both $\alpha$ and $C_2$, $d_2\circ\Phi$ must be positive in the region enclosed between them, contradicting the fact that it 
vanishes on $\beta$.

If instead we are in case (3) we can argue analogously to case (2) and get a contradiction. In all cases (1), (2), and (3), we reach a contradiction which derives by assuming that $\pi(\immclosedann)$ is not simply connected. The proof is achieved.
\end{proof}

We next  proceed to characterize the geometry of $\Om\cap\partial 
\pi(\immclosedann)$.

\begin{lemma}\label{lemma_step4}
	Suppose  $m_2(\Gamma)<m_1(\Gamma_1)+m_1(\Gamma_2)$ and let
	$\Phi\in\mathcal{P}_2(\Gamma)$ be a $\mathcal{MY}$ solution to \eqref{catenoid_plateau}.
	Then $\Om\cap\partial \pi(\immclosedann)$
	consists of two disjoint Lipschitz
embedded curves $\beta_1$ and $\beta_2$ joining  $q_1$ to $p_2$, and $q_2$ to $p_1$, respectively. 
Moreover, the closed regions $E_i$ 
enclosed between $\partial^0_1\Om$ and $\beta_i$
are convex for $i=1,2$.
	
\end{lemma}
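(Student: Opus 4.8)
The plan is to read off $\Om\cap\partial\pi(\immclosedann)$ from the geometry of the conformal harmonic embedding $\Phi$, exploiting the structure already established in Lemma \ref{lemma_step1}. By \cite[Theorem 3, pag.\ 343]{DHS} the relative interior of $\immclosedann$ lies strictly inside the convex hull of $\Gamma$, hence strictly inside $\Om\times\R$; together with Lemma \ref{lemma_step1} this gives that $\pi(\immclosedann)$ is a simply connected subset of $\overline\Om$ whose boundary splits into arcs of $\partial\Om$ (containing $\partial_1^D\Om\cup\partial_2^D\Om$) and a family of curves contained in $\Om$. First I would locate the endpoints of these interior curves. Since $\varphi>0$ on $\partial^D\Om$ and $\varphi$ vanishes only at $p_1,q_1,p_2,q_2$, the only points of $\Gamma$ lying on the plane $\R^2\times\{0\}$ are $(p_i,0)$ and $(q_i,0)$; as the interior of the surface stays strictly inside $\Om\times\R$, any interior boundary curve of $\pi(\immclosedann)$ can only issue from (the projections of) these four points.

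Next I would show that there are exactly two interior curves, paired as $q_1$--$p_2$ and $q_2$--$p_1$. Here I would transplant the half-plane argument of Lemma \ref{lemma_step1}: for a vertical plane $\Pi_\theta\cup\Pi_{\theta+\pi}$, the signed distance $d$ from it composed with $\Phi$ is harmonic on $\openannulus$, so $\Phi^{-1}(\immclosedann\cap(\Pi_\theta\cup\Pi_{\theta+\pi}))$ is a family of analytic loops and arcs whose endpoints on $C_1\cup C_2$ are dictated by how the plane meets $\Gamma$. Using that $\Phi$ is an embedding (so these arcs neither cross, nor bound a homotopically trivial region, by harmonicity and the maximum principle, exactly as in Lemma \ref{lemma_step1}) and that $\nullb\Om$ has exactly the two components $\nullb_1\Om=\arc{q_1p_2}$ and $\nullb_2\Om=\arc{q_2p_1}$, one rules out any pairing other than $q_1$--$p_2$ and $q_2$--$p_1$, and any superfluous curve. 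The two resulting curves $\beta_1,\beta_2$ are embedded and mutually disjoint again by injectivity of $\Phi$.

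For the convexity of the closed regions $E_1,E_2$, I would argue exactly as in Step~4 of Lemma \ref{lem:plateau_solution}. Assuming $E_i$ not convex, I would find $p',q'\in\beta_i$ such that the open region $U'$ cut off by $\beta_i$ and the chord $\overline{p'q'}$ is nonempty and lies on the same side of the line through $\overline{p'q'}$ as $\nullb_i\Om$; letting $d_W$ be an affine function vanishing on the vertical plane through $\overline{p'q'}$ and positive on the side containing $U'$, harmonicity of $d_W\circ\Phi$ together with the convex-hull containment of $\immclosedann$ would force, via the maximum principle, a curve in $\{d_W\circ\Phi>0\}$ joining a point over $U'$ to $\Gamma$, whose horizontal projection cannot remain in the half-plane $\{d_W>0\}$, a contradiction. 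Finally, each $\beta_i$ is Lipschitz, being the connected component lying in $\Om$ of the boundary of the convex set $E_i$.

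The main obstacle will be the counting step, namely establishing that the interior boundary consists of precisely two curves with the correct endpoint pairing. This is exactly where the hypothesis $n=2$ is essential: it is the two-component structure of $\nullb\Om$ that, through the half-plane/nodal-line analysis inherited from Lemma \ref{lemma_step1}, simultaneously pins down the number of curves and their endpoints. Once this is in place, the convexity of $E_i$ and the Lipschitz regularity of $\beta_i$ follow by comparatively routine adaptations of the disc-type arguments of Lemma \ref{lem:plateau_solution}.
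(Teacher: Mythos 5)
Your proposal splits into a structural half (the two curves $\beta_1,\beta_2$ and their endpoints) and a convexity half, and the gap is in the first one. Your endpoint-location step is a non sequitur: $\Om\cap\partial\pi(\immclosedann)$ is the \emph{silhouette} of the surface, made of projections of points where the tangent plane is vertical, and the observation that $\Gamma$ meets the plane $\{x_3=0\}$ only above $p_1,q_1,p_2,q_2$ says nothing about where such silhouette curves can terminate on $\partial\Om$; the relevant exclusion comes instead from the strict convex-hull containment of the interior of $\immclosedann$, which forces $\pi(\immclosedann)\cap\partial\Om=\overline{\partial^D\Om}$ and in particular $\pi(\immclosedann)\cap\partial^0\Om=\emptyset$. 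More seriously, the step you yourself flag as ``the main obstacle'' --- that there are exactly two interior curves, paired $q_1$--$p_2$ and $q_2$--$p_1$ --- is never carried out: ``transplanting the half-plane argument of Lemma \ref{lemma_step1}'' is not a counting device. That argument excludes holes (a nodal loop of $d\circ\Phi$ forces a flat piece of surface, contradicting analyticity); it gives no control on the number of components of $\Om\cap\partial\pi(\immclosedann)$ or on which of the four points they join. The idea you are missing is that after Lemma \ref{lemma_step1} \emph{no further surface theory is needed}: since $\pi(\immclosedann)$ is closed, connected, simply connected, contains $\partial^D\Om$ and misses $\partial^0\Om$, plane topology gives everything. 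Any component of $\overline\Om\setminus\pi(\immclosedann)$ meeting $\partial\Om$ must contain one of the two connected arcs $\partial_1^0\Om$, $\partial_2^0\Om$; these arcs lie in \emph{different} components, because a path in the complement joining $\arc{q_1p_2}$ to $\arc{q_2p_1}$ would separate $\partial_1^D\Om$ from $\partial_2^D\Om$ in $\overline\Om$, contradicting connectedness of $\pi(\immclosedann)$ (this elementary separation fact, relying on the cyclic order $p_1,q_1,p_2,q_2$, is the true place where $n=2$ enters, not a nodal-line count); and a component disjoint from $\partial\Om$ would be a hole, contradicting simple connectedness. The closures $E_1,E_2$ of the two components are the regions of the statement, and $\beta_i$ is by definition the inner part of $\partial E_i$, joining the endpoints $q_i,p_{i+1}$ of $\partial_i^0\Om$. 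This is exactly how the paper proceeds.

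The convexity half of your plan is essentially the paper's argument and would stand once the structure is in place. The paper, instead of a chord of $\beta_i$, takes a line $l$ through three collinear points $A_1,A_2,A_3$ with $A_1,A_3\in{\rm int}(E_1)$ and $A_2\notin E_1$, considers the component $U$ of $\pi(\immclosedann)\setminus l$ whose boundary contains $A_2$ and whose closure avoids $\partial^D\Om$, lifts a point $P\in\partial U\setminus l$ to $Q\in\immclosedann$, and applies the maximum principle to the harmonic function $d_l\circ\Phi$: either the positivity component of $\Phi^{-1}(Q)$ avoids $\partial\openannulus$ (forcing $d_l\circ\Phi\equiv0$ there, absurd), or it yields an arc whose projection is trapped in $\overline U$ yet must end on $\partial^D\Om$, absurd again. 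Your chord-plus-affine-function $d_W$ version is the same mechanism (modulo checking that a non-convex $E_i$ really produces a cut-off region on the correct side, which the three-point formulation avoids), and the Lipschitz regularity of $\beta_i$ as the boundary of a convex set is as in the paper.
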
 

\begin{proof}
	By Lemma \ref{lemma_step1}, $\pi(\immclosedann)$ is 
simply connected in $\overline\Omega$, and 
contains $\partial^D\Om$. Therefore $\overline\Omega\setminus  \pi(\immclosedann)$ consists of two simply connected components, one containing $\partial_1^0\Omega$ and the other containing $\partial_2^0\Om$. 
	Let $E_1$ and $E_2$ be the closures of these two components\footnote{The sets $E_1$ and $E_2$ have nonempty interior, since $\Phi(\openannulus)$ is contained in the interior of the convex hull of $\Phi(\partial\openannulus)$, hence  contained in the cylinder $\Omega\times\R$.},
	so that in particular the boundary of $E_i$ is a 
simple Jordan curve 
of the form $ \beta_i\cup\partial_i^0\Om$ for some 
embedded curve $\beta_i\subset\overline\Om$ joining the endpoints of $\partial_i^0\Om$.
	We will prove that $E_i$ is convex for $i=1,2$.
	This will also imply that $\beta_i$ are Lipschitz.
	
	Take $i=1$, and assume by contradiction that $E_1$ is not convex. 
Thus we can find a line $l$ in $\R^2$ and three different points $A_1$, $A_2$, $A_3$ on $l$, with $A_2\in\overline{A_1A_3}$, so that $A_2$ is contained in $\Om\setminus E_1$, and $A_1$ and $A_3$ belong to the interior of $E_1$.

	Consider the region $\pi(\immclosedann)\setminus l$, which consists in several (open) connected components. There is one of 
these  connected components, say $U$,
 which does not intersect $\partial^D\Om$ and 
whose boundary contains $A_2$. In addition, 
$\overline U \cap \partial^D\Om = \emptyset$. 
	Indeed, $\partial U$ is the union of a segment $L$ (containing $A_2$) and a curve $\gamma$ (contained in $\beta_1\subseteq\partial (\pi(\immclosedann)$) joining its endpoints.
	Hence, $\overline U\setminus U=\gamma\cup L$, and $L$ cannot intersect $\partial^D\Om$ by the hypothesis on $A_1$, $A_2$, and $A_3$.
	
	Let $\Pi_l\subset\R^3$ be the 
plane containing $l$ and 
orthogonal to the plane containing $\Om$; 
As usual, $\Pi_l\cap 
\immclosedann$ is a family of closed curves, possibly with endpoints on $\Gamma\cap \Pi_l$.
	Now, pick a point $P$ on $\partial U\setminus L$, and let $Q$ be a point on $\immclosedann$ so that $\pi(Q)=P$.
Let $d_l:\R^3\rightarrow \R$ be the signed distance from $\Pi_l$, 	
with $d_l(Q)=d_l(P)>0$. We claim that, if $D$ is the connected component of $\{w\in
\overline\Sigma_{\rm ann}:d_l\circ\Phi(w)>0\}$ containing  the point $\Phi^{-1}(Q)$, then $D\cap \partial\openannulus=\emptyset$.
	This would contradict the harmonicity of $d_l\circ\Phi$, since $d_l\circ\Phi$ would be zero 
on $D$, but $d_l(Q)>0$. 
	
	Assume by contradiction that the converse holds. Then there is  an arc $\alpha:[0,1]\rightarrow D\cup\partial\openannulus$  joining $\Phi^{-1}(Q)$ to $\partial\openannulus$ . 
The image of the 
map $\pi\circ\Phi\circ \alpha$ is an arc in $\overline \Omega$ 
joining $P$ to $\partial^D\Om$ and such that $d_l\geq0$ on it. 
Clearly this arc is a subset of $\pi(\immclosedann)$. Since $\pi\circ\Phi\circ \alpha(0)=P$, it follows that the image of $\pi\circ\Phi\circ \alpha$ is contained in $\overline U$. Now $\overline U$ does not intersect $\partial^D\Om$, contradicting that $\pi\circ\Phi\circ \alpha(1)\in \partial^D\Om$. This concludes the proof.
\end{proof}

In the next step we show that there exists a set $E\subset\R^3$ of finite perimeter such that 
$$\partial E=\partial^*E=\Phi(\openannulus)\cup\overline\Delta_1\cup\overline\Delta_2,$$ 
where
\begin{equation}\label{delta}
\Delta_i:=\{P=(P',P_3)\in \R^3:P'=(P_1,P_2)\in \partial_i^D\Om, \;P_3\in (-\varphi(P'),\varphi(P'))\},
\qquad 
i=1,2.
\end{equation}
 In particular $\overline\Delta_1\cup\overline\Delta_2\subset(\partial\Om)\times\R$ and
$(\Om\times\R)\cap\partial E=\Phi(\openannulus)$.\\

We first fix some notation. We let $\jump{E}\in\mathcal{D}_3(\R^3)$ be the 3-current given by integration over  $E$ with $E\subset\R^3$ being a set of finite perimeter. 
To every $\mathcal{MY}$ solution 
$\Phi\in\mathcal{P}_2(\Gamma)$  
to \eqref{catenoid_plateau} we associate the push-forward 2-current  $\Phi_\sharp\jump{\openannulus}\in\mathcal D_2(\R^3)$ given by integration over the (suitably oriented) surface $\Phi(\openannulus)$ 
\cite[Section 7.4.2]{Krantz-Parks}.
Finally if $\mathcal{T}\in\mathcal{D}_k(U)$ with $U\subset\R^3$ open and $k=2,3$, we denote by $|\mathcal{T}|$ the mass of $\mathcal{T}$ in $U$ [see \cite[p. 358]{Federer}].

\begin{lemma}[{\bf Region enclosed by $\Phi(\openannulus)$}]\label{lemma_step2}
	Suppose $m_2(\Gamma)<m_1(\Gamma_1)+m_1(\Gamma_2)$ and let $\Phi\in\mathcal{P}_2(\Gamma)$ be a $\mathcal{MY}$ solution to \eqref{catenoid_plateau}.
	Then there is a closed
 finite perimeter set $E\subset\overline\Om\times\R$ such that $\partial E=\Phi(\openannulus)$ in $\Om\times\R$.
\end{lemma}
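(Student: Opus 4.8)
The plan is to realize the closed surface obtained by capping $\immclosedann$ with the two vertical curtains $\overline\Delta_1,\overline\Delta_2$ as the boundary of a bounded region, and to read off $E$ as the region it encloses. First I would record the geometric picture. Since $\Phi$ is an $\mathcal{MY}$ solution it is an embedding, and by the convex hull property the relative interior $\Phi(\openannulus)$ is strictly contained in the interior of the convex hull of $\Gamma$; as $\Gamma\subset\overline\Om\times\R$ and $\overline\Om\times\R$ is convex, this forces $\Phi(\openannulus)\subset\Om\times\R$, so $\immclosedann$ meets $(\partial\Om)\times\R$ only along $\Gamma=\Gamma_1\cup\Gamma_2$. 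On the other hand each curtain $\overline\Delta_i$ defined in \eqref{delta} lies in $(\partial\Om)\times\R$, the two curtains are disjoint (because $\overline{\partial^D_1\Om}\cap\overline{\partial^D_2\Om}=\emptyset$), and $\partial\overline\Delta_i=\Gamma_i$. Hence
$$\mathcal S:=\immclosedann\cup\overline\Delta_1\cup\overline\Delta_2$$
is a compact, embedded, Lipschitz surface without boundary: it is the cylinder $\immclosedann$ with its two boundary circles $\Gamma_1,\Gamma_2$ capped by the topological discs $\overline\Delta_1,\overline\Delta_2$, hence homeomorphic to $S^2$.

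The core step is a current-theoretic filling. Orienting $\immclosedann$ by $\Phi_\sharp\jump{\openannulus}$ and the curtains by $\jump{\Delta_i}$, the crucial orientation assumption on $\Gamma_j$ (inherited from the graph of $\varphi$ on $\partial^D_j\Om$) is exactly what makes the boundaries cancel: $\partial\Phi_\sharp\jump{\openannulus}=\Phi_\sharp\partial\jump{\openannulus}$ integrates over $\Gamma_1+\Gamma_2$ with one orientation, while $\partial(\jump{\Delta_1}+\jump{\Delta_2})$ integrates over the same curves with the opposite orientation. Therefore
$$T:=\Phi_\sharp\jump{\openannulus}+\jump{\Delta_1}+\jump{\Delta_2}\in\mathcal D_2(\R^3)$$
is a compactly supported integral $2$-cycle, $\partial T=0$. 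Since every compactly supported integral cycle in $\R^3$ bounds (equivalently $H_2(\R^3)=0$), there is a compactly supported integral $3$-current $R$ with $\partial R=T$; such $R$ is unique, because two choices differ by a boundaryless compactly supported integral $3$-current, which the constancy theorem forces to vanish. Applying the constancy theorem on each connected component of $\R^3\setminus\mathcal S$ (where $\partial R=0$) shows that $R$ has constant integer multiplicity there, equal to $0$ on the unbounded component; since $\mathcal S$ is embedded the only other value is $1$, so $R=\jump{E}$ with $E\subset\R^3$ the bounded region enclosed by $\mathcal S$. In particular $E$ has finite perimeter with $\partial^*E=\partial E=\mathcal S$ and $\H^2(\partial E)=\H^2(\mathcal S)<+\infty$. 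Equivalently, one may invoke the Jordan--Brouwer separation theorem directly, as $\mathcal S\cong S^2$ is embedded and Lipschitz.

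Finally I would localize the conclusion. Because $\mathcal S\subset\overline\Om\times\R$ and $E$ is the bounded side, $E\subset\overline\Om\times\R$. The two curtains sit in $(\partial\Om)\times\R$, which is disjoint from the open cylinder $\Om\times\R$, so
$$(\Om\times\R)\cap\partial E=(\Om\times\R)\cap\mathcal S=\Phi(\openannulus),$$
which is the asserted identity $\partial E=\Phi(\openannulus)$ in $\Om\times\R$. I expect the main obstacle to be the regularity bookkeeping at the seam $\Gamma$: verifying that gluing the smooth embedded minimal cylinder to the Lipschitz curtains along $\Gamma$ produces a genuinely embedded Lipschitz surface, so that the enclosed region is a finite perimeter set whose reduced boundary is exactly $\mathcal S$. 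The current formulation sidesteps most of this, reducing it to checking that $T$ is integer rectifiable with finite mass and that the filling $R$ has multiplicity one, which is precisely where the embeddedness of $\Phi$ is used.
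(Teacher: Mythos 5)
Your proposal is correct, but it takes a genuinely different route from the paper's. The paper never caps the surface: it works entirely inside the open cylinder $\Om\times\R$, where $\Phi_\sharp\jump{\openannulus}$ is already boundaryless, fills it by an integral $3$-current $\mathcal E$ with compact support in $\overline\Om\times\R$, and then faces the real difficulty, namely identifying $\mathcal E$ as $\pm\jump{E}$ for a \emph{single} set of multiplicity one. That identification is done measure-theoretically: Federer's decomposition theorem gives $\mathcal E=\sum_k\sigma_k\jump{E_k}$ with additivity of masses, one shows $\partial^*E_k\subseteq\Phi(\openannulus)$ for every $k$, and a local argument (in a cube where the minimal surface is a smooth graph, applying the constancy lemma to the subgraph) forces all the $E_k$ to coincide locally and the multiplicities to add up to one, so the sum has exactly one term. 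Your capping construction replaces this entire analysis with global topology: gluing the curtains $\overline\Delta_1,\overline\Delta_2$ to $\immclosedann$ produces an embedded topological sphere $\mathcal S$, the associated $2$-current $T$ is a compactly supported integral cycle in all of $\R^3$, and the constancy theorem applied on the two components of $\R^3\setminus\mathcal S$ provided by Jordan--Brouwer yields at once that the filling equals $m\jump{E_{\rm in}}$ with $m=0$ outside and $|m|=1$ inside, the latter because $T$ has unit multiplicity on $\Phi(\openannulus)$ ($\Phi$ being an embedding). What your route buys is brevity and conceptual transparency---the separation theorem does the work of the paper's decomposition-plus-local-constancy argument, and it hands you for free the global identity $(\overline\Om\times\R)\cap\partial E=\immclosedann\cup\overline\Delta_1\cup\overline\Delta_2$ that the paper only records afterwards in Remark \ref{rem:steiner-sym}. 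The price is two verifications the paper's intrinsic argument does not need: that the curtains meet $\immclosedann$ only along $\Gamma$ (which you correctly reduce to the strict convex hull property, also used elsewhere in the paper), and Jordan--Brouwer separation with the common-boundary statement for a merely Lipschitz sphere (true without any tameness hypothesis, but it must be invoked in that generality). Two small imprecisions, neither fatal: the cancellation $\partial T=0$ does not actually depend on the paper's orientation convention for $\Gamma_j$, since you are free to flip the orientations of $\jump{\Delta_1}$ and $\jump{\Delta_2}$ independently to cancel $\Phi_\sharp\partial\jump{\openannulus}$; and the phrase ``the only other value is $1$'' needs the justification spelled out above (unit multiplicity of $\partial R$ on $\Phi(\openannulus)$ forces $|m|=1$), which your closing sentence does acknowledge as the point where embeddedness enters.
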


\begin{proof}
As $\Phi_\sharp\jump{\openannulus}$ is a boundaryless integral $2$-current in $\Omega\times\R$, there exists (see, e.g., \cite[Theorem 7.9.1]{Krantz-Parks}) an integral $3$-current $\mathcal E\in \mathcal D_3(\Omega\times\R)$ with $\partial\mathcal E=\Phi_\sharp\jump{\openannulus}$, and we might also assume that the support of $\mathcal E$ is compact in $\overline\Omega\times\R$. We claim that, up to switching the orientation of $\Phi_\sharp\jump{\openannulus}$, $\mathcal E$ has multiplicity in $\{0,1\}$, and hence is the integration $\jump{E}$ over a bounded measurable set $E$. 
This is a finite perimeter set if we show that the integration over $(\Om\times\R)\cap\partial^*E$ coincides with $\Phi_\sharp\jump{\openannulus}$.

By Federer decomposition theorem \cite[Section 4.2.25, p. 420]{Federer} (see also \cite[Section 4.5.9]{Federer} and \cite[Theorem 7.5.5]{Krantz-Parks}) there is a sequence
$(E_k)_{k\in \mathbb N}$ of finite perimeter sets such that
\begin{equation}\label{6.9a}
	\mathcal E=\sum_{k=1}^{+\infty}\sigma_k\jump{E_k},\qquad \sigma_k\in \{-1,1\},
\end{equation}
 moreover 
\begin{align}\label{6.9}
|\mathcal E|=\sum_{k=1}^{+\infty}|E_k|\quad\text{and} \quad|\partial\mathcal E|=\mathcal H^2(\Phi(\openannulus))=\sum_{k=1}^{+\infty}\mathcal H^2(\partial^*E_k).
\end{align}
We start by observing that
\begin{equation}\label{inclusione}
\partial^*E_k\subseteq \Phi(\openannulus)\qquad \forall k\in \mathbb N.	
\end{equation}
Indeed, fixing $k\in \mathbb N$, by the second equation in \eqref{6.9}, we have that $\partial^*E_k$ is contained in the support of $\partial\mathcal E$, which in turn is $\Phi(\openannulus)$.
As a consequence, if $P=(P_1,P_2,P_3)\in(\Om\times\R)\cap \overline{\partial^*E_k}$, then $P\in \Phi(\openannulus)$. Around $P$ we can find suitable coordinates and a cube $U=(P_1-\eps,P_1+\eps)\times (P_2-\eps,P_2+\eps)\times(P_3-\eps,P_3+\eps)$ such that $\Phi(\openannulus)\cap U$ is the graph $\mathcal G_h$ of a smooth function $h:(P_1-\eps,P_1+\eps)\times (P_2-\eps,P_2+\eps)\rightarrow (P_3-\eps,P_3+\eps)$. Moreover, $\Phi_\sharp\jump{\openannulus}=\jump{\mathcal G_h}$ in $U$. We conclude\footnote{This is a consequence of the constancy lemma and the fact that $\partial\mathcal E-\Phi_\sharp\jump{\openannulus}=0$ in $U$.} that $\mathcal E\res U=\jump{SG_h\cap U}+m\jump{U}$, with $SG_h$ the subgraph of $h$, and $m\in \mathbb Z$.\\
We claim that 
$$\forall k\quad\text{either}\quad E_k\cap U=SG_h\cap U \quad\text{or} \quad E_k\cap U=U\setminus SG_h.$$
Indeed, assume for instance that $|E_k\cap SG_h\cap U|>0$ and $|(SG_h\setminus E_k)\cap U|>0$; by the constancy lemma it follows that $\partial\jump{E_k}$ is nonzero in the simply connected open set $SG_h$, contradicting \eqref{inclusione}. 

As a consequence of the preceding claim, we have that $U\cap \partial^*E_k=U\cap \Phi(\openannulus)$. Since this argument holds for any choice of $P\in (\Omega\times\R)\cap \overline{\partial^*E_k}$, we have proved that $(\Omega\times\R)\cap \overline{\partial^*E_k}$ is relatively open 
(and relatively closed at the same time) in $\Phi(\openannulus)$, which in turn being a connected open set, implies 
$$\immclosedann=\overline{\partial^*E_k}\qquad \forall k\in \mathbb N.$$

Denote by $\mathcal I^{\pm}:=\{k\in \mathbb N:\sigma_k=\pm1\}$, where $\sigma_k$ appears in \eqref{6.9a}.
Going back to the local behaviour around $P\in \Phi(\openannulus)$, if $U$ is a neighbourhood as above, we see that for all $k\in \mathcal I^+$  either $E_k\cap U=SG_h$ or $E_k=U\setminus SG_h$ (namely, all the $E_k$'s coincide in $U$), since otherwise, there will be cancellations in the series $\sum_{k\in \mathcal I^+}\partial \jump{E_k}$, in contradiction with the second formula in \eqref{6.9}. Assume without loss of generality that for all $k\in \mathcal I^+$ we have $E_k\cap U=SG_h$; thus, arguing as before, for all $k\in \mathcal I^-$ we must have $E_k\cap U=U\setminus SG_h$.
 
 We obtain that $\mathcal E\res U=m\jump{SG_h}-n\jump{U\setminus SG_h}$ 
for some nonnegative integers $n,m$. Since  $(\partial \mathcal E)\res U=(m+n)\jump{\mathcal G_h}$ 
and also $(\partial\mathcal E)\res U=\Phi_\sharp\jump{\openannulus}=\jump{\mathcal G_h}$ in $U$, we conclude $m+n=1$. Hence either $m=1$ 
and $n=0$, or $m=0$ and $n=1$. On the other hand, we know that 
 $\mathcal E\res U=\sum_{k\in \mathcal I^+}\jump{E_k\cap U}-\sum_{k\in \mathcal I^-}\jump{E_k\cap U}$, from which it follows that $\mathcal I^+$ has cardinality $m$ and $\mathcal I^-$ has cardinality $n$. 
 Namely, one of the sets $\mathcal I^\pm$ is empty, and the other contains only one index.
 
We conclude that the sum in \eqref{6.9a} involves only one index, that is, there is only one compact set $E$ in $\overline\Omega\times\R$ such that (up to switching the orientation) $$\mathcal E=\jump{E}.$$
 This concludes the proof.
\end{proof}

\begin{remark}\label{rem:steiner-sym}
From the fact that $ (\overline \Omega\times\R)\cap \partial E=\immclosedann\cup\overline\Delta_1\cup\overline\Delta_2$,  we 
easily see that $\pi( E)=\pi(\immclosedann)$ which, by Lemma \ref{lemma_step1}, is simply connected.
\end{remark}

We denote by ${\rm sym}_{\rm st}(E)$ the set (symmetric with respect to the horizontal plane $\R^2\times\{0\}$) obtained applying to $E$ the Steiner symmetrization  with respect to  $\R^2\times\{0\}$.  \\
Clearly ${\rm sym}_{\rm st}(E)\cap(\partial_i^D\Om\times\R)=\overline{\Delta_i}$ with $\Delta_i$ defined as in \eqref{delta}.
We define the surfaces  
\begin{equation}\label{surf}
	S:=	\partial({\rm sym}_{\rm st} (E))\setminus(\Delta_1\cup\Delta_2),\quad
		S^+:=	S\cap\{x_3\ge0\},\quad	S^-:=	S\cap\{x_3\le0\}.
\end{equation}
Since $P({\rm sym}_{\rm st}(E))\le P(E)$ (here $P(\cdot)$ is the perimeter \cite{AFP}) we have $\mathcal{H}^2(S)\le\mathcal{H}^2(
\immclosedann)$.

\begin{lemma}[{\bf Graphicality 
of $\partial({\rm sym}_{\rm st} (E))$} ]\label{lemma_step3} 
		Suppose $m_2(\Gamma)<m_1(\Gamma_1)+m_1(\Gamma_2)$ and let $\Phi\in\mathcal{P}_2(\Gamma)$ be a $\mathcal{MY}$ solution to \eqref{catenoid_plateau}.
Let $E$ be the finite perimeter set given by Lemma \ref{lemma_step2} and $S^\pm$ be as in \eqref{surf}.
Then there is $\widetilde \psi\in BV({\rm int}(\pi(E))\cap C^0(\pi(E))$ such that 
$
S^\pm=
	\mathcal {G}_{\pm\widetilde\psi}$.
In particular $S^\pm\cap(\R^2\times\{0\})=\overline{\Om\cap\partial(\pi(E))}$.
\end{lemma}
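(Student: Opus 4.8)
The plan is to read $\widetilde\psi$ off directly from the definition of Steiner symmetrization and then to promote it to a continuous function. Writing $E_{x'}:=\{x_3\in\R:(x',x_3)\in E\}$ for the vertical slice of the closed finite–perimeter set $E$ produced by Lemma \ref{lemma_step2}, the Steiner symmetral is by construction ${\rm sym}_{\rm st}(E)=\{(x',x_3):x'\in\pi(E),\ |x_3|<\widetilde\psi(x')\}$ up to $\mathcal{L}^3$–null sets, where $\widetilde\psi(x'):=\tfrac12\mathcal{H}^1(E_{x'})$. The standard theory of Steiner symmetrization of sets of finite perimeter (see \cite{AFP}) gives $\widetilde\psi\in BV({\rm int}(\pi(E)))$ together with $P({\rm sym}_{\rm st}(E))\le P(E)$, which is exactly the inequality already recorded before the statement. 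Since the reduced boundary of the subgraph of a $BV$ function is its generalised graph, this identification yields $S^\pm=\mathcal{G}_{\pm\widetilde\psi}$ at once. Hence the whole content of the lemma is the regularity of $\widetilde\psi$: its boundary behaviour and, above all, its continuity.

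First I would pin down the values of $\widetilde\psi$ on $\partial(\pi(E))$. Because $\overline\Delta_i\subset\partial E$ with $\Delta_i$ as in \eqref{delta}, the slice of $E$ over a point of $\partial_i^D\Om$ is exactly the interval $(-\varphi,\varphi)$, so $\widetilde\psi=\varphi$ on $\partial^D\Om$; as $\varphi$ is continuous, this is the right continuous datum there. On the other hand ${\rm int}(E)\ne\emptyset$ and $\pi({\rm int}(E))={\rm int}(\pi(E))$ force $\widetilde\psi>0$ on ${\rm int}(\pi(E))$, while on $\Om\cap\partial(\pi(E))=\beta_1\cup\beta_2$ (Lemma \ref{lemma_step4}) the slice degenerates and $\widetilde\psi=0$. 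Once continuity is in hand, the set $\{\widetilde\psi=0\}$ is precisely $\overline{\Om\cap\partial(\pi(E))}$, which is the ``in particular'' assertion $S^\pm\cap(\R^2\times\{0\})=\overline{\Om\cap\partial(\pi(E))}$.

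The main obstacle is the continuity of $\widetilde\psi$. I would prove it by controlling the slice length $v(x'):=\mathcal{H}^1(E_{x'})=2\widetilde\psi(x')$ from above and from below. Upper semicontinuity of $v$ follows from the closedness of $E$: if $x'_n\to x'_0$ then $\limsup_n\chi_{E_{x'_n}}\le\chi_{E_{x'_0}}$ pointwise, and reverse Fatou (all slices lying in a fixed bounded interval, since $E$ is compactly supported) gives $\limsup_n v(x'_n)\le v(x'_0)$. For lower semicontinuity I would instead work with the open set ${\rm int}(E)$: since $(x'_0,t)\in{\rm int}(E)$ implies $(x'_n,t)\in{\rm int}(E)$ for $n$ large, Fatou yields $\liminf_n v(x'_n)\ge\mathcal{H}^1({\rm int}(E)_{x'_0})$. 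Thus continuity reduces to the identity $\mathcal{H}^1(E_{x'})=\mathcal{H}^1({\rm int}(E)_{x'})$ for every interior $x'$, that is, to excluding that a slice carries a gap or a ``thin'' boundary portion.

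This is the delicate point, and here I would exploit that $\immclosedann=\partial E\cap(\Om\times\R)$ is a smooth, analytic, embedded minimal surface. Since such a surface cannot contain an open vertical portion, the projection of its vertical part is $\mathcal{H}^2$–negligible in the $x'$–plane, whence $\mathcal{H}^1(\partial E\cap\pi^{-1}(x'))=0$ and so $\mathcal{H}^1(E_{x'})=\mathcal{H}^1({\rm int}(E)_{x'})$ for a.e. $x'$; to upgrade this to every interior $x'$ I would show that each slice $E_{x'}$ is a single interval, by a cut–and–fill comparison: filling a gap produces a set bounded by a surface of no larger area spanning the same $\Gamma$, contradicting the minimality of $\immclosedann$ unless no gap is present. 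Combining the two semicontinuities then gives $\widetilde\psi\in C^0({\rm int}(\pi(E)))$, and continuity up to the boundary follows from $\varphi\in C^0(\partial^D\Om)$ together with the smoothness of the curves $\beta_i$ from Lemma \ref{lemma_step4}. I expect the interval/no–gap step to be the true crux, since it is precisely where the minimality of the Meeks--Yau solution, rather than mere finite perimeter of $E$, has to enter.
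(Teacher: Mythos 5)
Your reduction is correct as far as it goes: setting $v(x'):=\mathcal{H}^1(E_{x'})=2\widetilde\psi(x')$, upper semicontinuity of $v$ follows from compactness of $E$ and reverse Fatou, lower semicontinuity of $x'\mapsto\mathcal{H}^1\bigl(({\rm int}\,E)_{x'}\bigr)$ follows from openness, and continuity of $\widetilde\psi$ at an interior point $x'_0$ follows once one knows $\mathcal{H}^1\bigl((\partial E)_{x'_0}\bigr)=0$ --- but this is needed at \emph{every} interior point, not just almost every one. The genuine gap is in your upgrade from a.e.\ to everywhere. Sard's theorem does give the a.e.\ statement, but your proposed everywhere-statement (``each slice $E_{x'}$ is a single interval, by cut-and-fill against minimality'') does not hold up: (i) filling gaps in vertical slices is not Steiner symmetrization (which replaces each slice by a centered interval of the \emph{same} length) and carries no standard perimeter inequality; (ii) even granting such an inequality, Meeks--Yau minimality is minimality among \emph{parametric annulus-type} competitors in $\mathcal{P}_2(\Gamma)$, so to reach a contradiction you must convert the modified set's boundary into an admissible parametrization --- and in the paper that conversion machinery (Lemma \ref{lem_tec2}, Lemma \ref{lem:construction-of-parametrization}) is built \emph{after}, and on top of, the present lemma, so this route is circular; (iii) indeed, the interval property of the slices is obtained in the paper only at the very end (Lemma \ref{lemma_step5}), through the equality case of the Steiner inequality of \cite{CCF}, precisely because it is not accessible at this stage.

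The fix is the paper's ``Assertion A'', which uses no minimization argument at all, only analyticity, embeddedness and boundedness of $\Phi(\overline{\Sigma}_{\rm ann})$: if $\pi^{-1}(x'_0)$ met the surface in a non-isolated point, the local analytic graph representation would force a vertical segment of $\pi^{-1}(x'_0)$ to lie in the surface, and analytic continuation (iterated at the endpoints of such segments) would force the entire line $\pi^{-1}(x'_0)$ into the bounded surface, a contradiction. Hence $\pi^{-1}(x')\cap\Phi(\overline{\Sigma}_{\rm ann})$ is finite for \emph{every} $x'\in\Omega$, so $\mathcal{H}^1\bigl((\partial E)_{x'}\bigr)=0$ everywhere in $\Omega$, and your semicontinuity argument closes with no need for the slices to be intervals; the same fact gives continuity (with value $0$) at points of $\Omega\cap\partial\pi(E)$, since there $E_{x'_0}\subset(\partial E)_{x'_0}$. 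Two further remarks: your appeal to the ``smoothness'' of the curves $\beta_i$ is premature (Lemma \ref{lemma_step4} gives only Lipschitz curves at this stage; $C^\infty$ is deduced later), and on $\partial^D\Omega$ the matching of interior limits with $\varphi$ requires the continuity of $\Phi$ up to $C_1\cup C_2$, a step the paper carries out and your sketch skips. Finally, note that where you expected area-minimality to be the crux, what actually enters is only the \emph{regularity} of the Meeks--Yau surface; with Assertion A in hand, your semicontinuity framework is a clean alternative to the paper's bookkeeping of signed intersection points in \eqref{verticalslice}--\eqref{6.18}.
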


The proof of Lemma \ref{lemma_step3} essentially follows from the fact that 
$\immclosedann$ is a  minimal surface in $\overline\Omega\times \R$. 

\begin{proof}
Since $E$ has finite perimeter,
there exists a function $\widetilde \psi\in BV(\pi(E))$  such that 
$
	S^\pm=\mathcal G_{\pm\widetilde \psi}$ \cite{CCF}. So,
we only need to show that 
$\widetilde\psi$ is continuous.
Take a point $\pointinOm$ in the interior of $\pi(E)$; if $\pointinOm
=\pi(\Phi(w))$ for some $w$, then $w\in \openannulus$, since 
$\pi(\Phi(C_i)) \subset \partial \Om$ for $i=1,2$.
If at none of the points of $\pi^{-1}(\pointinOm)\cap 
\immclosedann$ the tangent plane to $\immclosedann$ is vertical, 
then $\widetilde\psi$ is $C^\infty$ in a neighbourhood of $\pointinOm$, 
since it is the linear combination of smooth functions 
(see the discussion after formula \eqref{6.18} below, where details are given).
Therefore we only have to check continuity of $\widetilde\psi$ 
at those points $\pointinOm$ for which there is $\pointinspace\in 
\pi^{-1}(\pointinOm)\cap\immclosedann$ such that
$\immclosedann$ has a vertical tangent plane $\Pi$ at $\pointinspace$. 
 
 Consider a system of Cartesian coordinates  
centred at $ \pointinspace$, with the $(x,y)$-plane 
coinciding with $\Pi$, the $x$-axis coinciding with the line $\pi^{-1}( \pointinOm)$, 
and let $z=z(x,y)$ (defined at least in a neighbourhood of $0$) be the analytic function whose graph coincides with  $\immclosedann$. 
This map, restricted to the $x$-axis, is analytic and
 it vanishes at $x=0$; hence it is 
either  constantly zero or it 
has a discrete set of zeroes (in the neighbourhood where it exists).
We now exclude the former case: If $z(\cdot,0)$ is constantly zero, 
it means that around $\pointinspace$ there is a vertical open
segment included in $\pi^{-1}(\pointinOm)$, 
which is 
contained in $\immclosedann$. Let $Q$ be an extremal point of this segment, and 
let $\Pi_Q$ be the tangent plane to $\immclosedann$ at $Q$. 
This plane must 
contain as tangent vector the above segment, hence $\Pi_Q$ is vertical and 
contains $\pi^{-1}(\pointinOm)$. 
Choosing again a suitable Cartesian coordinate system  centred at $Q$ we can express locally 
the surface $\immclosedann$ as the graph of an analytic function defined in a 
neighbourhood of $Q$ in $\Pi_Q$, and so 
the restriction of this map to 
$\pi^{-1}(\pointinOm)$ is analytic in a neighbourhood of $Q$,  
hence it must be constantly zero since it is zero in a 
left (or right) neighbourhood of $Q$. 
 What we found is that we can properly extend the segment $\overline{\pointinspace Q}$ 
on the $Q$ side to a segment $\overline{PR}$ contained in 
$\immclosedann$. 
By iterating  this argument we 
conclude that the whole line $\pi^{-1}(\pointinOm)$ 
is contained in $\immclosedann$, which
is impossible since ${\Phi}(\overline\Sigma_{\rm ann})$ is bounded.

Hence  the zeroes of the function $z(\cdot,0)$ are isolated, so the 
next assertion follows:
\medskip

\textit{Assertion A: Let $\pointinspace\in \pi^{-1}(\pointinOm)\cap
\immclosedann$. Then in a neighbourhood of $\pointinspace$ 
the only intersection between $\immclosedann$ 
and $\pi^{-1}(\pointinOm)$ is $\pointinspace$ itself.} 
\medskip

We can now conclude the proof of 
the continuity of the function $\widetilde\psi$. Let $\pointinOm$ be in the 
interior of $\pi(E)$, and write $\pi^{-1}(\pointinOm)\cap 
\immclosedann=\{Q_1,Q_2,\dots,Q_{m}\}\subset\Omega\times\R$. 
It follows that 
\begin{align}\label{verticalslice}
2\widetilde\psi(\pointinOm)=\mathcal H^1(\pi^{-1}(\pointinOm)\cap E)=\sum_{j=1}^m\sigma_j (Q_j)_3,
\end{align}
where $(Q_j)_3$ is the vertical coordinate of $Q_j$ and $\sigma_j\in\{-1,0,1\}$ is defined as
\begin{align}
\sigma_j=\begin{cases}
-1&\text{if }\overline{Q_{j-1}Q_j}\subset\overline{ \R^3\setminus E}\text{ and }\overline{Q_{j}Q_{j+1}}\subset  E,\\
1 &\text{if }\overline{Q_{j-1}Q_j}\subset E\text{ and }\overline{Q_{j}Q_{j+1}}\subset \overline{\R^3\setminus E},\\
0&\text{otherwise,}
\end{cases}
\qquad \qquad j=1,\dots,m.
\end{align}
Let $\pointinOm_k 
\in \textrm{int}(\pi(E))$ be such that the sequence
$(\pointinOm_k )$ converges to $\pointinOm$, and write $\pi^{-1}
(\pointinOm_k )\cap \immclosedann=\{Q^k_1,Q^k_2,\dots,Q^k_{m_k}\}\subset\Omega\times\R$. With a similar 
notation as above, we have
\begin{equation}\label{verticalslice2}
2\widetilde\psi(\pointinOm_k)=\mathcal H^1(\pi^{-1}(\pointinOm_k)
\cap E)=\sum_{j=1}^{m_k}\sigma^k_j (Q^k_j)_3.
\end{equation}
Now, if $\pointinOm$ is such that at
 every point $Q_j$ the tangent plane to $\immclosedann$ 
is not vertical, then  $\immclosedann$ is a smooth Cartesian surface in a neighbourhood of 
$Q_j$, and so it is clear that, for $k$ large enough, 
\begin{align}\label{6.18}
m=m_k,\qquad 
Q_j^k\rightarrow Q_j,\qquad \sigma^k_j\rightarrow\sigma_j\qquad\text{for all }j=1,\dots,m,
\end{align}
and the continuity of \eqref{verticalslice} follows.
Therefore it remains to check continuity in the case that the tangent plane to some $Q_j$ is vertical.

Let $\widetilde Q$ 
be one of these points, with associated sign $\widetilde\sigma$. 
By assertion A there is $\delta>0$ 
so that $\widetilde Q$ is the unique intersection between $\pi^{-1}(\pointinOm)$ and 
$\immclosedann$ with vertical coordinate in $[\widetilde Q_3-\delta,\widetilde Q_3+\delta]$. 
This means that the segments  $\pi^{-1}(\pointinOm)\cap\{\widetilde Q_3-\delta<x_3<\widetilde Q_3\}$ 
and $\pi^{-1}(\pointinOm)\cap\{\widetilde Q_3<x_3<\widetilde Q_3+\delta\}$ are either
subsets of $\textrm{int}(E)$ or subsets of 
$\R^3\setminus E$. In particular,
 there is a neighbourhood $U\subset\Omega$ of $\pointinOm$ such that the discs $U\times \{x_3=\widetilde Q_3-\delta\}$ and $U\times \{x_3=\widetilde Q_3+\delta\}$ are 
subsets of  $\textrm{int}(E)$ or of $\R^3\setminus E$. Suppose 
without loss of generality that both these discs are inside $\R^3\setminus E$ (the other cases being
 similar),  
so that $\widetilde \sigma=0$. 
We infer that, for $k$ large enough so that $\pointinOm_k\in U$, 
there is a finite subfamily $\{Q^k_j:j\in J\}$ of $\{Q^k_1,Q^k_2,\dots,Q^k_{m_k}\}$ contained in $\{ \widetilde Q_3<x_3<\widetilde Q_3+\delta\}$ and which satisfies the following: The sum in \eqref{verticalslice2} restricted to such subfamily reads as:
$$ \sum_{j\in J}\sigma^k_j (Q^k_j)_3=(Q^k_{j_l})_3-(Q^k_{j_{l-1}})_3+\dots+(Q^k_{j_2})_3-(Q^k_{j_1})_3,$$
where 
$J=\{j_1,j_2,\dots,j_l\}$ and $(Q^k_{j_l})_3>(Q^k_{j_{l-1}})_3>\dots>(Q^k_{j_2})_3>(Q^k_{j_1})_3$ (in the case that $j_l=1$ necessarily $\sigma_{j_1}^k=0$ and the sum is zero). We have to show that this sum tends to $\widetilde\sigma \widetilde Q_3=0$ as $k\rightarrow
+\infty$, which is true, since each $Q^k_j$ tends to $\widetilde Q$. 
Repeating this argument for each point $\widetilde Q$ appearing in \eqref{verticalslice} 
with a vertical tangent plane to $\immclosedann$, we conclude the proof of continuity of $\widetilde\psi$ in the interior of $\pi(E)$.\\
\smallskip

Let now $\pointinOm\in\partial(\pi(E))$. If $\pointinOm\in \partial(\pi(E))\cap \Omega$ 
then every point in $\pi^{-1}(\pointinOm)\cap \immclosedann$ has vertical tangent plane 
and we can argue as in the previous case.
It remains to show continuity of $\widetilde \psi$ 
on $\partial\pi(E)\cap \partial\Omega$. In this case we 
exploit the fact that the interior of 
$\immclosedann$ is contained in $\Om\times \R$. We sketch  the proof without details since it is very similar to the previous arguments. Let 
$\pointinOm\in \partial_1^D\Om$, 
thus $\pi^{-1}(\pointinOm)\cap\Gamma_1$ consists of two points 
$Q_1$ and $Q_2$. Let $(\pointinOm_k)$ 
be a sequence of points in $\overline{\pi(E)}$ converging to $P$. 
For $\pointinOm_k \in \partial^D_1\Om$ it follows $\pi^{-1}(\pointinOm_k)
\cap\Gamma_1=\{Q^k_1,Q^k_2\}$ and the continuity of $\widetilde\psi$ 
follows from the continuity of $\varphi$ on $\partial^D_1\Om$, 
whereas if $\pointinOm_k$ 
is in the interior of $\pi(E)$ there holds $\pi^{-1}(\pointinOm_k)
\cap\Gamma_1=\{Q^k_1,Q^k_2,\dots, Q^k_{m_k}\}$. 
Using the continuity of $\Phi$ up to $C_1$, it is easily seen that all such points must converge, 
as $k\rightarrow +\infty$, either to $Q_1$ or to $Q_2$. 
Hence we can repeat an argument similar to the one used before.
\end{proof}  

\begin{lemma}\label{lem:construction-of-parametrization}
Suppose $m_2(\Gamma)<m_1(\Gamma_1)+m_1(\Gamma_2)$ and let $\Phi\in\mathcal{P}_2(\Gamma)$ be a $\mathcal{MY}$ solution to \eqref{catenoid_plateau}. Let $E$  be the finite perimeter set given in Lemma \ref{lemma_step2} and let
		$S$ be defined as in \eqref{surf}.
		Then there is an injective map $\widetilde\Phi\in H^1
(\openannulus;\R^3)\cap C^0(\overline \Sigma_{\rm ann};\R^3)$ 
which maps $\partial \openannulus$ 
weakly monotonically to $\Gamma$ and  such that $\widetilde\Phi (\overline \Sigma_{\rm ann}) =S$, and also
		 \begin{align}\label{eq:area-symmetrization}
		\mathcal H^2(S)=	\int_{\Sigma_{{\rm ann}}}|\partial_{w_1}\widetilde \Phi\wedge \partial_{w_2}\widetilde \Phi |dw=
			\int_{\Sigma_{{\rm ann}}}|\partial_{w_1}\Phi\wedge\partial_{w_2}\Phi|dw=m_2(\Gamma).
		\end{align}
In particular, $\widetilde \Phi$ is a solution of \eqref{catenoid_plateau}.
\end{lemma}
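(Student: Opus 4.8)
The plan is to exhibit an admissible competitor $\widetilde\Phi\in\mathcal P_2(\Gamma)$ whose image is exactly the symmetrized surface $S$ defined in \eqref{surf} and whose area equals $\mathcal H^2(S)$. Granting this, the sandwich
\[
m_2(\Gamma)\le\int_{\openannulus}|\partial_{w_1}\widetilde\Phi\wedge\partial_{w_2}\widetilde\Phi|\,dw=\mathcal H^2(S)\le\mathcal H^2(\immclosedann)=m_2(\Gamma)
\]
closes the argument: the first inequality is admissibility of $\widetilde\Phi$, the middle equality is the area formula applied to the injective map $\widetilde\Phi$, the second inequality is the Steiner estimate $\mathcal H^2(S)\le\mathcal H^2(\immclosedann)$ recorded just before Lemma \ref{lemma_step3}, and the last equality holds because $\Phi$ is a $\mathcal{MY}$ solution (Definition \ref{def:MY}). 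Forcing equality throughout yields simultaneously the identities \eqref{eq:area-symmetrization} and the minimality of $\widetilde\Phi$.

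The geometric data needed for the construction are already available. By Lemma \ref{lemma_step1} and Remark \ref{rem:steiner-sym} the set $\pi(E)$ is simply connected in $\overline\Omega$ and contains $\partial^D\Omega$; by Lemma \ref{lemma_step4} one has $\Om\cap\partial\pi(E)=\beta_1\cup\beta_2$ with $\beta_1,\beta_2$ disjoint and $E_1,E_2$ convex; and by Lemma \ref{lemma_step3} the surface $S$ is symmetric with $S^\pm=\mathcal G_{\pm\widetilde\psi}$ for a continuous $\widetilde\psi$ which equals $\varphi$ on $\partial^D\Om$. This is precisely the configuration treated in Lemma \ref{lem_tec1}(a), with ${\rm int}(\pi(E))$ and $\widetilde\psi$ in the roles of $\Om\setminus E(\sigma)$ and $\psi$. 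I would therefore repeat the gluing construction of that lemma: parametrize the simply connected region $\overline{\pi(E)}$ by a half-annulus, build a parametrization $\Psi^+$ of $S^+=\mathcal G_{\widetilde\psi}$ sending the four distinguished boundary points to $q_1,p_2,q_2,p_1$, the two half-circles weakly monotonically onto $\gamma_1,\gamma_2$ (using $\widetilde\psi=\varphi$ on $\partial^D\Om$) and the two segments onto $\beta_1,\beta_2\subset\R^2\times\{0\}$, set $\Psi^-:=\textrm{Sym}(\Psi^+)$, and glue the two copies along the segments to obtain $\widetilde\Phi$ on $\overline\Sigma_{\rm ann}$. Continuity of $\widetilde\psi$ up to $\partial^D\Om$ gives $\widetilde\Phi\in C^0$, the graph structure gives injectivity together with $\widetilde\Phi(\overline\Sigma_{\rm ann})=S$, and by construction $\widetilde\Phi\res C_j$ is a weakly monotone parametrization of $\Gamma_j$.

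The remaining, and genuinely delicate, point is to guarantee that $\widetilde\Phi$ can be taken in $H^1(\openannulus;\R^3)$ and that $\int_{\openannulus}|\partial_{w_1}\widetilde\Phi\wedge\partial_{w_2}\widetilde\Phi|\,dw=\mathcal H^2(S)$. Here I would run the exhaustion scheme of Lemma \ref{lem_tec2}(a): exhaust ${\rm int}(\pi(E))$ by simply connected compact sets $H_k$ with rectifiable boundary, produce over each $H_k$ a finite-energy conformal parametrization of $\mathcal G_{\widetilde\psi\res H_k}$ whose Dirichlet energy equals its area, pass to the limit $k\to+\infty$, and then double and glue as above; injectivity makes the area formula applicable and gives the area identity (membership in the fixed $\openannulus$ is achieved by a diffeomorphic reparametrization, which preserves both $H^1$ and the value of the area integral). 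The main obstacle is exactly the regularity of the symmetrized profile $\widetilde\psi$: Lemma \ref{lemma_step3} yields continuity everywhere but smoothness only away from the projection of the vertical–tangent locus of $\immclosedann$, so a priori $\widetilde\psi$ is merely continuous and $BV$ — finiteness of the area of its graph forces only $\nabla\widetilde\psi\in L^1$, i.e.\ $W^{1,1}$ — whereas $\mathcal P_2(\Gamma)$ demands the stronger $H^1$ bound. The crux is thus to upgrade the naive graph map into an admissible finite-energy parametrization, controlling $\widetilde\psi$ near those vertical–tangent points; once this is accomplished, the inequality sandwich above delivers $\mathcal H^2(S)=m_2(\Gamma)$, the identities in \eqref{eq:area-symmetrization}, and the fact that $\widetilde\Phi$ solves \eqref{catenoid_plateau}.
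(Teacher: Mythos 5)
Your overall architecture (sandwich inequality plus a gluing/exhaustion construction over $\pi(E)$) matches the shape of the paper's argument, and you have correctly isolated the crux: Lemma \ref{lemma_step3} only gives $\widetilde\psi\in BV({\rm int}(\pi(E)))\cap C^0(\pi(E))$, which is not enough to make the graph map an admissible element of $\mathcal P_2(\Gamma)$, nor even to start the exhaustion scheme of Lemma \ref{lem_tec2}: that scheme needs $\psi$ to be locally Lipschitz on the exhausting sets $H_k$ (in the paper this comes from the analyticity of minimizers of $\mathcal F$, Theorem \ref{thm:regularity}), and a merely continuous $BV$ function gives no control on the Dirichlet energy of the restricted graph maps. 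But your proposal stops exactly there: the sentence ``once this is accomplished'' is the entire content of the lemma, and you offer no mechanism to accomplish it. This is a genuine gap, not a detail.

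The paper fills it with a variational bootstrap that is absent from your proposal, and which never proves the missing regularity ``by hand''. Since $\beta_1,\beta_2$ bound convex regions (Lemma \ref{lemma_step4}), the pair $(\widetilde\sigma,\widetilde\psi)$ with $\widetilde\sigma:=(\beta_1,\beta_2)$ and $\widetilde\psi$ extended by zero outside $\pi(E)$ is an admissible element of $\comp$, and for admissibility in $\comp$ the $BV$ regularity you already have is sufficient; moreover $2\mathcal F(\widetilde\sigma,\widetilde\psi)=\mathcal H^2(S)$ because the boundary term vanishes. Now let $(\sigma,\psi)$ be an abstract minimizer of $\mathcal F$ enjoying properties \ref{1.}--\ref{5.} of Theorem \ref{thm:regularity}. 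Minimality gives $2\mathcal F(\sigma,\psi)\le 2\mathcal F(\widetilde\sigma,\widetilde\psi)=\mathcal H^2(S)\le \mathcal H^2(\immclosedann)=m_2(\Gamma)$. This chain verifies the hypothesis \eqref{eq:lem_tec2} for the \emph{regular} pair $(\sigma,\psi)$, so Lemma \ref{lem_tec2} applies to it, produces a competitor in $\mathcal P_2(\Gamma)$ parametrizing the doubled graph of $\psi$, and forces $2\mathcal F(\sigma,\psi)=m_2(\Gamma)$; hence every inequality in the chain is an equality. In particular $(\widetilde\sigma,\widetilde\psi)$ is itself a minimizer of $\mathcal F$, so Theorem \ref{thm:regularity} now applies to \emph{it}, yielding that $\widetilde\psi$ is real analytic in ${\rm int}(\pi(E))$ and hence in $W^{1,1}$; only at this point is Lemma \ref{lem_tec2} invoked a second time, for $(\widetilde\sigma,\widetilde\psi)$, to produce the injective $\widetilde\Phi\in \mathcal P_2(\Gamma)$ with image $S$ and the identities \eqref{eq:area-symmetrization}. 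In short, the control near the vertical-tangent points that you flag as ``the crux'' is obtained for free by showing the symmetrized pair minimizes $\mathcal F$ and then quoting the regularity theory already established for such minimizers; without this (or some substitute for it) your construction of $\widetilde\Phi$ cannot be carried out.
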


\begin{proof}
	By Lemma \ref{lemma_step3} there is $\widetilde \psi\in BV({\rm int}(\pi(E))\cap C^0(\pi(E))$ such that $S^\pm=
		\mathcal {G}_{\pm\widetilde\psi}$.
As a consequence,  for  $p\in \partial^D\Om$
	we have $\widetilde\psi(p)=\varphi(p)$ and for $p\in \partial (\pi(E))\cap \Omega$ 
we have $\widetilde\psi(p)=0$. \\
	By Lemma \ref{lemma_step1} $\pi(E)$ is simply connected, and so 
the maps $\widetilde \Psi^\pm:\pi(E)\rightarrow \R^3$ given by
	$\widetilde \Psi^\pm(p):=(p,\pm\widetilde\psi(p))$ are disc-type parametrizations of  $S^\pm$.
	Moreover $S^+$ and $S^-$ glue to each other along  
$\partial({\rm sym}_{\rm st}(E))\cap(\R^2\times\{0\})=
\beta_1\cup\beta_2$, where $\beta_1$ and $\beta_2$ are the curves given by Lemma \ref{lemma_step4} .

	Let $(\sigma,\psi)\in\comp$ be a minimizer of $\mathcal F$ which satisfies 
properties \ref{1.}-\ref{5.} of Theorem \ref{thm:regularity}. Setting $\widetilde\sigma:=(\beta_1,\beta_2)$ and by extending $\widetilde\psi$ to zero in $\overline\Om\setminus \pi(E)$, without relabelling it, by minimality we get $$2\mathcal F(\sigma,\psi)\leq 2\mathcal F(\widetilde\sigma,\widetilde\psi)=\mathcal H^2(S),$$ whence, by Remark \ref{rem:steiner-sym}
	 \begin{align}\label{last_eq}
	2\mathcal F(\sigma,\psi)\leq \mathcal{H}^2(S)\le\mathcal{H}^2(\immclosedann)
	=\int_{\Sigma_{{\rm ann}}}|\partial_{w_1}\Phi\wedge\partial_{w_2}\Phi|dw=m_2(\Gamma).
	\end{align}
We are in the hypotheses of Lemma \ref{lem_tec2}, and therefore there exists a map $\widetilde \Phi\in P_2(\Gamma)$ parametrizing $\mathcal G_{\psi\res(\overline{\Om\setminus E(\sigma)})}\cup \mathcal G_{-\psi\res(\overline{\Om\setminus E(\sigma)})}$ which is a minimizer 
of \eqref{catenoid_plateau}. In particular, $2\mathcal F(\sigma,\psi)=m_2(\Gamma)$, and all the inequalities in \eqref{last_eq} are equalities. We 
deduce also that 
$(\widetilde \sigma,\widetilde\psi)$ is a minimizer of $\mathcal F$ in $\comp$, so that by Theorem \ref{thm:regularity} $\widetilde \psi$ is analytic in $\textrm{int}(\pi(E))$. As a consequence it belongs to $W^{1,1}(\pi(E);\R^3)$.

We now conclude the proof of the lemma by invoking again Lemma \ref{lem_tec2}.
\end{proof}

\begin{lemma}\label{lemma_step5}
Suppose $m_2(\Gamma)<m_1(\Gamma_1)+m_1(\Gamma_2)$ and let $\Phi\in\mathcal{P}_2(\Gamma)$ be a $\mathcal{MY}$ solution to \eqref{catenoid_plateau}. Let $E$ be the finite perimeter set given in Lemma \ref{lemma_step2} and let $S$ be defined as in \eqref{surf}. Then 
$\immclosedann=S$
 and in particular $E={\rm sym}_{\rm st}(E)$.
\end{lemma}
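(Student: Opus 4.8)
The plan is to read the desired identity as a rigidity statement for equality in Steiner's inequality. By Lemma~\ref{lem:construction-of-parametrization} we already know that $\mathcal H^2(S)=\mathcal H^2(\immclosedann)=m_2(\Gamma)$. Since the two curtains $\overline\Delta_1,\overline\Delta_2$ defined in \eqref{delta} are symmetric with respect to $\R^2\times\{0\}$, they are left unchanged by ${\rm sym}_{\rm st}$, and they lie on $\partial\Om\times\R$; hence, computing \emph{relative} perimeters in the open cylinder $\Om\times\R$, we have $P(E;\Om\times\R)=\mathcal H^2(\immclosedann)$ and $P({\rm sym}_{\rm st}(E);\Om\times\R)=\mathcal H^2(S)$. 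Therefore $P({\rm sym}_{\rm st}(E);\Om\times\R)=P(E;\Om\times\R)$, i.e. equality holds in the Steiner inequality $P({\rm sym}_{\rm st}(E))\le P(E)$ over $\Om\times\R$. Working in the open cylinder is what allows the curtains $\Delta_i$ (which are genuinely vertical) to be discarded, since they sit on $\partial\Om\times\R$.

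Next I would invoke the characterization of equality in Steiner's inequality \cite{CCF}. Its applicability rests on the fact that, inside $\Om\times\R$, the reduced boundary of the symmetral $F:={\rm sym}_{\rm st}(E)$ is non-vertical $\mathcal H^2$-a.e.: by Lemma~\ref{lemma_step3} one has $\partial F\cap(\Om\times\R)=S^+\cup S^-=\mathcal G_{\widetilde\psi}\cup\mathcal G_{-\widetilde\psi}$, whose unit normals $(\mp\nabla\widetilde\psi,\pm1)/\sqrt{1+|\nabla\widetilde\psi|^2}$ have nonzero vertical component wherever $\widetilde\psi$ is approximately differentiable, while the coincidence set $\{\widetilde\psi=0\}\cap\Om=\beta_1\cup\beta_2$ of Lemma~\ref{lemma_step4} is a finite union of curves, hence $\mathcal H^2$-negligible. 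The equality case theorem then yields that $E$ is, fibrewise over $\pi(E)$, a vertical translate of $F$: there is a measurable $b\colon\pi(E)\to\R$ with $E_{x'}=(b(x')-\widetilde\psi(x'),\,b(x')+\widetilde\psi(x'))$ for a.e.\ $x'\in\pi(E)$.

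It then remains to show $b\equiv0$. Comparing the areas of the two boundary graphs $x_3=b\pm\widetilde\psi$ of such an $E$ with those of $F$ through the pointwise convexity inequality $\sqrt{1+|a+c|^2}+\sqrt{1+|a-c|^2}\ge 2\sqrt{1+|c|^2}$ (with equality iff $a=0$), applied with $a=\nabla b$ and $c=\nabla\widetilde\psi$, the equality $P(E;\Om\times\R)=P(F;\Om\times\R)$ forces $\nabla b=0$ a.e.; since $\pi(E)$ is simply connected, hence connected, by Lemma~\ref{lemma_step1}, $b$ is a constant $c$ (this constancy is also part of the rigidity in \cite{CCF}). Finally I would fix $c$ from the boundary datum: the upper boundary of $E$ is the function $c+\widetilde\psi$, which along $\partial^D\Om$ must coincide with the upper part of $\Gamma$, namely $\varphi$; since $\widetilde\psi=\varphi$ on $\partial^D\Om$ by Lemma~\ref{lemma_step3}, we get $c+\varphi=\varphi$, so $c=0$. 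Equivalently, $\partial E\cap(\partial\Om\times\R)=\overline\Delta_1\cup\overline\Delta_2$ is symmetric and fixed, while a nonzero translate would shift it off $\partial\Om\times\R$. Hence $E={\rm sym}_{\rm st}(E)$ and, intersecting with $\Om\times\R$, $\immclosedann=S$.

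The main obstacle is the careful invocation of the equality-case theorem \cite{CCF}. One must phrase everything as a relative-perimeter equality in $\Om\times\R$ so that the vertical curtains $\Delta_i$ are excluded, and one must also check that $\immclosedann$ itself (not only the graphical $S$) has $\mathcal H^2$-a.e. non-vertical reduced boundary; this follows from the analyticity of the $\mathcal{MY}$ surface, whose set of vertical-tangent points is lower dimensional. The delicate points are the passage from the abstract fibrewise-translate conclusion to the centred-segment representation of $E_{x'}$ and the upgrade, via equality of perimeters and convexity, of a fibrewise translate to a single constant; once $b$ is known to be constant, pinning $c=0$ from the symmetric boundary data $\Gamma$ is routine.
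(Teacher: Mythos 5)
Your overall skeleton is the same as the paper's: both arguments start from $\mathcal H^2(S)=m_2(\Gamma)$ (Lemma \ref{lem:construction-of-parametrization}), deduce the equality $P({\rm sym}_{\rm st}(E))=P(E)$, invoke the equality case of the Steiner inequality \cite{CCF}, and finally kill the residual vertical translation using the Dirichlet data. Where you diverge is the rigidity step. The paper uses only the \emph{structure} part of \cite[Theorem 1.1]{CCF} (namely $\partial^*E=\mathcal G_f\cup\mathcal G_g$ together with the normal reflection condition \eqref{normals}), and then proves by hand that $f$ and $g$ are smooth in ${\rm int}(\pi(E))$ --- via the geometric assertion that every vertical line over an interior point meets the Meeks--Yau surface $\immclosedann$ transversally in exactly two points, which rests on the analyticity and minimality of that surface --- after which \eqref{normals} is integrated along curves emanating from $\partial^D\Om$ to obtain $f+g\equiv0$. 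You instead try to obtain constancy of the fibre translation $b=(f+g)/2$ from strict convexity of the area integrand plus connectedness of $\pi(E)$, deferring to the full rigidity statement of \cite{CCF} as a fallback. If one is allowed to cite CCF at full strength, your route is legitimately shorter; the paper's longer geometric argument is what one must do when only the structure theorem is used.

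As written, however, your rigidity step has a genuine gap. The functions $f,g$ (hence $b$) are a priori only $BV$, and your convexity inequality, applied with $a=\nabla b$, controls only the absolutely continuous part of $Db$: the inference ``$\nabla b=0$ a.e.\ and $\pi(E)$ connected, hence $b$ constant'' is false for $BV$ functions (the Cantor--Vitali function is a counterexample). Moreover, the identity you implicitly use --- that $P(E;\Om\times\R)$ is the sum of the two graph-area integrals of $b\pm\widetilde\psi$ --- ignores the singular contributions: jump or Cantor parts of $Db$ produce vertical pieces of $\partial^*E$ (with possible cancellations between the upper and lower generalized graphs when a jump of $b$ exceeds $2\widetilde\psi$), and equality in the naive triangle inequality $|D^sb+D^s\widetilde\psi|+|D^sb-D^s\widetilde\psi|\ge 2|D^s\widetilde\psi|$ does \emph{not} force $D^sb=0$; controlling these terms requires a careful perimeter decomposition that is absent from your argument. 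Relatedly, your verification of the ``no vertical parts'' hypothesis needed for CCF rigidity is misdirected: continuity of $\widetilde\psi$ and a.e.\ approximate differentiability do not exclude vertical parts of $\partial({\rm sym}_{\rm st}(E))$, which sit over the set where $D\widetilde\psi$ is singular; what is needed is $D^s\widetilde\psi=0$, and this is available only because $\widetilde\psi$ is analytic in ${\rm int}(\pi(E))$ (established via Theorem \ref{thm:regularity} inside the proof of Lemma \ref{lem:construction-of-parametrization}), not from the continuity statement of Lemma \ref{lemma_step3} alone. With these two repairs --- verifying the hypotheses of the CCF rigidity theorem through the analyticity of $\widetilde\psi$, or else extending the convexity argument to the singular parts of $Db$ --- your proof closes; the paper avoids the issue altogether by first proving smoothness of $f$ and $g$ geometrically and only then integrating \eqref{normals}.
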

\begin{proof} By Lemma \ref{lem:construction-of-parametrization} we have that $\mathcal H^2(S)=m_2(\Gamma)$ from which it follows that  $P(\textrm{sym}_{\rm st}(E))= P(E)$. Then  we can apply \cite[Theorem 1.1]{CCF}
to deduce the existence of two functions
$f,g:\pi(E)\rightarrow \R$ of bounded variation, such that $\partial^* E=\mathcal G_f\cup\mathcal G_{g}$ (up to $\mathcal H^2$-negligible sets). We will show that $f=\widetilde\psi$ and $g=-\widetilde\psi$. To this aim, 
thanks again to \cite[Theorem 1.1]{CCF}, we know that for a.e. $p\in \pi(E)$, the two unit normal vectors $\nu^f=(\nu^f_1,\nu^f_2,\nu^f_3)$ and $\nu_g=(\nu^g_1,\nu^g_2,\nu^g_3)$ to $\mathcal G_f$ and $\mathcal G_g$ at the points $(p,f(p))$ and $(p,g(p))$, respectively, satisfy
	\begin{align}\label{normals}
		(\nu^f_1,\nu^f_2,\nu^f_3)=(\nu^g_1,\nu^g_2,-\nu^g_3).
	\end{align} 
	To conclude the proof it is then sufficient to show that $f=-g$ a.e. on $\pi(E)$: indeed this would readily imply $E={\rm sym}_{\rm st}(E)$ and hence $f=\widetilde\psi$.
	
	Let $p\in \textrm{int}(\pi(E))$; if 
	\begin{equation}
	\pi^{-1}(p)\cap S=\{P_1,P_2,\dots,P_k\},
	\end{equation}
	then for a.e. $p\in \textrm{int}(\pi(E))$ it is $k\leq2$. We now show that, for all $p\in \textrm{int}(\pi(E))$, if $k>1$, none of the points  $\{P_1,P_2,\dots,P_k\}$ has vertical tangent plane. 
Assume by contradiction that $P_1$ has vertical tangent plane $\Pi_1$. In this case $\Pi_1\cap S$ consists, in a neighbourhood $U$ of $P_1$, of at least $2$ curves crossing {transversally} at $P_1$. These curves, by assertion A in the proof of Lemma \ref{lemma_step3}, intersect $\pi^{-1}(p)$ only at $P_1$. Moreover, in a neighbourhood $V$ of $P_2$, with $U\cap V=\varnothing$, $\Pi_1\cap S$ consists of (at least) one (or more)
 curve passing through $P_2$. This curve is locally Cartesian if $\pi^{-1}(p)$ crosses $S$ transversally in $P_2$, otherwise it can be locally the union of two curves ending at $P_2$, with vertical tangent plane, which lie on the same side of $\Pi_1$ with respect to $\pi^{-1}(p)$. In both cases, we deduce that there is a point $q\in \Pi_1\cap (\Om\times\{0\})$ for which $\pi^{-1}(q)$ intersects transversally $S$ in at least three points. As a consequence, for all $q'$ in a neighbourhood of $q$ in 
$\Om$, the line $\pi^{-1}(q')$ intersects $S$ at more than two points, which is a contradiction.
	We have proved the following:
	\medskip
	
	\textit{Assertion: for all $p\in\textrm{int}(E)$ the line $\pi^{-1}(p)$ either intersects $S$ transversally at two points $P_1,P_2$, or it intersects $S$ at only one point $P_1$.}  
	\medskip 
	
	We now see that the latter case cannot happen. 
Indeed, first one checks
 that in this case the intersection cannot be transversal\footnote{This is a consequence of the fact that the line $\pi^{-1}(p)$ must lie outside the set $E$, with the only exception of the point $P_1$. Indeed, otherwise, there must be some other point in $\pi^{-1}(p)\cap S$, $E$ being compact in $\R^3$.}, and that $\pi^{-1}(p)$ must be tangent to $S$ at $P_1$. Let $\Pi_1$ be the 
vertical tangent plane to $S$ at $P_1$. 
Let $\Pi_1^\perp$ be the vertical plane orthogonal to $\Pi_1$ 
passing 
through $P_1$. In a neighbourhood of $P_1$, the unique curve in $S\cap \Pi_1^\perp$ 
must be the union of two curves joining at $P_1$, and these curves must belong to
 the same half-plane of $\Pi_1^\perp$ with boundary $\pi^{-1}(p)$.
As a consequence, if $p'\in \Omega\cap \Pi_1^\perp$ is in
that half-plane, then $\pi^{-1}(p')$ consists of at least two points; if $p'$ lies in the opposite half-plane, then $\pi^{-1}(p')$ is empty. This means that necessarily $p\in \partial \pi(E)$. Namely, the previous assertion can be strengthened to:
	\medskip
	
\textit{For all $p\in\textrm{int}(E)$ the line $\pi^{-1}(p)$ intersects $S$ transversally at exactly two points $P_1,P_2$.}  
\medskip 

The consequence of this is that $f$ and $g$ belong to $W^{1,1}(\textrm{int}(\pi(E)))$ and are also smooth in $\textrm{int}(\pi(E))$. 
Indeed, let $p\in \textrm{int}(\pi(E))$, so $f(p)\neq g(p)$, and 
	\begin{equation}\label{twopoints}
		\pi^{-1}(p)\cap S=\{(p,f(p)),(p,g(p))\}.
	\end{equation}
	Since $S$ is locally the graph of smooth functions around $(p,f(p))$ and $(p,g(p))$, 
these functions coincide with $f$ and $g$, respectively.
		We can now conclude the proof 
of the lemma: let us choose a simple curve $\alpha:[0,1]\rightarrow \pi(E)$ with $\alpha(0)\in \partial^D\Om$ and $\alpha(1)=p$ such that \eqref{normals} holds for $\mathcal H^1$ a.e. $p\in \alpha([0,1])$. Since $f\circ \alpha$ and $g\circ \alpha$ 
are differentiable  in $[0,1]$, condition 
\eqref{normals} uniquely determines the tangent planes to $\mathcal G_f$ and $\mathcal G_g$, and hence it implies that the derivatives of $f\circ \alpha$ and $g\circ\alpha$ satisfy
	\begin{align}
	(f\circ \alpha)'(t)+(g\circ \alpha)'(t)=0,\qquad \text{ for a.e. }t\in[0,1].
	\end{align}
	By continuity of $f$ and $g$ one infers $f\circ\alpha+g\circ\alpha=c$ a.e. on $[0,1]$ (actually everywhere since $f+g$ is continuous), for some constant $c\in\R$. To show that $c=0$ it is sufficient to observe that $f\circ\alpha(0)=\varphi(\alpha(0))$ and $g\circ\alpha(0)=-\varphi(\alpha(0))$. Hence $f(p)=-g(p)$, and the thesis of Lemma \ref{lemma_step5} is achieved. 
\end{proof}

We are now in a position to conclude the proof of  Theorem \ref{crucial_teo}.

\begin{proof}[Proof of Theorem \ref{crucial_teo}]
	Property \ref{crucial1} follows by Lemma \ref{lemma_step1} and Lemma \ref{lemma_step4}. Properties \ref{crucial2}--\ref{crucial4} follow by Lemma \ref{lemma_step3} and Lemma \ref{lemma_step5}. To see that $\beta_i$ are 
$C^\infty$ it is sufficient to observe that, in view of
 the Cartesianity of $S^+$ and $S^-$, their union coincides 
with the set $S\cap\{x_3=0\}$ which, by standard arguments, 
 is the image of the zero-set of $\Phi_3$, which is smooth.
\end{proof}
\begin{theorem}\label{teorema-finale}
	
	There holds
	\begin{align}
		2\min_{(s,\zeta)\in \comp}\mathcal F(s,\zeta)= m_2(\Gamma).
	\end{align}
\end{theorem}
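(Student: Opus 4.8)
The plan is to establish separately the two inequalities $2\mu\le m_2(\Gamma)$ and $2\mu\ge m_2(\Gamma)$, where $\mu:=\min_{(s,\zeta)\in\comp}\mathcal F(s,\zeta)$ (attained by Theorem \ref{thm:existence}). By Theorem \ref{thm:regularity} I may fix a minimizer $(\sigma,\psi)$ enjoying properties \ref{1.}--\ref{5.}; for such a pair $\psi$ is analytic in $\Om\setminus E(\sigma)$, continuous and null on $\Om\cap\partial E(\sigma)$, and equal to $\varphi$ on $\partial^D\Om\setminus\partial E(\sigma)$, so that $\mathcal F(\sigma,\psi)=\mathcal H^2\big(\mathcal G_{\psi\res(\overline{\Om\setminus E(\sigma)})}\big)$ and the symmetric surface
\[
S:=\mathcal G_{\psi\res(\overline{\Om\setminus E(\sigma)})}\cup\mathcal G_{-\psi\res(\overline{\Om\setminus E(\sigma)})}
\]
spans $\Gamma$ with $\mathcal H^2(S)=2\mu$. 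Throughout I will use that $m_2(\Gamma)\le m_1(\Gamma_1)+m_1(\Gamma_2)$.

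For the upper bound I split according to whether $m_2(\Gamma)<m_1(\Gamma_1)+m_1(\Gamma_2)$. If it holds, Theorem \ref{myexistence} furnishes a $\mathcal{MY}$ solution $\Phi$, and Theorem \ref{crucial_teo} shows $\immclosedann$ is symmetric and Cartesian, with $\immclosedann\cap\{x_3\ge0\}=\mathcal G_{\widetilde\psi}$ and $\Om\cap\partial\pi(\immclosedann)=\beta_1\cup\beta_2$ bounding the convex caps $E_1,E_2$. Then $(\widetilde\sigma,\widetilde\psi):=((\beta_1,\beta_2),\widetilde\psi)$, with $\widetilde\psi$ extended by $0$ on $E_1\cup E_2$, lies in $\comp$ and $2\mathcal F(\widetilde\sigma,\widetilde\psi)=\mathcal H^2(\immclosedann)=m_2(\Gamma)$, so $2\mu\le m_2(\Gamma)$. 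If instead $m_2(\Gamma)=m_1(\Gamma_1)+m_1(\Gamma_2)$, I take for $j=1,2$ a solution $\Phi_j$ of \eqref{minimizationSigma}; since $\Gamma_j$ satisfies the hypotheses of Lemma \ref{lem:plateau_solution}, $S_j:=\Phi_j(\overline B_1)$ is symmetric and Cartesian, $S_j^+=\mathcal G_{\widetilde\psi_j}$ over a region $U_j$ meeting $\partial\Om$ only along $\partial^D_j\Om$, with $K_j:=\overline\Om\setminus U_j$ convex and $\widetilde\psi_j$ vanishing on $\partial U_j\cap\Om$. The geometric heart here is that $U_1\cap U_2=\emptyset$: indeed $K_j\supseteq\partial\Om\setminus\partial^D_j\Om$, so $\partial^0\Om\subseteq K_1\cap K_2\ne\emptyset$, whence $K_1\cap K_2$ is convex (connected) and $K_1\cup K_2$ is simply connected; as $K_1\cup K_2\supseteq\partial\Om$ (a Jordan curve), simple connectedness forces $K_1\cup K_2=\overline\Om$, i.e. $U_1\cap U_2=\overline\Om\setminus(K_1\cup K_2)=\emptyset$. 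Hence $M:=K_1\cap K_2=\overline\Om\setminus(U_1\cup U_2)$ is convex, and taking $\sigma_1$ to parametrize $\partial M\setminus\partial^0_1\Om$ and $\sigma_2$ to parametrize $\partial^0_2\Om$ yields $(\sigma,\psi)\in\comp$ with $E(\sigma)=M$, $\psi=\widetilde\psi_j$ on $U_j$, $\psi=0$ on $M$; the vanishing of $\widetilde\psi_j$ on $\partial U_j\cap\Om$ makes the graph area additive, so $2\mathcal F(\sigma,\psi)=\mathcal H^2(S_1)+\mathcal H^2(S_2)=m_1(\Gamma_1)+m_1(\Gamma_2)=m_2(\Gamma)$, and again $2\mu\le m_2(\Gamma)$.

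For the lower bound I return to $S$ and distinguish the topology of $\Om\setminus E(\sigma)$. If $\overline{\Om\setminus E(\sigma)}$ is simply connected, Lemma \ref{lem_tec1}(a) parametrizes $S$ by a map weakly monotonic onto $\Gamma_j$ on each boundary circle; a conformal reparametrization (as built in the proof of Lemma \ref{lem_tec2}(a)) makes it an admissible competitor for \eqref{catenoid_plateau} with Dirichlet energy $\mathcal H^2(S)$, so $m_2(\Gamma)\le\mathcal H^2(S)=2\mu$. If $\Om\setminus E(\sigma)$ splits into two components with disjoint closures $F_1,F_2$, $F_j\supseteq\partial^D_j\Om$, then $S=S_1\sqcup S_2$ with $S_j$ a disc-type surface spanning $\Gamma_j$; Lemma \ref{lem_tec1}(b) and the same reparametrization give $\mathcal H^2(S_j)\ge m_1(\Gamma_j)$, hence $2\mu=\mathcal H^2(S_1)+\mathcal H^2(S_2)\ge m_1(\Gamma_1)+m_1(\Gamma_2)\ge m_2(\Gamma)$. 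In either case $2\mu\ge m_2(\Gamma)$, and combining with the upper bound gives $2\mu=m_2(\Gamma)$.

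The main obstacle is the upper bound in the degenerate regime $m_2(\Gamma)=m_1(\Gamma_1)+m_1(\Gamma_2)$, where the Meeks--Yau structure of Theorem \ref{crucial_teo} is not available and one must assemble a single admissible pair in $\comp$ from two separate disc solutions; the crux is the disjointness $U_1\cap U_2=\emptyset$, which is precisely where the convexity of $\Omega$ and the two-arc structure of the case $n=2$ (two disjoint Dirichlet arcs separated by the nonempty $\partial^0\Om$) are used. A recurring secondary point, in both bounds, is the passage from the Cartesian graph of an analytic function to a genuinely admissible $H^1$ parametrization equating area and Dirichlet energy, which is handled by the conformal reparametrization already employed in Lemmas \ref{lem_tec1} and \ref{lem_tec2}.
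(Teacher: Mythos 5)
Your architecture is the same as the paper's: the upper bound splits on whether $m_2(\Gamma)<m_1(\Gamma_1)+m_1(\Gamma_2)$, using Theorem \ref{myexistence} together with Theorem \ref{crucial_teo} in the strict case and two disc solutions via Lemma \ref{lem:plateau_solution} in the degenerate case; the lower bound uses the same topological dichotomy on $\Om\setminus E(\sigma)$ and Lemmas \ref{lem_tec1}--\ref{lem_tec2}. There is, however, one genuine error, in the degenerate case of the upper bound. You take $\sigma_2$ to parametrize $\partial_2^0\Om$ itself, so that $E(\sigma_2)$ is the arc $\partial_2^0\Om$, a set with empty interior. Such a set is convex only when $\partial_2^0\Om$ is a straight segment; for a generic convex $\Om$ (e.g.\ a disc) it is not, so condition (i) in \eqref{def:admissible_class} fails and your pair does \emph{not} belong to $\comp$ --- this is exactly the point of the paper's observation after \eqref{def:admissible_class}, that a curve of $\convexcurves$ can overlap $\partial_i^0\Om$ only if that arc is straight. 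As written, your construction only shows $2\inf_{\largercomp}\mathcal F\le m_2(\Gamma)$. The repair is cheap: either invoke Theorem \ref{teo:convexification}, which gives $\min_{\comp}\mathcal F=\inf_{\largercomp}\mathcal F$, so the bound transfers; or do what the paper does and split $M=\overline\Om\setminus(U_1\cup U_2)$ along the chord joining $q_2$ to $p_1$, taking $E(\sigma_2)$ to be the convex cap between $\partial_2^0\Om$ and that chord, and $\sigma_1$ to run along $\beta_1$, the chord, and $\beta_2$, so that $E(\sigma_1)$ is the intersection of the convex set $M$ with a half-plane; then both sets are convex, their interiors are disjoint, their union is $M$, and the pair is honestly in $\comp$.

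Apart from this, the proof is correct, with two localized differences from the paper worth recording. For the disjointness $U_1\cap U_2=\emptyset$, the paper observes $U_j\subset D_j$, the (projected) convex hull of $\Gamma_j$, and that $D_1\cap D_2=\emptyset$; your alternative --- $K_j:=\overline\Om\setminus U_j$ is convex by property $(4')$ of Lemma \ref{lem:plateau_solution}, $K_1\cap K_2\supset\partial^0\Om$ is nonempty and convex, hence $K_1\cup K_2$ is simply connected and, containing the Jordan curve $\partial\Om$, must equal $\overline\Om$ --- is valid and has the merit of making explicit where the convexity of $\Om$ and the presence of the two separating arcs $\partial_1^0\Om,\partial_2^0\Om$ enter, whereas the paper asserts $D_1\cap D_2=\emptyset$ as evident. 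Second, in the lower bound you appeal to ``the reparametrization built in the proof of Lemma \ref{lem_tec2}''; what you actually need is the statement of that lemma, and its hypothesis \eqref{eq:lem_tec2} is not free: in the annulus case it is supplied precisely by the upper bound you have already proved (this is the paper's ``by Step 1''), and in the two-component case one should argue component by component --- for each $j$, either $\mathcal H^2(S_j)>m_1(\Gamma_j)$, in which case the desired inequality is trivial for that $j$, or the hypothesis holds and the lemma yields $\mathcal H^2(S_j)=m_1(\Gamma_j)$. Spelling this out would close the argument cleanly (the paper's ``similarly'' glosses over the same point).
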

\begin{proof}
	\step 1 
$2\min_{(s,\zeta)\in \comp}\mathcal F(s,\zeta)\le m_2(\Gamma)$.

	Suppose $m_2(\Gamma)<m_1(\Gamma_1)+m_1(\Gamma_2)$.
Let $\Phi\in\mathcal{P}_2(\Gamma)$ be a $\mathcal{MY}$ solution to \eqref{catenoid_plateau} and let $S:=\immclosedann$. By Theorem \ref{crucial_teo} the following properties hold:
\begin{itemize}
	\item $S\cap(\R^2\times\{0\})=\beta_1\cup\beta_2$ with $\beta_1$ and $\beta_2$ disjoint
embedded curves of class $C^\infty$ joining $q_1$ to $p_2$ and $q_2$ to $p_1$, respectively;
	\item $S$ is symmetric with respect to $\R^2\times\{0\}$;
	\item for $i=1,2$ the closed 
region $E_i$ enclosed between $\partial^0_i\Om$ and $\beta_i$ is convex;
	\item $S^+=S\cap\{x_3\ge0\}$ is the graph of  $\widetilde\psi\in W^{1,1}(U)\cap C^0(\overline U)$, where $U=\Om\setminus( E_1\cup  E_2)$  is the open 
region enclosed between $\partial^D\Om$ and $\beta_1\cup\beta_2$.
\end{itemize}
Let $(\sigma,\psi)\in\comp$ be given by 
\begin{equation*}
	\sigma:=(\sigma_1,\sigma_2)\quad\text{and}\quad\psi:=\begin{cases}
		0&\text{in } \Om\setminus U,
\\
		\widetilde{\psi}&\text{in } U,
	\end{cases}
\end{equation*}
where $\sigma_i([0,1])=\beta_i$ for $i=1,2$.
Then clearly $S^+=\mathcal{G}_{\psi\res(\Om\setminus E(\sigma))}$ and
\begin{equation*}
\min_{(s,\zeta)\in\comp}\mathcal{F}(s,\zeta)\le	\mathcal{F}(\sigma,\psi)=\mathcal{H}^2(S^+)=\frac12m_2(\Gamma).
\end{equation*}

Suppose now $m_2(\Gamma)=m_1(\Gamma_1)+m_1(\Gamma_2)$. Let $\Phi_j\in \mathcal{P}_1(\Gamma_j)$ be a solution to \eqref{plateau_1} and $S_j:=\Phi_j(\overline B_1)$ ($j=1,2$).   For $j=1,2$, let $D_j$ be the closed convex hull of $\Gamma_j$: clearly $D_1\cap D_2=\emptyset$. By Lemma \ref{lem:plateau_solution} each $S_j$ satisfies the following properties:
	\begin{itemize}
		\item $S_j\cap(\R^2\times\{0\})=\beta_j\subset D_j$ is a simple {smooth} curve joining $p_j$ to $q_j$;
		\item $S_j$ is symmetric with respect to $\R^2\times\{0\}$;
		\item $S_j^+:=S\cap\{x_3\ge0\}$ is the graph of  a function $\widetilde \psi_j\in W^{1,1}(U_j)\cap C^0(\overline U_j)$, where $U_j\subset D_j$ is the open region enclosed between $\partial^D\Om_j$ and $\beta_j$;
		\item $\beta_j$ is contained in $D_j$ and $F_j\setminus U_j$ is convex.
	\end{itemize}
	Let $(\sigma,\psi)\in\comp$ be given by 
	\begin{equation*}
		\sigma:=(\sigma_1,\sigma_2)\quad\text{and}\quad\psi:=\begin{cases}
			0&\text{in } \Om\setminus \{U_1\cup U_2\},\\
			\widetilde{\psi}_j&\text{in } U_j\, \text{ for }j=1,2,
		\end{cases}
	\end{equation*}
	where $\sigma_1([0,1]):=\overline{p_1q_2}$ and $\sigma_2([0,1]):=\beta_2\cup\,\overline{q_2p_1}\,\cup\beta_1$.
	Then $S^+:=S^+_1\cup S^+_2=\mathcal{G}_{\psi\res(\Om\setminus E(\sigma))}$ and 
	\begin{equation*}
		\min_{(s,\zeta)\in\comp}\mathcal{F}(s,\zeta)\le	\mathcal{F}(\sigma,\psi)=\mathcal{H}^2(S^+)=\frac12(m_1(\Gamma_1)+m_1(\Gamma_2))=\frac12m_2(\Gamma),
	\end{equation*}
		and the proof of step 1 is concluded.\\

	\step 2 
$2\min_{(s,\zeta)\in \comp}\mathcal F(s,\zeta)\ge m_2(\Gamma)$.

	Let $(\sigma,\psi)\in\comp$ be a minimizer satisfying
properties 
\ref{1.}-\ref{5.} of 
Theorem \ref{thm:regularity}.
	\\
	If $E(\sigma_1)\cup E(\sigma_2)=\emptyset$, by Step 1 we can apply Lemma \ref{lem_tec2} and find an injective  parametrization 
$\Phi\in \mathcal P_2(\Gamma)$ such that 
 $\Phi_i(\partial \openannulus)=\Gamma$ monotonically
	$\Phi(\overline\Sigma_{\rm ann})=\mathcal G_\psi\cup \mathcal G_{-\psi}$, and $$2\mathcal{F}(\sigma,\psi)=\int_{\Sigma_{\rm ann}}|\partial_{w_1}\Phi\wedge\partial_{w_2}\Phi|dw\geq m_2(\Gamma).$$
		If instead $E(\sigma_1)\cup E(\sigma_2)\ne\emptyset$, similarly we find injective  parametrizations 
$\Phi_1\in \mathcal P_1(\Gamma_1)$ and $\Phi_2\in \mathcal P_1(\Gamma_2) $  
such that $\Phi_j(\partial B_1)=\Gamma_j$ monotonically
for $j=1,2$, $\Phi_1(\overline B_1)\cup\Phi_2(\overline B_1)=\mathcal G_\psi\cup \mathcal G_{-\psi}$, and 
	$$2\mathcal{F}(\sigma,\psi)=\int_{B_{1}}|\partial_{w_1}\Phi_1\wedge\partial_{w_2}\Phi_1|dw+\int_{B_{1}}|\partial_{w_1}\Phi_2\wedge\partial_{w_2}\Phi_2|dw\geq m_1(\Gamma_1)+ m_1(\Gamma_2)\ge m_2(\Gamma).$$
	This concludes the proof.
\end{proof}

Now the proof of Theorem  \ref{thm:comparison-with-classical-plateau} is easily achieved.

\begin{proof}[Proof of Theorem \ref{thm:comparison-with-classical-plateau}] 

\ref{1plateau}. Let $\Phi\in\mathcal{P}_2(\Gamma)$, $S$, $S^+$, $S^-$ be as in the statement. 
By arguing as in the proof of  Theorem \ref{teorema-finale} we can find $(\sigma,\psi)\in\comp$ such that $S^\pm=\mathcal{G}_{\pm\psi\res (\Om\setminus E(\sigma))}$. 
Then by Theorem \ref{teorema-finale} we have

\begin{equation}\label{mesorotta}
\mathcal{F}(\sigma,\psi)=\frac12 m_2(\Gamma)= \min_{(s,\zeta)\in\comp}\mathcal{F}(s,\zeta)
\end{equation}
Hence $(\sigma,\psi)$ is a minimizer for $\mathcal F$ in $\mathcal W$; moreover by the properties of  $S$ it also satisfies properties \ref{1.}-\ref{5.} of Theorem \ref{thm:regularity}.

\ref{2plateau}.
Let $\Phi_j\in \mathcal{P}_1(\Gamma_j)$, $S_j$ for $j=1,2$, $S^+$, $S^-$ be as in the statement.  
Again arguing as in the proof of  Theorem \ref{teorema-finale}, we can find $(\sigma,\psi)\in\comp$ such that $S^\pm=\mathcal{G}_{\pm\psi\res(\Om\setminus E(\sigma))}$ and \eqref{mesorotta} holds, so that $(\sigma,\psi)$ is a minimizer of $\mathcal F$ in $\mathcal W$ satisfying properties \ref{1.}-\ref{5.} of Theorem \ref{thm:regularity}.

\ref{3plateau}. Let $(\sigma,\psi)\in\comp$ be a minimizer of $\mathcal F$ in $\mathcal W$ satisfying properties \ref{1.}-\ref{5.} of  Theorem \ref{thm:regularity}. Let also
	$$S:=\mathcal G_{\psi\res (\Om\setminus E(\sigma))}\cup\mathcal G_{-\psi\res (\Om\setminus E(\sigma))}.$$
	
	Suppose  $E(\sigma_1)\cap E(\sigma_2)=\emptyset$. Then 
there is $\Phi\in \mathcal{P}_2(\Gamma)$ which is a $\mathcal{MY}$ solution to \eqref{catenoid_plateau} such that $\immclosedann=S$:
indeed, to see this, it is sufficient to apply	Lemma \ref{lem_tec2}, since by Theorem \ref{teorema-finale} we have
		\begin{equation}\label{cd0}
		2\mathcal F(\sigma, \psi)= m_2(\Gamma).
	\end{equation}
\medskip
	
Suppose now $E(\sigma_1)\cap E(\sigma_2)\ne\emptyset$; then with a similar argument we can construct $\Phi_j\in\mathcal P_1(\Gamma_j)$ for $j=1,2$ solutions to \eqref{plateau:n1} such that $\Phi_1(\overline B_1)\cup \Phi_2(\overline B_1)=S$.
The proof is achieved.
\end{proof}

\subsection*{Acknowledgements}
We acknowledge the financial support of the GNAMPA of INdAM (Italian institute of high mathematics).
The work of R. Marziani was supported by the Deutsche Forschungsgemeinschaft (DFG, German Research Foundation) under the Germany Excellence Strategy EXC 2044-390685587, Mathematics M\"unster: Dynamics--Geometry--Structure.

	\end{document}